\newtheorem{theorem}{Theorem}[section]
\newtheorem{lemma}[theorem]{Lemma}
\newtheorem{proposition}[theorem]{Proposition}
\newtheorem{corollary}[theorem]{Corollary}
\theoremstyle{definition}
\newtheorem{assumption}[theorem]{Assumption}
\newtheorem{remark}[theorem]{Remark}
\newtheorem{example}[theorem]{Example}
\numberwithin{equation}{section}
\acrodef{KPZ}{Kardar--Parisi--Zhang}
\acrodef{ASEP}{Asymmetric Simple Exclusion Process}
\acrodef{TASEP}{Totally Asymmetric Simple Exclusion Process}
\acrodef{SHE}{Stochastic Heat Equation}
\acrodef{SPDE}{Stochastic Partial Differential Equation}
\acrodef{PAM}{Parabolic Anderson Model}
\renewcommand{\Pr}{ \mathbf{P} }							
\newcommand{\Ex}{ \mathbf{E} }								
\newcommand{\Exrt}{ \mathbf{E}^\rt }						
\newcommand{\ExO}{ \mathbf{E}^{\rt*} }						
\newcommand{\normrt}[2]{ \Vert #1\Vert_{\rt,#2}} 			
\newcommand{\Normrt}[2]{ \Big\Vert #1\Big\Vert_{\rt,#2}} 
\newcommand{\normO}[2]{ \Vert #1\Vert_{\rt*,#2}} 			
\newcommand{\NormO}[2]{ \Big\Vert #1\Big\Vert_{\rt*,#2}}	
\newcommand{\Ham}{ \mathcal{H} }	
\newcommand{\Sg}{ \mathcal{Q} }		
\newcommand{\Sgr}{ \mathcal{R} }	
\newcommand{\ham}{ \mathbb{H} }		
\newcommand{\sg}{ \mathbb{Q} }		
\newcommand{\sgr}{ \mathbb{R} }		
\newcommand{\sgra}{ \mathbb{R}^{\rt} }
\newcommand{\sgmg}{ \mathbb{K} }	
\newcommand{\sgmgg}{ \mathbb{k} }	
\newcommand{\HK}{ \mathcal{P} }				
\newcommand{\hk}{ \mathbb{p} }				
\newcommand{\hka}{ \mathbb{p}^{\rt} }		
\newcommand{\hkr}{ \mathbb{r} }				
\newcommand{\Id}{\text{Id}}		
\newcommand{\norm}[1]{\Vert{#1}\Vert}	
\newcommand{\hold}[1]{[{#1}]}				
\newcommand{\SgrI}{ \mathcal{K} }	
\newcommand{\Ips}{ \mathcal{U} }	
\newcommand{\sgrI}{ \mathbb{K} }	
\newcommand{\ips}{ \mathbb{U} }		
\newcommand{\Rt}{ \mathbb{A} }			
\newcommand{\Rtlim}{ \mathcal{A} }		
\newcommand{\rt}{ \mathbb{a} }			
\newcommand{\rtt}{ \widetilde{\rt} }	
\newcommand{\iid}{ \mathbb{b} }			
\newcommand{\filZ}{ \mathscr{F} }		
\newcommand{\eigf}{ \varphi }		
\newcommand{\eigv}{ \lambda }		
\newcommand{\eigsp}{ \langle \{\eigf_n\} \rangle } 
\newcommand{\Mg}{ \mathcal{M} }		
\newcommand{\Mgg}{ \mathcal{L} }	
\newcommand{\mgg}{ L }				
\newcommand{\linmgD}{ G }			
\newcommand{\LinmgD}{ \mathcal{G} }	
\newcommand{\linmgR}{ H }			
\newcommand{\bdd}{\mathcal{B}}		
\newcommand{\ind}{ \mathbf{1} }		
\newcommand{\set}[1]{ {\{#1\}} }	
\newcommand{\dist}{ \mathrm{dist} }	
\newcommand{\urt}{ u_{\Rt} }		
\newcommand{\uic}{ u_\ic }			
\newcommand{\PoiR}{P_\rightarrow}	
\newcommand{\PoiL}{P_\leftarrow}	
\newcommand{\PoiiR}{Q_\rightarrow}	
\newcommand{\PoiiL}{Q_\leftarrow}	
\newcommand{\mg}{M}					
\newcommand{\Z}{\mathbf{Z}} 
\newcommand{\T}{\mathbb{T}} 
\newcommand{\R}{\mathbf{R}} 
\newcommand{\limT}{\mathcal{T}} 
\newcommand{\Parti}{\mathcal{T}} 	
\newcommand{\Midp}{\mathrm{Mid}} 	
\newcommand{\parti}{\mathbb{T}} 	
\newcommand{\midp}{\mathrm{mid}} 	
\newcommand{\ic}{\mathrm{ic}}
\newcommand{\e}{\varepsilon}
\newcommand{\scZ}{Z}				
\newcommand{\limZ}{\mathcal{Z}} 	
\renewcommand{\hat}{\widehat}
\newcommand{\til}{\widetilde}
\renewcommand{\bar}{\overline}
\newcommand{\tilx}{\widetilde{x}}
\newcommand{\tily}{\widetilde{y}}
\newcommand*{\Cdot}{{\raisebox{-0.5ex}{\scalebox{1.8}{$\cdot$}}}} 
\title[Weakly Inhomogeneous ASEP]
{SPDE Limit of Weakly Inhomogeneous ASEP}
\author[I.\ Corwin]{Ivan Corwin}
\address{I.\ Corwin,
	Departments of Mathematics, Columbia University,
	\newline\hphantom{\quad \ I.\ Corwin}
	2990 Broadway, New York, NY 10027 USA
	}
\email{ivan.corwin@gmail.com}
\author[L.-C.\ Tsai]{Li-Cheng Tsai}
\address{L.-C.\ Tsai,
	Department of Mathematics, Rutgers University — New Brunswick
	\newline\hphantom{\quad \ L.-C. Tsai}
	110 Frelinghuysen Road, Piscataway, NJ 08854 USA
	}
\email{lctsai.math@gmail.com}
\subjclass[2010]{%
Primary 60K35, 		
Secondary 82C22. 	
}
\begin{document}
\begin{abstract}
We study ASEP in a spatially inhomogeneous environment on a torus $ \T^{(N)} = \Z/N\Z$ of $ N $ sites.
A given inhomogeneity $ \rtt(x)\in(0,\infty) $, $ x\in\T $,
perturbs the overall asymmetric jumping rates $ r<\ell\in(0,1) $ at bonds,
so that particles jump from site $x$ to $x+1$ with rate $r\rtt(x)$ and from $x+1$ to $x$ with rate $\ell \rtt(x)$ (subject to the exclusion rule in both cases).
Under the limit $ N\to\infty $, we suitably tune the asymmetry $ (\ell-r) $ to zero like $N^{-\frac{1}{2}}$ and the inhomogeneity $ \rtt $ to unity,
so that the two compete on equal footing. At the level of the G\"{a}rtner (or microscopic Hopf--Cole) transform, we show convergence to a new SPDE --- the Stochastic Heat Equation with a mix of spatial and spacetime multiplicative noise.
Equivalently, at the level of the height function we show convergence to the Kardar-Parisi-Zhang equation with a mix of spatial and spacetime additive noise.

Our method applies to a general class of $ \rtt(x) $,
which, in particular, includes i.i.d., long-range correlated, and periodic inhomogeneities.
The key technical component of our analysis consists of a host of estimates on the \emph{kernel} of the semigroup $ \Sg(t):=e^{t\Ham} $
for a Hill-type operator $ \Ham:= \frac12\partial_{xx} + \Rtlim'(x) $, and its discrete analog,
where $ \Rtlim $ (and its discrete analog) is a generic H\"{o}lder continuous function.
\end{abstract}

\maketitle

\section{Introduction}
\label{intro}
In this article we study the \ac{ASEP} in a spatially inhomogeneous environment where the inhomogeneity perturbs the rate of jumps across bonds, while maintaining the asymmetry (i.e., the ratio of the left and right rates across the bond).
Quenching the inhomogeneity, we run the \ac{ASEP} and study its resulting Markov dynamics.
Even without inhomogeneities, \ac{ASEP} demonstrates an interesting scaling limit to the \ac{KPZ} equation 
when the asymmetry is tuned weakly~\cite{bertini97}.

It is ultimately interesting to determine how the inhomogeneous rates modify the dynamics of such systems, and scaling limits thereof.
In this work we tune the strengths of the asymmetry and the inhomogeneity to compete on equal levels,
and we find that the latter introduces a new spatial noise into the limiting equation.
At the level of the G\"{a}rtner (or microscopic Hopf--Cole) transform (see~\eqref{eq:Z}),
we obtain a new equation of \ac{SHE}-type, with a mix of spatial and spacetime multiplicative noise. At the level of the height function, we obtain a new equation of \ac{KPZ}-type, with a mix of spatial and spacetime additive noise.

We now define the inhomogeneous \ac{ASEP}.
The process runs on a discrete $N$-site torus $ \T^{(N)} := \Z/N\Z $ where we identify $ \T^{(N)} $ with $ \{0,1,\ldots,N-1\} $,
and, for $ x,y\in\T $, understand $ x+y $ to be mod $ N $.
To alleviate heavy notation, we will often omit dependence on $ N $ and write $ \T $ in place of $ \T^{(N)} $,
and similarly for notation to come.
For fixed homogeneous jumping rates $ r<\ell\in(0,1) $ with $ r+\ell=1 $,
and for fixed inhomogeneities $ \rtt(x)\in(0,\infty) $, $ x\in\T $,
the inhomogeneous \ac{ASEP} consists of particles performing continuous time random walks on $ \T $.
Jumping from $ x $ to $ x+1 $ occurs at rate $ \rtt(x)r $, jumping from $ x+1 $ to $ x $ currents at rate $ \rtt(x)\ell $,
and attempts to jump into occupied sites are forbidden.
See Figure~\ref{fig:asepRing}.
\begin{figure}[h]
\centering
\begin{subfigure}{.35\textwidth}
	\psfrag{L}[r]{$ \ell \rtt(x-1) $}
	\psfrag{R}[c][t]{$ r \rtt(x) $}
	\includegraphics[width=\textwidth]{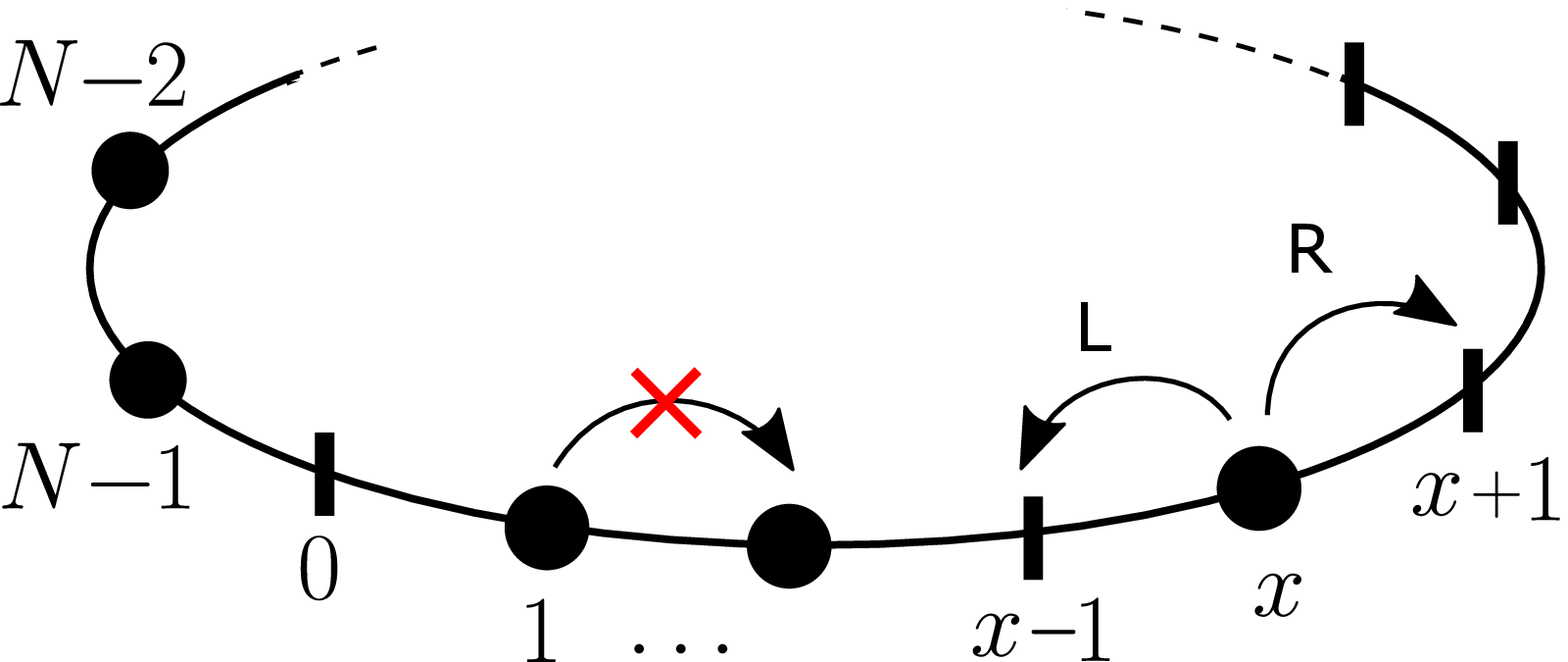}
	\caption{Inhomogeneous \ac*{ASEP}}
	\label{fig:asepRing}
\end{subfigure}
\hfil
\begin{subfigure}{.45\textwidth}
	\includegraphics[width=\textwidth]{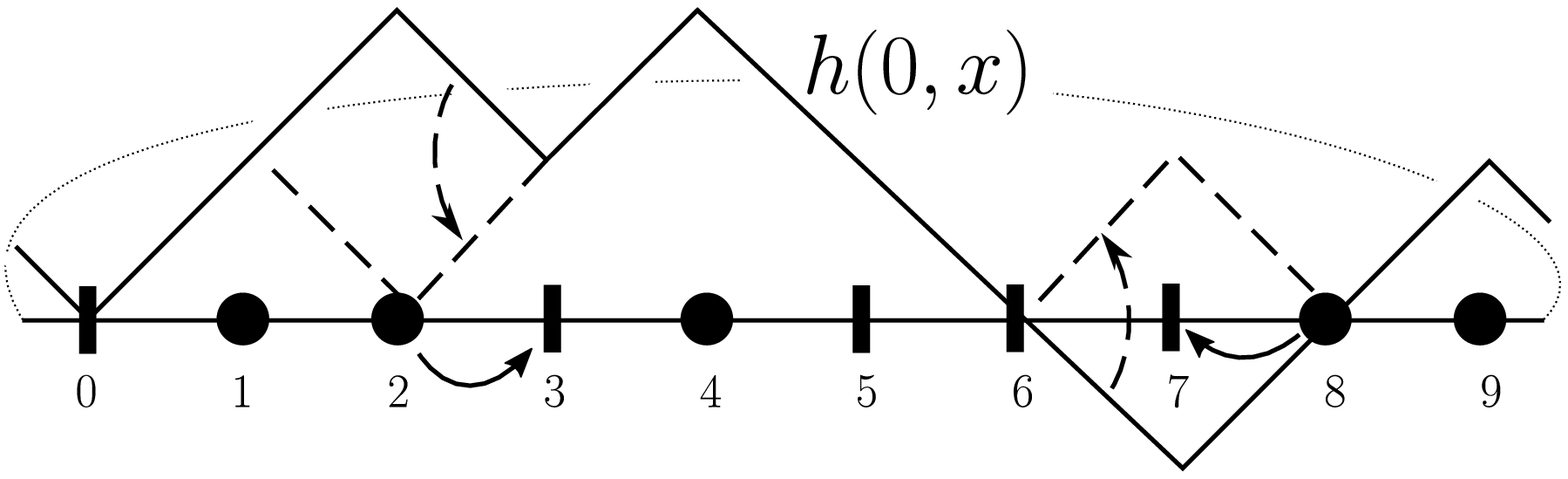}
	\caption{The height function}
	\label{fig:height}
\end{subfigure}
\caption{Inhomogeneous \ac*{ASEP} on $ \T $ and its height function. (a): The particle at $x$ jumps to $x-1$ at rate $\rtt(x-1) \ell$ or to $x+1$ at rate $\rtt(x) r$; meanwhile the particle at $1$ may not jump to the occupied site $2$. (b): The particle dynamics are coupled with a height function as shown.}
\end{figure}

We will focus on the height function (also known as integrated current), denoted $ h(t,x) $.
To avoid technical difficulties, throughout this article we assume the particle system to be \textbf{half-filled} so that $ N $ is even, and there are exactly $ \frac{N}{2} $ particles.
Under this setup, letting
\begin{align*}
	\eta(t,x) := \left\{\begin{array}{l@{,}l}
		1	&\text{ if the site } x \text{ if occupied at } t,
	\\
		0	&\text{ if the site } x \text{ if empty at } t,
	\end{array}\right.
\end{align*}
denote the occupation variables,
we define the height function $ h: [0,\infty)\times\T\to\R $ at $ t=0 $ to be
\begin{align*}
	h(0,x) &:= \sum_{0<y\leq x} \big( 2\eta(0,y) - 1 \big),
	\qquad
	x=0,1,\ldots,N-1.
\end{align*}
Then, for $ t \geq 0 $, each jump of a particle from $ x $ to $ x+1 $ decreases $ h(t,x) $ by $ 2 $,
and each jump of a particle from $ x+1 $ to $ x $ increases $ h(t,x) $ by $ 2 $,
as depicted in Figure~\ref{fig:height}.

We now provide two key definitions needed to state and prove our main results. The \textbf{G\"{a}rtner (or microscopic Hopf--Cole) transform} of inhomogeneous ASEP is defined to be
\begin{align}
	\label{eq:Z}
	Z(t,x) := \tau^{ \frac12h(t,x)}e^{\nu t},
	\qquad
	\tau := r/\ell,
	\qquad
	\nu := 1-2\sqrt{r\ell}.
\end{align}
The other key definition is \textbf{weak asymmetry} scaling in which the system size $N$ controls the asymmetry and scaling of $Z$ as
\begin{align}
	\label{eq:was}
	\ell = \tfrac12(1+N^{-\frac12}),
	\
	r = \tfrac12(1-N^{-\frac12}),
	\qquad
	\scZ_N(t,x) := Z( t N^2,xN).
\end{align}

G\"{a}rtner \cite{gaertner87} introduced his eponymous transform in the context of \emph{homogeneous} \ac{ASEP} (i.e., $ \rtt(x)\equiv 1 $), where he observed that \eqref{eq:Z} linearizes the drift of the microscopic equation, so that $ Z(t,x) $ solves a microscopic \ac{SHE}:
\begin{align}
	\label{eq:dshe}
	dZ(t,x) = \sqrt{r\ell}\Delta Z(t,x) + d\mg(t,x),
	\qquad
	\Delta Z(t,x) := Z(t,x+1)+Z(t,x-1) - 2Z(t,x),
\end{align}
where $ \mg(t,x) $ is an explicit martingale in $ t $.
Using G\"{a}rtner's transform as a starting point, Bertini and Giacomin \cite{bertini97}
showed that a \ac{SPDE} arises
under the weak asymmetry scaling. Namely,  under the scaling~\eqref{eq:was}, the process $ \scZ_N $ converges
to the solution of the \ac{SHE}:
\begin{align}
	\label{eq:SHE}
	\partial_t \limZ = \tfrac12\partial_{xx} \limZ + \xi\limZ,	
\end{align}
where $ \limZ=\limZ(t,x) $, $ (t,x)\in[0,\infty)\times\R $,
and $ \xi=\xi(t,x) $ denotes the Gaussian spacetime white noise (see, e.g.,~\cite{walsh86}). In fact, the result of~\cite{bertini97} is on the full-line $ \Z $,
and in that context $ \e\to 0 $ represents lattice spacing, which is identified with $ N^{-1} $ here. Also, the work of \cite{bertini97} assumes near stationary initial conditions similar to the ones considered here in~\eqref{eq:nearst}. Other initial conditions were considered later in \cite{ACQ}

The first key observation of our present paper is that for inhomogeneities of the form introduced above, G\"{a}rtner's transform remains essentially valid. In particular, the Laplacian term $\sqrt{r\ell}\Delta$ in the discrete \ac{SHE} \eqref{eq:dshe} is replaced by the spatially inhomogeneous operator
$$
\ham := \sqrt{r\ell} \, \rtt(x)\Delta - \nu \, \big(\rtt(x)-1\big),
$$
which involves a mixture of the \textbf{Bouchaud trap model} generator and a \textbf{parabolic Anderson model} type potential (see the sketch of the proof later in this introduction for further explanation and references for these terms). Besides this change, the martingale is also modified.

Armed with the G\"{a}rtner transform, we investigate the effect of $ \rtt(x) $ at large scales in the $ N\to\infty $ limit.
In doing so, we focus on the case where the effect of $ \rtt(x) $ is compatible with the aforementioned \ac{SPDE} limit.
A prototype of our study is when
\begin{align*}
	\rtt(x) = 1 + \tfrac{1}{\sqrt{N}} \iid(x),
	\qquad
	\{\iid(x):x\in\T\} \text{ i.i.d., bounded, with } \Ex[\iid(x)]=0.
\end{align*}
For this example of i.i.d.\ inhomogeneities, the $ N^{-\frac12} $ scaling is weak enough to have an \ac{SPDE} limit, while still strong enough to modify the nature of said limit.

To demonstrate the generality of our approach,
we will actually consider a much more general class of inhomogeneities.
Let us first prepare some notation.
For $ x,x'\in\T $, let $ [x,x']\subset \T $ denote the closed interval on $ \T $
that goes counterclockwise (see Figure~\ref{fig:asepRing} for the orientation) from $ x $ to $ x' $,
and similarly for open and half-open intervals.
With $ |I| $ denoting the cardinality of (i.e., number of points within) an interval $ I\subset\T $,
we define the \textbf{geodesic distance}
\begin{align*}
	\dist_{\T}(x,x') := |(x,x']| \wedge |(x',x]|.
\end{align*}
We will also be considering the continuum torus $ \limT := \R/\Z \simeq [0,1) $,
which is to be viewed as the $ N\to\infty $ limit of $ \frac{1}{N}\T $.

Similarly for the continuum torus $ \limT $,
we let $ [x,x']\subset\limT $ denote the interval going from $ x $ to $ x' $ counterclockwise,
let $ |[x,x']| $ denote the length of the interval,
and let $ \dist_{\limT}(x,x') $, $ x,x'\in\limT $ denote the analogous geodesic distance on $ \limT $.
For $ u\in[0,1] $, let $ C^{u}[0,1] $ denote the space of $ u $-H\"{o}lder continuous functions $ f:[0,1]\to\R $,
equipped with the norm 
\begin{align}
	\label{eq:Hold}
	\norm{f}_{C^u[0,1]} := \norm{f}_{L^\infty[0,1]}+ [f]_{C^u(\limT)},
	\qquad
	[f]_{C^u(\limT)}
	:=
	\sup_{[x, x']\subset\limT} \Big( \frac{1}{|[x,x']|^u} \Big| \int_{[x,x']\setminus\{0\}} d f(y)\Big| \Big),
\end{align}
where the integral is in the Riemann–Stieltjes sense.
The integral excludes $ 0 $ so that the possible jump of $ f $ there will not be picked up.

We now define the type of inhomogeneities to be studied.
Throughout this article, we will consider possibly random $ (\rtt^{(N)}(x))_{x\in\T} $
that may depend on $ N $.
Set $ \rt^{(N)}(x) := \rtt^{(N)}(x)-1 $, and put
\begin{align}
	\label{eq:Rt}
	\Rt^{(N)}(x,x') := -\frac12 \sum_{y\in(x,x']} \rt^{(N)}(y),
	\qquad
	x,x'\in\T.
\end{align}
As announced previously, we will often write $ \rtt^{(N)} = \rtt $, $ \rt^{(N)}=\rt $, $ \Rt^{(N)}=\Rt $, etc., to simplify notation.
When $ x=0 $, we will write $ \Rt(x):=\Rt(0,x) $.
Consider also the scaled partial sums $ \Rt_N(x,x') := \Rt(xN,x'N) $ and $ \Rt_N(x) := \Rt(xN) $,
which are linearly interpolated to be functions on $ \limT^2 $ and $ [0,1) $, respectively.
For $ f:\T^2\to\R $, we define a seminorm that is analogous to $ [\Cdot]_{C^u(\limT)} $ in~\eqref{eq:Hold}: 
\begin{align}
	\label{eq:hold}
	\hold{f}_{u,N} := \sup_{[x,x']\subset\T} \Big( \frac{1}{(|(x,x']|/N)^u} |f(x,x')| \Big).
\end{align}
Throughout this article we assume $ \{\rtt(x):x\in\T\} $ satisfies: 

\begin{assumption}\label{assu:rt}
\begin{enumerate}[label=(\alph*),leftmargin=7ex]
\item[]
\item \label{assu:rt:bdd}
For some fixed constant $ c\in(0,\infty) $,
$ \frac{1}{c} \leq \rtt(x) \leq c $.
\item \label{assu:rt:holder}
For some $ \urt>0 $, the partial sum $ \Rt_N(x,x') $ is $ \urt $-H\"{o}lder continuous:
\begin{align*}
	\lim_{\Lambda\to\infty}\liminf_{N\to\infty}
	\Pr\Big[
		\hold{\Rt_N}_{\urt,N} \leq \Lambda
	\Big]
	=1.
\end{align*}
\item \label{assu:rt:limit}For the same  $ \urt>0 $ as in \ref{assu:rt:holder}
there exists a $ C^{\urt}[0,1] $-valued process $ \Rtlim $ such that
\begin{align*}
	\sup_{ x\in[0,\frac{N-1}{N}) } |\Rt_N(x)-\Rtlim(x)| \longrightarrow_\text{P} 0,
	\qquad
	\text{as } N\to\infty,
\end{align*}
where $ \to_\text{P} $ denotes convergence in probability.
\end{enumerate}
\end{assumption}
\begin{remark}\label{rmk:rt}
\begin{enumerate}[label=(\alph*),leftmargin=7ex]
\item []
\item
Assumption~\ref{assu:rt}\ref{assu:rt:bdd} ensures the rate $ \rtt(x) $
is always nonnegative so that the process is well-defined.
\item Note that we do \emph{not} assume $ (\rt(0)+\ldots\rt(N-1))=0 $ or $ \Rtlim(1)=0 $.
\item Under Assumption~\ref{assu:rt}\ref{assu:rt:limit}, the microscopic processes $\big\{\Rt^{(N)}\big\}_{N}$ (and likewise $\big\{\rt^{(N)}\big\}_{N}$)
and limiting process $ \Rtlim $ are \emph{coupled} on the same probability space.
\end{enumerate}
\end{remark}

Here we list a few examples that fit into our working assumption~\ref{assu:rt}.
\begin{example}[i.i.d.\ inhomogeneities]
\label{ex:iid}
Consider $ \rt^{(N)}(x)=\tfrac{1}{\sqrt{N}} \iid(x) $,
where $\{\iid(x):x\in\T\}$ are i.i.d., bounded, with $ \Ex[\iid(x)]=0 $ and $ \Ex[\iid(x)^2] := \sigma^2>0 $.
Indeed, Assumptions \ref{assu:rt}\ref{assu:rt:bdd}--\ref{assu:rt:holder} are satisfied for any $ \urt\in(0,\frac12) $ (and $ N $ large enough).
The invariance principle asserts that $ \Rt^{(N)}(x/N) $ converges in distribution to $ \frac12\sigma B(x) $ in $ C[0,1] $, where $ B(x) $ denotes a standard Brownian motion.
By Skorokhod's representation theorem,
after suitable extension of the probability space, we can couple $ \{\Rt^{(N)}\}_N$ and $B$ together
so that Assumption~\ref{assu:rt}\ref{assu:rt:limit} holds.
\end{example}

\begin{example}[fractional Brownian motion]
\label{ex:fbm}
Let $ B^\alpha(x) $, $ x \geq 0 $, denote a fractional Brownian motion of a fixed Hurst exponent $ \alpha\in(0,1) $.
For $ x\in \T^{(N)}\simeq \{0,1,\ldots,N-1\} $, set $ \hat{\rt}^{(N)}(x) = B^*(\frac{x+1}{N})-B^*(\frac{x}{N}) $,
and $ \rt^{(N)}(x) := \hat{\rt}^{(N)}(x)\ind_\set{|\hat{\rt}^{(N)}(x)|<1/2} $. To be clear, we define $\hat{\rt}^{(N)}(N-1) = B^*(1)-B^*(\frac{N-1}{N})$.
The indicator $ \ind_\set{|\hat{\rt}^{(N)}(x)|<1/2} $ forces Assumption~\ref{assu:rt}\ref{assu:rt:bdd} to hold.
Since each $ \hat{\rt}^{(N)}(x) $ is a mean-zero Gaussian of variance $ N^{-2\alpha} $,
we necessarily have that
\begin{align*}
	\Pr\big[ \rt^{(N)}(x) = \hat{\rt}^{(N)}(x), \ \forall x\in\T \big] \longrightarrow 1,
	\qquad
	\text{as } N\to\infty.
\end{align*}
Given this, it is standard to verify that Assumptions~\ref{assu:rt}\ref{assu:rt:holder}--\ref{assu:rt:limit}
hold for $ \urt\in(0,\alpha) $ and $ \Rtlim=-\frac12 B^\alpha $.
\end{example}

\begin{example}[Alternating]
\label{ex:alt}
Fix any $ \delta>0 $ and let $ \rt^{(N)}(x)=N^{-\delta} $ for $ x=0,2,4,\ldots,N-2 $ and $ \rt^{(N)}(x)=-N^{-\delta} $ for $ x=1,3,\ldots,N-1 $.
It is readily verified that Assumptions~\ref{assu:rt}\ref{assu:rt:bdd}--\ref{assu:rt:limit}
hold for $ \urt\in(0,\delta] $ and $ \Rtlim\equiv 0 $.
\end{example}

Roughly speaking, our main result asserts that,
for inhomogeneous \ac{ASEP} under Assumption~\ref{assu:rt}, $ \scZ_N(t,x) $ (defined via \eqref{eq:Z} and \eqref{eq:was}) converges in distribution to the solution of the following \ac{SPDE}:
\begin{align}
	\label{eq:spde}
	\partial_t \limZ = \Ham \limZ + \xi \limZ,
	\qquad
	\Ham := \tfrac12 \partial_{xx} + \Rtlim'(x).
\end{align}

To state our result precisely,
we first recall a result from~\cite{fukushima77} on the Schr\"{o}dinger operator with a rough potential.
It is shown therein that, for any bounded Borel function $ f:[0,1]\to\R $,
the expression $ \frac12 \partial_{xx} + f'(x) $ defines a self-adjoint operator on $ L^{2}[0,1] $ with Dirichlet boundary conditions.
This construction readily generalizes to $ \limT $ (i.e., $ [0,1] $ with periodic boundary condition) considered here.
In Section~\ref{sect:Sg}, for given $ \Rtlim\in C^{\urt}[0,1] $,
we construct the semigroup $ \Sg(t)=e^{t\Ham} $ by giving an explicit formula for the kernel $ \Sg(t;x,\tilx) $.
We say that a $ C([0,\infty),C(\limT)) $-valued process $ \limZ $ is a \textbf{mild solution} of~\eqref{eq:spde} with initial condition $ \limZ^\ic \in C(\limT) $, if
\begin{align}
	\label{eq:spde:mild}
	\limZ(t,x) = \int_{\limT} \Sg(t;x,\tilx) \limZ^\ic(\tilx)d\tilx + \int_0^t \int_{\limT} \Sg(t-s;x,\tilx) \limZ(s,\tilx) \xi(s,\tilx) dsd\tilx.
\end{align}

\begin{remark}
\label{rmk:Rtfixed}
In~\eqref{eq:spde:mild}, $ \Sg(t;x,\tilx) $ is taken to be independent of the driving noise $ \xi $.
This being the case, throughout this article, for the analysis that involves the limiting \ac{SPDE}~\eqref{eq:spde}--\eqref{eq:spde:mild},
we will assume without loss of generality that $ \Sg(t;x,\tilx) $ is deterministic,
and interpret the stochastic integral $ \int(\ldots)\xi(s,x) dsdx $ in the It\^{o} sense.
\end{remark}
\noindent
Using standard Picard iteration,
we show in Proposition~\ref{prop:unique} that~\eqref{eq:spde:mild} admits at most one solution for a given $ \limZ^\ic \in C(\limT) $.
Existence follows from our result~Theorem~\ref{thm:main} in the following.

Fix $ \uic>0 $. Throughout this article we will also fix a sequence of
\emph{deterministic} initial conditions for the \ac{ASEP} height function $ \big\{ h^{\ic,(N)}(\Cdot) \big\}_N $,
and let $ Z^{\ic,(N)}(x) $ be defined in terms of $ h^{\ic,(N)} $ via \eqref{eq:Z} and \eqref{eq:was} at $t=0$.
We make the assumption that the initial conditions are \textbf{near stationary}. This is easiest stated in terms of $ Z^{\ic,(N)}$ and posits that there exists a finite constant $ c<\infty $ such that, with the shorthand notation $ \scZ^{\ic}:=\scZ^{\ic,(N)} $,
\begin{align}
	\label{eq:nearst}
	\scZ^\ic(x)
	\le c,
	\qquad
	| \scZ^\ic(x)-\scZ^\ic(x')|
	\le c \ \big(\tfrac{\dist_{\T}(x,x')}{N}\big)^{\uic},
	\qquad
	\forall x,x'\in\T,
	\
	N\in\Z_{>0}.
\end{align}
Recall the scaled process $ Z_N(t,x) $ from~\eqref{eq:was}, and similarly scale $ Z^\ic_N(x) := Z^{\ic,(N)}(xN) $.
We linearly interpolate the process $ \scZ_N(t,x) $ in $ x $ so that it is $ D([0,\infty),C(\limT)) $-valued.
We endow the space $ C(\limT) $ with the uniform norm $ \norm{\,\Cdot\,}_{C(\limT)} $ (and hence uniform topology),
and, for each $ T<\infty $, endow the space $ D([0,T],C(\limT)) $ with Skorohod's $ J_1 $-topology.
We use $ \Rightarrow $ to denote weak convergence of probability laws.
Our main result is the following:
\begin{theorem}
\label{thm:main}
Consider a half-filled inhomogeneous \ac{ASEP} on $ \T^{(N)}$,
with deterministic, near stationary initial condition described as in the preceding.
If, for some $ \limZ^\ic\in C(\limT) $,
\begin{align*}
	\norm{\scZ^\ic_N - \limZ^\ic}_{C(\limT)} \longrightarrow 0,
	\qquad
	\text{as }N\to\infty,
\end{align*}
then, under the scaling~\eqref{eq:was},
\begin{align*}
	\scZ_N \Longrightarrow \limZ \text{ in } D([0,T],C(\limT)),
	\qquad
	\text{as }N\to\infty,
\end{align*}
for each $ T<\infty $, where $ \limZ $ is the mild solution of~\eqref{eq:spde} with initial condition $ \limZ^\ic $.
\end{theorem}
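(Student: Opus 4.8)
The plan is to follow the well-trodden route for weak-asymmetry limits of ASEP (as in Bertini--Giacomin~\cite{bertini97} and its many descendants), but carried out with the inhomogeneous semigroup $\Sg(t)$ in place of the heat semigroup. First I would establish the microscopic mild (Duhamel) formula. By the first key observation of the paper, $Z(t,x)$ solves the microscopic SHE with the modified generator $\ham = \sqrt{r\ell}\,\rtt(x)\Delta - \nu(\rtt(x)-1)$ driven by an explicit martingale $\mg$; writing $\scZ_N$ in the scaled variables and Duhamel-unfolding against the discrete semigroup $e^{tN^2\ham}$, one gets $\scZ_N(t,x) = (\text{heat term acting on }\scZ^\ic_N) + (\text{discrete stochastic integral against }d\mg)$. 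The crucial input here is that the rescaled discrete semigroup $e^{tN^2\ham}$ has kernel converging to $\Sg(t;x,\tilx)$, together with the quantitative kernel bounds (near-diagonal Gaussian decay, Hölder regularity, moment bounds on the kernel and its increments) promised for $\Sg$ and its discrete analog in Section~\ref{sect:Sg}. Those estimates are exactly what one needs to replace the classical heat-kernel estimates in every place Bertini--Giacomin use them.

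Second, I would prove tightness of $\{\scZ_N\}$ in $D([0,T],C(\limT))$. This breaks into (i) a uniform-moment bound $\sup_N \sup_{t\le T}\Ex[\scZ_N(t,x)^{2k}] < \infty$ and spatial/temporal Hölder moment bounds of the form $\Ex[|\scZ_N(t,x)-\scZ_N(s,y)|^{2k}] \lesssim (|t-s|^{\alpha} + |x-y|^{\beta})^k$, obtained by a Gronwall/bootstrap argument applied to the Duhamel formula using the near-stationary assumption~\eqref{eq:nearst}, the discrete kernel bounds, and Burkholder--Davis--Gundy on the martingale integral (the predictable bracket of $\mg$ is, as in the homogeneous case, essentially $\scZ_N^2$ times bounded factors, up to the $\rtt$-prefactors which are bounded above and below by Assumption~\ref{assu:rt}\ref{assu:rt:bdd}); and (ii) a Kolmogorov-type criterion plus control of jumps to upgrade to $J_1$-tightness on $D([0,T],C(\limT))$. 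Here one must be a little careful because $\Rtlim$ is merely Hölder, so the exponents $\alpha,\beta$ degrade with $\urt$, but any positive exponents suffice for tightness.

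Third, I would identify the limit. Along any weakly convergent subsequence $\scZ_{N_k}\Rightarrow \limZ$, I would pass to the limit in the microscopic mild formula term by term: the deterministic term converges to $\int_{\limT}\Sg(t;x,\tilx)\limZ^\ic(\tilx)d\tilx$ using $\norm{\scZ^\ic_N-\limZ^\ic}_{C(\limT)}\to 0$ and kernel convergence; for the martingale term, a martingale problem / quadratic-variation argument shows that the family of martingales $\mg_N$ converges to a limiting martingale whose bracket is $\int_0^t\int_{\limT}\Sg(t-s;x,\tilx)^2\,\limZ(s,\tilx)^2\,dsd\tilx$-compatible, which by a standard representation (e.g.\ the argument in~\cite{bertini97}, using that the limit lies in a filtration and has the right covariance) identifies it as $\int_0^t\int_{\limT}\Sg(t-s;x,\tilx)\limZ(s,\tilx)\xi(s,\tilx)\,dsd\tilx$. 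Thus every subsequential limit is a mild solution of~\eqref{eq:spde} with initial data $\limZ^\ic$; by the uniqueness statement (Proposition~\ref{prop:unique}) the whole sequence converges, which also yields existence.

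The main obstacle I expect is the self-averaging step inside the martingale-bracket computation: the discrete bracket of $\mg_N$ contains oscillatory factors built from $\rtt(x) = 1+\tfrac1{\sqrt N}\rt(x)$ (and its Gärtner-conjugated versions), and one must show these homogenize — that the $\tfrac1{\sqrt N}\rt(x)$ fluctuations, after being smeared by the semigroup kernel, contribute neither a surviving drift nor an extra noise beyond what is already encoded in $\Rtlim' = \lim \tfrac12 N\,\Delta(\tfrac1{\sqrt N}\Rt_N)$ being absorbed into $\Sg(t)$ itself. Making this precise requires the full strength of the Hölder-continuity and convergence in Assumption~\ref{assu:rt} together with the delicate kernel increment bounds of Section~\ref{sect:Sg}, and is where the ``weakly inhomogeneous'' scaling $\rt\sim N^{-1/2}$ is used critically: it is exactly the borderline at which $\Rt_N$ converges to a nondegenerate Hölder limit while the pointwise rates converge to $1$. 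A secondary technical point is handling the $J_1$-topology (controlling the size and location of jumps of $\scZ_N$), but this is routine given the moment bounds.
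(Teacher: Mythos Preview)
Your overall architecture is exactly the paper's: microscopic Duhamel for $Z$, kernel estimates for $\sg(t)=e^{t\ham}$ and its continuum limit $\Sg(t)$, moment bounds $\Rightarrow$ tightness, then identification of subsequential limits via a martingale problem and Proposition~\ref{prop:unique}. So at the level of strategy the proposal is correct and matches the paper.

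Where your proposal has a genuine gap is in the diagnosis of the ``main obstacle''. You locate the difficulty in the quadratic-variation step in the oscillations of $\rtt(x)=1+\rt(x)$ (not $1+N^{-1/2}\rt(x)$; under Assumption~\ref{assu:rt} one only has $|\rt(x)|\lesssim N^{-\urt}$). In fact those contributions are easy: after expanding $d\langle \mg,\mg\rangle$ one gets a term of size $(\rt(x)+N^{-1/2})\bdd\, Z^2$ which is dispatched by the pointwise bound $|\rt(x)|\le N^{-\urt}[\Rt_N]_{\urt,N}$ and the $Z$-moment bounds. The real obstruction---present already in homogeneous ASEP and \emph{unchanged} by the inhomogeneity---is the gradient--gradient term
\[
W(t,x)\;=\;N\,\nabla Z(t,x)\,\nabla Z(t,x-1),
\]
which does not vanish pointwise and must be shown to self-average to zero after integration against the semigroup. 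The paper handles this by a decorrelation estimate on $\ExO[\,|\ExO[W(t+s,x)\mid\filZ(s)]|\,]$ obtained through an integral inequality that can be \emph{closed} only because of the key contraction
\[
\int_0^{N^2T}\sum_{\tilx\in\T}\big|\nabla_x\sg(s;x,\tilx)\,\nabla_x\sg(s;x-1,\tilx)\big|\,ds\;\le\;\beta\;<\;1
\]
(Lemma~\ref{lem:sgkey}\ref{lem:sgkey<1}). For $\hk$ this is Bertini--Giacomin's Lemma~A.3; the new work is to show $\sgr=\sg-\hk$ is small enough in the relevant norms that the $\beta<1$ survives for $\sg$. This step uses the full battery of $\sgr$-bounds (Proposition~\ref{prop:sg}\ref{prop:sgr:sum}--\ref{prop:sgr:gsup}) and is where the H\"older assumption on $\Rt_N$ is really spent---not, as you suggest, in homogenizing $\rt$ inside the bracket.

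A second, smaller point: for the linear martingale problem you cannot simply test against smooth functions and pass $\ham_N\to\Ham$, because the eigenfunctions of $\Ham$ are only $H^1(\limT)$ and $\rt$, $\scZ_N$ lack the regularity to make $\langle \ham_N\phi,\scZ_N\rangle_N$ converge directly. The paper circumvents this by testing against eigenfunctions $\eigf_n$ of $\Ham$ and rewriting the would-be drift via the \emph{mild} equation on a mesh $t_i=\tfrac{i}{k}t$, so that only $(\sg_N(\tfrac{t}{k})-\Id)$ acts on $\scZ_N$; one then uses $\Sg(t)\eigf_n=e^{t\eigv_n}\eigf_n$ and Proposition~\ref{prop:sgtoSg} to pass to the limit. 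Your ``pass to the limit term by term in the mild formula'' is the right instinct, but you should be aware that making it work requires this eigenfunction/mesh trick rather than a direct generator computation.
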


\begin{remark}
Though we formulate all of our results at the level of \ac{SHE}-type equations, they can also be interpreted in terms of convergence of the \ac{ASEP} height function (under suitable centering and scaling) to a \ac{KPZ}-type equation which formally is written as
\begin{align}\label{KPZtypeformal}
	\partial_t \mathcal{H}(t,x)
	=
	\tfrac{1}{2}\partial_{xx} \mathcal{H}(t,x) - \tfrac{1}{2} \big(\partial_x \mathcal{H}(t,x)\big)^2 + \xi(t,x) - \Rtlim'(x).
\end{align}
The solution to this equation should be (as in the case where $\Rtlim'(x)\equiv 0$) defined via $\mathcal{H}(t,x) = -\log \limZ(t,x)$. One could also try to prove well-posedness of this inhomogeneous \ac{KPZ} equation directly, though this is outside the scope of our present investigation and unnecessary for our aim.
\end{remark}

\subsection*{Steps in the proof of Theorem \ref{thm:main}}

Given that Theorem~\ref{thm:main} concerns convergence at the level of $ \limZ $,
our proof naturally goes through the microscopic transform~\eqref{eq:Z}.
As mentioned earlier, for \emph{homogeneous} \ac{ASEP}, $ Z $ solves the microscopic \ac{SHE} \eqref{eq:dshe}.
On the other hand, with the presence of inhomogeneities, it was not clear at all that G\"{a}rtner's transform applies.
As noted in \cite[Remark~4.5]{borodin14}, transforms of the type~\eqref{eq:Z} are tied up with the Markov duality.
The inhomogeneous \ac{ASEP} considered here lacks a certain type of Markov duality
so that one cannot infer a useful transform from Markov duality. For specifically, referring to the notation in Remark~\ref{rmk:rt} and \cite{borodin14},
the inhomogeneous \ac{ASEP} does enjoy a Markov duality for the observable $ \til{Q}(t,\vec{x})$ (which is essentially $\eta(t,x)Z(t,x)$ in our notation), but not for $ Q(t,\vec{x}) $ (which is essentially $Z(t,x)$ in our notation).
The latter is crucial for inferring a transform of the type~\eqref{eq:Z}.

The first step of the proof is to observe that, despite the (partial) lost of Markov duality,
$ Z $ still solves an \ac{SHE}-type equation (\eqref{eq:Lang} in the following),
with two significant changes compared to~\eqref{eq:dshe}.
\begin{enumerate}[label=\roman*),leftmargin=30pt]
\item \label{enu:Bouchaud}
First the discrete Laplacian is now replaced by the generator of an inhomogeneous random walk.
Interestingly, this walk is exactly Bouchaud's model \cite{bouchaud92},
which is often studied with heavy-tail $ \rtt(x) $ (as opposed Assumption~\ref{assu:rt}) in the context of randomly trapped walks.
\item \label{enu:pontential}
Additionally, a potential term (the term $ \nu \rt(x)Z(t,x)dt $ in~\eqref{eq:Lang}) appears due to the unevenness of quenched expected growth.
For homogeneous \ac{ASEP} with near stationary initial condition, the height function grows at a constant expected speed,
and the term $ e^{\nu t} $ in~\eqref{eq:Z} is in place to balance such a constant growth.
Due to the presence of the inhomogeneity, in our case the quenched expected growth is no longer a constant and varies among sites.
This results in a fluctuating potential that acts on $ Z(t,x) $.
\end{enumerate}
The two terms in~\ref{enu:Bouchaud}--\ref{enu:pontential} together
make up an operator $ \ham $ (defined in~\eqref{eq:Lang}) of Hill-type that governs the microscopic equation.
Correspondingly, the semigroup $ \sg(t):=e^{t\ham} $ now plays the role of standard heat kernel in the case of homogeneous \ac{ASEP}.
We refer to $ \sg(t):=e^{t\ham} $ and its continuum analog $ \Sg(t) $ as \ac{PAM} semigroups.

Much of our analysis consists of estimating the kernels of $ \sg(t) $ and $ \Sg(t) $.
These estimates are crucial in order to adapt and significantly extend the core argument of \cite{bertini97}.
To prepare for the analysis of $ \Sg(t) $, in Section~\ref{sect:hk},
we establish bounds on the transition kernel of Bouchaud's walk, which was described \ref{enu:Bouchaud},
and show that the kernel is well-approximated by that of the simple random walk.
The transition kernel of Bouchaud’s walk has been studied with heavy-tail inhomogeneities in the context of trapping  models ---
see the beginning of Section~\ref{sect:hk} for more discussions of this literature.
Here we consider \emph{bounded} and \emph{vanishing} inhomogeneiiesy,
which is technically much simpler to dealt with than heavy-tail ones.
Given the vast literature (some of which is surveyed at the beginning of Section~\ref{sect:hk}) on heat kernel estimates,
it is quite possibly that some of the bounds (Proposition~\ref{prop:hk}\ref{cor:hk:hkasup}--\ref{cor:hk:hkahold::})
obtained in this paper could be derived from, or follow similarly from existing techniques.
However, for the sake of being self-contained, we provide with an elementary and short derivation via Picard iteration of the heat kernel bounds we use.

Based on the bounds in Section~\ref{sect:hk} on the kernel of Bouchaud's walk,
in Section~\ref{sect:Sgsg}, we express the PAM semigroup $ \sg(t) $ using the Feynman--Kac formula,
with Bouchaud's walk being the underlying measure.
We then expand the Feynman--Kac formula, and develop techniques to bound the resulting expansion
to obtain estimates on the kernel of $ \sg(t) $ and its continuum analog.
For the operator $ \frac12\Delta-V $ with a singular potential $ V $,
accessing the corresponding semigroup has been a classic subject of study in mathematical physics.
In particular, semigroup kernels and the Feynman--Kac formulas have been studied in \cite{mckean77,simon82},
and the expansion we use is similar to the one considered in \cite[Section 14, Chapter V]{simon79}.
The major difference is that our potential $ \Rtlim' $ has negative regularity, and hence is not function-valued.

Armed with the heat kernel bounds from  Section~\ref{sect:hk} and the semigroup kernel expansion from Section~\ref{sect:Sgsg}, the final two sections adapt the key ideas from the work of Bertini and Giacomin \cite{bertini97} into the inhomogeneous setting to prove tightness (Section \ref{sect:mom}) and identify the limiting \ac{SPDE} (Section \ref{sect:pfmain}).
\subsection*{Further directions}

There are a number of directions involving inhomogeneous \ac{ASEP} which could warrant further investigation. At a very basic level, in this article we limit our scope to half-filled systems on the torus with near stationary initial conditions so as to simplify the analysis. However, we expect similar results should be provable via our methods when one relaxes these conditions. Putting aside the weak asymmetry scaling, it is compelling to consider the nature of the long-time hydrodynamic limit (i.e., functional law of large numbers) or fluctuations (i.e., central limit type theorems) for inhomogeneous \ac{ASEP}. For the homogeneous \ac{ASEP} the hydrodynamic limit is dictated by Hamilton-Jacobi PDEs and the fluctuations characterized by the \ac{KPZ} universality class --- does any of this survive the introduction of inhomogeneities?
These questions are complicated by the lack of an explicit invariant measure for our inhomogeneous \ac{ASEP}, as well as a lack of any apparent exact solvability.

There are other types of inhomogeneities which can be introduced into \ac{ASEP} and it is natural to consider whether different choices lead to similar long-time scaling limits or demonstrate different behaviors. Our choice of inhomogeneities stemmed from the fact that upon applying G\"{a}rtner's transform, it results in an \ac{SHE}-type equation.
On the other hand, our methods seem not to apply to site (instead of bond) inhomogeneities (so out of $ x $ we have $\ell\rtt(x)$ and $r \rtt(x) $ as rates). It also does not seem to extend to non-nearest neighbor systems. A more direct approach (e.g. regularity structures \cite{Hairer13}; energy solutions \cite{GJ2014a, GP2015a}; paracontrolled distributions \cite{GIP15,GP17}; or renormalization group \cite{Kupiainen16}) at the level of the height function may eventually prove useful in dealing with these generalizations. For the homogeneous non-nearest neighbor ASEP (or other homogeneous generalizations which maintain a product invariant measure), the energy solution method  has proved quite useful for demonstrating \ac{KPZ} equation limit results --- see, for example, \cite{GJ14}.

Another type of inhomogeneities would involve jump out of $ x $ given by rates $ \rtt(x) +b $ to the left and $ \rtt(x)-b $ to the right.
A special case of this type of inhomogeneities is studied in \cite{franco16} where they consider a single slow bond (i.e, $ \rtt(x)\equiv \rtt_* $ for $ x\neq 0 $ and $ \rtt(0)<\rtt_* $). In that case, they show that the inhomogeneities preserves the product Bernoulli invariant measure (note that the inhomogeneities we consider do not preserve this property).
(In fact, the argument in \cite{franco16} for this preservation of the invariant measure may be generalized to more than just a single-site inhomogeneity.) 
Using energy solution methods, \cite{franco16} shows that depending on the strength of the asymmetry and the slow bond, one either obtains a Gaussian limit with a possible effect of the slow bond,
or the \ac{KPZ} equation without the effect of the slow bond. It would be interesting to see if this type of inhomogeneities (at every bond, not just restricted to a single site) could lead to a similar sort of \ac{KPZ} equation with inhomogeneous spatial noise such as derived herein.

%
%
\cite{covert97,rolla08,calder15} characterized the hydrodynamic limit for \ac{ASEP} and TASEP with inhomogeneities that varies at a \emph{macroscopic} scale. Those methods do not seem amenable to rough or rapidly varying parameters (such as the i.i.d.\ or other examples considered herein) and it would be interesting to determine their effect.
%
A special case of spatial inhomogeneities is to have a slow bond at the origin.
The slow bond problem is traditionally considered for the TASEP,
with particular interest in how the strength of slow-down affects the hydrodynamic limit of the flux,
see \cite{janowsky92,basu14} and the reference therein.
As mentioned previously, this problem has been further considered in the context of weakly asymmetric \ac{ASEP} in \cite{franco16}.
There are other studies of TASEP (or equivalently last passage percolation) with inhomogeneitues in
\cite{gravner02, gravner02a, lin12, emrah15, emrah16, borodin17}.
The type of inhomogeneities in those works are of a rather different nature than those considered here.
In terms of TASEP, their inhomogeneities mean that the $i^{th}$ jump of the $j^{th}$ particle occur at rate $\pi_i+\hat{\pi}_j$ for the inhomogeneity parameters $\{\pi_i\}$ and $\{\hat{\pi}_j\}$. Such inhomogeneities do not seem to result in a temporally constant (but spatially varying) noise in the limit. Thus, the exact methods which are applicable in those works do not seem likely to grant access to the fluctuations or phenomena surrounding our inhomogeneous process or limiting equation.

As mentioned previously, upon applying G\"{a}rtner's transform we obtain an \ac{SHE}-type equation with the generator of Bouchaud's walk.
Our particular result involves tuning the waiting time rate near unity,
and under such scaling the inhomogeneous walk approximates the standard random walk.
On the other hand, Bouchaud's model (introduced in \cite{bouchaud92} in relation to aging in disordered systems; see also \cite{benarous06,benarous15} for example)
is often studied under the assumption of heavy-tailed waiting parameters. In such a regime, one expects to see the effect of trapping, and in particular the FIN diffusion \cite{fontes99} is a scaling limit that exhibits the trapping effect. It would be interesting to consider a scaling limit of inhomogeneous \ac{ASEP} in which the FIN diffusion arises. See the beginning of Section \ref{sect:hk} for further discussion and references related to Bouchaud's model.

For the case $ \Rtlim'(x)=B'(x) $ (spatial white noise),
the operator $ \Ham $ (in~\eqref{eq:spde}) that goes into the \ac{SPDE}~\eqref{eq:spde} is known as Hill's operator.
There has been much interest in the spectral properties of this and similar random Schr\"{o}dinger type operator.
In particular, \cite{frisch60, halperin65, fukushima77, mckean94, cambronero99, cambronero06} studied the ground state energy in great depth,
and recently, \cite{dumaz17} proved results on the point process for lowest few energies, as well as the localization of the eigenfunctions.
On the other hand, the semigroup $ \Sg(t) := e^{t\Ham} $ is the solution operator of the (continuum) \ac{PAM} (see \cite{carmona94, koenig16} and the references therein for extensive discussion on the discrete and continuum  \ac{PAM}). A compelling challenge is to understand how this spectral information translates into the long-time behavior of our \ac{SPDE}. For instance, what can be said about the intermittency of this \ac{SPDE}?

\subsection*{Outline}
In Section~\ref{sect:hc}, we derive the microscopic (\ac{SHE}-type) equation for $ Z(t,x) $.
As seen therein, the equation is governed by a Hill-type operator $ \ham $
that involves the generator of an (Bouchaud-type) inhomogeneous walk.
Subsequently, in Sections~\ref{sect:hk} and \ref{sect:Sgsg}
we develop the necessary estimates on the transition kernel of the inhomogeneous walk and Hill-type operator.
Given these estimates, we proceed to prove Theorem~\ref{thm:main} in two steps:
by first establishing tightness of $ \{\scZ_N\}_N $ and then characterizing its limit point.
Tightness is settle in Section~\ref{sect:mom} via moment bounds.
To characterizes the limit point, in Section~\ref{sect:pfmain},
we develop the corresponding martingale problem,
and prove that the process $ Z_N(t,x) $ solves the martingale problem.

\subsection*{Acknowledgment}
We thank anonymous referees for their very useful comments that help us improving the presentation and historical treatment of results in this article.
During the conference
`Stochastic Analysis, Random Fields and Integrable Probability,
The 12th Mathematical Society of Japan, Seasonal Institute',
Takashi Kumagai generously walked us through references on heat kernel estimates in related settings.
We appreciate Kumagai's help and the hospitality of the conference organizers.

We thank Yu Gu and Hao Shen for useful discussions during the writing of the first manuscript,
and particularly acknowledge Hao Shen for pointing to us the argument in \cite[Proof of Proposition~3.8]{labbe17}.
Ivan Corwin was partially supported by the Packard Fellowship for Science and Engineering, and by the NSF through DMS-1811143 and DMS-1664650.
Li-Cheng Tsai was partially supported by the Simons Foundation through a Junior Fellowship and by the NSF through DMS-1712575.

\subsection*{Notation}
We use $ c(u,v,\ldots)\in (0,\infty) $ to denote a generic, positive, finite, deterministic constant,
that may change from line to line (or even within a line), but depends only on the designated variables $ u,v,\ldots $.
We use subscript $ N $ to denote scaled processes/spaces/functions, e.g., $ Z_N $ in~\eqref{eq:was}.
Intrinsic (not from scaling) dependence on $ N $ will be designated by superscript $ (N) $, e.g., $ \T^{(N)}$,
though, to alleviate heavy notation, we will often omit such dependence, e.g., $ \T := \T^{(N)} $.
For processes/spaces/functions that have discrete and continuum versions,
we often use the same letter but in difference fonts: math backboard for the discrete and math calligraphy for the continuum,
e.g., $ \Rt(x) $ and $ \Rtlim(x) $.
We list some of the reoccurring notation below for convenience of the readers. The left column is for discrete notation and the right is for analgous continuum notion. Some notation only occurs in one of the two contexts.

\begin{minipage}{.49\linewidth}
\begin{itemize}[leftmargin=0pt]
\item []
\item []
\item [] $\T= \Z/N\Z$: discrete torus
\item [] $ \dist_\T(\Cdot,\Cdot) $: geodesic distance on $ \T $
\item []
\item []
\item []
\item [] $ \rtt(x) $: the inhomogeneity
\item [] $ \rt(x) := \rtt(x)-1 $
\item [] $ \Rt(x,x') $: partial sum of $ \rt(x) $, see~\eqref{eq:Rt}.
\item [] $ \Rt(x) := \Rt(0,x) $
\item [] $ \Exrt[\,\Cdot \,] := \Ex[ \,\Cdot \, | \rt(x),x\in\T] $
\item [] $ \hk(t) $: random-walk semigroup on $ \T $
\item [] $ \hka(t) $: inhomogeneous random-walk semigroup on $ \T $
\item [] $ \hkr(t) := \hka(t) - \hk(t) $		
\item [] $ \ham $: the discrete \ac{PAM} operator
\item [] $ \sg(t) := e^{t\ham} $
\item [] $ \sgr(t) := \sg(t)-\hk(t) $
\item [] $ \sgra(t) := \sg(t)-\hka(t) $
\end{itemize}
\end{minipage}
\hfill
\begin{minipage}{.49\linewidth}
\begin{itemize}[leftmargin=5pt]
\item []
\item []
\item [] $ \limT := \R/\Z $ (continuum) torus
\item [] $ \dist_\limT(\Cdot,\Cdot) $: geodesic distance on $ \limT $
\item [] $ C[0,1],C(\limT) $: continuous functions on $ [0,1] $ and $ \limT $
\item [] $ C^u[0,1] $: $ u $-H\"{o}lder continuous functions on $ [0,1] $
\item [] $ H^k(\limT) $: the $ k $-th Sobolev space on $ \limT $
\item []
\item []
\item []
\item [] $ \Rtlim(x) $: limit of $ \Rt(Nx) $, see Assumption~\ref{assu:rt}\ref{assu:rt:limit}.
\item []
\item [] $ \HK(t) := e^{\frac12 t\Delta} $, the heat semigroup on $ \limT $
\item []
\item []
\item [] $ \Ham := \frac12\partial_{xx} + \Rtlim'(x) $, the continuum \ac{PAM} operator
\item [] $ \Sg(t) := e^{t\Ham} $
\item [] $ \Sgr(t) := \Sg(t)-\HK(t) $
\item []
\end{itemize}
\end{minipage}

\section{Microscopic Equation for $ Z(t,x) $}
\label{sect:hc}

In this section we derive the microscopic equation for $ Z(t,x) $.
In doing so, we view $ \{\rtt(x):x\in\T\} $ as being fixed (quenched),
and consider only the randomness due to the dynamics of our process. In deriving this equation for $Z(t,x)$ we will also treat $N$ as fixed and hence drop it from the notation.
The inhomogeneous \ac{ASEP} can be constructed as a continuous time Markov process
with a finite state space $ \{0,1\}^\T $, where $ \{0,1\} $ indicates whether a given sites is empty or occupied.
Here we build the inhomogeneous \ac{ASEP} out of graphical construction (see~\cite[Section~2.1.1]{liggett12,corwin12}).
For each $ x\in\T $, let $ \{\PoiR(t,x)\}_{t\geq 0} $ and $ \{\PoiL(t,x)\}_{t\geq 0} $ be independent Poisson processes of rates $r \rtt(x)$ and $\ell \rtt(x)$ respectively.
A particle attempts a jump across the bond $ (x,x+1) $ to the right (resp.\ left) whenever $ \PoiR(\Cdot,x) $ (resp.\ $ \PoiL(\Cdot,x) $) increases.
The jump is executed if the destination is empty, otherwise the particle stays put.
Let
\begin{align}
	\label{eq:filZ}
	\filZ(t) := \sigma(\PoiL(s,x),\PoiR(s,x),\rt(x): s\leq t, x\in\T)
\end{align}
denote the corresponding filtration.

Recall from~\eqref{eq:Z} that $ \tau:=\frac{r}{\ell} $.
Consider when a particle jumps from $ x $ to $ x+1 $.
Such a jump occurs only if $ \eta(t,x)(1-\eta(t,x+1))=1 $,
and, with $ Z(t,x) $ defined in~\eqref{eq:Z},
such a jump changes $ Z(t,x) $ by $ (\tau^{-1}-1)Z(t,x) $.
Likewise, a jump from $ x+1 $ to $ x $ occurs only if $ \eta(t,x+1)(1-\eta(t,x))=1 $,
and changes $ Z(t,x) $ by $ (\tau-1)Z(t,x) $.
Taking into account the continuous growth due to the term $ e^{\nu t} $ in~\eqref{eq:Z},
we have that
\begin{align}
	\notag
	dZ(t,x)
	=&
	\ \eta(t,x)(1-\eta(t,x+1)) (\tau^{-1}-1) Z(t,x)
		d \PoiR(t,x)
\\
	\label{eq:Lang1}
		&+\eta(t,x+1)(1-\eta(t,x)) (\tau-1) Z(t,x)
		d \PoiL(t,x)
	+ \nu Z(t,x) dt.
\end{align}
The differential in $ dZ(t,x) $ acts on the $ t $ variable.
We may extract the expected growth $ \rtt(x)rt $ and $ \rtt(x)\ell t $
from the Poisson processes $ \PoiR(\Cdot,x) $ and $ \PoiL(\Cdot,x) $,
so that the processes
\begin{align*}
	\PoiiR(t,x) := \PoiR(t,x) - \rtt(x)rt,
	\qquad
	\PoiiL(t,x) := \PoiL(t,x) - \rtt(x)\ell t
\end{align*}
are martingales.
We then rewrite~\eqref{eq:Lang1} as
\begin{align}
	\notag
	dZ(t,x)
	&=
	\Big(
		\rtt(x)\eta(t,x)(1-\eta(t,x+1)) (\tau^{-1}-1) r
		+\rtt(x)\eta(t,x+1)(1-\eta(t,x)) (\tau-1) \ell
		+\nu
	\Big)
	Z(t,x) dt + d\mg(t,x)
\\
	\label{eq:Lang2}
	&=
	\Big(
		\rtt(x)(\ell-r)\big( \eta(t,x)-\eta(t,x+1)\big)
		+\nu
	\Big)
	Z(t,x) dt + d\mg(t,x),
\end{align}
where $ \mg(t,x) $ is an $ \filZ $-martingale given by
\begin{align}
	\label{eq:mg}
	\mg(t,x) := \int_0^{t} Z(s,x)\Big( \eta(s,x)(1-\eta(s,x+1))(\tau^{-1}-1)d\PoiiR(s,x) + \eta(s,x+1)(1-\eta(s,x))(\tau-1)d\PoiiL(s,x) \Big).
\end{align}
Recall from~\eqref{eq:Z} that  $ \nu := 1-2\sqrt{\ell r} $.
Let $ \Delta f(x) := f(x+1)+f(x-1)-2f(x) $ denote discrete Laplacian.
By considering separately the four cases corresponding to $ (\eta(x),\eta(x+1)) \in \{0,1\}\times\{0,1\} $, it is straightforward to verify that
\begin{align*}
	( \ell-r ) \big(\eta(t,x)-\eta(t,x+1)\big) Z(t,x)
	=
	\sqrt{\ell r}\Delta Z(t,x)
	-
	\nu Z(t,x).
\end{align*}
Inserting this identity into~\eqref{eq:Lang2},
we obtain the following Langevin equation for $ Z(t,x) $:
\begin{align}
	\label{eq:Lang}
	dZ(t,x)
	&=	
	\ham Z(t,x) dt + d\mg(t,x),
\\
	\label{eq:ham}
	\ham &:= \sqrt{r\ell} \, \rtt(x)\Delta - \nu \, \rt(x).
\end{align}
Recall that $ \Rt(x) := \Rt(x,0) $.
From~\eqref{eq:Rt} we have $ \rt(x) = -2 \,(\Rt(x)-\Rt(x-1)) $,
and, under weak asymmetry scaling~\eqref{eq:was}, we have $ \nu = \frac1{2N} + O(N^{-2}) $.
We hence expect (and will justify) that $ \ham $ behaves like $ \Ham=\frac12\partial_{xx} + \Rtlim'(x) $.
This explains why $ \Ham $ appears in the limiting equation~\eqref{eq:spde}.
For~\eqref{eq:spde} to be the limit of~\eqref{eq:Lang},
the martingale increment $ d\mg(t,x) $ should behave like $ \xi \limZ $.
To see why this should be true, let us calculate the quadratic variation of $ \mg(t,x) $.
The collection of compensated Poisson processes $\big\{\PoiiR(\Cdot,x), \PoiiL(\Cdot,x)\big\}_{x\in \T}$ are all independent of each other. Thus, from~\eqref{eq:mg}, we have that
\begin{align}
	\notag
	&d\langle \mg(t,x), \mg(t,\tilx) \rangle\\
	\notag &\quad= \ind_\set{x=\tilx}Z^2(t,x)
	\Big( \eta(t,x)(1-\eta(t,x+1))(\tau^{-1}-1)^2\rtt(x)r + \eta(t,x+1)(1-\eta(t,x))(\tau-1)^{2}\rtt(x)\ell \Big)dt
\\
	\label{eq:qv}
	&\quad= \ind_\set{x=\tilx}Z^2(t,x)
	(r-\ell)^2 \rtt(x) \Big( \tfrac{1}{\ell}\eta(t,x) + \tfrac{1}{r}\eta(t,x+1) - \big(\tfrac1r+\tfrac1\ell\big)\eta(t,x)\eta(t,x+1)) \Big)dt,
\end{align}
where $ \ind_A(\Cdot) $ denotes the indicator function of a given set $ A $.
Under the weak asymmetry scaling~\eqref{eq:was}, $ (r-\ell)^2 = \frac1N + O(N^{-2}) $
acts as the relevant scaling factor for the quadratic variation.
In addition to this scaling factor, we should also consider the quantities that involve $ \eta(t,x) $ and $ \eta(t,x+1) $.
Informally speaking, since the system is half-filled (i.e., having $ N/2 $ particles),
we expect $ \eta(t,x) $ and $ \eta(t,x+1) $ to self-average (in $ t $) to $ \frac12 $,
and expect $ \eta(t,x)\eta(t,x+1) $ to self-average to $ \frac14 $.
With $ r,\ell\to\frac12 $ and $ \rtt(x) \to 1 $,
we expect $ d\langle \mg(t,x), \mg(t,\tilx) \rangle $ to behave like $ N^{-1} \ind_\set{x=\tilx}Z^2(t,x) dt $,
and hence $ d\mg(t,x) $ to behaves like $ \xi\limZ $, as $ N\to\infty $.

Equation~\eqref{eq:Lang} gives the microscopic equation in differential form.
For subsequent analysis, it is more convenient to work with the integrated equation.
Consider the semigroup $ \sg(t):=e^{t\ham} $,
which is well-defined and has kernel $ \sg(t;x,\tilx) $
because $ \ham $ acts on the space $ \{f:\T\to \R\} $ of \emph{finite} dimensions.
Using Duhamel's principle in~\eqref{eq:Lang} gives
\begin{align}
	\label{eq:Lang:int}
	Z(t,x) = \sum_{\tilx\in\T}\sg(t;x,\tilx)Z_\ic(\tilx) + \int_0^t \sum_{\tilx\in\T}\sg(t-s;x,\tilx)d\mg(s,\tilx).
\end{align}
More generally, initiating the process from time $ t_* \geq 0 $ instead of $ 0 $, we have
\begin{align}
	\label{eq:Lang:int:}
	Z(t,x) = \sum_{\tilx\in\T}\sg(t-t_*;x,\tilx)Z(t_*,\tilx) + \int_{t_*}^t \sum_{\tilx\in\T}\sg(t-s;x,\tilx)d\mg(s,\tilx),
	\qquad
	t \geq t_*.
\end{align}
The semigroup $ \sg(t) $ admits an expression by the Feynman--Kac formula as
\begin{align}
	\label{eq:feynmankac}
	\big( \sg(t)f \big)(s)
	=
	\Ex_x\Big[ e^{\int_0^t \nu\rt(X^\rt(s))ds} f(X^\rt(t)) \Big].
\end{align}
Hereafter $ \Ex_x[\,\Cdot\,] $ (and similarly $ \Pr_x[\,\Cdot\,] $) denotes expectation with respect to a reference process starting at $ x $.
Here the reference process $ X^\rt(t) $ is a walk on $ \T $
that attempts jumps from $ X^\rt(t) $ to $ X^\rt(t)\pm 1 $ in continuous time (each) at rate $ \sqrt{r\ell} \,\rtt(X^\rt(t)) $.

\begin{remark}
It is natural to wonder whether it is possible to directly (without appealing to the G\"{a}rtner transform and \ac{SHE}-type equation) see convergence of the height function to the \ac{KPZ}-type equation \eqref{KPZtypeformal}. While a direct proof is certainly beyond the scope of this paper, we will briefly explain at a very heuristic level where the structure of the \ac{KPZ}-type equation \eqref{KPZtypeformal} arises from the microscopic evolution of the height function.

There are two changes that can occur for the ASEP height function $h(t,x)$. Using the notation $\nabla f(x)=f(x+1)-f(x)$, we have that $h(t,x)$ can increase by $2$ at rate $\ell\rtt(x)$ provided that $\nabla h(t,x)=1$ and $\nabla h(t,x-1)=-1$; and  $h(t,x)$ can decrease by $2$ at rate $r\rtt(x)$ provided that $\nabla h(t,x)=-1$ and $\nabla h(t,x-1)=1$. Since $\nabla h$ only takes values in $\{-1,1\}$ we can encode the indicator functions as linear functions of $\nabla h$. This gives rise to the following evolution equation:
$$
dh(t,x) =  \Big( 2\ell \rtt(x) \frac{1+\nabla h(t,x)}{2}\frac{1-\nabla h(t,x-1)}{2} -  2r \rtt(x) \frac{1-\nabla h(t,x)}{2}\frac{1+\nabla h(t,x-1)}{2} \Big) dt + d\til{M}(t,x),
$$
where $ \til{M}(t,x)$ is an explicit martingale. Recalling that $\Delta f(x) = \nabla f(x)-\nabla f(x-1)$, we can rewrite the above in the suggestive form
$$
dh(t,x) =  \Big( \frac{(\ell +r)\rtt(x)}{2} \Delta h(t,x) -  \frac{(\ell-r)\rtt(x)}{2} \nabla h(t,x)\nabla h(t,x-1) +  \frac{(\ell-r)\rtt(x)}{2} \Big) dt + d\til{M}(t,x).
$$
While it is still highly non-trivial to prove convergence (with appropriate renormalization) of these terms to those in \eqref{KPZtypeformal}, it is now apparent that the new spatial noise $\Rtlim'(x)$ comes from the term $\frac{(\ell-r)\rtt(x)}{2}$.
\end{remark}

\section{Transition Probability of the Inhomogeneous Walk $ X^\rt(t) $}
\label{sect:hk}
The focus of this section and Section~\ref{sect:Sgsg} is to control the semigroup $ \sg(t) $ and its continuum counterpart $ \Sg(t) $.
As the first step, in this section we establish estimates on the transition kernel
\begin{align}
	\label{eq:hka}
	\hka(t;x,\tilx) := \Pr_{x}\big[ X^\rt(t)=\tilx \big]
\end{align}
of the inhomogeneous walk $ X^\rt(t) $.
Note that $ X^\rt(t) $ and $ \hka $ depend on $ N $ through $ \rt(x)=\rt^{(N)}(x) $ and through the underlying torus $ \T=\T^{(N)} $,
but we omit such dependence in the notation.

The kernel $ \hka $ has been studied with heavy-tail $ \rt(x) $ in the context of trapped models.
For heavy-tail $ \rt(x) $, \cite[Lemma~3.1]{cerny06} obtained bounds on $ \hka(t;x,x) $,
and \cite[Lemma~3.2]{cerny06} and \cite{cabezas15} demonstrated stretched exponential tails in large deviations of $ X^\rt(x) $,
which confirmed a prediction \cite{bertin03} based on the trapping nature of Bouchaud's walk.
Estimates on the analogous continuum kernel (i.e., for the FIN diffusion) have been obtained \cite{croydon19}.
As mentioned previously,
here we consider bounded and vanishing $ \rt(x) $, which is technically much simpler than heavy-tailed $ \rt(x) $. These bounds mentioned above essentially imply Proposition \ref{prop:hk}(a).

Due to the technical nature of how $ \hka $ enters our subsequent analysis, we need detailed bounds.
Specifically, we will derive in Proposition~\ref{prop:hk} bounds on $ \hka(t,\Cdot) $, its H\"{o}lder continuity, its gradients, and its difference between the homogeneous walk kernel.
Put in a broader context, the type of bounds we seek to obtain on $ \hka(t,\Cdot) $ and its H\"{o}lder continuity
go under the name of Nash--Aronson bounds \cite{nash58,aronson67} in elliptic PDEs (we thank one of the anonymous referees for pointing us to this literature).
These type of bounds have since been pursued and generalized in various contexts
such as Riemannian manifolds, metric measure spaces, and fractals.
We point to \cite{grigor92,saloffcoste02} and the references therein.
On Riemannian manifolds, bounds on the gradient of heat kernels have been derived in, for example, \cite{cheng81,li86}.
Modern works in probability have been investigating the analogous heat kernel in discrete settings.
For the random conductance model,
heat kernel estimates have been derived at various generality:
for bounded below conductance in \cite{barlow10},
and for general conductance with certain integrability assumptions in \cite{folz11,andres15,andres19}.
It was shown in \cite{berger08,biskup11} that anomalies in heat-kernel decay may occur without integrability assumptions.
The works \cite{deuschel19,deuschel19a} consider layered random conductance models and establish kernel estimates.
For an overview on the random conductance model we point to~\cite{biskup11}.
We also point out that a gradient estimates on Green's function of percolation clusters
has been obtained in~\cite[Theorem~6]{benjamini15}.

The starting point of our analysis is the backward Kolmogorov equation
\begin{align}
	\label{eq:bkol:}
	\partial_t \hka(t;x,\tilx) = \sqrt{r\ell} \, \rtt(x) \Delta_{x} \hka(t;x,\tilx),
	\qquad
	\hka(0;x,\tilx) = \ind_\set{\tilx}(x),
\end{align}
where $ \ind_A(\Cdot) $ denotes the indicator function of a given set $ A $.
Under the scaling~\eqref{eq:was}, we have $ \sqrt{r\ell}\to\frac12 $ as $ N\to\infty $.
Indeed, the coefficient $ \sqrt{r\ell} $ can be scaled to $ \frac12 $ by a change-of-variable $ t\mapsto 2\sqrt{\ell r}t $,
so without loss of generality, we alter the coefficient $ \sqrt{r\ell} $ in~\eqref{eq:bkol:} and consider
\begin{align}
	\tag{\ref*{eq:bkol:}'}
	\label{eq:bkol}
	\partial_t \hka(t;x,\tilx) = \tfrac12 \rtt(x) \Delta_{x} \hka(t;x,\tilx),
	\qquad
	\hka(0;x,\tilx) = \ind_\set{\tilx}(x).
\end{align}

As announced previously, we use $ c(u,v,\ldots)\in (0,\infty) $ to denote a generic, positive, finite, deterministic constant,
that may change from line to line (or even within a line), but depends only on the designated variables $ u,v,\ldots $.

Recall that $ \rtt(x)=1+\rt(x) $.
Our strategy of analyzing $ \hka $ is perturbative.
We solve~\eqref{eq:bkol} iteratively, viewing $ \rt(x) $ as a perturbation.
Such an iteration scheme begins with the unperturbed equation
\begin{align}
	\label{eq:lhe}
	\partial_t \hk(t;x,\tilx) = \tfrac12 \Delta_{x} \hk(t;x,\tilx),
	\qquad
	\hk(0;x,\tilx) = \ind_\set{\tilx}(x),
\end{align}
which is solved by the transition probability $ \hk(t;x,\tilx) = \Pr_{x}[ X(t)=\tilx ] $
of the continuous time symmetric simple random walk $ X(t) $ on $ \T $.
Here, we record some useful bounds on $ \hk $.
Let $ \nabla f(x) := f(x+1)-f(x) $ denote the forward discrete gradient.
When needed we write $ \nabla_x $ or $ \Delta_x $ to highlight which variable the operator acts on.
Given any $ u\in(0,1] $ and $ T<\infty $,
\begin{subequations}
\label{eq:hk}
\begin{align}
	\label{eq:hk:sup}
	|\hk(t;x,\tilx)| &\leq \frac{c(T)}{\sqrt{t+1}},
\\
	\label{eq:hkgd}
	|\hk(t;x,\tilx)-\hk(t,x',\tilx)| & \leq c(u,T) \frac{\dist_{\T}(x,x')^{u}}{(t+1)^{(1+u)/2}},
\\
	\label{eq:hkgd:sum}
	\sum_{\tilx\in\T} |\hk(t;x,\tilx)-\hk(t;x',\tilx)| & \leq c(u,T) \frac{\dist_{\T}(x,x')^{u}}{(t+1)^{u/2}},
\\
	\label{eq:hk:lap:sum}
	\sum_{\tilx\in\T} |\Delta_{x}\hk(t;x,\tilx)| &\leq \frac{c(T)}{t+1},
\\
	\label{eq:hk:lap:sum:}
	\sum_{x\in\T} |\Delta_{x}\hk(t;x,\tilx)| &\leq \frac{c(T)}{t+1},
\\
	\label{eq:hk:hold:sup}
	|\hk(t;x,\tilx)|\dist_{\T}(x,\tilx)^{u} &\leq c(T)(t+1)^{-(1-u)/2},
\\
	\label{eq:hk:hold}
	\sum_{\tilx\in\T}|\hk(t;x,\tilx)|\dist_{\T}(x,\tilx)^{u} &\leq c(T)(t+1)^{u/2},
\\
	\label{eq:hk:hold:}
	\sum_{\tilx\in\T}|\nabla_x\hk(t;x,\tilx)|\dist_{\T}(x,\tilx)^{u}
	 & \leq \frac{c(u,T)}{(1+t)^{(1-u)/2}},	 
\\
	\label{eq:hk:hold::}
	\sum_{\tilx\in\T}|\nabla_{\tilx}\hk(t;x,\tilx)|\dist_{\T}(x,\tilx)^{u}
	& \leq \frac{c(u,T)}{(1+t)^{(1-u)/2}},
\end{align}
\end{subequations}
for all $ x,x',\tilx\in\T $ and $ t\leq N^2T $.
These bounds~\eqref{eq:hk:sup}--\eqref{eq:hk:hold::} follow directly from known results on the analogous kernel on $ \Z $.
Indeed, with $ \hk^\Z(t;x-\tilx):=\Pr_{x}[ X^\Z(t)=\tilx ] $ denoting the transition kernel of
continuous time symmetric simple random walk $ X^\Z(t) $ on the full-line $ \Z $, we have
\begin{align}
	\label{eq:hk:hkZ}
	\hk(t;x,\tilx) = \sum_{i\in\Z} \hk^\Z(t;x-\tilx+iN),
	\qquad
	x,\tilx \in \{0,\ldots,N-1\}.
\end{align}
The full-line kernel $ \hk^\Z $ can be analyzed by standard Fourier analysis,
as in, e.g., \cite[Equation (A.11)-(A.14)]{dembo16}.
Relating these known bounds on $ \hk^\Z $ to $ \hk $ gives~\eqref{eq:hk:sup}--\eqref{eq:hk:hold::}.

Now we will start to study the perturbations around the solution to  \eqref{eq:lhe}.
Let $ \Gamma(v) $ denote the Gamma function, and let
\begin{align}
	\label{eq:Sigman}
	\Sigma_n(t) := \big\{(s_0,\ldots,s_n)\in(0,\infty)^{n+1}: s_0+\ldots+s_n=t\big\}.
\end{align}
In subsequent analysis, we will make frequent use of the Dirichlet formula
\begin{align}
	\label{eq:dirichlet}
	\int_{\Sigma_n(t)}
	\prod_{i=0}^n s_i^{v_i-1} d^n\vec{s}
	=
	t^{(v_0+\ldots+v_n)-1} \frac{\prod_{i=0}^n\Gamma(v_i)}{\Gamma(v_0+\ldots+v_n)},
	\qquad
	v_0,\ldots,v_n >0.
\end{align}
Note that the constraint in~\eqref{eq:Sigman} reduces one dimension out the $ (n+1) $-dimensional variable $ (s_0,\ldots,s_n) $.
In particular, the integration in~\eqref{eq:dirichlet} is $ n $-dimension, and we adopt the notation
\begin{align}
	\label{eq:ds}
	d^n\vec{s} = (ds_1\cdots ds_n) = (ds_0ds_2\cdots ds_n)
	=
	\cdots
	=
	\prod_{i\in\{0,\ldots,n\}\setminus\set{i_0}} ds_i,
	\qquad
	i_0\in\{0,\ldots,n\}.
\end{align}

In the following we view $ \hka $ as a perturbation of $ \hk $, and set
\begin{align}
	\label{eq:hkr}
	\hkr(t;x,\tilx) := \hka(t;x,\tilx)-\hk(t;x,\tilx).
\end{align}

\begin{lemma}
\label{lem:hk}
Given any $ u,v\in(0,1] $ and $ T<\infty $,
\begin{enumerate}[label=(\alph*),leftmargin=7ex]
\item \label{lem:hka:sup} \
	$
		\displaystyle
		|\hkr(t;x,\tilx)|
		\leq
		\frac{1}{\sqrt{t+1}}
		\sum_{n=1}^{\infty} \frac{(c(v,T)N^v\norm{\rt}_{L^\infty})^n}{\Gamma(\frac{nv+1}2)},
	$
\item \label{lem:hk:sum} \
	$
		\displaystyle
		\sum_{\tilx\in\T} |\hkr(t;x,\tilx)|
		\leq
		\sum_{n=1}^{\infty} \Big(c(T)\norm{a}_{L^\infty(\T)}\log(N+1)\Big)^n,
	$
\item \label{lem:hkagd:sum} \
	$
		\displaystyle
		\sum_{\tilx\in\T}|\hkr(t;x,\tilx)-\hkr(t;x',\tilx)|
		\leq
		\frac{\dist_{\T}(x,x')^u}{(t+1)^{u/2}}
		\sum_{n=1}^{\infty} \frac{(c(u,v,T)N^v\norm{\rt}_{L^\infty})^n}{\Gamma(\frac{2-u+nv}2)},
	$
\item \label{lem:hkagd:sup} \
	$
		\displaystyle
		|\hkr(t;x,\tilx)-\hkr(t;x',\tilx)|
		\leq
		\frac{\dist_{\T}(x,x')^u}{(t+1)^{(1+u)/2}}
		\sum_{n=1}^{\infty} \frac{(c(u,v,T)N^v\norm{\rt}_{L^\infty})^n}{\Gamma(\frac{1-u+nv}2)},
	$
\end{enumerate}
for all $ x,x',\tilx\in\T $, $ t\in[0,N^{2}T] $.
\end{lemma}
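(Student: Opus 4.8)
The plan is to treat the inhomogeneous kernel $\hka$ as a perturbation of the simple-random-walk kernel $\hk$ in the parameter $\rt$, expand by an iterated Duhamel formula (a Picard iteration), bound each term of the expansion using the heat-kernel estimates~\eqref{eq:hk}, and resum with the Dirichlet formula~\eqref{eq:dirichlet}. Writing the backward equation~\eqref{eq:bkol} as $\partial_t\hka(t)=(\tfrac12\Delta+\mathcal V)\hka(t)$ with the perturbation $\mathcal V f(x):=\tfrac12\rt(x)\Delta f(x)$, and using that $\T$ is finite, the variation-of-constants formula iterates to an absolutely convergent expansion $\hka(t)=\sum_{n\ge 0}A_n(t)$ with $A_0(t)=\hk(t)$ and, in kernel form,
\begin{align*}
	A_n(t;x,\tilx)=\int_{\Sigma_n(t)}\ \sum_{y_1,\dots,y_n\in\T}\ \hk(s_0;x,y_1)\prod_{j=1}^{n}\Big(\tfrac12\,\rt(y_j)\,\Delta_{y_j}\hk(s_j;y_j,y_{j+1})\Big)\,d^n\vec s,\qquad y_{n+1}:=\tilx,
\end{align*}
so that $\hkr(t)=\sum_{n\ge1}A_n(t)$. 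The structural fact that organizes everything is that $x$ enters $A_n$ only through the factor $\hk(s_0;x,\cdot)$ and $\tilx$ only through $\hk(s_n;\cdot,\tilx)$; in particular, the $x$-regularity parts~\ref{lem:hkagd:sum}--\ref{lem:hkagd:sup} follow by merely replacing $\hk(s_0;x,y_1)$ with $\hk(s_0;x,y_1)-\hk(s_0;x',y_1)$ in the display above.

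For each $n$ I would bound $|A_n|$ (or its $x$-difference) by the triangle inequality, pull out $|\rt(y_j)|\le\norm{\rt}_{L^\infty}$, and carry out the spatial sums over $y_1,\dots,y_n$ — and over $y_{n+1}=\tilx$ in parts~\ref{lem:hk:sum},~\ref{lem:hkagd:sum} — one variable at a time, marching along the chain from the $\tilx$-end in parts~\ref{lem:hk:sum},~\ref{lem:hkagd:sum} and from the $x$-end in parts~\ref{lem:hka:sup},~\ref{lem:hkagd:sup}. Every \emph{internal} summation collapses one propagator via $\sum_y|\Delta_x\hk(s;x,y)|\le c(T)(s+1)^{-1}$ — form~\eqref{eq:hk:lap:sum} when summing the second argument (the $\tilx$-end march) and form~\eqref{eq:hk:lap:sum:} when summing the first (the $x$-end march) — and contributes $c(T)(s_j+1)^{-1}$, uniformly in the remaining endpoint. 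The first propagator $\hk(s_0;x,\cdot)$ is then summed last (parts~\ref{lem:hk:sum},~\ref{lem:hkagd:sum}) or first (parts~\ref{lem:hka:sup},~\ref{lem:hkagd:sup}), contributing: the factor $1$ via $\sum_{y_1}\hk(s_0;x,y_1)=1$ for~\ref{lem:hk:sum}; $\sup_{y_1}|\hk(s_0;x,y_1)|\le c(T)(s_0+1)^{-1/2}$ for~\ref{lem:hka:sup}; $c(u,T)\,\dist_{\T}(x,x')^u(s_0+1)^{-u/2}$ via~\eqref{eq:hkgd:sum} for~\ref{lem:hkagd:sum}; and $c(u,T)\,\dist_{\T}(x,x')^u(s_0+1)^{-(1+u)/2}$ via~\eqref{eq:hkgd} for~\ref{lem:hkagd:sup}. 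The upshot is that in every case $|A_n|$ (or its $x$-difference) is at most $(c\norm{\rt}_{L^\infty})^n$ times $\int_{\Sigma_n(t)}g(s_0)\prod_{j=1}^n(s_j+1)^{-1}\,d^n\vec s$, where $g\equiv 1$ in part~\ref{lem:hk:sum} and $g(s_0)$ equals $(s_0+1)^{-1/2}$, $(s_0+1)^{-u/2}$, $(s_0+1)^{-(1+u)/2}$ in the other three parts.

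It remains to estimate the time integral. For part~\ref{lem:hk:sum}, drop the constraint $\sum_i s_i=t$ to get $\le\prod_{j=1}^n\int_0^t(s_j+1)^{-1}\,ds_j=(\log(t+1))^n\le(c(T)\log(N+1))^n$, using $t\le N^2T$. For the other parts, bound $(s_j+1)^{-1}\le s_j^{\frac v2-1}$ and $(s_0+1)^{-p}\le s_0^{-p}$, then apply~\eqref{eq:dirichlet} with $v_1=\dots=v_n=\tfrac v2$ and $v_0=\tfrac12$, $1-\tfrac u2$, $\tfrac{1-u}2$ respectively, giving
\begin{align*}
	\int_{\Sigma_n(t)}s_0^{v_0-1}\prod_{j=1}^{n}s_j^{\frac v2-1}\,d^n\vec s=t^{\,v_0-1+\frac{nv}2}\,\frac{\Gamma(v_0)\,\Gamma(\tfrac v2)^n}{\Gamma\!\big(v_0+\tfrac{nv}2\big)}.
\end{align*}
Here $t^{nv/2}\le(N^2T)^{nv/2}=N^{nv}T^{nv/2}$ produces the advertised factor $N^{nv}$, the denominator $\Gamma(v_0+\tfrac{nv}2)$ is exactly the claimed $\Gamma(\tfrac{nv+1}2)$, $\Gamma(\tfrac{2-u+nv}2)$, $\Gamma(\tfrac{1-u+nv}2)$, and the leftover $t^{v_0-1}$ equals $t^{-1/2}$, $t^{-u/2}$, $t^{-(1+u)/2}$, which I convert into the stated $(t+1)$-powers after handling $t<1$ by the crude bound $\int_{\Sigma_n(t)}(\cdots)\le\mathrm{vol}(\Sigma_n(t))=t^n/n!$. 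Summing over $n\ge1$ yields the four estimates; part~\ref{lem:hka:sup} (and~\ref{lem:hkagd:sup}) also shows $\sum_n|A_n(t;x,\tilx)|<\infty$, so the series for $\hkr$ converges and $\sum_n$ may be interchanged with the spatial sums in parts~\ref{lem:hk:sum},~\ref{lem:hkagd:sum}.

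I expect the main obstacle to be the bookkeeping of the iterated spatial sums: one must march along the chain in the direction that parks the correct surviving decay ($s_0^{-1/2}$, $s_0^{-u/2}$, $s_0^{-(1+u)/2}$, or none) on the first propagator, consistently apply~\eqref{eq:hk:lap:sum} or~\eqref{eq:hk:lap:sum:} accordingly, and keep each intermediate bound uniform in the variable not yet summed. A minor additional technicality is the endpoint $u=1$ in parts~\ref{lem:hkagd:sum}--\ref{lem:hkagd:sup}, where $v_0=\tfrac{1-u}2$ degenerates to $0$ and~\eqref{eq:dirichlet} is inapplicable; this is repaired by inserting a small extra exponent $\e'\in(0,\tfrac v2]$ on $s_0$, which costs only a constant and a factor $N^{2\e'}\le N^{nv}$.
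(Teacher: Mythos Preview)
Your proposal is correct and follows essentially the same approach as the paper: Picard expansion of~\eqref{eq:bkol}, termwise bounds via the heat-kernel estimates~\eqref{eq:hk:sup}--\eqref{eq:hk:lap:sum:} with the spatial sums marched in the appropriate direction, and resummation via the Dirichlet formula~\eqref{eq:dirichlet}. Your treatment of the time integral in part~\ref{lem:hk:sum} (simply enlarging the simplex to $[0,t]^n$) is slightly cleaner than the paper's change of variables plus exponential trick, and your explicit handling of the $u=1$ endpoint in part~\ref{lem:hkagd:sup} addresses a point the paper leaves implicit.
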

\begin{proof}
The starting point of the proof is the backward Kolmogorov equation~\eqref{eq:bkol}.
We split the inhomogeneous Laplacian $ \tfrac12 \rtt(x) \Delta_x $
into $ \frac12 \Delta_x + \frac12\rt(x) \Delta_x $,
and rewrite \eqref{eq:bkol} as
\begin{align}
	\label{eq:bkol::}
	\hka(t;x,\tilx) = \hk(t;x,\tilx)
	+
	\int_{\Sigma_1(t)} \sum_{x_1\in\T} \hk(s_0;x,x_1) \frac{\rt(x_1)}{2} \Delta_{x_1} \hka(s_1;x_1,\tilx) ds_1.
\end{align}
Through Picard iteration we obtain
\begin{align}
	\label{eq:hk:chaos}
	\hkr(t;x,\tilx) = \sum_{n=1}^\infty \hkr_n(t;x,\tilx),
\end{align}
where, under the convention $ x_0:=x $ and $ x_{n+1}:=\tilx $, and the notation~\eqref{eq:Sigman} and~\eqref{eq:ds},
\begin{align}
	\label{eq:hk:chaosn}
	\hkr_n(t;x,\tilx)
	&:=
	\int_{\Sigma_n(t)} \
	\sum_{x_1,\ldots,x_n\in\T}
	\hk(s_0;x_{0},x_1)
	\prod_{i=1}^{n} \frac{\rt(x_i)}{2} \Big( \Delta_{x_i} \hk(s_i;x_{i},x_{i+1}) \Big) d^n\vec{s}.
\end{align}
Indeed, the infinite series in~\eqref{eq:hk:chaos} converges for fixed $ (t,x) $.
To see this, in~\eqref{eq:hk:chaosn}, (crudely) bound
\begin{align*}
	|\hkr_n(t;x,\tilx)|
	\leq
	N^n \norm{\tfrac12\rt}^n_{L^\infty(\T)} \big(4\norm{\hk}_{L^\infty([0,t]\times\T)}\big)^{n+1}
	\int_{\Sigma_n(t)} \ d^n\vec{s}
	\leq
	c(N,\rt,t)^n \tfrac{1}{(n+1)!}.
\end{align*}
Given the expression~\eqref{eq:hk:chaos}--\eqref{eq:hk:chaosn},
we proceed to prove the bounds~\ref{lem:hka:sup}--\ref{lem:hkagd:sup}.

\ref{lem:hka:sup}
In~\eqref{eq:hk:chaosn}, use~\eqref{eq:hk:sup} to bound $ \hk(s_0;x_0,x_1) $ by $ \frac{c}{\sqrt{s_0}} $,
and then sum over $ x_1,\ldots,x_n $ in order, using~\eqref{eq:hk:lap:sum:}.
This yields
\begin{align}
	\label{eq:hka:sup:chaosn}
	|\hkr_n(t;x,\tilx)|
	\leq
	\big( c \norm{\rt}_{L^\infty(\T)} \big)^n
	\int_{\Sigma_n(t)} \frac{1}{\sqrt{s_0}}  \prod_{i=1}^n \frac{ds_i}{s_i+1}.
\end{align}
To bound the last expression, for the given $ v\in(0,1) $,
we write $ \frac{1}{s_i+1} \leq c(v)s_i^{v/2-1} $,
and apply the Dirichlet formula~\eqref{eq:dirichlet} with $ (v_0,\ldots,v_n)=(1/2,v/2,\ldots,v/2) $ to get
\begin{align}
	\label{eq:hka:sup:chaosn:}
	|\hkr_n(t;x,\tilx)|
	\leq
	\big( c(v) \norm{\rt}_{L^\infty(\T)} \big)^n	
	\int_{\Sigma_n(t)}
	s_{0}^{-\frac12}\prod_{i=1}^n s_i^{v/2-1} ds_i
	=
	\frac{1}{\sqrt{t}} \frac{(t^{v/2}c(v)\norm{\rt}_{L^\infty(\T)})^n}{\Gamma(\frac{nv+1}{2})}.
\end{align}
Referring back to~\eqref{eq:hka:sup:chaosn},
we see that $ |\hkr_n(t;x,\tilx)| $ is bounded by $ ( c \norm{\rt}_{L^\infty(\T)})^n $ when $ t\leq 1 $,
uniformly over $ x,\tilx\in\T $.
This being the case, by making the constant $ c(v) $ larger in~\eqref{eq:hka:sup:chaosn:},
we replace the factor $ \frac{1}{\sqrt{t}} $ with $ \frac{1}{\sqrt{t+1}} $.
Since $ t^{v/2} \leq (TN^2)^{v/2} = c(v,T)N^{v} $, summing over $ n\geq 1 $ yields the desired bound.

\ref{lem:hk:sum}
Given the expansion~\eqref{eq:hk:chaos},
our goal is to bound $ \sum_{\tilx\in\T}|\hkr_n(t;x,\tilx)| $,
for $ n=1,2,\ldots $.
To this end, sum both sides of~\eqref{eq:hk:chaosn} over $ \tilx\in\T $.
Under the convention $ x_0:=x $ and the relabeling $ x_{n+1}=\tilx $, we bound
\begin{align}
	\label{eq:hkn:sum}
	\sum_{\tilx\in\T} |\hkr_n(t;x,\tilx)|
	\leq
	\norm{\rt}^n_{L^\infty(\T)}
	\int_{\Sigma_n(t)}
	\sum_{x_1,\ldots,x_{n+1}\in\T} \hk(s_0;x_0,x_1) \prod_{i=1}^n \big| \Delta_{x_i} \hk(s_i;x_{i},x_{i+1}) \big|
	d^n \vec{s}.
\end{align}
In~\eqref{eq:hkn:sum}, sum over $ x_{n+1},\ldots,x_2,x_1 $ in order,
using the bound~\eqref{eq:hk:lap:sum} for the sum over $ x_{n+1},\ldots,x_2 $
and using $ \sum_{x_1} \hk(s_0;x_0,x_1)=1 $ for the sum over $ x_1 $.
We then obtain
\begin{align*}
	\sum_{\tilx\in\T} |\hkr_n(t;x,\tilx)|
	\leq
	\big( c\norm{\rt}_{L^\infty(\T)} \big)^n
	\int_{\Sigma_n(t)} \prod_{i=1}^n \frac{ds_i}{s_i+1}.
\end{align*}
To bound the last integral, performing a change of variable $ s'_i:= ts_i $, we see that
\begin{align*}
	\sum_{\tilx\in\T} |\hkr_n(t;x,\tilx)|
	&\leq
	\big( c\norm{\rt}_{L^\infty(\T)} \big)^n\int_{\Sigma_n(1)} \prod_{i=1}^n \frac{ds_i}{s_i+t^{-1}}
	\leq
	\big( c\norm{\rt}_{L^\infty(\T)} \big)^n
	\int_{\Sigma_n(1)}
	e^{1-s_1-\ldots-s_n} \prod_{i=1}^n \frac{ds_i}{s_i+t^{-1}}
\\
	&\leq e \big( c\norm{\rt}_{L^\infty(\T)} \big)^n \prod_{i=1}^n\int_0^\infty \frac{e^{-s_i}}{s_i+t^{-1}} ds_i
	\leq
	\big( c\norm{\rt}_{L^\infty(\T)} \big)^n(1+(\log t)_+)^n.
\end{align*}
With $ t \leq N^2T $, summing both sides over $ n\geq 1 $ gives the desired result.

\ref{lem:hkagd:sum}
Taking the difference of~\eqref{eq:hk:chaosn} for $ x=x $ and $ x=x' $,
under the relabeling $ x_{n+1}=\tilx $,
here we have
\begin{align*}
	\sum_{\tilx\in\T} |\hkr_n(t;x,\tilx)-\hkr_n(t;x',\tilx)|
	\leq
	\norm{\rt}^n_{L^\infty(\T)}
	\int_{\Sigma_n(t)}
	\sum_{x_1,\ldots,x_{n+1}\in\T} |\hk(s_0;x,x_1)-\hk(s_0;x',x_1)|
	\prod_{i=1}^n \big| \Delta_{x_i} \hk(s_i;x_{i},x_{i+1}) \big|
	d^n \vec{s}.
\end{align*}
Sum over $ x_{n+1},\ldots,x_2,x_1 $ in order,
using the bound~\eqref{eq:hk:lap:sum} for the sum over $ x_{n+1},\ldots,x_2 $,
and using the bound~\eqref{eq:hkgd:sum} for the sum over $ x_1 $.
From this we obtain
\begin{align*}
	\sum_{\tilx\in\T} |\hkr_n(t;x,\tilx)-\hkr_n(t;x',\tilx)|
	\leq
	\big( c(u)\norm{\rt}_{L^\infty(\T)} \big)^n
	\int_{\Sigma_n(t)} \frac{\dist_{\T}(x,x')^u}{s^{u/2}_0}\prod_{i=1}^n \frac{ds_i}{s_i+1}.
\end{align*}
To bound the last integral,
for the given $ v\in(0,1) $, we write $ \frac{1}{s_i+1} \leq c(v)s_i^{v/2-1} $,
and apply the Dirichlet formula~\eqref{eq:dirichlet} with $ (v_0,\ldots,v_n)=(1-u/2,v/2,\ldots,v/2) $ to get
\begin{align}
	\label{eq:hkagd:sup:chaosn:}
	|\hkr_n(t;x,\tilx)-\hkr_n(t;x',\tilx)|
	&\leq
	\dist_{\T}(x,x')^u \big( c(u,v)\norm{\rt}_{L^\infty(\T)} \big)^n
	\int_{\Sigma_n(t)} s^{-u/2}_0\prod_{i=1}^n s_i^{v/2-1}ds_i
\\
	\notag
	&=
	\dist_{\T}(x,x')^u
	\frac{1}{t^{u/2}}\frac{( t^{v/2}c(u,v)\norm{\rt}_{L^\infty(\T)})^n}{\Gamma(\frac{2-u+nv}{2})}.
\end{align}
Referring back to~\eqref{eq:hka:sup:chaosn},
we see that the l.h.s.\ of~\eqref{eq:hkagd:sup:chaosn:}
is bounded by $ ( c(u) \norm{\rt}_{L^\infty(\T)})^n $ when $ t\leq 1 $, uniformly over $ x,\tilx\in\T $.
This being the case, by making the constant $ c(u,v) $ larger in~\eqref{eq:hkagd:sup:chaosn:},
we may replace the factor $ \frac{1}{t^{u/2}} $ with $ \frac{1}{(t+1)^{u/2}} $.
Since $ t^{v/2} \leq (TN^2)^{v/2} = c(v,T)N^{v} $, summing the result over $ n\geq 1 $ concludes the desired bound.

\ref{lem:hkagd:sup}
Taking the difference of~\eqref{eq:hk:chaosn} for $ x=x $ and $ x=x' $,
here we have
\begin{align*}
	|\hkr_n(t;x,\tilx)-\hkr_n(t;x',\tilx)|
	\leq
	\norm{\rt}^n_{L^\infty(\T)}
	\int_{\Sigma_n(t)}
	\sum_{x_1,\ldots,x_{n}\in\T} |\hk(s_0;x,x_1)-\hk(s_0;x',x_1)|
	\prod_{i=1}^n \big| \Delta_{x_i} \hk(s_i;x_{i},x_{i+1}) \big|
	d^n \vec{s}.
\end{align*}
Use~\eqref{eq:hkgd} to bound
the expression $ | \hk(s_0;x,x_1) - \hk(s_0,x',x_1) | $
by $ c(u)\dist_{\T}(y,y')^u (s_0)^{-\frac{1+u}2} $, and then sum over $ x_1,\ldots,x_{n} $ using~\eqref{eq:hk:lap:sum:}.
We then obtain
\begin{align*}
	|\hkr_n(t;x,\tilx)-\hkr_n(t;x',\tilx)|
	\leq
	\big( c(u) \norm{\rt}_{L^\infty(\T)} \big)^n
	\int_{\sum_{n}(t)} \frac{\dist_{\T}(x,x')^u}{s_0^{(1+u)/2}}
	\prod_{i=1}^n \frac{dt_i}{s_i+1}.
\end{align*}
To bound the last expression, for the given $ v\in(0,1) $, we write $ \frac{1}{s_i+1} \leq c(v)s_i^{v-1} $,
and apply the Dirichlet formula~\eqref{eq:dirichlet} with $ (v_0,\ldots,v_n)=((1-u)/2,v/2,\ldots,v/2) $ to get
\begin{align}
	\label{eq:hkagd:sup:chaosn::}
	|\hkr_n(t;x,\tilx)-\hkr_n(t;x',\tilx)|
	&\leq
	\dist_{\T}(x,x')^{u}
	\big(c(u,v) \norm{\rt}^n_{L^\infty(\T)} \big)^n 	
	\int_{\Sigma_n(t)}
	s_{0}^{-(1+u)/2}\prod_{i=1}^n s_i^{v/2-1} ds_i
\\
	\notag
	&=
	\frac{\dist_{\T}(x,x')^v}{t^{\frac{1+u}{2}}}
	\frac{(t^{v/2}c(u,v))^n}{\Gamma(\frac{1-u+nv}{2})}.
\end{align}
Referring back to~\eqref{eq:hka:sup:chaosn},
we see that the l.h.s.\ of~\eqref{eq:hkagd:sup:chaosn::}
is bounded by $ ( c(u) \norm{\rt}_{L^\infty(\T)})^n $ when $ t\leq 1 $, uniformly over $ x,\tilx\in\T $.
This being the case, by making the constant $ c(u,v) $ larger in~\eqref{eq:hkagd:sup:chaosn::},
we replace the factor $ \frac{1}{t^{(1+u)/2}} $.
Since $ t^{v/2} \leq (TN^2)^{v/2} = c(v,T)N^{v} $, summing the result over $ n\geq 1 $ concludes the desired bound.
\end{proof}

We now incorporate Lemma~\ref{lem:hk} with the assumed properties of $ \rt(x) $ from Assumption~\ref{assu:rt}.
To simplify notation, we will say that events $ \{\Omega_{\Lambda,N}\}_{\Lambda,N} $ hold `with probability $ \to_{\Lambda,N} 1 $' if
\begin{align}
	\label{eq:wp1}
	\lim_{\Lambda\to\infty} \liminf_{N\to\infty} \Pr\big[ \Omega_{\Lambda,N} \big] = 1.
\end{align}
\begin{proposition}\label{prop:hk}
For given  $ T<\infty $, $ u\in(0,1] $ and $ v\in(0,\urt) $,
the following hold with probability $ \to_{\Lambda,N} 1 $:
\begin{enumerate}[label=(\alph*),leftmargin=7ex]
\item \label{cor:hk:hkasup} \
	$
		\displaystyle
		|\hka(t;x,\tilx)| \leq \frac{1}{\sqrt{t+1}}\Lambda,
		\qquad
		\forall t\in[0,N^2T],
		\
		x,\tilx\in\T,
	$
\item \label{cor:hk:hkagdsup} \
	$
		\displaystyle
		|\hka(t;x,\tilx)-\hka(t;x',\tilx)| \leq \frac{\dist_{\T}(x,x')^u}{(t+1)^{(1+u)/2}}\Lambda,
		\qquad
		\forall t\in[0,N^2T],
		\
		x,x'\in\T,
	$
\item \label{cor:hk:hkagdsum} \
	$
		\displaystyle
		\sum_{\tilx\in\T}|\hka(t;x,\tilx)-\hka(t;x',\tilx)| \leq \frac{\dist_{\T}(x,x')^u}{(t+1)^{u/2}}\Lambda,
		\qquad
		\forall t\in[0,N^2T],
		\
		x,x'\in\T,
	$
\item \label{cor:hk:hkahold:sup} \
	$
		\displaystyle
		\hka(t;x,\tilx)\dist_{\T}(x,\tilx)^v \leq (t+1)^{-(1-v)/2}\Lambda,
		\qquad
		\forall t\in[0,N^2T],
		\
		x,\tilx\in\T,	
	$
\item \label{cor:hk:hkahold} \
	$
		\displaystyle
		\sum_{\tilx\in\T}\hka(t;x,\tilx)\dist_{\T}(x,\tilx)^v \leq (t+1)^{v/2}\Lambda,
		\qquad
		\forall t\in[0,N^2T],
		\
		x\in\T,	
	$
\item \label{cor:hk:hkahold:} \
	$
		\displaystyle
		\sum_{\tilx\in\T}|\nabla_x\hka(t;x,\tilx)|\Big(\frac{\dist_{\T}(x,\tilx)}{N}\Big)^v \leq \Lambda,
		\qquad
		\forall t\in[0,N^2T],
		\
		x\in\T,	
	$
\item \label{cor:hk:hkahold::} \
	$
		\displaystyle
		\sum_{\tilx\in\T}|\nabla_{\tilx}\hka(t;x,\tilx)|\Big(\frac{\dist_{\T}(x,\tilx)}{N}\Big)^v \leq \Lambda,
		\qquad
		\forall t\in[0,N^2T],
		\
		x\in\T,		
	$
\item \label{cor:hk:hkrsup} \
	$
		\displaystyle
		|\hkr(t;x,\tilx)| \leq \frac{N^{-v}}{\sqrt{t+1}}\Lambda,
		\qquad
		\forall t\in[0,N^2T],
		\
		x,\tilx \in\T,	
	$
\item \label{cor:hk:hkrsum} \
	$
		\displaystyle
		\sum_{\tilx\in\T} |\hkr(t;x,\tilx)| \leq N^{-v}\Lambda,
		\qquad
		\forall t\in[0,N^2T],
		\
		x\in\T,	
	$
\item \label{cor:hk:hkrgsum} \
	$
		\displaystyle
		\sum_{\tilx\in\T}|\hkr(t;x,\tilx)-\hkr(t;x',\tilx)| \leq \frac{(\dist_{\T}(x,x'))^{u}}{(t+1)^{u/2}} N^{-v}\Lambda,
		\qquad
		\forall t\in[0,N^2T],
		\
		x,x'\in\T,
	$
\item \label{cor:hk:hkrgsup} \
	$
		\displaystyle
		|\hkr(t;x,\tilx)-\hkr(t;x',\tilx)| \leq \frac{(\dist_{\T}(x,x'))^{u}}{(t+1)^{(u+1)/2}} N^{-v}\Lambda,
		\qquad
		\forall t\in[0,N^2T],
		\
		x,x',\tilx\in\T.
	$
\end{enumerate}
\end{proposition}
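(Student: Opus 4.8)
The plan is to reduce the whole proposition to Lemma~\ref{lem:hk}, the simple-random-walk bounds~\eqref{eq:hk:sup}--\eqref{eq:hk:hold::}, and one elementary consequence of Assumption~\ref{assu:rt}\ref{assu:rt:holder}. On the event $\Omega_\Lambda:=\{\hold{\Rt_N}_{\urt,N}\le\Lambda\}$, which by Assumption~\ref{assu:rt}\ref{assu:rt:holder} holds with probability $\to_{\Lambda,N}1$, the definition~\eqref{eq:hold} of the seminorm, applied to a single-site interval $(y-1,y]$, gives $\tfrac12|\rt(y)|=|\Rt(y-1,y)|\le\Lambda N^{-\urt}$, so $\norm{\rt}_{L^\infty(\T)}\le 2\Lambda N^{-\urt}$ on $\Omega_\Lambda$. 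Fix the proposition's parameters $T,u,v$, and write $v'\in(0,\urt)$ for the free exponent in Lemma~\ref{lem:hk}. On $\Omega_\Lambda$ the argument of every series there equals $N^{v'}\norm{\rt}_{L^\infty(\T)}\le 2\Lambda N^{v'-\urt}$, which tends to $0$; since the $\Gamma$-denominators are bounded below by an absolute constant, it follows that, for $N$ large (depending on $\Lambda,v',T$), each such series is $O_{u,v',T}(\Lambda N^{v'-\urt})$, except that the geometric series of Lemma~\ref{lem:hk}\ref{lem:hk:sum} carries no $v'$ and an extra $\log(N+1)$, i.e.\ is $O_T(\Lambda N^{-\urt}\log(N+1))$.

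Given this, I would establish each claim by writing $\hka=\hk+\hkr$ and treating the two pieces separately. For \ref{cor:hk:hkasup}, \ref{cor:hk:hkagdsup}, \ref{cor:hk:hkagdsum}: bound the $\hk$-piece by~\eqref{eq:hk:sup}, \eqref{eq:hkgd}, \eqref{eq:hkgd:sum} --- which already have the asserted form --- and the $\hkr$-piece by Lemma~\ref{lem:hk}\ref{lem:hka:sup}, \ref{lem:hkagd:sup}, \ref{lem:hkagd:sum} with any fixed $v'<\urt$, making it $o(1)$ times the asserted form; as $\Lambda$ may be chosen arbitrarily large, the sum is at most $(\text{asserted form})\cdot\Lambda$. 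For \ref{cor:hk:hkahold:sup}, \ref{cor:hk:hkahold}: bound the $\hk$-piece by~\eqref{eq:hk:hold:sup}, \eqref{eq:hk:hold} with H\"older exponent equal to $v$, and the $\hkr$-piece crudely by $\dist_{\T}(x,\tilx)^v\le N^v$ times Lemma~\ref{lem:hk}\ref{lem:hka:sup}, \ref{lem:hk:sum}, which produces a factor $N^v N^{v'-\urt}$ --- so one chooses $v'\le\urt-v$ here (possible precisely because $v<\urt$), keeping the net power of $N$ nonpositive. For \ref{cor:hk:hkahold:}, \ref{cor:hk:hkahold::}: write the $\hk$-piece as $N^{-v}$ times $\sum_{\tilx}|\nabla_x\hk(t;x,\tilx)|\dist_{\T}(x,\tilx)^v$ (resp.\ with $\nabla_{\tilx}$), bounded by $N^{-v}$ times~\eqref{eq:hk:hold:} (resp.~\eqref{eq:hk:hold::}) with H\"older exponent $v$; and for the $\hkr$-piece, discard the weight via $(\dist_{\T}(x,\tilx)/N)^v\le1$ and use the triangle inequality and reindexing, $\sum_{\tilx}|\nabla_x\hkr(t;x,\tilx)|\le\sum_{\tilx}|\hkr(t;x+1,\tilx)|+\sum_{\tilx}|\hkr(t;x,\tilx)|$ (resp.\ $\sum_{\tilx}|\nabla_{\tilx}\hkr(t;x,\tilx)|\le 2\sum_{\tilx}|\hkr(t;x,\tilx)|$), each controlled by Lemma~\ref{lem:hk}\ref{lem:hk:sum}. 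Finally \ref{cor:hk:hkrsup}, \ref{cor:hk:hkrsum}, \ref{cor:hk:hkrgsum}, \ref{cor:hk:hkrgsup} follow directly from Lemma~\ref{lem:hk}\ref{lem:hka:sup}, \ref{lem:hk:sum}, \ref{lem:hkagd:sum}, \ref{lem:hkagd:sup} with the sharp choice $v'=\urt-v$, which converts the series bound $O(\Lambda N^{v'-\urt})$ into exactly $O(\Lambda N^{-v})$. Intersecting these finitely many events and relabelling $\Lambda$ by the accumulated (deterministic, $u,v,\urt,T$-dependent) constant, the intersection still holds with probability $\to_{\Lambda,N}1$.

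The only step requiring genuine attention --- and which I expect to be the main, if mild, obstacle --- is the treatment of \ref{cor:hk:hkahold:sup} and \ref{cor:hk:hkahold}: the weight $\dist_{\T}(x,\tilx)^v$ can be of order $N^v$, so the decay $N^{v'-\urt}$ furnished by Lemma~\ref{lem:hk} must strictly beat it, which is exactly where the hypothesis $v<\urt$ enters non-trivially and which forces $v'\in(0,\urt-v]$. One then checks that the net exponent $v+v'-\urt\le0$ and that $(t+1)^{-1/2}\le(t+1)^{-(1-v)/2}$ and $1\le(t+1)^{v/2}$ for $t\ge0$ (both because $v\ge0$), so that these crude bounds still land in the asserted form. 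Everything else is bookkeeping: matching each conclusion to the appropriate member of~\eqref{eq:hk:sup}--\eqref{eq:hk:hold::} and the appropriate part of Lemma~\ref{lem:hk}, and checking that the H\"older exponent $u$ and scaling exponent $v$ pass through unchanged. No heat-kernel estimate beyond~\eqref{eq:hk:sup}--\eqref{eq:hk:hold::}, and no input beyond Lemma~\ref{lem:hk} and Assumption~\ref{assu:rt}, is needed.
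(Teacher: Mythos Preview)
Your proposal is correct and follows essentially the same approach as the paper: control $\|\rt\|_{L^\infty}$ via Assumption~\ref{assu:rt}\ref{assu:rt:holder}, feed this into Lemma~\ref{lem:hk} with the free exponent set to $\urt-v$ to obtain \ref{cor:hk:hkrsup}--\ref{cor:hk:hkrgsup}, and then combine with the simple-random-walk bounds~\eqref{eq:hk:sup}--\eqref{eq:hk:hold::} via $\hka=\hk+\hkr$ for the remaining parts. The paper orders things slightly differently (it establishes \ref{cor:hk:hkrsup}--\ref{cor:hk:hkrgsup} first and then cites those within the proof of \ref{cor:hk:hkahold:sup}--\ref{cor:hk:hkahold::}, rather than going back to Lemma~\ref{lem:hk} each time), but this is purely cosmetic.
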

\begin{proof}
Recall the definition of $ \Rt(x,x') $ from~\eqref{eq:Rt} and recall the seminorm $ \hold{\,\Cdot\,}_{\urt,N} $ from~\eqref{eq:hold}.
With $ \rt(x)=\Rt(0,x)-\Rt(0,x-1) $, we have
\begin{align}
	\label{eq:rtbd}
	|\rt(x)| \leq N^{-\urt} \hold{ \Rt_N }_{\urt,N}.
\end{align}
In particular, under Assumption~\ref{assu:rt}\ref{assu:rt:holder},
$ \norm{\rt}_{L^\infty(\T)} \leq N^{-\urt}\Lambda $ with probability $ \to_{\Lambda,N} 1 $.
This being the case, taking $ v'=\urt-v $ in Lemma~\ref{lem:hk},
and summing over $ n\geq 1 $ therein, we see that events \ref{cor:hk:hkrsup}--\ref{cor:hk:hkrgsup} hold with probability $ \to_{\Lambda,N} 1 $.

%
%
With $ \hka=\hk+\hkr $, \ref{cor:hk:hkasup}--\ref{cor:hk:hkagdsum}
follow by combining the bounds (we just showed they hold with probability $ \to_{\Lambda,N} 1 $) in \ref{cor:hk:hkrsup}, \ref{cor:hk:hkrgsum} and \ref{cor:hk:hkrgsup} with those in \eqref{eq:hk:sup}--\eqref{eq:hkgd:sum}.
As for~\ref{cor:hk:hkahold:sup}--\ref{cor:hk:hkahold::},
with $ \hka=\hk+\hkr $ and $ \frac{\dist_\T(x,x')}{N} \leq 1 $,
we write
\begin{align}
	\label{eqpf:hkhold:sup}
	\hka(t;x,\tilx)\dist_\T(x,x')^v
	&\leq
	\hk(t;x,\tilx)\dist_\T(x,x')^v
	+
	|\hkr(t;x,\tilx)| \, N^v,
\\
	\label{eqpf:hkhold}
	\sum_{\tilx\in\T}\hka(t;x,\tilx)\dist_\T(x,x')^v
	&\leq
	\sum_{\tilx\in\T}\hk(t;x,\tilx)\dist_\T(x,x')^v
	+
	\sum_{\tilx\in\T}|\hkr(t;x,\tilx)| \, N^v,
\end{align}
and, for $ y=x,\tilx $,
\begin{align}
	\notag
	\sum_{\tilx\in\T}|\nabla_{y}\hka(t;x,\tilx)|\Big(\frac{\dist_\T(x,x')}{N}\Big)^{v}
	&\leq
	\sum_{\tilx\in\T}|\nabla_{y}\hk(t;x,\tilx)|\Big(\frac{\dist_\T(x,x')}{N}\Big)^{v}
	+
	\sum_{\tilx\in\T}|\nabla_{y}\hkr(t;x,\tilx)|
\\
	\label{eqpf:hkhold:}
	&\leq
	\sum_{\tilx\in\T}|\nabla_{y}\hk(t;x,\tilx)|\Big(\frac{\dist_\T(x,x')}{N}\Big)^{v}
	+
	\sum_{\tilx\in\T}2|\hkr(t;x,\tilx)|.
\end{align}
Applying~\eqref{eq:hk:hold:sup}--\eqref{eq:hk:hold::} as well as \ref{cor:hk:hkrsup} and \ref{cor:hk:hkrsum} (that we have already established in this proposition), we can bound the corresponding terms in~\eqref{eqpf:hkhold:sup}--\eqref{eqpf:hkhold:}.
From this and by using $ (t+1)^{-1/2} \leq (t+1)^{-(1-v)/2} $ we obtain the desired results for~\ref{cor:hk:hkahold:sup}--\ref{cor:hk:hkahold::}.
\end{proof}

\section{The Semigroups $ \Sg(t) $ and $ \sg(t) $}
\label{sect:Sgsg}
Our goal in this section is to establish the relevant properties of
the semigroups $ \Sg(t)=e^{t\Ham} $ and $ \sg(t)=e^{t\ham} $.
In particular, in Section~\ref{sect:Sg},
for any given potential $ \Rtlim' $, we will \emph{construct} $ \Sg(t)=e^{t\Ham} $ and establish bounds
using integration by parts techniques.
Then, in Section~\ref{sect:sg}, we generalize these techniques to the microscopic setting
to establish bounds on $ \sg(t) $.

As mentioned in the introduction,
we will utilize an expansion of the Feynman--Kac formula,
which is similar to the expansion considered in \cite[Section 14, Chapter V]{simon79}
for singular but function-valued potentials.
With the potential $ \Rtlim'(x) $ being non-function-valued,
we need to extract the smoothing effect of the heat semigroup to compensate the roughness of $ \Rtlim'(x) $.
This is done by integration by parts in Lemmas~\ref{lem:Ips} and \ref{lem:ips}.

\subsection{Macroscopic}
\label{sect:Sg}
Recall that $ \Ham=\frac12\partial_{xx}+\Rtlim'(x) $.
As previously explained in Remark~\ref{rmk:Rtfixed},
for the analysis within this subsection (that pertains to the \emph{limiting} \ac{SPDE}),
the randomness of $ \Rtlim $ plays no role,
and we will assume without loss of generality $ \Rtlim $ is a deterministic function in $ C^{\urt}[0,1] $.

We begin by recalling the classical construction of $ \Ham $ from~\cite{fukushima77}.
Note that, even though~\cite{fukushima77} treats $ \Ham $ on the closed interval $ [0,1] $ with Dirichlet boundary condition,
the (relevant) argument carries through for $ \limT $ as well.
Write $ H^1(\limT):\{f\in\limT\to\R: f,f'\in L^2(\limT)\} $ for the Sobolev space,
equipped with the norm $ \norm{f}^2_{H^1(\limT)} := \norm{f}^2_{L^2(\limT)} + \norm{f'}^2_{L^2(\limT)} $.
For $ f,g\in L^2(\limT) $, write $ \langle f,g\rangle = \langle f,g\rangle_{L^2(\limT)}:= \int_{\limT} fg dx $
for the inner product in $ L^2(\limT) $,
and similarly $ \langle f,g\rangle_{H^1(\limT)} := \int_{\limT} (fg + f'g') dx $.
Consider the symmetric quadratic form
\begin{align*}
	F_\Rtlim: H^1(\limT)\times H^1(\limT)\to\R,
	\qquad
	F_\Rtlim(f,g): = \tfrac12 \langle f',g' \rangle - f(1)g(1) \Rtlim(1) + \int_{0}^1 (f'g+fg')(x) \Rtlim(x) dx.
\end{align*}
If $ \Rtlim $ were smooth, integration by parts gives $ F_\Rtlim(f,g)= -\langle f,\Ham g\rangle  $.

We now appeal to \cite[Definition 12.14]{grubb08} to define $ \Ham $ to be the operator associated to $ F_\Rtlim $.
Let $ D(\Ham) $ denote the domain of $ \Ham $.
From the preceding definition we have that
\begin{align}
	\label{eq:ham:form}
	D(\Ham) \subset H^1(\limT);
	\qquad\quad
	- \langle \Ham f, g \rangle = F_\Rtlim(f,g),
	\qquad
	\forall
	(f,g)\in D(F_\Rtlim)\times H^1(\limT).
	\qquad
\end{align}
On $ \limT $ we have the following elementary bound
\begin{align}
	\label{fn:LinfinH1}
	\norm{\,\Cdot\,}_{L^\infty(\limT)} \leq \sqrt{2}\norm{\,\Cdot\,}_{H^1(\limT)}
\end{align}
To see this, for $ x,y\in\limT $, write $ |f(x)| \leq |f(y)| + \int_{[x,y]} |f'(\tily)| d\tily $,
and integrate in $ y\in\limT $ to get $ |f(x)| \leq \norm{f}_{L^1(\limT)} + \norm{f'}_{L^1(\limT)} \leq \sqrt{2}\norm{f}_{H^1(\limT)} $.
Now, with $ \Rtlim $ being bounded, using \eqref{fn:LinfinH1}
it is readily checked (see~\cite[Lemma~1]{fukushima77}) that
\begin{align*}
	F_\Rtlim(f,g) + c \langle f, g\rangle_{L^2(\limT)} \geq \tfrac{1}{c} \langle f, g\rangle_{H^1(\limT)},
	\qquad\quad
	F_\Rtlim(f,g) \leq c \langle f, g\rangle_{H^1(\limT)},	
	\qquad
	f,g\in H^1(\limT),
\end{align*}
for some constant $ c=c(\Rtlim) $ depending only on $ \Rtlim $.
Given these properties, and that $ F_\Rtlim $ is symmetric,
it then follows that (see \cite[Theorem 12.18, Corollary 12.19]{grubb08}) $ \Ham $ is a self-adjoint, closed operator,
with $ D(\Ham) $ being dense in $ L^2(\limT) $.

Having constructed $ \Ham $, we now turn to the semigroup $ \Sg(t)=e^{t\Ham} $.
\emph{Heuristically}, the semigroup should be given by the Feynman--Kac formula
\begin{align*}
	\big( \Sg(t) f \big)(x)
	``="
	\Ex_x\Big[ e^{\int_0^t \Rtlim'(B(s))ds} f(B(t)) \Big],
\end{align*}
where $ B $ denotes a Brownian motion on $ \limT $ starting from $ x $.
The issue with this formula is that,
under our assumptions, $ \Rtlim \in C^{\urt}[0,1] $ is not necessarily differentiable,
namely the potential $ \Rtlim' $ may not be function-valued.
As mentioned previously, for function-valued potentials, 
such Feynman--Kac formula have been rigorously made sense in \cite{mckean77,simon79,simon82}.
Continuing, for the moment, with the informal Feynman--Kac formula,
we Taylor-expand the exponential function $ \exp(\int_0^t \Rtlim'(B(s))ds) $,
and exchange the expectation $ \Ex_x[\Cdot] $ with the integrals.
This yields
\begin{align*}
	(\Sg(t) f)(x)
	&``="
	\Ex_x\Big[ \sum_{n=0}^\infty \frac{1}{n!}\int_{[0,t]^n} \Big(\prod_{i=1}^n\Rtlim'(B(t_i))dt_i\Big) f(t) \Big]
\\
	&=
	\Ex_x\Big[ \sum_{n=0}^\infty \int_{0<t_1<\ldots<t_n<t} \Big(\prod_{i=1}^n\Rtlim'(B(t_i))dt_i\Big) f(t) \Big]
	``="
	\int_{\limT} \Sg(t;x,\tilx)f(x) dx,
\end{align*}
where $ \Sg $ is defined as follows.
With the notation $ \Sigma_{n}(t) $ from~\eqref{eq:Sigman}, $ d^n\vec{s} $ from~\eqref{eq:ds},
the convention $ x_0:=x $, $ \tilx:=x_{n+1} $,
and with
\begin{align}
	\label{eq:HK}
	\HK(t;x,\tilx) = \sum_{i\in\Z}\frac{1}{\sqrt{2\pi t}} e^{-\frac{|x-\tilx+i|^2}{2t}},
	\qquad
	x,\tilx \in[0,1)
\end{align}
denoting the standard heat kernel on $ \limT $, we define
\begin{align}
	\label{eq:Sgker}
	\Sg(t;x,\tilx)
	&:=
	\HK(t;x,\tilx) + \sum_{n=1}^\infty\Sgr_n(t;x,\tilx),
\\
	\label{eq:Sgr}
	\Sgr_n(t;x,\tilx)
	&:=
	\int_{\Sigma_n(t)} \SgrI_n(\vec{s};x,\tilx) d^n\vec{s},
\\
	\label{eq:SgrI}
	\SgrI_n(\vec{s};x,\tilx)
	&=
	\SgrI_n(s_0,\ldots,s_n;x,\tilx)
	:=
	\int_{\limT^n} \prod_{i=0}^n\HK(s_i;x_i,x_{i+1})\ \prod_{i=1}^n d\Rtlim(x_i).
\end{align}

Note that, for each fixed $ (s_0,\ldots,s_n)\in \Sigma_n(t) $, the function
$ \prod_{i=0}^n\HK(s_i;x_i,x_{i+1}) $ is $ C^\infty(\limT^{n+2}) $ in $ (x_0,\ldots,x_{n+1}) $,
so~\eqref{eq:SgrI} is actually a well-defined Riemann–Stieltjes integral.
Namely, despite the heuristic nature of the preceding calculations,
the function $ \Sg(t;x,\tilx) $ in \eqref{eq:Sgker} is well-defined as long as
the summations and integrals in \eqref{eq:Sgker}--\eqref{eq:Sgr} converge absolutely.
This construction will be carried out in Proposition~\ref{prop:Sg}:
there we \emph{define} $ \Sg(t) $ via~\eqref{eq:Sgker}--\eqref{eq:SgrI},
check that the result defines a bounded operator for each $ t \in[0,\infty) $,
and verify that the result is indeed the semigroup generated by $ \Ham $.

\begin{remark}
\label{rmk:notchaos}
In the case when $ \Rtlim $ is equal to a Brownian motion $ B $,
one can also consider the chaos expansion of $ \Sg(t;x,\tilx) $ (see, e.g.,~\cite{janson97}).
That is, for each $ t,x,\tilx $, one views $ \Sg(t;x,\tilx) $ as a random variable (with randomness over $ B $),
and decompose it into terms that belongs to $ n $-th order Wiener chaoses of $ B $.
Such an expansion has been carried out in \cite{gu18} for \ac{PAM} in two dimensions,
and it is conceivable that their method carries over in one dimension.
We clarify here that our expansion~\eqref{eq:Sgker}--\eqref{eq:SgrI} here is \emph{not} the chaos expansion.
For example, it is readily checked that $ \Ex[\Sgr_1(t;x,\tilx)\Sgr_2(t;x,\tilx)] \neq 0 $,
where the expectation is taken with respect to $ B $.
\end{remark}

To prepare for the construction in Proposition~\ref{prop:Sg},
in Lemma~\ref{lem:ips} we establish an integration-by-parts identity,
and, based on this identity,  in Lemmas~\ref{lem:Ipsbd}--\ref{lem:SgrIbd},
we obtain bounds on the relevant integrals in \eqref{eq:Sgker}--\eqref{eq:Sgr}.
To state Lemma~\ref{lem:Ips}, we begin with some notation.
Recall that we write $ [x,\tilx] $, $ x,\tilx\in\limT $, for the interval on $ \limT $
that goes counterclockwise from $ x $ to $ \tilx $.
For given $ y_1\neq y_2\in\limT $, let $ \Midp_2(y,\tily)\in[y_1,y_2] $ denote the midpoint of the interval $ [y_1,y_2] $,
and let $ \Midp_1(y_1,y_2)\in[y_2,y_1] $ denote the midpoint of the interval $ [y_2,y_1] $.
Set $ \Parti_1(y_1,y_2) := [\Midp_1(y_1,y_2),\Midp_2(y_1,y_2)) \subset \limT $
and $ \Parti_2(y_1,y_2) := [\Midp_2(y_1,y_2),\Midp_1(y_1,y_2)) \subset \limT $.
Indeed, $ \Parti_1(y_1,y_2), \Parti_2(y_1,y_2) $ form a partition of $ \limT $, with the property
\begin{align}
	\label{eq:Parti}
	\dist_{\limT}(y_j,x) \leq \dist_{\limT}(y_{j+1},x), \ \forall x\in \Parti_j(y_1,y_2),
	\qquad
	\forall j=1,2,
	\qquad
	\text{where } y_3 := y_1.
\end{align}
See Figure~\ref{fig:Midpt} for an illustration.
\begin{figure}[h]
\centering
	\psfrag{X}{$ y_1 $}
	\psfrag{Y}{$ y_2 $}
	\psfrag{T}[c]{$ \Parti_1(y_1,y_2) $}
	\psfrag{S}{$ \Parti_2(y_1,y_2) $}
	\psfrag{M}[r]{$ \Midp_1(y_1,y_2) \quad $}
	\psfrag{N}{$ \Midp_2(y_1,y_2) $}
	\includegraphics[width=.4\textwidth]{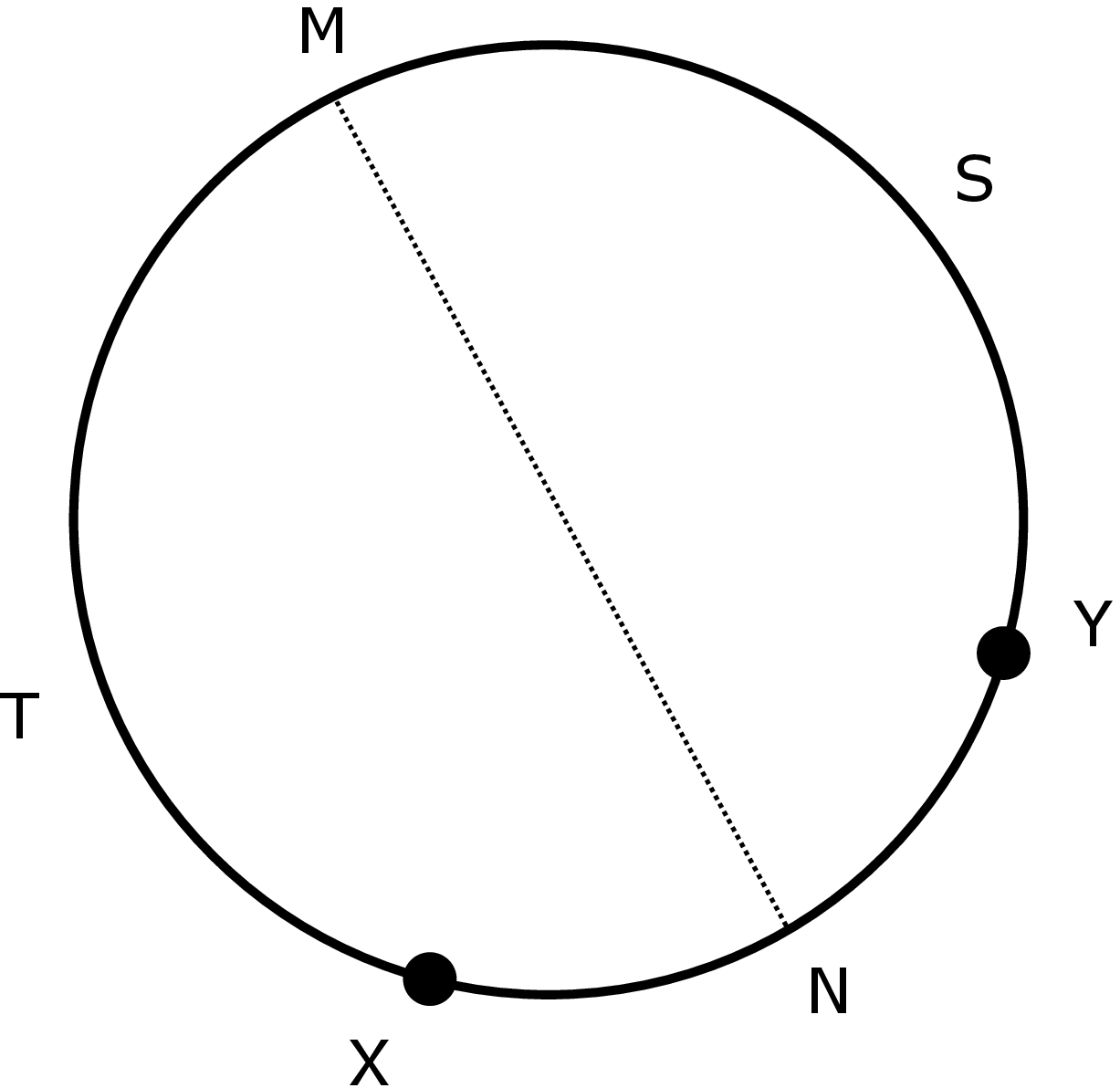}
	\caption{The points $ \Midp_1(y_1,y_2), \Midp_2(y_1,y_2) $ are equidistant from $y_1$ and $y_2$; the intervals $ \Parti_1(y_1,y_2), \Parti_2(y_1,y_2) $ are composed of all points closer to $y_1$ and $y_2$ respectively.}
	\label{fig:Midpt}
\end{figure}

Recall that $ [x_1,x_2] \subset \limT $ denotes the interval going from $ x_1 $ to $ x_2 $ counterclockwise.
We define the macroscopic analog of $ \Rt(x_1,x_2) $ (see~\eqref{eq:Rt}) as
\begin{align}
	\label{eq:RtLim}
	\Rtlim(x_1,x_2)
	=
	\int_{[x_1,x_2]\setminus\{0\}} d \Rtlim(x)
	= \left\{\begin{array}{l@{,}l}
		\Rtlim(x_2)-\Rtlim(x_1)	&\text{ when } x_1\leq x_2\in[0,1),
	\\
		\Rtlim(x_2)-\Rtlim(0) + \Rtlim(1)-\Rtlim(x_1)	&\text{ when }  x_2<x_1\in[0,1).
	\end{array}\right.
\end{align}
Note that the integral excludes $ 0 $ so that the possible jump of $ \Rtlim(x) $ there will not be picked up by $ \Rtlim(x_1,x_2) $. 
Hereafter we adopt the standard notation $ f(x)|^{x=b}_{x=a} := f(b)-f(a) $.
Lemma~\ref{lem:Ips} gives a integration-by-parts formula for $ \Ips $.
\begin{lemma}
\label{lem:Ips}
For $ y_1\neq y_2\in\limT $, set
\begin{align}
\label{eq:Ips:}
\begin{split}
	\Ips&(s,s';y_1,y_2)
	:=
	\sum_{j=1}^2
	\Bigg(
		\HK(s;y_1,x) \Rtlim(y_j,x) \HK(s';x,y_2)\big|_{x=\Midp_j(y_1,y_2)}^{x=\Midp_{j+1}(y_1,y_2)}
\\
		&-
		\int_{\Parti_j(y_1,y_2)} (\partial_{x}\HK(s;y_1,x)) \Rtlim(y_j,x) \HK(s';x,y_2) dx
		-
		\int_{\Parti_j(y_1,y_2)} \HK(s;y_1,x) \Rtlim(y_j,x) \partial_{x}\HK(s';x,y_2) dx
	\Bigg),
\end{split}
\end{align}
where, by convention, we set $ \Midp_{3}(\tily,y) :=\Midp_{1}(\tily,y)  $. Then we have
\begin{align}
	\label{eq:Ips}
	\SgrI_n(\vec{s};x,\tilx)
	=
	\int_{\limT^{n+1}}
	\HK(\tfrac{s_0}{2};x,y_1) dy_1
		\Big( \prod_{i=1}^n \Ips(\tfrac{s_{i-1}}{2},\tfrac{s_{i}}{2};y_{i},y_{i+1}) dy_{i+1} \Big)
	\HK(\tfrac{s_{n}}{2};y_{n+1},\tilx).
\end{align}
\end{lemma}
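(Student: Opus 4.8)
The plan is to reduce the $(n+1)$-fold identity \eqref{eq:Ips} to a single one-variable integration-by-parts formula, and then lift it back up via the semigroup (Chapman--Kolmogorov) property of the heat kernel.

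\textbf{Step 1: a one-variable identity.} The heart of the matter is to establish that, for all $ s,s'>0 $ and all $ y_1\neq y_2\in\limT $,
\begin{align}
\label{eqpf:Ips:core}
	\int_{\limT} \HK(s;y_1,x)\,\HK(s';x,y_2)\, d\Rtlim(x)
	=
	\Ips(s,s';y_1,y_2),
\end{align}
where $ d\Rtlim $ denotes the Stieltjes measure on $ \limT $ associated to $ \Rtlim $, understood (consistently with \eqref{eq:Hold} and \eqref{eq:RtLim}) not to charge the point $ 0 $, so that $ x\mapsto\Rtlim(y_j,x) $ is a continuous primitive of it on $ \limT $ with base point $ y_j $. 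I would prove \eqref{eqpf:Ips:core} by splitting $ \limT $ into the two arcs $ \Parti_1(y_1,y_2) $ and $ \Parti_2(y_1,y_2) $, noting $ d\Rtlim(x)=d_x[\Rtlim(y_j,x)] $ on $ \Parti_j(y_1,y_2) $, and integrating by parts on each arc: since $ x\mapsto\HK(s;y_1,x)\HK(s';x,y_2) $ is $ C^\infty $ for fixed $ s,s'>0 $ while $ \Rtlim(y_j,\,\Cdot\,)\in C^{\urt}(\limT) $, the Riemann--Stieltjes integration-by-parts formula turns $ \int_{\Parti_j}\HK(s;y_1,x)\HK(s';x,y_2)\,d_x[\Rtlim(y_j,x)] $ into the boundary term $ \HK(s;y_1,x)\,\Rtlim(y_j,x)\,\HK(s';x,y_2)\big|_{x=\Midp_j(y_1,y_2)}^{x=\Midp_{j+1}(y_1,y_2)} $ minus $ \int_{\Parti_j}\Rtlim(y_j,x)\,d_x\!\big[\HK(s;y_1,x)\HK(s';x,y_2)\big] $; expanding the last differential by the product rule and summing over $ j=1,2 $ reproduces exactly the right-hand side of \eqref{eq:Ips:}. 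The particular midpoint partition plays no role at this stage — any interior points of the two arcs would do — and is chosen only so that \eqref{eq:Parti} can be exploited in the subsequent bounds on $ \Ips $.

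\textbf{Step 2: splitting the heat kernels.} Using $ \HK(s)\HK(s')=\HK(s+s') $ I would rewrite each factor of \eqref{eq:SgrI} as $ \HK(s_k;x_k,x_{k+1})=\int_{\limT}\HK(\tfrac{s_k}{2};x_k,y_{k+1})\,\HK(\tfrac{s_k}{2};y_{k+1},x_{k+1})\,dy_{k+1} $, $ k=0,\ldots,n $ (with $ x_0:=x $, $ x_{n+1}:=\tilx $), introducing auxiliary variables $ y_1,\ldots,y_{n+1} $. Regrouping the $ 2(n+1) $ resulting half-kernels by shared variable, each $ x_i $ with $ 1\le i\le n $ appears only in $ \HK(\tfrac{s_{i-1}}{2};y_i,x_i) $ and $ \HK(\tfrac{s_i}{2};x_i,y_{i+1}) $; since the integrand is jointly smooth in all variables and bounded, a Fubini interchange for the mixed Lebesgue/Riemann--Stieltjes integrals lets me push the $ y $-integrations outside and perform the $ x_i $-integrations (against $ d\Rtlim(x_i) $) inside. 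The $ i $-th such inner integral is $ \int_{\limT}\HK(\tfrac{s_{i-1}}{2};y_i,x_i)\HK(\tfrac{s_i}{2};x_i,y_{i+1})\,d\Rtlim(x_i) $, which equals $ \Ips(\tfrac{s_{i-1}}{2},\tfrac{s_i}{2};y_i,y_{i+1}) $ by \eqref{eqpf:Ips:core} (for Lebesgue-a.e.\ $ (y_i,y_{i+1}) $, the diagonal $ y_i=y_{i+1} $ being null). Collecting the pieces yields \eqref{eq:Ips}.

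\textbf{Main obstacle.} The delicate point lies entirely in Step 1: because $ \Rtlim $ is only $ \urt $-Hölder (hence generally not of bounded variation), the integration by parts must be read in the Young/Riemann--Stieltjes sense — valid here precisely because the heat-kernel factor is smooth — rather than classically; and because $ \Rtlim $ may jump at $ 0\in\limT $, one must use the $ \{0\} $-excluding convention so that $ \Rtlim(y_j,\,\Cdot\,) $ is a genuine continuous primitive of $ d\Rtlim $ even on whichever of the two arcs contains $ 0 $. Once these points are in place, the semigroup splitting and the Fubini interchange of Step 2 are routine bookkeeping.
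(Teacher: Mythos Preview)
Your proposal is correct and follows essentially the same route as the paper's proof: both arguments use the Chapman--Kolmogorov identity to split each $\HK(s_i;x_i,x_{i+1})$ at its half-time, thereby decoupling the $x_i$-integrals, and then reduce to the one-variable Stieltjes integration by parts over the two arcs $\Parti_1,\Parti_2$ with $\Rtlim(y_j,\Cdot)$ as primitive. Your write-up is in fact slightly more careful than the paper's in flagging the Young/Stieltjes interpretation and the role of the $\{0\}$-excluding convention, but the underlying mechanism is identical.
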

\begin{remark}
The value of $ \Ips(s,s';y_1,y_2) $ at $ y=\tily $ in~\eqref{eq:Ips} is irrelevant since the set has zero Lebesgue measure.
\end{remark}

\begin{proof}
In~\eqref{eq:SgrI},
use the semigroup property $ \HK(s_i;x_i,x_{i+1})=\int_{\limT} \HK(\frac{s_i}{2};x_i,y_i)\HK(\frac{s_i}{2};y_i,x_{i+1}) dy_i $
to rewrite
\begin{align}
	\label{eqlem:Ips}
	&\SgrI_n(\vec{s};x,\tilx)
	=
	\int_{\limT^{n+1}}
	\HK(\tfrac{s_0}{2};x,y_1) dy_1
		\Big( \prod_{i=1}^n \til{\Ips}_i(y_i,s_{i-1},y_{i+1},s_i) dy_{i+1} \Big)
	\HK(\tfrac{s_{n}}{2};y_{n+1},\tilx),
\\
	\label{eqlem:Ips:}
	&\til{\Ips}_i(y_i,s_{i-1},y_{i+1},s_i)
	:=
	\int_{\limT} \HK(\tfrac{s_{i-1}}{2};y_{i},x) \, d\Rtlim(x) \, \HK(\tfrac{s_{i}}{2};x,y_{i+1}).
\end{align}
The integral in \eqref{eqlem:Ips:} is over $ x\in\limT $ in the Riemann–Stieltjes sense.
Split the integral into integrals over $ \Parti_1(y_i,y_{i+1}) $ and $ \Parti_2(y_{i},y_{i+1}) $.
This gives $ \til{\Ips}_i=\til{\Ips}_{i,1}+\til{\Ips}_{i,2} $,
\begin{align}
	\notag
	\til{\Ips}_{i,j}(y_i,s_{i-1},y_{i+1},s_i)
	&:= \int_{\Parti_j(y_i,y_{i+1})} \HK(\tfrac{s_{i-1}}{2};y_{i},x) \, d\Rtlim(x) \, \HK(\tfrac{s_{i}}{2};x,y_{i+1})
\\
	\label{eq:Ips1}
	&= \int_{\Parti_j(y_{i},y_{i+1})} \HK(\tfrac{s_{i-1}}{2};y_{i},x) \, d\Rtlim(y_{i+j-1},x) \, \HK(\tfrac{s_{i}}{2};x,y_{i+1}),
\end{align}
where the equality in~\eqref{eq:Ips1} follows
since $ y_i $ and $ y_{i+1} $ are \emph{fixed} (the integral is in $ x $).
Then, in~\eqref{eq:Ips1}, integrate by parts (in $ x $), and add the results for $ j=1,2 $ together.
This gives $ \til{\Ips}_{i}= \Ips(\frac{s_{i-1}}{2},\frac{s_{i}}{2};y_i,y_{i+1}) $.
Inserting this back into~\eqref{eqlem:Ips} completes the proof.
\end{proof}

Equation~\eqref{eq:Ips} expresses $ \SgrI_n $ in terms of $ \Ips $.
We proceed to establish bounds on the latter.
Here we list a few bounds on $ \HK(t;x,x') $ that will be used in the subsequent analysis.
They are readily checked from the explicit expression~\eqref{eq:HK} of $ \HK $ (in the spirit of \eqref{eq:hk}).
Given any $ u\in(0,1] $ and $ T<\infty $, for all $ x,x',\tilx\in\limT $ and $ s\in[0,T] $,
\begin{subequations}
\begin{align}
	\label{eq:HK:int}
	\int_{\limT} \HK(s;x,\tilx) d\tilx &=1,
\\
	\label{eq:HKgx:int}
	\int_{\limT} |\partial_{\tilx}\HK(s;x,\tilx)| \dist_{\limT}(x,\tilx)^u d\tilx &\leq c(u,T) s^{-\frac{1-u}{2}},
\\
	\label{eq:HKgx':int}
	\int_{\limT} |\partial_{x}\HK(s;x,\tilx)| \dist_{\limT}(x,\tilx)^u d\tilx &\leq c(u,T) s^{-\frac{1-u}{2}},
\\
	\label{eq:HK:hold}
	\int_{\limT} \HK(s;x,\tilx) \dist_{\limT}(x,\tilx)^u d\tilx &\leq c(T) s^{-\frac{u}2},
\\
	\label{eq:HK:uni}
	\HK(s;x,\tilx) &\leq c(T) s^{-\frac12},
\\
	\label{eq:HK:uni:}
	\HK(s;x,\tilx) \dist_{\limT}(x,\tilx)^u &\leq c(T) s^{-\frac{1-u}2},
\\
	\label{eq:HKg:int}
	\int_{\limT} |\HK(s;x,\tilx)-\HK(s;x',\tilx)| d\tilx &\leq c(u,T) \dist_{\limT}(x,x')^{u}s^{-\frac{u}{2}},
\\
	\label{eq:HKg:uni}
	|\HK(s;x,\tilx)-\HK(s;x',\tilx)| &\leq c(u,T)s^{-\frac{1+u}{2}}.
\end{align}
\end{subequations}

\begin{lemma}
\label{lem:Ipsbd}
Given any $ v\in(0,\urt) $ and $ T<\infty $, for all $s,s' \in [0,T]$,
\begin{align*}
	\int_{\limT} |\Ips(s,s';y,y')| dy'
	\leq
	c(v,T) \norm{\Rtlim}_{C^{\urt}[0,1]} \big( {s}^{-(1-v)/2}+{s'}^{-(1-v)/2} \big).
\end{align*}
\end{lemma}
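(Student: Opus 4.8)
The plan is to estimate $\int_{\limT}|\Ips(s,s';y,y')|\,dy'$ by bounding, term by term, the three families appearing in the integration-by-parts representation~\eqref{eq:Ips:}: the boundary contributions evaluated at $x=\Midp_j(y,y')$, the bulk integrals carrying $\partial_x\HK(s;y,x)$, and the bulk integrals carrying $\partial_x\HK(s';x,y')$, each summed over $j\in\{1,2\}$. In each case I bound the integrand in absolute value by a nonnegative expression and perform the $y'=y_2$ integration last.

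The one analytic input on the potential is the pointwise estimate
\begin{align*}
	|\Rtlim(y_j,x)|\le c(v)\,\norm{\Rtlim}_{C^{\urt}[0,1]}\,\dist_{\limT}(y_j,x)^{v},
	\qquad x\in\Parti_j(y,y'),\quad v\le\urt,
\end{align*}
which follows from the definition of $[\,\Cdot\,]_{C^{\urt}(\limT)}$ in~\eqref{eq:Hold} together with property~\eqref{eq:Parti}: on $\Parti_j(y,y')$ the point $y_j$ is the nearer of the two base points, and $\Parti_j(y,y')$ being a half-circle the geodesic from $y_j$ to $x$ stays inside it, so the Hölder increment of $\Rtlim$ along that geodesic controls $\Rtlim(y_j,x)$ up to the constant $\Rtlim(1)-\Rtlim(0)$, which is absorbed into $\norm{\Rtlim}_{L^{\infty}[0,1]}\le\norm{\Rtlim}_{C^{\urt}[0,1]}$.

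Granting this, the two bulk families are routine. For the $\partial_x\HK(s;y,x)$ terms one uses the bound with base point $y$ on $\Parti_1$ directly, while on $\Parti_2$ one first invokes~\eqref{eq:Parti} to write $\dist_{\limT}(y_2,x)\le\dist_{\limT}(y_1,x)=\dist_{\limT}(y,x)$; pairing $\dist_{\limT}(y,x)^{v}$ with $|\partial_x\HK(s;y,x)|$ and integrating in $x$ over $\limT$ gives $c(v,T)\,s^{-(1-v)/2}$ via~\eqref{eq:HKgx':int}, after which the remaining factor $\HK(s';x,y')$ integrates to $1$ in $y'$ by~\eqref{eq:HK:int}. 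Symmetrically, the $\partial_x\HK(s';x,y')$ terms produce $c(v,T)\,s'^{-(1-v)/2}$: one pairs $\dist_{\limT}(x,y')^{v}$ with $|\partial_x\HK(s';x,y')|=|\partial_{y'}\HK(s';x,y')|$, integrates in $y'$ using~\eqref{eq:HKgx:int}, and then $\HK(s;y,x)$ integrates to $1$ in $x$ by~\eqref{eq:HK:int} (on $\Parti_1$ one again passes to $\dist_{\limT}(y',x)$ via~\eqref{eq:Parti}).

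The boundary terms are the delicate part, and I expect them to be the main obstacle. Here one uses that $\dist_{\limT}(y,\Midp_j(y,y'))$ and $\dist_{\limT}(\Midp_j(y,y'),y')$ each equal half of a definite arc length, so the two $\HK$-factors are controlled by~\eqref{eq:HK:uni}--\eqref{eq:HK:uni:}; one bounds $|\Rtlim(y_j,\Midp_j)|$ by the estimate above, and, after the change of variables $y'\mapsto|[y,y']|=:r$, reduces the $y'$-integral to a one-dimensional integral of the schematic shape $\int_0^1 r^{v}\,\HK(s;0,r/2)\,\HK(s';0,r/2)\,dr$, which by the semigroup property and~\eqref{eq:HK:uni}, \eqref{eq:HK:hold} is $\lesssim(s\wedge s')^{v/2}(s\vee s')^{-1/2}\lesssim s^{-(1-v)/2}+s'^{-(1-v)/2}$. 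The subtlety is that both the partition $\Parti_j(y,y')$ and the base point $y_j$ depend on $y'$, and on the side of $y_j$ facing away from $\Midp_j$ the counterclockwise primitive $\Rtlim(y_j,\Midp_j)$ is only of size $\norm{\Rtlim}_{L^\infty}$ rather than $\dist_{\limT}^{v}$; this far-side piece has to be handled by first adding the $j=1$ and $j=2$ boundary contributions evaluated at the same $\Midp_k$, so that their difference of primitives telescopes (the dangerous constant $\Rtlim(1)-\Rtlim(0)$ dropping out) and one is left with $\pm\Rtlim(y,y')=O(\norm{\Rtlim}_{C^\urt}r^{v})$ on the scale set by the Gaussian factors, before applying the estimate above. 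Summing the three contributions yields the stated bound.
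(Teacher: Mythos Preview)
Your plan is essentially the paper's: bound the primitive pointwise by $\dist_{\limT}(y_j,x)^{v}\norm{\Rtlim}_{C^{\urt}}$, then treat the boundary piece and the two bulk families separately, routing the distance weight via~\eqref{eq:Parti} to whichever heat factor carries the derivative. Your bulk estimates and your explicit change-of-variables treatment of the boundary integral are correct and in fact more careful than the paper's one-line sketch.

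The gap is in the pointwise bound itself. You correctly note that on the half $[\Midp_j,y_j)\subset\Parti_j$ the counterclockwise increment $\Rtlim(y_j,x)$ traverses the long arc and is only $O(\norm{\Rtlim}_{L^{\infty}})$, not $O(\dist_{\limT}(y_j,x)^{v})$. But your two remedies are inconsistent: for the bulk integrals you say the residual constant $\Rtlim(1)-\Rtlim(0)$ is ``absorbed into $\norm{\Rtlim}_{L^{\infty}}$'' while still asserting the $\dist^{v}$ decay --- this does not follow, and the undecayed piece paired with $\int_{\Parti_j}|\partial_x\HK(s;y,x)|\,dx$ produces $s^{-1/2}$ rather than $s^{-(1-v)/2}$; your telescoping at the endpoints is valid but leaves the bulk unaddressed. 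The clean fix (which the paper also glosses over) is that the integration by parts on the interval $\Parti_j$ must use the primitive of $d\Rtlim$ that is \emph{continuous on $\Parti_j$} and vanishes at $y_j$: this equals $\Rtlim(y_j,x)$ on $[y_j,\Midp_{j+1})$ but $-\Rtlim(x,y_j)$ on $[\Midp_j,y_j)$. On both halves the relevant arc is the geodesic, so this primitive is bounded by $\dist_{\limT}(y_j,x)^{\urt}[\Rtlim]_{C^{\urt}(\limT)}$ uniformly on $\Parti_j$, and neither the absorption nor the telescoping is needed.
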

\begin{proof}
Recall the definition of $ \norm{\Cdot}_{C^{u}[0,1]} $ and $ [\Cdot]_{C^u(\limT)} $ from~\eqref{eq:Hold}.
From the expression~\eqref{eq:RtLim} of $ \Rtlim(x_1,x_2) $ and the property~\eqref{eq:Parti},
it is straightforward to check that
\begin{align*}
	|\Rtlim(y_j,x)|
	\leq
	\dist_{\limT}(y_j,x)^{\urt} \, [ \Rtlim ]_{C^{\urt}(\limT)}
	\leq
	\dist_{\limT}(y_j,x)^{v} \, \norm{ \Rtlim }_{C^{\urt}[0,1]},
	\qquad
	\forall x\in\Parti_j(y_1,y_2),
	\
	j=1,2.
\end{align*}
Inserting this bound into~\eqref{eq:Ips:} gives
\begin{align}
	\label{eqlem:Ipsbd:1}
	|\Ips(s,s';y_1,y_2)|
	\leq
	\norm{ \Rtlim }_{C^{\urt}[0,1]}
	\sum_{j=1}^2
	\Bigg(
		&\sum_{x\in\{\Midp_j(y_1,y_2)\}_{j=1}^2}  \HK(s;y_1,x) \, \dist_{\limT}(y_j,x)^v \, \HK(s';x,y_2)
\\
	\label{eqlem:Ipsbd:2}
	&+
		\int_{\Parti_j(y_1,y_2)} \,\big|\partial_{x}\HK(s;y_1,x)\big| \dist_{\limT}(y_j,x)^v \,\HK(s';x,y_2) dx
\\
	\label{eqlem:Ipsbd:3}
	&+
		\int_{\Parti_j(y_1,y_2)} \HK(s;y_1,x) \, \dist_{\limT}(y_j,x)^v \big|\partial_{x}\HK(s;y_2,x)\big|  \,  dx
	\Bigg).
\end{align}
In~\eqref{eqlem:Ipsbd:2} use~\eqref{eq:Parti}
to bound $ \dist_{\limT}(y_j,x) $ by $ \dist_{\limT}(y_1,x) $,
and in~\eqref{eqlem:Ipsbd:3} use~\eqref{eq:Parti} to bound $ \dist_{\limT}(y_j,x) $ by $ \dist_{\limT}(y_2,x) $.
This way $ \dist_{\limT} $ has the same $ y $ variable as $ \partial_{x}\HK $.
We now have
\begin{align}
\label{eq:Ips:bd:}
	&|\Ips(s,s';y_1,y_2)|
	\leq
	\norm{ \Rtlim }_{C^{\urt}[0,1]}
	\Bigg(
	2
	\sum_{x\in\{\Midp_j(y_1,y_2)\}_{j=1}^2}  \HK(s;y_1,x) \dist_{\limT}(y_j,x)^v \HK(s';x,y_2)
\\
\label{eq:Ips:bd::}
		&
		+
		\int_{\limT} \, \big|\partial_{x}\HK(s;y_1,x)\big| \dist_{\limT}(y_1,x)^v \, \HK(s';x,y_2) dx
		+
		\int_{\limT} \HK(s;y_1,x) \, \dist_{\limT}(y_2,x)^v \big|\partial_{x}\HK(s;x,y_2)\big| \, dx
	\Bigg).
\end{align}
Integrate~\eqref{eq:Ips:bd:}--\eqref{eq:Ips:bd::} over $ y_2\in\limT $,
use \eqref{eq:HK:int}, \eqref{eq:HK:uni}--\eqref{eq:HK:uni:} to bound the terms in~\eqref{eq:Ips:bd:},
and use \eqref{eq:HK:int}--\eqref{eq:HKgx':int} to bound the terms in~\eqref{eq:Ips:bd::}.
We then conclude the desired result
\begin{align*}
	\int_{\limT} |\Ips(s,s';y_1,y_2)| dy_2
	\leq
	c(v,T) \norm{\Rtlim}_{C^{\urt}[0,1]} \big( s^{-(1-v)/2} + {s'}^{-(1-v)/2} + s^{-(1-v)/2} + {s'}^{-(1-v)/2} \big).
\end{align*}
\end{proof}

Based on Lemmas~\ref{lem:Ips}--\ref{lem:Ipsbd}, we now establish bounds on $ \SgrI_n $.
Recall the notation $ \Sigma_{n}(t) $ from~\eqref{eq:Sigman} and $ d^n\vec{s} $ from~\eqref{eq:ds}.
\begin{lemma}
\label{lem:SgrIbd}
Given any $ u\in(0,1] $ and $ v\in(0,\urt) $, we have,
for all $ x,x',\tilx\in\limT $, $ t\in[0,T] $, and $ n \geq 1 $,
\begin{enumerate}[label=(\alph*),leftmargin=7ex]
\item \label{lem:SgrI:int}
	$
	\displaystyle
	\int_{\Sigma_n(t)} \int_{\limT} |\SgrI_n(\vec{s};x,\tilx)| d^n\vec{s}\, d\tilx
	\leq
	t^{\frac{(1+v)n}{2}}\frac{ (c(v,T) \norm{\Rtlim}_{C^{\urt}[0,1]} )^n}{\Gamma(\frac{(1+v)n+2}{2})},
	$
\item \label{lem:SgrI:gint}
	$
	\displaystyle	
	\int_{\Sigma_n(t)} \int_{\limT} |\SgrI_n(\vec{s};x,\tilx)-\SgrI_n(\vec{s};x',\tilx)| d^n\vec{s}\,d\tilx
	\leq
	\dist_{\limT}(x,x')^{u} t^{\frac{(1+v)n-u}{2}} \frac{(c(u,v,T) \norm{\Rtlim}_{C^{\urt}[0,1]})^n }{\Gamma(\frac{(1+v)n+2-u}{2})},
	$
\item \label{lem:SgrI:sup}
	$
	\displaystyle
	\int_{\Sigma_n(t)} |\SgrI_n(\vec{s};x,\tilx)| d^n\vec{s}
	\leq
	t^{\frac{(1+v)n-1}{2}} \frac{ (c(v,T) \norm{\Rtlim}_{C^{\urt}[0,1]} )^n}{\Gamma(\frac{(1+v)n+1}{2})},
	$
\item \label{lem:SgrI:gsup}
	$
	\displaystyle
	\int_{\Sigma_n(t)} |\SgrI_n(\vec{s};x,\tilx)-\SgrI_n(\vec{s};x',\tilx)| d^n\vec{s}
	\leq
	\dist_{\limT}(x,x')^{u} t^{\frac{(1+v)n-1-u}{2}} \frac{(c(u,v,T) \norm{\Rtlim}_{C^{\urt}[0,1]} )^n }{\Gamma(\frac{(1+v)n+1-u}{2})}.
	$
\end{enumerate}
\end{lemma}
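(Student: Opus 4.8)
The plan is to build on the iterated‑integral representation of $\SgrI_n$ from Lemma~\ref{lem:Ips}: by~\eqref{eq:Ips}, $\SgrI_n(\vec s;x,\tilx)$ is an $(n+1)$‑fold integral over $y_1,\ldots,y_{n+1}\in\limT$ of $\HK(\tfrac{s_0}{2};x,y_1)\big(\prod_{i=1}^{n}\Ips(\tfrac{s_{i-1}}{2},\tfrac{s_i}{2};y_i,y_{i+1})\big)\HK(\tfrac{s_n}{2};y_{n+1},\tilx)$. The idea is to integrate out the variables $y_{n+1},y_n,\ldots,y_1$ (and, for parts~\ref{lem:SgrI:int}--\ref{lem:SgrI:gint}, also $\tilx$) one at a time, peeling off one factor and collecting a negative power of the relevant time variable at each step, and then to evaluate the remaining integral over the simplex $\Sigma_n(t)$ by the Dirichlet formula~\eqref{eq:dirichlet}.

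Set $\alpha:=\tfrac{1-v}{2}$, which is admissible in Lemma~\ref{lem:Ipsbd} since $v\in(0,\urt)$. For parts~\ref{lem:SgrI:int}--\ref{lem:SgrI:gint} I would first integrate out $\tilx$ via $\int_\limT\HK(\tfrac{s_n}{2};y_{n+1},\tilx)\,d\tilx=1$ from~\eqref{eq:HK:int}; for parts~\ref{lem:SgrI:sup}--\ref{lem:SgrI:gsup}, where there is no $\tilx$‑integral, I would instead use the pointwise bound $\HK(\tfrac{s_n}{2};y_{n+1},\tilx)\le c(T)s_n^{-1/2}$ from~\eqref{eq:HK:uni}. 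Next, integrating $y_{n+1},\ldots,y_2$ in this order, each step is handled by Lemma~\ref{lem:Ipsbd}: $\int_\limT|\Ips(\tfrac{s_{i-1}}{2},\tfrac{s_i}{2};y_i,y_{i+1})|\,dy_{i+1}\le c(v,T)\norm{\Rtlim}_{C^{\urt}[0,1]}(s_{i-1}^{-\alpha}+s_i^{-\alpha})$ uniformly in $y_i$, so the steps genuinely chain together. Finally, the $y_1$‑integral meets the factor $\HK(\tfrac{s_0}{2};x,y_1)$: for parts~\ref{lem:SgrI:int},\ref{lem:SgrI:sup} it integrates to $1$ by~\eqref{eq:HK:int}, whereas for parts~\ref{lem:SgrI:gint},\ref{lem:SgrI:gsup}, where only this first factor depends on $x$, the difference becomes $|\HK(\tfrac{s_0}{2};x,y_1)-\HK(\tfrac{s_0}{2};x',y_1)|$, whose $L^1_{y_1}$‑norm is at most $c(u,T)\dist_{\limT}(x,x')^u s_0^{-u/2}$ by~\eqref{eq:HKg:int}. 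Altogether this produces the pointwise estimate $|\SgrI_n(\vec s;x,\tilx)|\le(c\norm{\Rtlim}_{C^{\urt}[0,1]})^n\,E(\vec s)\prod_{i=1}^{n}(s_{i-1}^{-\alpha}+s_i^{-\alpha})$ (and the analogue for the $x$‑difference in parts~\ref{lem:SgrI:gint},\ref{lem:SgrI:gsup}), where the extra factor $E(\vec s)$ is $1$ for~\ref{lem:SgrI:int}, $\dist_{\limT}(x,x')^us_0^{-u/2}$ for~\ref{lem:SgrI:gint}, $s_n^{-1/2}$ for~\ref{lem:SgrI:sup}, and $\dist_{\limT}(x,x')^us_0^{-u/2}s_n^{-1/2}$ for~\ref{lem:SgrI:gsup}.

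It then remains to integrate this bound over $\Sigma_n(t)$. Expanding $\prod_{i=1}^n(s_{i-1}^{-\alpha}+s_i^{-\alpha})=\sum_{\epsilon\in\{0,1\}^n}\prod_{i=1}^n s_{i-1+\epsilon_i}^{-\alpha}$ into $2^n$ monomials, each monomial is $\prod_{j=0}^n s_j^{-\alpha m_j}$ with nonnegative integers $m_j$ satisfying $\sum_j m_j=n$, and — since the $i$‑th factor uses only the indices $i-1,i$ — also $m_j\le2$ for $1\le j\le n-1$ and $m_0,m_n\le1$. Folding in $E(\vec s)$ adds $u/2$ to the exponent of $s_0$ (parts~\ref{lem:SgrI:gint},\ref{lem:SgrI:gsup}) and/or $1/2$ to that of $s_n$ (parts~\ref{lem:SgrI:sup},\ref{lem:SgrI:gsup}); the key point is that every resulting exponent is then strictly below $1$, since $\alpha m_j\le 2\alpha=1-v<1$, $\alpha+\tfrac u2=\tfrac{1+u-v}{2}<1$ (using $u\le1$, $v>0$), and $\alpha+\tfrac12=1-\tfrac v2<1$. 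Hence~\eqref{eq:dirichlet} applies to each of the $2^n$ monomials, and because the sum of all exponents is the same for every monomial (it equals $\alpha n$ plus the contribution of $E$), the resulting power of $t$ is common to all of them; a short computation using $1-\alpha=\tfrac{1+v}{2}$ shows it equals the exponent of $t$ asserted in~\ref{lem:SgrI:int}--\ref{lem:SgrI:gsup}, and the denominator Gamma factor is exactly the one claimed. The numerator Gamma factors are each drawn from a finite list depending only on $u,v$ (values such as $\Gamma(1)$, $\Gamma(\tfrac{1+v}{2})$, $\Gamma(v)$, plus two or three analogous values at the endpoints), so their product is at most $c(u,v)^n$; together with the $2^n$ monomials this is absorbed into the overall constant, completing all four estimates.

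The genuinely laborious part is the bookkeeping in the last two paragraphs: keeping track of which time variable sits in which factor of~\eqref{eq:Ips} (so that interior variables occur in at most two $\Ips$‑factors and the endpoints $s_0,s_n$ in at most one), and then checking that, after adding the endpoint contributions from $E(\vec s)$, every exponent fed into the Dirichlet formula stays below $1$ — this is precisely where $v<\urt$ and $u\le1$ enter — and that the power of $t$ and the Gamma denominator come out exactly as asserted. Everything else is mechanical.
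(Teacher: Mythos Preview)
Your proposal is correct and follows essentially the same approach as the paper: start from the representation~\eqref{eq:Ips}, peel off the $y$-variables in order using~\eqref{eq:HK:int}, \eqref{eq:HK:uni}, \eqref{eq:HKg:int} and Lemma~\ref{lem:Ipsbd}, arrive at the same pointwise bounds (the paper's \eqref{eqSglem:a}--\eqref{eqSglem:d}), and then expand the product $\prod_{i=1}^n(s_{i-1}^{-\alpha}+s_i^{-\alpha})$ into $2^n$ monomials and apply the Dirichlet formula~\eqref{eq:dirichlet}. Your bookkeeping of the exponents (checking $m_j\le2$ interior, $m_0,m_n\le1$, and that each resulting exponent stays below $1$) is slightly more explicit than the paper's, which simply bounds each numerator Gamma factor by $\Gamma(v)$ using monotonicity of $\Gamma$ on $(0,1]$; the content is the same.
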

\begin{proof}
The proof begins with the given expression~\eqref{eq:Ips} of $ \SgrI_n $:
\begin{align}
	\begin{split}
	\label{eqSglem:SgrIbd:Ips}
	&\SgrI_n(\vec{s};x,\tilx)
	=
	\int_{\limT^{n+1}}
	\HK(\tfrac{s_0}{2};x,y_1) \, dy_1 \, \Ips(\tfrac{s_{0}}{2},\tfrac{s_{1}}{2};y_{1},y_{2}) \, dy_2
	\cdots
\\
	&\hphantom{\SgrI_n(\vec{s};x,\tilx)=\int_{\limT^{n+1}}\HK(\tfrac{s_0}{2};x,y_1) \, dy_1 \,}
	\, dy_{n} \, \Ips(\tfrac{s_{n-1}}{2},\tfrac{s_{n}}{2};y_{n},y_{n+1}) \, dy_{n+1} \,
	\HK(\tfrac{s_{n}}{2};y_{n+1},\tilx),
	\end{split}
\\
	\begin{split}
	&\SgrI_n(\vec{s};x,\tilx) - \SgrI_n(\vec{s};x',\tilx)
	=
	\int_{\limT^{n+1}}
	\big( \HK(\tfrac{s_0}{2};x,y_1)- \HK(\tfrac{s_0}{2};x',y_1) \big) \, dy_1
\\
	&
	\label{eqSglem:SgrIbd:Ipsg:}	
	\hphantom{\SgrI_n(\vec{s};x,\tilx) - \SgrI_n(\vec{s};x',\tilx)=}
	\Ips(\tfrac{s_{0}}{2},\tfrac{s_{1}}{2};y_{1},y_{2}) \, dy_2
	\cdots
	\, dy_{n} \, \Ips(\tfrac{s_{n-1}}{2},\tfrac{s_{n}}{2};y_{n},y_{n+1}) \, dy_{n+1} \,
	\HK(\tfrac{s_{n}}{2};y_{n+1},\tilx).
	\end{split}
\end{align}

For~\ref{lem:SgrI:int}--\ref{lem:SgrI:gint},
integrate \eqref{eqSglem:SgrIbd:Ips}--\eqref{eqSglem:SgrIbd:Ipsg:} over $ \tilx,y_{n+1},\ldots, y_1\in\limT $ \emph{in order}.
Use~\eqref{eq:HK:int} for the integral over $ \tilx $,
use Lemma~\ref{lem:Ipsbd} subsequently for the integrals over $ y_{n},\ldots,y_{2} $,
and use~\eqref{eq:HK:int} and \eqref{eq:HKg:int} for the integral over $ y_1 $.
We then obtain
\begin{subequations}
\label{eqSglem:}
\begin{align}
	\label{eqSglem:a}
	\int_{\limT} |\SgrI_n(\vec{s};x,\tilx)| d\tilx
	&\leq
	\big( c(v,T) \norm{\Rtlim}_{C^{\urt}[0,1]} \big)^n \prod_{i=1}^n \big( s_{i-1}^{-(1-v)/2}+s_{i}^{-(1-v)/2} \big),
\\
	\label{eqSglem:b}	
	\int_{\limT} |\SgrI_n(\vec{s};x,\tilx)-\SgrI_n(\vec{s};x',\tilx)| d\tilx
	&\leq
	\dist_{\limT}(x,x')^{u}
	\big( c(u,v,T)\norm{ \Rtlim }_{C^{\urt}[0,1]} \big)^n s_0^{-\frac{u}{2}} \prod_{i=1}^n \big( s_{i-1}^{-(1-v)/2}+s_{i}^{-(1-v)/2} \big).
\end{align}
\end{subequations}
For~\ref{lem:SgrI:sup}--\ref{lem:SgrI:gsup},
in~\eqref{eqSglem:SgrIbd:Ips}--\eqref{eqSglem:SgrIbd:Ipsg:},
use~\eqref{eq:HK:uni} to bound $ \HK(\frac{s_n}{2};y_{n+1},\tilx) $ by $ c s_{n}^{-1/2} $,
and then integrate the result over $ y_{n+1},\ldots, y_1\in\limT $ in order.
Similarly to the preceding, we have
\begin{align}
	\tag{\ref*{eqSglem:}c}
	|\SgrI_n(\vec{s};x,\tilx)|
	&\leq
	\big( c(u,T)\norm{ \Rtlim }_{C^{\urt}[0,1]} \big)^n \prod_{i=1}^n \big( s_{i-1}^{-(1-u)/2}+s_{i}^{-(1-u)/2} \big) s_n^{-\frac12},
\\
	\tag{\ref*{eqSglem:}d}
	\label{eqSglem:d}
	|\SgrI_n(\vec{s};x,\tilx)-\SgrI_n(\vec{s};x',\tilx)|
	&\leq
	\dist_{\limT}(x,x')^{u}
	\big( c(u,v,T)\norm{ \Rtlim }_{C^{\urt}[0,1]} \big)^n s_0^{-\frac{u}{2}} \prod_{i=1}^n \big( s_{i-1}^{-(1-v)/2}+s_{i}^{-(1-v)/2} \big) s_n^{-\frac12}.
\end{align}
Next we will explain how to integrate the r.h.s.\ of~\eqref{eqSglem:a}--\eqref{eqSglem:d} to establish the desired bounds.
We begin with~\eqref{eqSglem:a}.
Expand the $ n $-fold product on the r.h.s.\ of~\eqref{eqSglem:a} into a sum of size $ 2^n $:
\begin{align}
	\label{eq:sumexpansion}
	\prod_{i=1}^n \big( s_{i-1}^{-(1-v)/2}+s_{i}^{-(1-v)/2} \big)
	=
	\sum_{\vec{b}}
	\prod_{i=0}^n s_{i}^{-\frac{1-v}{2}(\ind\set{b_{i-1/2}=i}+\ind\set{b_{i+1/2}=i})},	
\end{align}
where the sum goes over $ \vec{b}=(b_{1/2},b_{3/2},\ldots,b_{n-1/2})\in \{0,1\}\times\{1,2\}\times\cdots\times\{n-1,n\} $,
with the convention that $ b_{-1/2}:=-1 $ and $ b_{n+1/2}:=n+1 $.
Insert~\eqref{eq:sumexpansion} into~\eqref{eqSglem:a}, and integrate both sides over $ \vec{s}\in\Sigma_n(t) $.
%
With the aid of the Dirichlet formula~\eqref{eq:dirichlet}, we obtain
\begin{align*}
	\int_{\Sigma_n(t)\times\limT} |\SgrI_n(\vec{s};x,\tilx)| d^n\vec{s} d\tilx
	\leq
	\big( c(v,T)\norm{ \Rtlim }_{C^{\urt}[0,1]} \big)^n
	\sum_{\vec{b}}
	\frac{t^{(1+v)n/2} \prod_{i=0}^n\Gamma\big(1-\frac{1-v}{2}(\ind\set{b_{i-1/2}=i}+\ind\set{b_{i+1/2}=i})\big)}{\Gamma(\frac{1+v}{2}n+1)}.
\end{align*}
Since $ \Gamma(x) $ is decreasing for $ x\in(0,1] $,
we have $ \Gamma(1-\frac{1-v}{2}(\ind\set{b_{i-1/2}=i}+\ind\set{b_{i+1/2}=i})) \leq \Gamma(v) $.
From this we conclude the desired result for~\ref{lem:SgrI:int}:
\begin{align*}
	\int_{\Sigma_n(t)\times\limT} |\SgrI_n(\vec{s};x,\tilx)| d^n\vec{s} d\tilx
	\leq
	\big(  c(v,T)\norm{ \Rtlim }_{C^{\urt}[0,1]}  \big)^n 2^{n}  \frac{t^{(1+v)n/2}\Gamma(v)^{n+1}}{\Gamma(\frac{1+v}{2}n+1)}
	\leq
	t^{\frac{(1+v)n}{2}}\frac{ (c(v,T) \norm{\Rtlim}_{C^{\urt}[0,1]} )^n}{\Gamma(\frac{(1+v)n+2}{2})}.
\end{align*}
As for~\ref{lem:SgrI:gint}--\ref{lem:SgrI:gsup},
integrating \eqref{eqSglem:b}--\eqref{eqSglem:d} over $ \vec{s}\in\Sigma_n(t) $,
with the aid of the Dirichlet formula~\eqref{eq:dirichlet}, one obtains the desired results via the same procedure as in the preceding.
We do not repeat the argument.
\end{proof}

Given Lemma~\ref{lem:SgrIbd}, we now construct the semigroup $ \Sg(t) $,
whereby verifying the heuristic given earlier.
Recall $ \Sgr_n $ from~\eqref{eq:Sgr}.
\begin{proposition}
\label{prop:Sg}
Fix  $ u\in(0,1] $ and $ T<\infty $.
The series $ \Sgr(t;x,\tilx) := \sum_{n=1}^\infty \Sgr_n(t;x,\tilx) $ converges uniformly over $ x,\tilx\in\limT $ and $ t\in[0,T] $,
and satisfies the bounds
\begin{enumerate}[label=(\alph*),leftmargin=7ex]
\item \label{prop:Sg:int}
	$
		\displaystyle
		\int_{\Sigma_n(t)\times\limT} |\Sgr(t;x,\tilx)| d\tilx
		\leq
		c(T),
	$
\item \label{prop:Sgg:int}
	$
		\displaystyle
		\int_{\Sigma_n(t)\times\limT} |\Sgr(t;x,\tilx)-\Sgr(t;x',\tilx)|  d\tilx
		\leq
		\dist_{\limT}(x,x')^u c(u,T),
	$
\item \label{prop:Sg:sup}
	$
		\displaystyle
		|\Sgr(t;x,\tilx)|
		\leq
		c(T),
	$
\item \label{prop:Sgg:sup}
	$
		\displaystyle
		|\Sgr(t;x,\tilx)-\Sgr(t;x',\tilx)|
		\leq
		\frac{\dist_{\limT}(x,x')^u}{t^{u/2}} c(u,T),
	$
\end{enumerate}
for all $ x,\tilx\in\limT $ and $ t\in[0,T] $.
Furthermore, with $ \Sg(t;x,\tilx) := \HK(t;x,\tilx)+\Sgr(t;x,\tilx) $ (as in~\eqref{eq:Sgker}),
\begin{align*}
	\big( \Sg(t) f \big)(x) := \int_{\limT} \Sg(t;x,\tilx) f(\tilx) d\tilx
\end{align*}
defines an operator $ \Sg(t): C(\limT)\to C(\limT) $ for each $ t\in[0,\infty) $, and $ \Sg(t) $ is, in fact, the semigroup of $ \Ham $.
\end{proposition}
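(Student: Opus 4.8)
I plan to prove the proposition in three stages: deduce the four bounds (a)--(d) from Lemma~\ref{lem:SgrIbd} by summing over $n$; check that the resulting kernel defines a bounded operator on $C(\limT)$; and identify that operator with $e^{t\Ham}$ by regularizing the potential $\Rtlim$. For the first stage, since $\Sgr_n(t;x,\tilx)=\int_{\Sigma_n(t)}\SgrI_n(\vec s;x,\tilx)\,d^n\vec s$, I would bound $|\Sgr_n|$, its $x$-increment, and the corresponding $\tilx$-integrals directly by the four quantities in Lemma~\ref{lem:SgrIbd}\ref{lem:SgrI:int}--\ref{lem:SgrI:gsup}, and then sum over $n\ge1$. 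Each $n$-th term has the shape $(\text{a power of }t)\times\Gamma(\tfrac{1+v}{2}n+b)^{-1}\,(c\,\norm{\Rtlim}_{C^{\urt}[0,1]})^n$; after bounding $t\le T$ --- first pulling out a factor $t^{-u/2}$ in case (d), where the $n=1$ exponent can be negative --- the series is dominated by an entire Mittag--Leffler-type series $\sum_n z^n/\Gamma(\tfrac{1+v}{2}n+b)$, which converges and produces the stated constants $c(T)$, $c(u,T)$. The same domination is uniform over $(t,x,\tilx)\in[0,T]\times\limT^2$, giving the claimed uniform convergence of $\sum_n\Sgr_n$; and because the $n=1$ term carries a strictly positive power of $t$, one also obtains $\sup_{x,\tilx}|\Sgr(t;x,\tilx)|\to0$ and $\sup_x\int_{\limT}|\Sgr(t;x,\tilx)|\,d\tilx\to0$ as $t\to0$.

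For the second stage, boundedness $\Sg(t):C(\limT)\to C(\limT)$ follows from $\int_{\limT}\HK(t;x,\tilx)\,d\tilx=1$ together with bound (a), and Hölder continuity of $\Sg(t)f$ follows from~\eqref{eq:HKg:int} and bound (b); since $\Sigma_n(0)=\emptyset$ we have $\Sgr_n(0;\cdot)\equiv0$, so $\Sg(0)=\Id$, and with the $t\to0$ estimate above the family $\{\Sg(t)\}$ is strongly continuous at $0$. For the third stage I would argue by regularization. Let $\Rtlim_\e\in C^\infty[0,1]$ be mollifications of $\Rtlim$ with $\Rtlim_\e\to\Rtlim$ uniformly and $\sup_\e\norm{\Rtlim_\e}_{C^v[0,1]}<\infty$ for some $v\in(0,\urt)$. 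For the smooth bounded potential $\Rtlim_\e'$, the operator $\Ham_\e:=\tfrac12\partial_{xx}+\Rtlim_\e'$ is a bounded perturbation of $\tfrac12\partial_{xx}$, so $e^{t\Ham_\e}$ is given by the norm-convergent Dyson series, whose integral kernel $\Sg_\e(t;x,\tilx)$ is precisely~\eqref{eq:Sgker}--\eqref{eq:SgrI} with $d\Rtlim$ replaced by $\Rtlim_\e'\,dx$. The bounds of the first stage hold for $\Sg_\e$ uniformly in $\e$ (they depend on $\Rtlim_\e$ only through $\norm{\Rtlim_\e}_{C^v[0,1]}$), and for each fixed $\vec s$ with all coordinates positive, vague convergence $\Rtlim_\e'\,dx\to d\Rtlim$ tested against the continuous kernel $\prod_{i=0}^n\HK(s_i;x_i,x_{i+1})$ gives convergence of the corresponding $\SgrI_n$; dominated convergence in $\vec s$ (using the uniform bounds) and summation over $n$ then give $\Sg_\e(t;x,\tilx)\to\Sg(t;x,\tilx)$, hence $\Sg_\e(t)\to\Sg(t)$ strongly on $L^2(\limT)$ and on $C(\limT)$, and, passing to the limit in $\Sg_\e(t+t')=\Sg_\e(t)\Sg_\e(t')$, the semigroup property for $\Sg(t)$. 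On the other hand $\Ham_\e\to\Ham$ in the strong resolvent sense: the forms $F_{\Rtlim_\e}$ are uniformly coercive (the inequality $F_{\Rtlim_\e}(f,f)+c\langle f,f\rangle\ge\tfrac1c\langle f,f\rangle_{H^1(\limT)}$ holds with $c$ independent of $\e$ by~\eqref{fn:LinfinH1} and $\sup_\e\norm{\Rtlim_\e}_{L^\infty(\limT)}<\infty$) and converge to $F_\Rtlim$ in the sense of Mosco --- for (M1) take constant recovery sequences and use $\Rtlim_\e\to\Rtlim$ uniformly, and for (M2) use the uniform coercivity, the compact embeddings $H^1(\limT)\hookrightarrow L^2(\limT),C(\limT)$, and weak lower semicontinuity of $f\mapsto\tfrac12\norm{f'}_{L^2(\limT)}^2$ --- so by Trotter--Kato $e^{t\Ham_\e}\to e^{t\Ham}$ strongly on $L^2(\limT)$. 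Comparing the two limits gives $\Sg(t)=e^{t\Ham}$ on $L^2(\limT)$; since $\Sg(t)$ also preserves $C(\limT)$, it is the semigroup of $\Ham$.

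The main obstacle is the identification step, the underlying difficulty being that $\Rtlim'$ has negative regularity, so $e^{t\Ham}$ cannot be reached by perturbing $e^{\frac12 t\partial_{xx}}$ with a genuine multiplication operator. The regularization bypasses this, but the two convergences must be engineered to meet: the kernel convergence $\Sg_\e\to\Sg$ leans on the uniform-in-$\e$ bounds of the first stage to dominate the $\vec s$-integral near $\partial\Sigma_n(t)$, where the individual factors $\HK(s_i;\cdot)$ become singular, and the strong-resolvent convergence $\Ham_\e\to\Ham$ leans on the Fukushima-type form estimate~\eqref{fn:LinfinH1} being uniform in $\e$. A more hands-on alternative for the identification would be to establish the Volterra identity $\Sg(t;x,\tilx)=\HK(t;x,\tilx)+\int_0^t\!\int_{\limT}\HK(t-s;x,z)\,\Sg(s;z,\tilx)\,d\Rtlim(z)\,ds$ directly --- it follows termwise from~\eqref{eq:Sgker}--\eqref{eq:SgrI} by peeling off the first heat factor --- and then to match its right-hand side against the quadratic form $F_\Rtlim$ by integration by parts; this route hinges on $d\Rtlim$ having no atoms, which holds since $\Rtlim\in C^{\urt}[0,1]$.
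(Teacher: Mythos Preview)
Your derivation of the bounds (a)--(d) and the uniform convergence matches the paper: both sum the estimates of Lemma~\ref{lem:SgrIbd} over $n\ge1$ and invoke the convergence of the Mittag--Leffler--type series.

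For the identification $\Sg(t)=e^{t\Ham}$, your route is genuinely different from the paper's. The paper proceeds directly: it first verifies the semigroup identity $\Sg(t)\Sg(s)=\Sg(t+s)$ by expanding the product $\big(\sum_{n_1}\Sgr_{n_1}(t)\big)\big(\sum_{n_2}\Sgr_{n_2}(s)\big)$, using the heat-kernel semigroup property to merge the middle factor, and recognizing that the resulting constraint regions $\Sigma'_{n_1,n_2}(t,s)$ partition $\Sigma_n(t+s)$; it then identifies the generator by computing $\lim_{t\downarrow0}\tfrac1t(\langle f,\Sg(t)g\rangle-\langle f,g\rangle)$ term by term --- the $n=0$ heat term gives $-\tfrac12\langle f',g'\rangle$, the $n=1$ term is handled by integration by parts in $x$ to recover the remaining pieces of $-F_\Rtlim(f,g)$, and the $n\ge2$ terms are shown to be $O(t^{v/2})$ via Lemma~\ref{lem:SgrIbd}\ref{lem:SgrI:int}. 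You instead regularize $\Rtlim$ to $\Rtlim_\e$, invoke the classical Dyson series for the bounded perturbation $\Ham_\e$, and pass to the limit simultaneously on the kernel side (via the uniform-in-$\e$ bounds and vague convergence of $d\Rtlim_\e$) and on the operator side (via Mosco convergence of the forms and Trotter--Kato). Both arguments are correct. The paper's approach is more self-contained --- it avoids Mosco convergence and Trotter--Kato entirely --- and in the process extracts the precise small-$t$ asymptotics of each $\Sgr_n$, which is conceptually useful. Your approach is more structural and would transfer more readily to related settings; its only cost is the external machinery and the need to track uniformity in $\e$ through the pointwise bounds \eqref{eqSglem:a}--\eqref{eqSglem:d} (not just their $\vec s$-integrated versions) to justify dominated convergence.
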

\begin{proof}
By assumption, $ \norm{\Rtlim}_{C^{\urt}[0,1]}<\infty $.
For given $ v\in(0,\urt) $, $ \delta>0 $, and $ c<\infty $, $ \sum_{n=1}^\infty \frac{c^n}{\Gamma(v n+\delta)}<\infty $.
From these two observations the claimed bounds \ref{prop:Sg:int}--\ref{prop:Sgg:sup} follow straightforwardly from Lemma~\ref{lem:SgrIbd}.

It remains to check that the so defined operators $ \Sg(t) $, $ t\geq 0 $, form the semigroup of $ \Ham $.
Fixing $ t,s \in[0,\infty) $, we begin by checking the semigroup property.
Writing $ \Sgr_0(t;x,y):=\HK(t;x,y) $ to streamline notation,
we have
\begin{align}
	\label{eqprop:Sg:s+t}
	\big( \Sg(t) \Sg(s) \big)(x,\tilx)
	:=
	\int_{\limT} \Big( \sum_{n=0}^\infty \Sg_n(t;x,y) \Big)\Big( \sum_{n=0}^\infty \Sg_n(s;y,\tilx) \Big) dy
	=
	\sum_{n=0}^\infty \sum_{n_1+n_2=n}\int_{\limT} \Sg_{n_1}(t;x,y) \Sg_{n_2}(s;y,\tilx) dy.
\end{align}
Here we rearranged the product of two infinite sums into iterated sums,
which is permitted granted the bounds from Lemma~\ref{lem:SgrIbd}.

Fix $ n\geq 0 $, and consider generic $ n_1,n_2\geq 0 $ with $ n_1+n_2=n $.
From the given expressions~\eqref{eq:Sgr}--\eqref{eq:SgrI} of $ \Sgr_n $,
we have
\begin{align*}
	\int_{\limT} &\Sg_{n_1}(t;x,y) \Sg_{n_2}(s;y,\tilx) dy
\\
	&=
	\int_{\limT^{n+1}\times\Sigma_{n_1}(t)\times\Sigma_{n_2}(s)}
	\Big( \prod_{i=0}^{n_1}\HK(t_i;x_i,x_{i+1})\ \prod_{i=1}^n d\Rtlim(x_i) \Big)
	dy
	\Big( \prod_{i=0}^{n_2}\HK(s_i;x'_i,x'_{i+1})\ \prod_{i=1}^n d\Rtlim(x'_i) \Big)
	d^{n_1}\vec{t} \ d^{n_2}\vec{s},
\end{align*}
with the convention $ x_0:=x $, $ x_{n_1}:=y $, $ x'_0:=y $, and $ x_{n_2}:=\tilx $.
Integrate over $ y $, using $ \int_{\limT} \HK(t_{n_1};x_{n_1-1},y)\HK(s_{0};y,x'_1)dy = \HK(t_{n_1}+s_0;x_{n_1-1},x'_1) $.
Renaming variables as $ (x'_1,\ldots,x'_{n_2}):=(x_{n_1+1},\ldots,x_{n}) $
and $ (t_{n_1}+s_0,s_1,\ldots,s_{n_2}):=(t_{n_1},\ldots,t_{n}) $, we obtain
\begin{align}
	\label{eqprop:Sg:1}
	\int_{\limT} \Sg_{n_1}(t;x,y) \Sg_{n_2}(s;y,\tilx) dy
	=
	\int_{\limT^{n}\times\Sigma_{n}(t+s)}
	\Big( \prod_{i=0}^{n_1+n_2}\HK(t_i;x_i,x_{i+1})\ \prod_{i=1}^{n_1+n_2} d\Rtlim(x_i) \Big)
	\ind_{\Sigma'_{n_1,n_2}(t,s)}(\vec{t}\,)
	\,
	d^{n}\vec{t},
\end{align}
where
$
	\Sigma'_{n_1,n_2}(t,s)
	:=
	\{
		t_0+\ldots+t_{n_1-1}<t, \ t_{n_1+1}+\ldots+t_{n_1+n_2}<s
	\}.
$
It is straightforward to check that
\begin{align*}
	\sum_{n_1+n_2=n}\ind_{\Sigma'_{n_1,n_2}(t,s)}(\vec{t}\,) = \ind_{\Sigma_{n}(t+s)}(\vec{t}\,),
	\quad
	\text{for Lebesgue almost every } \vec{t} \in (0,\infty)^n.
\end{align*}
Given this property, we sum \eqref{eqprop:Sg:1} over $ n_1+n_2=n $ to obtain
\begin{align*}
	\sum_{n_1+n_2=n} \int_{\limT} \Sg_{n_1}(t;x,y) \Sg_{n_2}(s;y,\tilx) dy
	=
	\int_{\limT^{n}\times\Sigma_{n}(t+s)}
	\Big( \prod_{i=0}^{n_1+n_2}\HK(t_i;x_i,x_{i+1})\ \prod_{i=1}^{n_1+n_2} d\Rtlim(x_i) \Big)
	\,
	d^{n}\vec{t}
	=
	\Sg_n(t+s;x,\tilx).
\end{align*}
Inserting this back into~\eqref{eqprop:Sg:s+t} confirms the semigroup property: $ \Sg(t)\Sg(s)=\Sg(t+s) $.

We now turn to showing that $ \lim_{t\downarrow 0} \frac{1}{t} (\Sg(t)g-g) = \Ham g $, for all $ g\in D(\Ham) $.
Recall that $ \Ham $ satisfies \eqref{eq:ham:form}.
This being the case, it suffices to show
\begin{align}
	\label{eqprop:Sg:generator}
	\lim_{t\downarrow 0} \tfrac{1}{t} \big( \langle f,\Sg(t)g\rangle -\langle f, g \rangle \big)
	=
	-F_\Ham(f,g)
	:=
	-\tfrac12 \langle f',g' \rangle + f(1)g(1) \Rtlim(1) - \int_{0}^1 (f'g+fg')(x) \Rtlim(x) dx,
\end{align}
for all $ f,g\in H^1(\limT) $.
The operator $ \Sg(t) $, by definition, is given by the series~\eqref{eq:Sgker}.
This being the case, we consider separately the contribution from each term in the series.
First, for the heat kernel, with $ f,g\in H^1(\limT) $, it is standard to show that
\begin{align}
	\label{eqprop:Sg:HK}
	\lim_{t\downarrow 0} \tfrac{1}{t} \big( \langle f,\HK(t)g\rangle -\langle f, g \rangle \big)
	=
	-\tfrac12 \langle f',g' \rangle.
\end{align}

Next we turn to the $ n=1 $ term.
Recall the given expressions~\eqref{eq:Sgr}--\eqref{eq:SgrI} for $ \Sgr_1 $.
With the notation $ \phi(t;x) := \int_{\limT} \HK(t;x,y)\phi(y)dy $ for a given function $ \phi $,
we write
\begin{align*}
	\frac{1}{t} \langle f,\Sgr_1(t)g\rangle
	=
	\frac{1}{t} \int_{0}^t \Big( \int_{\limT} f(s;x) d\Rtlim(x) g(t-s;x) \Big) ds. 	
\end{align*}
Integrate by parts in $ x $ gives
\begin{align}
	\label{eqprop:Sg:Sgr1}
	\frac{1}{t} \langle f,\Sgr_1(t)g\rangle
	=
	\frac{1}{t} \int_{0}^t \Bigg(
		\Rtlim(1)f(s;1) g(t-s;1)
		-
		\int_{0}^1 \big( (\partial_x f(s;x)) g(t-s;x) + f(s;x) (\partial_x g(t-s;x)) \big) \Rtlim(x) dx
	\Bigg) ds.
\end{align}
For $ \phi\in H^1(\limT) $, it is straightforward to check that
$ \Vert \phi(t;\Cdot) - \phi(\Cdot) \Vert_{H^1(\limT)} \to 0 $ as $ t\downarrow 0 $.
Also, with $ \limT $ having unit (and hence finite) Lebesgue measure,
$ L^2 $-norms and $ L^\infty $-norms are controlled by the $ H^1 $-norms:
\begin{align*}
	\norm{\psi}_{L^2(\limT)},\norm{\psi}_{L^\infty(\limT)} \leq\norm{\psi}_{H^1(\limT)},
\end{align*}
so $ \Vert \phi(t;\Cdot) - \phi(\Cdot) \Vert_{L^2(\limT)} \to 0 $
and $ \Vert \phi(t;\Cdot) - \phi \Vert_{L^\infty(\limT)} \to 0 $, as $ t\downarrow 0 $.
Using these properties for $ \phi=f,g $ in~\eqref{eqprop:Sg:Sgr1},
together with $ \Rtlim \in L^\infty[0,1] $, we send $ t\downarrow 0 $ to obtain
\begin{align}
	\label{eqprop:Sg:Sgr1:}
	\lim_{t\downarrow 0} \tfrac{1}{t} \langle f,\Sgr_1(t)g\rangle
	=
	\Rtlim(1)f(1) g(1)
	-
	\int_{0}^1 \big( (f'g + fg \big)(x) \Rtlim(x) dx.
\end{align}

Finally we consider the $ n\geq 2 $ terms.
Given the expressions~\eqref{eq:Sgr}--\eqref{eq:SgrI} for $ \Sgr_n $,
we write
\begin{align}
	\label{eqprop:Sg:Sgrn:1}
	\langle f,\Sgr_n(t)g\rangle
	=
	\int_{\limT^2} f(x) \Big( \int_{\Sigma_n(t)} \SgrI_n(\vec{s};x,\tilx) \Big) g(\tilx) dxd\tilx.
\end{align}
With $ f,g\in H^1(\limT) $, we have $ \Vert f \Vert_{L^\infty(\limT)},\Vert g \Vert_{L^\infty(\limT)} <\infty $.
Thus, in~\eqref{eqprop:Sg:Sgrn:1}, bound $ f(x) $ and $ g(x') $ by their supremum,
and use Lemma~\ref{lem:SgrIbd}\ref{lem:SgrI:int} for fixed $ v\in(0,\urt) $.
This gives
\begin{align*}
	|\langle f,\Sgr_n(t)g\rangle|
	\leq
	\Vert f \Vert_{L^\infty(\limT)} \Vert g \Vert_{L^\infty(\limT)}
	t^{\frac{(1+v)n}{2}}\frac{ (c(v,T) \norm{\Rtlim}_{C^{\urt}[0,1]})^n}{\Gamma(\frac{(1+v)n+2}{2})}.	
\end{align*}
Sum this inequality over $ n\geq 2 $, and divide the result by $ t $.
This gives, for all $ t\leq 1 $,
\begin{align}
	\label{eqprop:Sg:Sgrn:2}
	\frac{1}{t}\sum_{n\geq 2} |\langle f,\Sgr_n(t)g\rangle|
	\leq
	c(f,g,\urt) t^{v/2}.
\end{align}
The r.h.s.\ of~\eqref{eqprop:Sg:Sgrn:2} indeed converges to $ 0 $ as $ t\downarrow 0 $.

Combining \eqref{eqprop:Sg:HK}, \eqref{eqprop:Sg:Sgr1:}, and \eqref{eqprop:Sg:Sgrn:2}
concludes the desired result~\eqref{eqprop:Sg:generator}.
\end{proof}

We close this subsection by showing the uniqueness of mild solutions~\eqref{eq:spde:mild} of \eqref{eq:spde}.
(Recall that existence follows from Theorem~\ref{thm:main}.)
The argument follows standard Picard iteration the same way as for the \ac{SHE}.
\begin{proposition}
\label{prop:unique}
For any given $ \limZ^\ic \in C^{\uic}(\limT) $
and a fixed $ \Rtlim \in C^{\urt}[0,1] $,
there exists at most one $ C([0,\infty),C(\R)) $-valued mild solution~\eqref{eq:spde:mild}.
\end{proposition}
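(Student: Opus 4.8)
The plan is to run the classical It\^{o}--Walsh isometry/Gr\"{o}nwall argument for the stochastic heat equation, now fuelled by the heat-kernel-type bounds on $\Sg$ from Proposition~\ref{prop:Sg}. Suppose $\limZ_1,\limZ_2$ are two $C([0,\infty),C(\limT))$-valued mild solutions of~\eqref{eq:spde:mild} with the same initial datum $\limZ^\ic$, on a common filtered probability space and driven by the same noise $\xi$; by Remark~\ref{rmk:Rtfixed} we take $\Sg(t;x,\tilx)$ deterministic and interpret the stochastic integral in the It\^{o}--Walsh sense. Subtracting the two instances of~\eqref{eq:spde:mild}, the initial datum cancels and $D:=\limZ_1-\limZ_2$ obeys the \emph{linear} mild equation
\begin{align*}
	D(t,x) = \int_0^t \int_{\limT} \Sg(t-s;x,\tilx)\, D(s,\tilx)\, \xi(s,\tilx)\, ds\, d\tilx ,
\end{align*}
with $D(0,\Cdot)\equiv 0$; the regularity of $\limZ^\ic$ plays no role hereafter.

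Since a priori we only control $D$ pathwise, the first step is a routine stopping-time localization. Let $\tau_K := \inf\{ t\ge 0 : \norm{\limZ_1(t,\Cdot)}_{C(\limT)}+\norm{\limZ_2(t,\Cdot)}_{C(\limT)} \ge K\}$, a stopping time with $\tau_K\uparrow\infty$ a.s.\ by path-continuity; the integrand $D(s,\tilx)\,\ind_\set{s<\tau_K}$ is predictable and bounded by $2K$, so $\til{D}_K(t,x) := \int_0^t\int_{\limT}\Sg(t-s;x,\tilx)D(s,\tilx)\ind_\set{s<\tau_K}\xi(s,\tilx)\,ds\,d\tilx$ is a bona fide $L^2$ Walsh integral. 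Replacing $D$ by $D\,\ind_\set{s<\tau_K}$ in the stochastic integral changes the value only by $\int_0^t\int_{\limT}\Sg(t-s;x,\tilx)D(s,\tilx)\ind_\set{s\ge\tau_K}\xi = \ind_\set{\tau_K\le t}\int_{\tau_K}^t\int_{\limT}(\cdots)\xi$, which vanishes on $\set{\tau_K>t}$; hence $D(t,x)\ind_\set{\tau_K>t}=\til{D}_K(t,x)\ind_\set{\tau_K>t}$ a.s., and $g_K(t):=\sup_{x\in\limT}\Ex[D(t,x)^2\ind_\set{\tau_K>t}]\le (2K)^2$ on $[0,T]$. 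Applying the It\^{o}--Walsh isometry to $\til{D}_K$ and bounding $\Ex[D(s,\tilx)^2\ind_\set{s<\tau_K}]\le g_K(s)$ then gives
\begin{align*}
	g_K(t) &\le \sup_{x\in\limT} \int_0^t \int_{\limT} \Sg(t-s;x,\tilx)^2\, \Ex\big[D(s,\tilx)^2\,\ind_\set{s<\tau_K}\big]\, ds\, d\tilx \\
	&\le \int_0^t \Big( \sup_{x\in\limT} \int_{\limT} \Sg(t-s;x,\tilx)^2\, d\tilx \Big)\, g_K(s)\, ds .
\end{align*}

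The only input specific to our equation is the $L^2$-in-space bound $\int_{\limT}\Sg(r;x,\tilx)^2\, d\tilx \le c(T)\, r^{-1/2}$ for $r\in(0,T]$: writing $\Sg=\HK+\Sgr$, one has $\int_{\limT}\HK(r;x,\tilx)^2\, d\tilx = \HK(2r;x,x)\le c(T)\, r^{-1/2}$ by Chapman--Kolmogorov and~\eqref{eq:HK:uni}, whereas $\int_{\limT}\Sgr(r;x,\tilx)^2\, d\tilx \le \norm{\Sgr(r)}_{L^\infty(\limT\times\limT)}^2 \le c(T)$ by Proposition~\ref{prop:Sg}\ref{prop:Sg:sup}. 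Hence $g_K(t)\le c(T)\int_0^t (t-s)^{-1/2}\, g_K(s)\, ds$ on $[0,T]$.

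Iterating this inequality $n$ times and evaluating the resulting $n$-fold convolution of $r^{-1/2}$ by the Dirichlet formula~\eqref{eq:dirichlet} yields $g_K(t)\le \big(\sup_{[0,T]}g_K\big)\,\frac{(c(T)\,\Gamma(\tfrac12))^{n}\, t^{n/2}}{\Gamma(\tfrac n2+1)}$, which tends to $0$ as $n\to\infty$ for each fixed $t\in[0,T]$. Thus $g_K\equiv 0$ on $[0,T]$, i.e.\ $D(t,\Cdot)=0$ a.s.\ on $\set{\tau_K>t}$ for every $t\in[0,T]$; letting $K\to\infty$ along a countable dense set of times and using continuity gives $D\equiv 0$ on $[0,T]$, and finally $T\to\infty$ gives the asserted uniqueness. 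I do not anticipate a genuine obstacle: once the bounds of Proposition~\ref{prop:Sg} are in hand, $\Sg$ behaves for this argument exactly like the heat kernel and the remaining steps are the textbook ones; the only mildly delicate point is the bookkeeping in the localization that makes the second-moment computation rigorous.
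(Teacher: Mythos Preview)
Your proof is correct and follows essentially the same strategy as the paper: localize via a stopping time, use the It\^{o}--Walsh isometry together with the kernel bounds from Proposition~\ref{prop:Sg}, and iterate so that the Gamma function in the denominator kills the resulting constant. The only cosmetic difference is the packaging: the paper iterates the mild equation~\eqref{eq:spde:mild} for a \emph{single} solution $\limZ$ to write $\limZ(t,x)=\sum_{n=0}^m\limZ_n(t,x)+\mathcal{W}_m(t,x)$, where each $\limZ_n$ is an explicit iterated stochastic integral depending only on $\limZ^\ic$ and $\xi$, and then shows $\mathcal{W}_m(t\wedge\tau_\Lambda,x)\to 0$ in $L^2$ as $m\to\infty$; uniqueness follows because the series is manifestly determined by the data. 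You instead subtract two solutions and Gr\"{o}nwall the difference. Your route is the more textbook one and slightly cleaner in that it never needs to write down the iterated integrals $\limZ_n$; the paper's route has the side benefit of producing an explicit chaos-type series for the solution. Your kernel bound $\int_{\limT}\Sg(r;x,\tilx)^2\,d\tilx\le c(T)r^{-1/2}$ is also stated a bit more carefully than the paper's, which in effect uses the same estimate but phrases the resulting iterated bound somewhat loosely.
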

\begin{proof}
Let $ \limZ\in C([0,\infty),C(\T)) $ be a mild solution~\eqref{eq:spde:mild} solving \eqref{eq:spde}.
Iterating~\eqref{eq:spde:mild} $ m $-times gives
\begin{align*}
	\limZ(t,x) =
	\sum_{n=0}^m\limZ_n(t,x)
	+
	\mathcal{W}_m(t,x),
\end{align*}
where, with the notation $ [0,t]^n_{<} := \{ (t_1,\ldots,t_n)\in(0,\infty)^n : 0<t_1<\ldots<t_n<t_{n+1}:=t\} $,
\begin{align*}
	\limZ_n(t,x)
	&:=
	\int_{[0,t]^{n}_{<}\times\limT^{n+1}}
	\Big( \prod_{i=1}^n \Sg(t_{i+1}-t_i;x_{i+1},x_i) \xi(t_i,x_i) dt_idx_i \Big) \Sg(t_1;x_1,x_0) \limZ^\ic(x_0) dx_0,
\\
	\mathcal{W}_m(t,x)
	&:=
	\int_{[0,t]^{m+1}_{<}\times\limT^{m+1}}
	\Big( \prod_{i=1}^{m+1} \Sg(t_{i+1}-t_i;x_{i+1},x_i) \xi(t_i,x_i) dt_idx_i \Big) \limZ(t_1,x_1).	
\end{align*}
For given $ \Lambda<\infty $, let $ \tau_{\Lambda} := \inf\{t \geq 0: \sup_{x\in\limT} \limZ(t,x)^2 > \Lambda \} $
denote the first hitting of $ \limZ^2 $ at $ \Lambda $.
Recall that $ \Sg(t) $ is deterministic since $ \Rtlim $ is assumed to be deterministic (throughout this section).
Evaluating the second moment of $ \mathcal{W}_m(t\wedge\tau_\Lambda,x) $ gives
\begin{align*}
	\Ex\big[\mathcal{W}_m(t\wedge\tau_\Lambda,x)^2\big]
	&=
	\Ex\Big[
		\int_{[0,\tau_\Lambda]^{m+1}_{<}\times\limT^{m+1}}
		\Big( \prod_{i=1}^{m+1} \Sg^2(t_{i+1}-t_i;x_{i+1},x_i) dt_idx_i \Big) \limZ^2(t_1,x_1)
		\Big]
\\
	&\leq
	\Lambda
	\int_{[0,t]^{m+1}_{<}\times\limT^{m+1}}
	\prod_{i=1}^{m+1} \Sg^2(t_{i+1}-t_i;x_{i+1},x_i) dt_idx_i.
\end{align*}
Further applying bounds from Proposition~\ref{prop:Sg}\ref{prop:Sg:int}, \ref{prop:Sg:sup} gives
\begin{align*}
	\Ex\big[\mathcal{W}_m(t\wedge\tau_\Lambda,x)^2\big]
	&\leq
	\Lambda c(t)^{m+1}
	\int_{[0,t]^{m+1}_{<}\times\limT^{m+1}}
	\prod_{i=1}^{m+1} dt_idx_i
	=
	\Lambda \frac{c(t)^{m+1}}{(m+1)!}.
\end{align*}
Sending $ m\to\infty $ gives $ \Ex[\mathcal{W}_m(t\wedge\tau_\Lambda,x)^2] \to 0 $.
With $ \limZ $ being $ C([0,\infty)\times\limT) $ by assumption,
we have $ \Pr[ \tau_{\Lambda} > t ]\to 1 $, as $ \Lambda\to\infty $.
Hence, after passing to a suitable sequence $ \Lambda_m\to\infty $,
we conclude $ \mathcal{W}_m(t,x) \to_\text{P} 0 $, as $ m\to\infty $, for each fixed $ (t,x) $.
This gives
\begin{align*}
	\limZ(t,x) =
	\lim_{m\to\infty} \sum_{n=0}^m\limZ_n(t,x),
\end{align*}
for each $ (t,x) $.
Since each $ \limZ_n $ is a function of $ \limZ^\ic $ and $ \xi $, uniqueness of $ \limZ(t,x) $ follows.
\end{proof}

\subsection{Microscopic}
\label{sect:sg}
Our goal is to bound the kernel $ \sg(t;x,\tilx) $ of the microscopic semigroup.
Recall the operator $ \ham $ defined in~\eqref{eq:ham} and the definition of $ \nu $ from~\eqref{eq:Z}.
Various operators and parameters (e.g., $ \ham $, $ \nu $) depends on $ N $,
but, to alleviate heavy notation, we often omit the dependence in notation.
Under weak asymmetry scaling~\eqref{eq:was},
\begin{align}
	\label{eq:nuN}
	\nu=\tfrac{1}{N} + O(\tfrac{1}{N^2}).
\end{align}
Set $ f=\ind_\set{\tilx} $ in the Feynman--Kac formula~\eqref{eq:feynmankac} to get
\begin{align*}
	\sg(t;x,\tilx)
	=
	\big( \sg(t)\ind_\set{\tilx} \big)(x)
	=
	\Ex_x\Big[ e^{\int_0^t \nu\rt(X^\rt(s))ds} \ind_\set{\tilx}(X^a(t)) \Big],
\end{align*}
where $ X^\rt(t) $ denotes the inhomogeneous walk defined in Section~\ref{sect:hc}.
Taylor-expanding the exponential function, and exchanging the expectation with the sums and integrals yields
\begin{align}
	\notag
	\sg(t;x,\tilx)
	&=
	\Ex_{x}\big[\ind_\set{\tilx}(X^\rt(t))\big]
	+
	\sum_{n=1}^\infty
	\int_{\Sigma_n(t)} \Ex_{x}\Big[ \prod_{i=1}^n \nu \, \rt(X^\rt(s_0+\ldots+s_{i-1})) \ind_\set{\tilx}(X^\rt(t)) \Big] d^n\vec{s}
\\
	\label{eq:sgker}
	&=
	\hka(t;x,\tilx)
	+
	\sum_{n=1}^\infty \sgr_n(t;x,\tilx),
\end{align}
where we assume (and will show below that) the sum in~\eqref{eq:sgr} converges, and the term $ \sgr_n(t;x,\tilx) $ is defined as
\begin{align}
	\label{eq:sgr}
	\sgr_n(t;x,\tilx)
	&:=
	\int_{\Sigma_{n}(t)} \sgrI_n(\vec{s};x,\tilx) d^n\vec{s},
\\
	\label{eq:sgrI}
	\sgrI_n(\vec{s};x,\tilx)
	&:=
	\sum_{x_1,\ldots,x_n\in\T}
	\prod_{i=0}^n\hka(s_i;x_i,x_{i+1}) \prod_{i=1}^n\nu \rt(x_i).
\end{align}
Note that, unlike in the continuum case in Section~\ref{sect:Sg},
the expansions~\eqref{eq:sgker} here is rigorous,
provided that the sums and integrals in~\eqref{eq:sgr}--\eqref{eq:sgrI} converge.

We now proceed to establish bounds that will guarantee the convergence of the sums and integrals.
Our treatment here parallels Section~\ref{sect:Sg},
starting with a summation-by-part formula.

Similarly to our treatment in Section~\ref{sect:Sg}, here we need to partition $ \T $ into two pieces
according to a given pair $ y,\tily\in\T $.
Unlike in the macroscopic (i.e., continuum) case, here we cannot ignore $ y=\tily $.
Given $ y,\tily \in \T $, we define
\begin{align*}
	\parti_1(y,\tily) := \big\{ x\in \T: \dist_{\T}(x,y) \leq \dist_{\T}(x,\tily) \wedge \tfrac{N}2 \big\},
	\qquad
	\parti_2(y,\tily) := \big\{ x\in \T: \dist_{\T}(x,\tily) < \dist_{\T}(x,y) \wedge \tfrac{N}2 \big\}.
\end{align*}
The intervals $ \parti_1(y,\tily) $ and $ \parti_2(y,\tily) $
are the macroscopic analog of $ \Parti_1(y,\tily) $ and $ \Parti_2(\tily,y) $, respectively.
In particular, $ \parti_1(y,\tily) $ and $ \parti_2(y,\tily) $ partition $ \T $ into two pieces, with
\begin{align}
	\label{eq:parti:dist}
	\begin{split}
	&\dist_{\T}(y_1,x) \leq \dist_{\T}(y_2,x)+1 \leq 2\dist_{\T}(y_2,x),\ \forall x\in \parti_1(y_1,y_2),
\\	
	&\dist_{\T}(y_2,x) \leq \dist_{\T}(y_1,x),\ \forall x\in \parti_2(y_1,y_2).
\end{split}
\end{align}
Write $ \midp_1(y,\tily),\midp_2(y,\tily)\in\T  $ for the boundary points of $ \parti_1(y,\tily) $ and $ \parti_2(y,\tily) $.
More precisely, $ \parti_1(y,\tily) = [ \midp_1(y,\tily),\midp_2(y,\tily) ) $
and $ \parti_2(y,\tily) = [ \midp_2(y,\tily),\midp_1(y,\tily) ) $.
Recall the definition of $ \Rt(y,x) $ from~\eqref{eq:Rt}.
\begin{lemma}
\label{lem:ips}
Set
\begin{align}
\label{eq:ips:}
\begin{split}
	\ips(s,s';y_1,y_2)
	:=
	\sum_{j=1}^2
	\Big(&
		\hka(s;y_1,x) \, \nu \, \Rt(y_j,x) \hka(s';x-1,y_2)\big|_{x=\midp_j(y_1,y_2)}^{x=\midp_{j+1}(y_1,y_2)+1}
\\
		&-
		\int_{\parti_1(y_1,y_2)} \big(\nabla_{x}\hka(s;y_1,x)\big) \, \nu \, \Rt(y_j,x) \hka(s';x+1,y_2) dx
\\
		&-
		\int_{\parti_2(y_1,y_2)} \hka(s;y_1,x) \, \nu \, \Rt(y_j,x) \nabla_{x}\hka(s';x,y_2) dx
	\Big),
\end{split}
\end{align}
where, by convention, we let $ \midp_{3}(y_1,y_2) := \midp_{1}(y_1,y_2) $.
Then we have that
\begin{align}
	\label{eq:ips}
	\sgrI_n(\vec{s};x,\tilx)
	=
	\sum_{y_1,\ldots,y_{n+1}\in\T}
	\hka(\tfrac{s_0}{2};x,y_1)
		\Big( \prod_{i=1}^n \ips(\tfrac{s_{i-1}}{2},\tfrac{s_{i}}{2};y_{i},y_{i+1})  \Big)
	\hka(\tfrac{s_{n}}{2};y_{n+1},\tilx).
\end{align}
\end{lemma}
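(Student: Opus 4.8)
The plan is to adapt the proof of Lemma~\ref{lem:Ips} to the discrete setting; the two genuinely new features are that sums replace integrals, so Abel summation (summation by parts) takes the place of integration by parts, and that the diagonal $y_i=y_{i+1}$ can no longer be dismissed as a null set and must be handled directly. First I would apply the Chapman--Kolmogorov identity $\hka(s_i;x_i,x_{i+1})=\sum_{y\in\T}\hka(\tfrac{s_i}{2};x_i,y)\hka(\tfrac{s_i}{2};y,x_{i+1})$ to each of the $n+1$ kernels in~\eqref{eq:sgrI}. Regrouping the resulting half-step kernels so that each weight $\nu\rt(x_i)$ sits between $\hka(\tfrac{s_{i-1}}{2};\,\Cdot\,,x_i)$ and $\hka(\tfrac{s_i}{2};x_i,\,\Cdot\,)$, and summing out $x_1,\dots,x_n$, rewrites~\eqref{eq:sgrI} as
\begin{align*}
	\sgrI_n(\vec s;x,\tilx)
	&=
	\sum_{y_1,\dots,y_{n+1}\in\T}
	\hka(\tfrac{s_0}{2};x,y_1)\Big(\prod_{i=1}^n\til{\ips}_i(y_i,y_{i+1})\Big)\hka(\tfrac{s_n}{2};y_{n+1},\tilx),
\\
	\til{\ips}_i(y,\tily)
	&:=
	\sum_{x\in\T}\hka(\tfrac{s_{i-1}}{2};y,x)\,\nu\rt(x)\,\hka(\tfrac{s_i}{2};x,\tily),
\end{align*}
so~\eqref{eq:ips} reduces to the single-link identity $\til{\ips}_i(y,\tily)=\ips(\tfrac{s_{i-1}}{2},\tfrac{s_i}{2};y,\tily)$, which I would prove for each fixed $y,\tily\in\T$.

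For the single-link identity I would split the sum over $x$ in $\til{\ips}_i$ along the partition $\T=\parti_1(y,\tily)\sqcup\parti_2(y,\tily)$, and on each $\parti_j$ write $\nu\rt(x)$ as a discrete difference of $x\mapsto\nu\Rt(y_j,x)$ (whose difference recovers $\nu\rt(x)$ up to the normalization in~\eqref{eq:Rt}), using the reference point $y_j$ so that $\Rt(y_j,\,\Cdot\,)$ vanishes at $y_j$ --- the discrete counterpart of taking, on $\Parti_j$ in Lemma~\ref{lem:Ips}, the antiderivative of $d\Rtlim$ that vanishes at $y_j$. This choice is dictated by the later estimates, which pair $|\Rt(y_j,x)|$ with the H\"older exponent $\urt$ and the distance comparison of~\eqref{eq:parti:dist}, but for the present identity it is purely algebraic. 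Summation by parts on each of the two half-open discrete intervals $\parti_1(y,\tily)$ and $\parti_2(y,\tily)$ then produces a boundary contribution at the endpoints $\midp_j(y,\tily)$ and $\midp_{j+1}(y,\tily)$ --- the unit shifts $x\mapsto x\pm1$ and the ``$+1$'' on the upper endpoint in~\eqref{eq:ips:} being precisely the artifacts of reindexing a discrete sum --- together with interior sums in which the discrete gradient lands on one of the two kernels; arranging the discrete product rule so that on $\parti_1$ the gradient falls on $\hka(\tfrac{s_{i-1}}{2};y,\,\Cdot\,)$ and on $\parti_2$ on $\hka(\tfrac{s_i}{2};\,\Cdot\,,\tily)$, with the complementary kernel carried at the shifted argument, reproduces exactly the three groups of terms in~\eqref{eq:ips:}. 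Summing the $j=1,2$ contributions gives $\til{\ips}_i=\ips(\tfrac{s_{i-1}}{2},\tfrac{s_i}{2};y,\tily)$, and inserting this back into the display above yields~\eqref{eq:ips}.

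The routine but delicate part, and the step I expect to cost the most care, is the Abel bookkeeping: tracking the unit index shifts so that the boundary terms land at precisely the arguments in~\eqref{eq:ips:}, and treating the degenerate case $y_i=y_{i+1}$, which has no analog in Lemma~\ref{lem:Ips}. In that case $\parti_1,\parti_2$ still partition $\T$ (with $\parti_2$ empty and $\parti_1=\T$) thanks to the $\wedge\tfrac N2$ tie-break in their definition, and since $\Rt(y,\,\Cdot\,)$ need not be periodic on $\T$ (we do not assume $\rt(0)+\dots+\rt(N-1)=0$) a boundary term at the cut point $\midp_1$ still survives; this case I would check directly rather than read off from the continuum argument.
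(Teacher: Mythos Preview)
Your proposal is correct and follows essentially the same route as the paper: apply the semigroup property to split each kernel at the midpoint, reduce to a single-link identity for $\til{\ips}_i$, partition $\T=\parti_1\sqcup\parti_2$, write $\nu\rt(x)$ as the discrete difference of $\nu\Rt(y_j,\,\Cdot\,)$ on $\parti_j$, and apply summation by parts. The paper differs only in presentation---it applies Abel summation with $g(x)=\hka(\tfrac{s_{i-1}}{2};y_i,x)\hka(\tfrac{s_i}{2};x,y_{i+1})$ taken as a product and then expands $\nabla g$ via the discrete Leibniz rule, rather than pre-arranging which kernel receives the gradient on each $\parti_j$---and it does not single out the diagonal $y_i=y_{i+1}$ case, which you are right to flag.
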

\begin{proof}
Use the semigroup property $ \hka(s_i;x_i,x_{i+1})=\sum_{y_i\in\T} \hka(\frac{s_i}{2};x_i,y_i)\hka(\frac{s_i}{2};y_i,x_{i+1}) dy_i $ in~\eqref{eq:sgrI} to rewrite
\begin{align}
	\label{eqlem:ips}
	&\sgrI_n(\vec{s};x,\tilx)
	=
	\sum_{y_1,\ldots,y_{n+1}\in\T}
	\hka(\tfrac{s_0}{2};x,y_1)
	\,\til{\ips}_1 (s_{0},y_1,s_1,y_{2})
	 \cdots \til{\ips}_n(s_{n},y_i,s_n,y_{n+1})
	\,
	\hka(\tfrac{s_{n}}{2};y_{n+1},\tilx),
\\
	\label{eqlem:ips:}
	&\til{\ips}_i(s_{i-1},y_i,s_i,y_{i+1})
	:=
	\sum_{x\in\T} \hka(\tfrac{s_{i-1}}{2};y_{i},x) \, \nu \, \rt(x) \, \hka(\tfrac{s_{i}}{2};x,y_{i+1}).
\end{align}
In~\eqref{eqlem:ips:}, divide the sum over $ \T $ into sums over $ \parti_1(y_i,y_{i+1}) $ and $ \parti_2(y_i,y_{i+1}) $ to get
$ \til{\ips}_i=\til{\ips}_{i,1}+\til{\ips}_{i,2} $, where
\begin{align*}
	\til{\ips}_{i,j}(s_{i-1},y_i,s_i,y_{i+1})
	&:=
	\sum_{x\in\parti_1(y_i,y_{i+1})} \hka(\tfrac{s_{i-1}}{2};y_{i},x) \, \nu \, \rt(x) \, \hka(\tfrac{s_{i}}{2};x,y_{i+1})
\\
	&=
	\sum_{x\in\parti_2(y_i,y_{i+1})} \hka(\tfrac{s_{i-1}}{2};y_{i},x) \, \nu
	\,
	\big( \nabla_{x}\Rt(y_{i+j-1},x-1) \big) \, \hka(\tfrac{s_{i}}{2};x,y_{i+1}).
\end{align*}
Apply summation by parts
\begin{align*}
	\sum_{x\in[x_1,x_2)} f(x)\nabla g(x-1) = - \sum_{x=[x_1,x_2)} \big( \nabla f(x) \big) g(x) + f(x_2+1)g(x_2) - f(x_1)g(x_1-1)
\end{align*}
with $ f(x)=\Rt(y_{i+j-1},x) $ and $ g(x)=\hka(\tfrac{s_{i-1}}{2};y_{i},x)\hka(\tfrac{s_{i}}{2};x,y_{i+1}) $
for $ j=1,2 $, and add the results together.
We then conclude $ \til{\ips}_{i}(s_{i-1},y_i,s_i,y_{i+1})= \ips(\frac{s_{i-1}}{2},\frac{s_{i}}{2};y_i,y_{i+1}) $.
Inserting this back to~\eqref{eqlem:ips} completes the proof.
\end{proof}

Given the summation-by-parts formula \eqref{eq:ips:}, we proceed to establish bounds on $ \ips $.
Unlike in the macroscopic case, where we assume $ \Rtlim $ to be deterministic,
the treatment of microscopic semigroup needs to address the randomness of $ \rt $.
Recall the terminology `with probability $ \to_{\Lambda,N} 1 $' from~\eqref{eq:wp1}.
\begin{lemma}
\label{lem:ipsbd}
Given any $ v\in(0,\urt) $ and  $ T<\infty $, the following holds with probability $ \to_{\Lambda,N} 1 $:
\begin{align*}
	\sum_{y'\in\T} |\ips(s,s';y,y')|
	\leq
	\frac{ \Lambda \, c(v,T) }{ N^{1+v} } \big( (1+s)^{-(1-v)/2}+(1+s')^{-(1-v)/2} \big),
	\qquad
	\forall s,s'\in[0,N^2T], y\in \T.
\end{align*}
\end{lemma}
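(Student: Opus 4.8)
\textbf{Proof proposal for Lemma~\ref{lem:ipsbd}.}

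The plan is to mirror the proof of Lemma~\ref{lem:Ipsbd} exactly, replacing the continuum heat kernel bounds \eqref{eq:HK:int}--\eqref{eq:HKg:uni} by the discrete bounds of Proposition~\ref{prop:hk}, and working on the event where the relevant probabilistic bounds from Proposition~\ref{prop:hk} hold (which is where the ``with probability $\to_{\Lambda,N}1$'' comes in). First I would record the bound on $\Rt(y_j,x)$ analogous to the one used in Lemma~\ref{lem:Ipsbd}: from the definition~\eqref{eq:Rt} and the seminorm~\eqref{eq:hold}, on the event $\{\hold{\Rt_N}_{\urt,N}\le\Lambda\}$ (which holds with probability $\to_{\Lambda,N}1$ by Assumption~\ref{assu:rt}\ref{assu:rt:holder}), one has
\begin{align*}
	|\Rt(y_j,x)|
	\le
	\Big(\tfrac{\dist_{\T}(y_j,x)}{N}\Big)^{\urt} \Lambda
	\le
	\Big(\tfrac{\dist_{\T}(y_j,x)}{N}\Big)^{v} \Lambda,
	\qquad
	\forall x\in\parti_j(y_1,y_2),\ j=1,2,
\end{align*}
where the second inequality uses $\dist_{\T}(y_j,x)/N\le 1$ and $v<\urt$, together with the defining property~\eqref{eq:parti:dist} that points of $\parti_j$ are (up to the harmless factor $2$ in the case $j=1$) closer to $y_j$ than to $y_{3-j}$. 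Insert this into the definition~\eqref{eq:ips:} of $\ips$, paying attention to the fact that on $\parti_1$ the discrete points are evaluated at $x$, $x+1$, or $x-1$ rather than at $x$; these shifts change $\dist_{\T}$ by at most $1$ and hence change the bound by at most a constant factor, which can be absorbed into $c(v,T)$ (using $\dist_\T(y_j,x)+1\le 2\,(\dist_\T(y_j,x)\vee 1)$, and noting the $x=y_j$ case is not excluded here, unlike the continuum case — this is exactly the reason $\parti_j$ is defined to include/exclude the diagonal carefully).

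Next, I would use~\eqref{eq:parti:dist} to replace $\dist_{\T}(y_j,x)$ by $\dist_{\T}(y_1,x)$ in the two integrals over $\parti_1$ (where the $\nabla_x\hka(s;y_1,\cdot)$ factor sits) and by $\dist_{\T}(y_2,x)$ in the two integrals over $\parti_2$ (where the $\nabla_x\hka(s';\cdot,y_2)$ factor sits), so that the distance factor always carries the same base point as the gradient it multiplies; and bound the boundary terms by the supremum bound. After extending the sums over $\parti_j$ to sums over all of $\T$ (losing only a constant), I am left with three types of sums: a boundary term controlled by $\sum \hka \cdot \dist^v \cdot \hka$, a term of the form $\sum_{x}|\nabla_x\hka(s;y_1,x)|\,(\dist_\T(y_1,x)/N)^v\,\hka(s';x,y_2)$ (up to shifts), and the symmetric term with $\hka(s;y_1,x)\,(\dist_\T(y_2,x)/N)^v\,|\nabla_x\hka(s';x,y_2)|$. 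I would then sum over $y_2\in\T$. For the gradient terms, summing $\hka(s';x\pm\text{(shift)},y_2)$ over $y_2$ gives $1$ by Proposition~\ref{prop:hk} (conservation of mass for $\hka$, which follows since $\hka$ is a transition kernel), and then Proposition~\ref{prop:hk}\ref{cor:hk:hkahold:} (resp.\ \ref{cor:hk:hkahold::}) — which are precisely the discrete analogs of \eqref{eq:HKgx':int}, carrying the $N^{-v}$ built into the statement — yields a factor $\Lambda$ times $(1+s)^{-(1-v)/2}$ (resp.\ $(1+s')^{-(1-v)/2}$), after absorbing the $\sqrt{r\ell}\to\tfrac12$ time rescaling into $c(v,T)$. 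For the boundary term, I would use the pointwise bound \ref{cor:hk:hkahold:sup} (the analog of \eqref{eq:HK:uni:}) on one $\hka$ factor and sum the other over $y_2$ using conservation of mass. Collecting the four contributions and recalling $\nu = \tfrac1N + O(N^{-2})$ from~\eqref{eq:nuN} — which supplies the overall $N^{-1}$ factor — gives the claimed bound with the extra $N^{-v}$ coming from the $(\dist_\T/N)^v$ normalization in \ref{cor:hk:hkahold:}--\ref{cor:hk:hkahold::}.

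The main obstacle, as compared with the clean continuum computation in Lemma~\ref{lem:Ipsbd}, is bookkeeping the discrete shifts ($x$ vs.\ $x\pm1$) produced by summation by parts and making sure they are genuinely harmless: one must check that $\dist_\T(y_j,x\pm1)\le \dist_\T(y_j,x)+1$ and that this additive $1$ is dominated on the relevant range (it is, since on $\parti_j$ either $\dist_\T(y_j,x)\ge1$ or $x=y_j$ and then $\Rt(y_j,x)=0$ anyway), and that the mass-conservation sums $\sum_{y_2}\hka(s';x\pm1,y_2)=1$ are applied with the correct shifted argument. A secondary technical point is that all the inputs from Proposition~\ref{prop:hk} are simultaneously valid only on a single event of probability $\to_{\Lambda,N}1$; since finitely many such events are intersected (and $\{\hold{\Rt_N}_{\urt,N}\le\Lambda\}$ as well), the conclusion still holds with probability $\to_{\Lambda,N}1$ after possibly enlarging $\Lambda$, which is the meaning of~\eqref{eq:wp1}. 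Everything else is a direct transcription of the proof of Lemma~\ref{lem:Ipsbd}.
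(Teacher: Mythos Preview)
Your proposal matches the paper's proof essentially step for step: bound $|\Rt(y_j,x)|$ via the H\"older seminorm $\hold{\Rt}_{\urt,N}$ together with \eqref{eq:parti:dist}, insert this and $|\nu|\le c/N$ into \eqref{eq:ips:}, realign each distance factor with the gradient it multiplies using \eqref{eq:parti:dist}, enlarge the sums over $\parti_j$ to all of $\T$, and finish by summing over $y_2$ with mass conservation for $\hka$ and the estimates of Proposition~\ref{prop:hk}. Your remarks on the harmless $\pm1$ shifts and on intersecting finitely many high-probability events are correct and mirror what the paper does implicitly.

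One point deserves care. You assert that Proposition~\ref{prop:hk}\ref{cor:hk:hkahold:}--\ref{cor:hk:hkahold::} yield ``a factor $\Lambda$ times $(1+s)^{-(1-v)/2}$''. As stated, those items give only $\sum_{\tilx}|\nabla\hka(t;x,\tilx)|(\dist_\T(x,\tilx)/N)^v\le\Lambda$, with \emph{no} time-decay factor; they are strictly weaker than the continuum bounds \eqref{eq:HKgx:int}--\eqref{eq:HKgx':int} you identify them with. Plugging only $\le\Lambda$ into the gradient terms gives a contribution of order $\Lambda/N$, which is \emph{not} dominated by $\Lambda N^{-1-v}((1+s)^{-(1-v)/2}+(1+s')^{-(1-v)/2})$ for small $s,s'$. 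The paper's own proof cites exactly the same range of items and then records the $(s+1)^{-(1-v)/2}$ factors, so this is a shared imprecision rather than a departure from the paper; but if you want the step to be fully justified you should go one level deeper---split $\hka=\hk+\hkr$, use \eqref{eq:hk:hold:}--\eqref{eq:hk:hold::} for the $\hk$ piece (which does carry $(1+t)^{-(1-v)/2}$), and control the $\hkr$ piece via the sharper gradient bounds coming from Lemma~\ref{lem:hk} (or Proposition~\ref{prop:hk}\ref{cor:hk:hkrgsum}--\ref{cor:hk:hkrgsup}) rather than merely \ref{cor:hk:hkahold:}--\ref{cor:hk:hkahold::}.
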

\begin{proof}
Recall the definition of the seminorm $ \hold{\,\Cdot\,}_{u,N} $ from~\eqref{eq:hold}.
With $ v\leq \urt $,
we have $ |\Rt(y_j,x)| \leq (\frac{|(y_j,x]|}{N})^{v} \hold{\Rt}_{\urt,N} $.
Further, by~\eqref{eq:parti:dist}, we have $ |(y_j,x]| \leq 2\dist_{\T}(y_1,y_2) $, for all $ x\in\parti_j(y_1,y_2) $.
Hence
\begin{align*}
	|\Rt(y_j,x)|
	\leq
	2\dist_{\T}(y_j,x)^{v} N^{-v} \, \hold{ \Rt_N }_{\urt},
	\qquad
	\forall x\in\parti_j(y_1,y_2).
\end{align*}
Using this bound in~\eqref{eq:ips:}, together with $ |\nu| \leq \frac{c}{N} $ (from~\eqref{eq:nuN}), we obtain
\begin{align}
	\label{eqlem:ipsbd:1}
	|\ips(s,s';y_1,y_2)|
	\leq
	\frac{ \hold{\Rt}_{\urt,N} }{ N^{1+v} }
	\sum_{j=1}^2
	\Bigg(&
		\sum_{ x\in\{\midp_i(y_1,y_2)\}_{i=1}^2 } \hka(s;y_1,x) \, \dist_{\T}(y_j,x)^{v} \, \hka(s';x,y_2)
\\
	\label{eqlem:psbd:2}
	&+
		\sum_{x\in\parti_1(y_1,y_2)}  \big|\nabla_{x}\hk(s;y_1,x)\big| \dist_{\T}(y_j,x)^{v} \, \hka(s';x+1,y_2)
\\
	\label{eqlem:psbd:3}
	&
	+
		\sum_{x\in\parti_2(y_1,y_2)} \hka(s;y_1,x) \, \dist_{\T}(y_j,x)^{v} \big|\nabla_{x}\hka(s;x,y_2)\big|
	\Bigg).
\end{align}
In~\eqref{eqlem:psbd:2}, use~\eqref{eq:parti:dist} to bound
$ \dist_{\T}(y_j,x)^{v} $ by $ 2 \dist_{\T}(y_1,x)^v $,
and in~\eqref{eqlem:psbd:3}, use~\eqref{eq:parti:dist} to bound
$ \dist_{\T}(y_j,x)^{v} $ by $ 2 \dist_{\T}(y_2,x)^v $.
This gives
\begin{align}
\label{eq:ips:bd:}
\begin{split}
	&|\ips(s,s';y_1,y_2)|
	\leq
	\frac{ \hold{\Rt}_{\urt,N} }{ N^{1+v} }
	\Bigg(
	2 \sum_{ x\in\{\midp_i(y_1,y_2)\}_{i=1}^2 }
	\hka(s;y_1,x) \, \dist_{\T}(y_j,x)^{v} \, \hka(s';x,y_2)
\\
	&
	+
	2\sum_{x\in\T} |\nabla_{x}\hka(s;y_1,x)\big| \dist_{\T}(y_1,x)^{v} \, \hka(s';x+1,y_2)
	+
	2\sum_{x\in\T} \hka(s;y_1,x) \, \dist_{\T}(y_2,x)^{v} \, \big|\nabla_{x}\hka(s;x,y_2)\big|
	\Bigg).
\end{split}
\end{align}
For the kernel $ \hk(t,x,\tilx) $ of the homogeneous walk,
we indeed have $ \sum_{\tilx\in\T} \hk(t,x,\tilx) = 1 $.
Since $ \hka=\hk+\hkr $, the preceding identity together with the bound from Proposition~\ref{prop:hk}\ref{cor:hk:hkrsum}
yields, with probability $ \to_{\Lambda,N} 1 $,
\begin{align}
	\label{eq:hk:hka:summ}
	\sum_{\tilx\in\T} \hka(t,x,\tilx) \leq 2.
\end{align}
Now sum~\eqref{eq:ips:bd:} over $ y_2\in\T $,
and use \eqref{eq:hk:hka:summ} and Proposition~\ref{prop:hk}\ref{cor:hk:hkasup}, \ref{cor:hk:hkahold:sup}--\ref{cor:hk:hkahold::} to bound the result.
This gives
\begin{align*}
	&\sum_{y_2\in\T}|\ips(s,s';y_1,y_2)|
	\leq
	\frac{ c(v,T) \hold{\Rt}_{\urt,N} }{ N^{1+v} }
	\big(
		(s+1)^{-(1-v)/2} + (s'+1)^{-(1-v)/2} + (s+1)^{-(1-v)/2} + (s'+1)^{-(1-v)/2}
	\big).
\end{align*}
Recalling Assumption~\ref{assu:rt}\ref{assu:rt:holder} on $ \hold{\Rt}_{\urt,N} $,
we conclude the desired result.
\end{proof}

Based on Lemmas~\ref{lem:ips}--\ref{lem:ipsbd}, we now establish bounds on $ \sgrI_n $.
\begin{lemma}
\label{lem:sgrIbd}
Given any $ u\in(0,1] $, $ v\in(0,\urt) $, and $ T<\infty $, the following hold with probability $ \to_{\Lambda,N} 1 $:
\begin{enumerate}[label=(\alph*),leftmargin=7ex]
\item \label{lem:sgrI:sum}
	$
	\displaystyle
	\sum_{\tilx\in\T} \int_{\Sigma_n(t)} |\sgrI_n(\vec{s};x,\tilx)| d^n\vec{s}
	\leq
	(tN^{-2})^{\frac{1+v}2} \frac{ \Lambda^n}{\Gamma(\frac{(1+v)n+2}{2})}
	$,
	\qquad $ t\in[0,N^2T] $, $ \forall x\in\T $,  $ n\in\Z_{>0} $,
\item \label{lem:sgrI:gsum}
	$
	\displaystyle	
	\sum_{\tilx\in\T}
	\int_{\Sigma_n(t)} |\sgrI_n(\vec{s};x,\tilx)-\sgrI_n(\vec{s};x',\tilx)| d^n\vec{s}\
	\leq
	\frac{\dist_{\T}(x,x')^{u}}{(t+1)^{u/2}} (tN^{-2})^{\frac{1+v}2} \frac{\Lambda^n }{\Gamma(\frac{(1+v)n+2-u}{2})},
	$
	\newline $ t\in[0,N^2T] $, $ \forall x\in\T $, $ n\in\Z_{>0} $,
\item \label{lem:sgrI:sup}
	$
	\displaystyle
	\int_{\Sigma_n(t)} |\sgrI_n(\vec{s};x,\tilx)| d^n\vec{s}
	\leq
	\frac{(tN^{-2})^{\frac{1+v}2} }{(t+1)^{1/2}} \frac{\Lambda^n}{\Gamma(\frac{(1+v)n+1}{2})},
	$
	\qquad $ t\in[0,N^2T] $, $ \forall x\in\T $, $ n\in\Z_{>0} $,
\item \label{lem:sgrI:gsup}
	$
	\displaystyle
	\int_{\Sigma_n(t)} |\sgrI_n(\vec{s};x,\tilx)-\sgrI_n(\vec{s};x',\tilx)| d^n\vec{s}
	\leq
	\frac{\dist_{\T}(x,x')^{u}}{(t+1)^{(1+u)/2}}(tN^{-2})^{\frac{1+v}2} \frac{\Lambda^n}{\Gamma(\frac{(1+v)n+1-u}{2})},
	$
	\qquad $ t\in[0,N^2T] $, $ \forall x\in\T $,  $ n\in\Z_{>0} $.
\end{enumerate}
\end{lemma}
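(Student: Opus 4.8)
The plan is to follow the proof of Lemma~\ref{lem:SgrIbd} essentially verbatim, with the discrete objects in place of the continuum ones: the integration-by-parts identity of Lemma~\ref{lem:Ips} replaced by its discrete analogue Lemma~\ref{lem:ips}, the estimate of Lemma~\ref{lem:Ipsbd} replaced by Lemma~\ref{lem:ipsbd}, and the heat-kernel bounds \eqref{eq:HK:int}--\eqref{eq:HKg:uni} replaced by the bounds on $\hka$ from Proposition~\ref{prop:hk}. All the random bounds I invoke — those in Proposition~\ref{prop:hk}, the one in Lemma~\ref{lem:ipsbd}, the bound $\sum_{\tilx\in\T}\hka(t;x,\tilx)\le 2$ from~\eqref{eq:hk:hka:summ}, and $\norm{\rt}_{L^\infty(\T)}\le N^{-\urt}\hold{\Rt_N}_{\urt,N}\le N^{-\urt}\Lambda$ via Assumption~\ref{assu:rt}\ref{assu:rt:holder} — hold simultaneously on an event of probability $\to_{\Lambda,N}1$, on which I work throughout; as usual the freedom in $\Lambda$ absorbs all deterministic prefactors $c(u,v,T)^n$ and the finitely many spare powers of $\Lambda$ that will appear. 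Without loss of generality I take $T\ge 1$.

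First, for fixed $n\ge 1$ and $\vec s\in\Sigma_n(t)$, I bound the integrand starting from the expansion~\eqref{eq:ips} of $\sgrI_n$ in terms of $\hka$ and $\ips$. Taking absolute values and summing over the internal vertices $y_{n+1},\dots,y_1$ in that order: for~(a),(b) I first sum over $\tilx$ using $\sum_{\tilx}\hka(\tfrac{s_n}{2};y_{n+1},\tilx)\le 2$, while for~(c),(d) I bound $\hka(\tfrac{s_n}{2};y_{n+1},\tilx)\le c\Lambda(1+s_n)^{-1/2}$ via Proposition~\ref{prop:hk}\ref{cor:hk:hkasup}; each of the sums over $y_{n+1},\dots,y_2$ then contributes $c\Lambda N^{-(1+v)}\big((1+s_{i-1})^{-(1-v)/2}+(1+s_i)^{-(1-v)/2}\big)$ by Lemma~\ref{lem:ipsbd} (after $1+\tfrac s2\ge\tfrac12(1+s)$); and the last sum over $y_1$ gives $2$ via $\sum_{y_1}\hka(\tfrac{s_0}{2};x,y_1)\le 2$ for~(a),(c), or $c\Lambda\,\dist_\T(x,x')^u(1+s_0)^{-u/2}$ via Proposition~\ref{prop:hk}\ref{cor:hk:hkagdsum} for~(b),(d), where the $x\mapsto x'$ difference falls entirely on the single factor $\hka(\tfrac{s_0}{2};x,y_1)$ in~\eqref{eq:ips}. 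In all four cases this gives a bound of the shape $c^n\Lambda^{n+\iota+1}N^{-(1+v)n}\dist_\T(x,x')^{u\iota}(1+s_0)^{-a}(1+s_n)^{-b}\prod_{i=1}^n\big((1+s_{i-1})^{-(1-v)/2}+(1+s_i)^{-(1-v)/2}\big)$, where $\iota\in\{0,1\}$ records whether a gradient was used and $(a,b)$ runs through $(0,0),(\tfrac u2,0),(0,\tfrac12),(\tfrac u2,\tfrac12)$ in parts~(a)--(d).

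Next I integrate over $\vec s\in\Sigma_n(t)$: expanding the $n$-fold product into $2^n$ monomials exactly as in~\eqref{eq:sumexpansion}, and noting that $s_0$ and $s_n$ each occur in only one factor (so that once the $(1+s_0)^{-a}$, $(1+s_n)^{-b}$ factors are included the total exponent of each $s_i$ is $<1$), I bound $(1+s_i)^{-\gamma}\le s_i^{-\gamma}$ and apply the Dirichlet formula~\eqref{eq:dirichlet} to each monomial, bounding the numerator Gamma factors by $\Gamma(v/2)^{n+1}$. This produces $\int_{\Sigma_n(t)}(\text{integrand})\,d^n\vec s\le c^n\Lambda^{n+\iota+1}N^{-(1+v)n}\dist_\T(x,x')^{u\iota}\,t^{\frac{(1+v)n}{2}-a-b}/\Gamma\big(\tfrac{(1+v)n+2}{2}-a-b\big)$, and one checks that $\tfrac{(1+v)n+2}{2}-a-b$ equals $\tfrac{(1+v)n+2}{2},\ \tfrac{(1+v)n+2-u}{2},\ \tfrac{(1+v)n+1}{2},\ \tfrac{(1+v)n+1-u}{2}$ in~(a)--(d). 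Writing $N^{-(1+v)n}t^{(1+v)n/2}=(tN^{-2})^{(1+v)n/2}\le T^{(1+v)(n-1)/2}(tN^{-2})^{(1+v)/2}$, this is exactly the claimed bound, except that it carries $t^{-(a+b)}$ where the statement has $(t+1)^{-(a+b)}$, plus prefactors $c^n\Lambda^{\iota+1}T^{(1+v)(n-1)/2}$ that collapse into $\Lambda^n$.

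It remains to replace $t^{-(a+b)}$ by $(t+1)^{-(a+b)}$. For $t\ge 1$ this costs only $2^{a+b}$. For $t\le 1$ the factor $(t+1)^{-(a+b)}$ on the right is bounded below by a constant, so it suffices to reprove the four bounds with the crude estimates valid there: $\hka\le c\Lambda$, $\sum_y\hka\le 2$, the $\ips$-sums $\le c\Lambda N^{-(1+v)}$ (the decaying prefactors being $\le 1$), $\sum_{y_1}|\hka(\tfrac{s_0}{2};x,y_1)-\hka(\tfrac{s_0}{2};x',y_1)|\le c\Lambda\dist_\T(x,x')^u$, and $\int_{\Sigma_n(t)}d^n\vec s=t^n/n!$; since $t^{\,n-(1+v)/2}\le 1$, $N^{-(1+v)n}\le N^{-(1+v)}$, and $\Gamma\big(\tfrac{(1+v)n+2}{2}-a-b\big)\le c\,n!$ for $n\ge 1$, the product is dominated by the stated right-hand side, finishing all four parts. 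The only points needing genuine care, as opposed to a mechanical transcription of the continuum argument, are (i) the $\Lambda$-bookkeeping — making sure the deterministic constants and the extra powers of $\Lambda$ from the gradient/sup bounds all absorb into the advertised $\Lambda^n$ — and (ii) the small-time regime $t\le 1$, where exactly as in the proofs of Lemma~\ref{lem:hk} and Lemma~\ref{lem:SgrIbd} the Dirichlet-formula estimate carries a spurious negative power of $t$ that must be removed by the separate crude bound.
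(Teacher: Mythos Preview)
Your proof is correct and follows essentially the same approach as the paper: the paper's own proof simply says the argument is identical to that of Lemma~\ref{lem:SgrIbd} with $\hka,\ips,\sgrI_n$ in place of $\HK,\Ips,\SgrI_n$, records the discrete analogues~\eqref{eqsglem:a}--\eqref{eqsglem:d} of~\eqref{eqSglem:a}--\eqref{eqSglem:d}, and omits the Dirichlet-formula step. Your write-up is in fact more careful than the paper's, explicitly handling the $\Lambda$-bookkeeping and the passage from $t^{-(a+b)}$ to $(t+1)^{-(a+b)}$ for small $t$, points the paper leaves implicit.
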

\begin{proof}
The proof follows by the same line of calculation as in the proof of Lemma~\ref{lem:SgrIbd},
with $ \hka $, $ \ips $, $ \sgrI_n $ replacing $ \HK $, $ \Ips $, $ \SgrI $,
and with sums replacing integrals accordingly.
In particular, in place of~\eqref{eqSglem:a}--\eqref{eqSglem:d}, here we have,
with probability $ \to_{\Lambda,N} 1 $,
\begin{subequations}
\label{eqsglem:}
\begin{align}
	\label{eqsglem:a}
	\sum_{\tilx\in\Z} |\sgrI_n(\vec{s};x,\tilx)| d\tilx
	&\leq
	\big( N^{-1-v}\Lambda \big)^n \prod_{i=1}^n \big( s_{i-1}^{-(1-v)/2}+s_{i}^{-(1-v)/2} \big),
\\
	\label{eqsglem:b}	
	\sum_{\tilx\in\Z} |\sgrI_n(\vec{s};x,\tilx)-\sgrI_n(\vec{s};x',\tilx)| d\tilx
	&\leq
	\big( N^{-1-v}\Lambda \big)^n \dist_{\T}(x,x')^{u} s_0^{-\frac{u}{2}} \prod_{i=1}^n \big( s_{i-1}^{-(1-v)/2}+s_{i}^{-(1-v)/2} \big),
\\	
	|\sgrI_n(\vec{s};x,\tilx)|
	&\leq
	\big( N^{-1-v}\Lambda \big)^n \prod_{i=1}^n \big( s_{i-1}^{-(1-u)/2}+s_{i}^{-(1-u)/2} \big) s_n^{-\frac12},
\\
	\label{eqsglem:d}
	|\sgrI_n(\vec{s};x,\tilx)-\sgrI_n(\vec{s};x',\tilx)|
	&\leq
	\dist_{\T}(x,x')^{u}
	\big( N^{-1-v}\Lambda \big)^n s_0^{-\frac{u}{2}} \prod_{i=1}^n \big( s_{i-1}^{-(1-v)/2}+s_{i}^{-(1-v)/2} \big) s_n^{-\frac12}.
\end{align}
\end{subequations}
Given~\eqref{eqsglem:a}--\eqref{eqsglem:d},
the rest of the proof follows by applying the Dirichlet formula~\eqref{eq:dirichlet}.
We omit repeating the argument.
\end{proof}

We now proceed to establish bounds on $ \sgr $.
In the following, we will often decompose $ \sgr $ into the sum of $ \hk $, the kernel of the homogeneous walk on $ \T $,
and a remainder term $ \sgr := \sg-\hk $.
Recall from~\eqref{eq:hkr} that $ \hka(t;x,\tilx)=\hk(t;x,\tilx)+\hkr(t;x,\tilx) $.
Referring to \eqref{eq:sgker}, we see that
\begin{align*}
	\sgr(t;x,\tilx) := \sg(t;x,\tilx)-\hk(t;x,\tilx) = \hkr(t;x,\tilx) + \sum_{n=1}^\infty \sgr_n(t;x,\tilx).
\end{align*}

\begin{proposition}
\label{prop:sg}
Fix  $ u\in(0,1] $, $ v\in(0,\urt) $ and $ T<\infty $.
The following hold with probability $ \to_{\Lambda,N} 1 $:
\begin{enumerate}[label=(\alph*),leftmargin=7ex]
\item \label{prop:sg:sum}
	$
		\displaystyle
		\sum_{\tilx\in\T} \sg(t;x,\tilx)
		\leq
		\Lambda,
	$
	\qquad $ t\in[0,N^2T] $, $ \forall x\in\T $,
\item \label{prop:sg:sup}
	$
		\displaystyle
		\sg(t;x,\tilx)
		\leq
		\frac{\Lambda}{\sqrt{t+1}},
	$
	\qquad $ t\in[0,N^2T] $, $ \forall x,\tilx\in\T $,
\item \label{prop:sg:gsup}
	$
		\displaystyle
		|\sg(t;x,\tilx)-\sg(t;x',\tilx)|
		\leq
		\frac{\dist_{\T}(x,x')^u}{(t+1)^{(u+1)/2}}\Lambda,
	$
	\qquad  $ t\in[0,N^2T] $, $ \forall x,x',\tilx\in\T $,
\item \label{prop:sg:gsum}
	$
		\displaystyle
		\sum_{\tilx\in\T}|\sg(t;x,\tilx)-\sg(t;x',\tilx)|
		\leq
		\frac{\dist_{\T}(x,x')^u}{(t+1)^{u/2}}\Lambda,
	$
	\qquad  $ t\in[0,N^2T] $, $ \forall x,x'\in\T $,
\item \label{prop:sgr:sum}
	$
		\displaystyle
		\sum_{\tilx\in\T} |\sgr(t;x,\tilx)|
		\leq
		(tN^{-2})^{v}\Lambda,
	$
	\qquad $ t\in[0,N^2T] $, $ \forall x\in\T $,
\item \label{prop:sgr:gsum}
	$
		\displaystyle
		\sum_{\tilx\in\T}|\sgr(t;x,\tilx)-\sgr(t;x',\tilx)|
		\leq
		\Big( \frac{\dist_{\T}(x,x')^u}{(1+t)^{u/2}}N^{-v} + \Big(\frac{\dist_{\T}(x,x')}{N}\Big)^u \Big)\Lambda,
	$
	\quad $ t\in[0,N^2T] $, $ \forall x,x'\in\T $,
\item \label{prop:sgr:gsup}
	$
		\displaystyle
		|\sgr(t;x,\tilx)-\sgr(t;x',\tilx)|
		\leq
		\Big( \frac{\dist_{\T}(x,x')^u}{(1+t)^{(u+1)/2}}N^{-v} + \frac{(\dist_{\T}(x,x')/N)^u}{(1+t)^{1/2}} \Big)\Lambda,
	$
	\quad $ t\in[0,N^2T] $, $ \forall x,x',\tilx\in\T $,
\item \label{prop:sg:loc}
	$
		\displaystyle
		\sup_{t'\in[t,t+1]}\sg(t';x,\tilx)
		\leq
		\Lambda \sg(t+1;x,\tilx),
	$
	\qquad $ t\in[0,N^2T] $, $ \forall x,\tilx\in\T $.
\end{enumerate}
\end{proposition}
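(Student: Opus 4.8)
\subsubsection*{Proof proposal}

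The proof will start from the expansion~\eqref{eq:sgker}, $ \sg(t;x,\tilx)=\hka(t;x,\tilx)+\sum_{n\geq 1}\sgr_n(t;x,\tilx) $, and, for the remainder bounds~\ref{prop:sgr:sum}--\ref{prop:sgr:gsup}, from the further decomposition $ \sgr(t;x,\tilx)=\hkr(t;x,\tilx)+\sum_{n\geq 1}\sgr_n(t;x,\tilx) $ (recall $ \hka=\hk+\hkr $). All the hard work is already done: the leading terms $ \hka $, $ \hkr $ are controlled termwise by Proposition~\ref{prop:hk}, and the tails $ \sum_{n\geq 1}\sgr_n $ are controlled by summing the $ n $-th order bounds of Lemma~\ref{lem:sgrIbd} over $ n $, which converge because $ \sum_{n\geq 1}c^n/\Gamma(\delta+wn)<\infty $ for any fixed $ c<\infty $, $ \delta,w>0 $. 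Throughout I will freely relabel $ \Lambda $ so that the (finite) value of such a sum is absorbed into the $ \Lambda $ of the conclusion; since the new $ \Lambda $ still tends to $ \infty $ with the old one, the ``with probability $ \to_{\Lambda,N} 1 $'' qualifier is preserved. I will also use the elementary facts that $ \sum_{\tilx\in\T}\hka(t;x,\tilx)\leq 2 $ on the event of Proposition~\ref{prop:hk}\ref{cor:hk:hkrsum} (because $ \sum_{\tilx}\hk(t;x,\tilx)=1 $) and that $ (tN^{-2})^{(1+v)/2}\leq c(T) $ and $ (tN^{-2})^{(1+v)/2}\leq c(T)(tN^{-2})^{v} $ for $ t\leq N^2T $ (as $ (1+v)/2\geq v $), so that the $ n=1 $ power appearing in Lemma~\ref{lem:sgrIbd} can be traded for the power in the conclusion.

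Granting this, parts~\ref{prop:sg:sum}--\ref{prop:sg:gsum} follow by adding the appropriate leading-term and tail bounds: \ref{prop:sg:sum} from $ \sum_{\tilx}\hka\leq 2 $ and Lemma~\ref{lem:sgrIbd}\ref{lem:sgrI:sum}; \ref{prop:sg:sup} from Proposition~\ref{prop:hk}\ref{cor:hk:hkasup} and Lemma~\ref{lem:sgrIbd}\ref{lem:sgrI:sup} (pulling out the common $ (t+1)^{-1/2} $); and \ref{prop:sg:gsup}, \ref{prop:sg:gsum} from Proposition~\ref{prop:hk}\ref{cor:hk:hkagdsup}, \ref{cor:hk:hkagdsum} together with Lemma~\ref{lem:sgrIbd}\ref{lem:sgrI:gsup}, \ref{lem:sgrI:gsum}, all summed over $ n\geq 1 $. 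For the remainder bounds~\ref{prop:sgr:sum}--\ref{prop:sgr:gsup}, the same scheme applies with the leading term $ \hka $ replaced by $ \hkr $: Proposition~\ref{prop:hk}\ref{cor:hk:hkrsum}, \ref{cor:hk:hkrgsum}, \ref{cor:hk:hkrgsup} supply the $ N^{-v} $-weighted summands, while Lemma~\ref{lem:sgrIbd} supplies the $ (tN^{-2})^{v} $- and $ (\dist_{\T}(x,x')/N)^u $-type summands; combining these gives~\ref{prop:sgr:sum}--\ref{prop:sgr:gsup}.

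Part~\ref{prop:sg:loc} is of a different nature and is proved directly. By the Feynman--Kac formula~\eqref{eq:feynmankac}, $ \sg(s;x,\tilx)=\Ex_x\big[e^{\int_0^s\nu\rt(X^\rt(r))dr}\ind_\set{\tilx}(X^\rt(s))\big]\geq 0 $. For $ s\in[0,1] $ we get a lower bound on the diagonal by restricting to the event that $ X^\rt $ makes no jump on $ [0,s] $: this event has probability $ \geq e^{-c} $ by Assumption~\ref{assu:rt}\ref{assu:rt:bdd} (the total jump rate out of any site being at most $ c $), and on it $ X^\rt(r)\equiv x $, so the exponential weight equals $ e^{s\nu\rt(x)}\geq\tfrac12 $ once $ N $ is large (using $ \nu=O(N^{-1}) $ and $ |\rt(x)|\leq c $ from Assumption~\ref{assu:rt}\ref{assu:rt:bdd}). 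Hence $ \sg(s;x,x)\geq c_0>0 $ for all $ s\in[0,1] $, with $ c_0 $ depending only on the constant in Assumption~\ref{assu:rt}\ref{assu:rt:bdd}. Now, for $ t'\in[t,t+1] $, the semigroup identity $ \sg(t+1)=\sg(t+1-t')\sg(t') $ and nonnegativity of the kernels give
\begin{align*}
	\sg(t+1;x,\tilx)
	&=
	\sum_{w\in\T}\sg(t+1-t';x,w)\,\sg(t';w,\tilx)
\\
	&\geq
	\sg(t+1-t';x,x)\,\sg(t';x,\tilx)
	\geq
	c_0\,\sg(t';x,\tilx),
\end{align*}
since $ t+1-t'\in[0,1] $. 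Taking the supremum over $ t'\in[t,t+1] $ and choosing $ \Lambda:=c_0^{-1} $ yields~\ref{prop:sg:loc}, in fact deterministically for $ N $ large.

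The main obstacle here is not conceptual but bookkeeping: one must verify that summing the $ n $-dependent bounds of Lemma~\ref{lem:sgrIbd} reproduces the precise dependence on $ t $, $ N $ and $ \dist_{\T}(x,x') $ claimed in each of the eight estimates --- in particular tracking which powers of $ tN^{-2} $ survive and which get absorbed into constants via $ tN^{-2}\leq T $, and matching the $ \hkr $-contributions against the right-hand sides --- and that a single relabeling of $ \Lambda $, on a single event of probability $ \to_{\Lambda,N} 1 $ coming from Proposition~\ref{prop:hk} and Lemma~\ref{lem:sgrIbd}, serves for all of them simultaneously. The only genuinely new input beyond Proposition~\ref{prop:hk} and Lemma~\ref{lem:sgrIbd} is the short-time diagonal lower bound for $ \sg $ used in~\ref{prop:sg:loc}.
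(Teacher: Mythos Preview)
Your proposal is correct and follows essentially the same approach as the paper for parts \ref{prop:sg:sum}--\ref{prop:sgr:gsup}: both decompose $\sg=\hka+\sum_{n\ge 1}\sgr_n$ (respectively $\sgr=\hkr+\sum_{n\ge 1}\sgr_n$), invoke Proposition~\ref{prop:hk} for the leading term and sum Lemma~\ref{lem:sgrIbd} over $n$ for the tail, absorbing the convergent series into~$\Lambda$.

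For part~\ref{prop:sg:loc} there is a small but genuine difference worth noting. Both you and the paper reduce to a diagonal lower bound $\sg(\delta;x,x)\ge c_0>0$ for $\delta\in[0,1]$ via the semigroup identity. The paper obtains this lower bound through the decomposition $\sg=\hk+\hkr+\til\sgr$: it uses $\hk(\delta;x,x)\ge 1/c$ (no-jump event for the \emph{homogeneous} walk) and then subtracts the contributions of $\hkr$ and $\til\sgr$ via Proposition~\ref{prop:hk}\ref{cor:hk:hkrsup} and the tail bound~(III), which requires the probabilistic event. Your argument is more direct: you apply the Feynman--Kac representation~\eqref{eq:feynmankac} and restrict to the no-jump event for the \emph{inhomogeneous} walk $X^\rt$, using only Assumption~\ref{assu:rt}\ref{assu:rt:bdd} and $\nu=O(N^{-1})$. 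This yields the bound deterministically for $N$ large, without appealing to the kernel expansion at all --- a minor simplification.
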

\begin{proof}
Let $ \til{\sgr}(t;x,\tilx) := \sum_{n\geq 1} \sgr_n(t;x,\tilx) $.
Summing the r.h.s.\ of Lemma~\ref{lem:sgrIbd}\ref{lem:sgrI:sum}--\ref{lem:sgrI:gsup} gives,
with probability $ \to_{\Lambda,N} 1 $,
\begin{enumerate}[label=(\Roman*),leftmargin=7ex]
\item \label{sgr:sum}
	$
	\displaystyle
	\sum_{\tilx\in\T} |\til{\sgr}(t;x,\tilx)|
	\leq
	(tN^{-2})^{\frac{1+v}2} \Lambda
	$,
%
\item \label{sgr:gsum}
	$
	\displaystyle	
	\sum_{\tilx\in\T}
	|\til{\sgr}(t;x,\tilx)-\til{\sgr}(t;x,\tilx)|
	\leq
	\frac{\dist_{\T}(x,x')^{u}}{(t+1)^{u/2}} (tN^{-2})^{\frac{1+v}2} \Lambda
	$,
%
\item \label{sgr:sup}
	$
	\displaystyle
	|\til{\sgr}(t;x,\tilx)|
	\leq
	\frac{(tN^{-2})^{\frac{1+v}2} }{(t+1)^{1/2}} \Lambda
	$,
%
\item \label{sgr:gsup}
	$
	\displaystyle
	|\til{\sgr}(t;x,\tilx)-\til{\sgr}(t;x',\tilx)|
	\leq
	\frac{\dist_{\T}(x,x')^{u}}{(t+1)^{(1+u)/2}}(tN^{-2})^{\frac{1+v}2} \Lambda
	$.
\end{enumerate}

Given that $ \sg(t)=\hka(t)+\til{\sgr}(t) $:
\begin{itemize}
\item \ref{prop:sg:sum} follows by combining $ \sum_{\tilx} \hka(t;x,\tilx)=1 $
	and \ref{sgr:sum};
\item \ref{prop:sg:sup} follows by combining Proposition~\ref{prop:hk}\ref{cor:hk:hkasup}
	and \ref{sgr:sup};
\item \ref{prop:sg:gsup} follows by combining Proposition~\ref{prop:hk}\ref{cor:hk:hkagdsup}
	and \ref{sgr:gsup};
\item \ref{prop:sg:gsum} follows by combining Proposition~\ref{prop:hk}\ref{cor:hk:hkagdsum}
	and \ref{sgr:gsum}.
\end{itemize}

Given that $ \sgr(t)=\hkr(t)+\til{\sgr}(t) $,
\begin{itemize}
\item \ref{prop:sgr:sum} follows by combining Proposition~\ref{prop:hk}\ref{cor:hk:hkrsum}
	and \ref{sgr:sum} (note that $ (tN^{-2})^{(v+1)/2} \leq c(T)(tN^{-2})^{v/2} $).
\item With $ t\leq N^2T $, we have $ \frac{1}{(t+1)^{u/2}} (tN^{-2})^{\frac{1+v}2} \leq c(T) N^{-v} $.
	Hence, by \ref{sgr:gsum}, with probability $ \to_{\Lambda,N} 1 $,
	\begin{align*}
		\sum_{\tilx\in\T} |\sgr(t;x,\tilx)| \leq \frac{\dist_{\T}(x,x')^u}{N^u} \Lambda.
	\end{align*}
	Combining this with Proposition~\ref{prop:hk}\ref{cor:hk:hkrgsum} gives~\ref{prop:sgr:gsum}.
\item Similarly to the preceding, by \ref{sgr:gsup}, with probability $ \to_{\Lambda,N} 1 $,
	\begin{align*}
		\sum_{\tilx\in\T} |\sgr(t;x,\tilx)-\sgr(t;x',\tilx)| \leq
		\frac{(\dist_{\T}(x,x')/N)^u}{(1+t)^{1/2}} \Lambda.
	\end{align*}
	Combining this with Proposition~\ref{prop:hk}\ref{cor:hk:hkrgsup} gives~\ref{prop:sgr:gsup}.
\item Finally, to show~\ref{prop:sg:loc}, we fix $ t'\in[t,t+1] $, and set $ \delta:=t+1-t'\leq 1 $.
With $ \sg(t;x,y) \geq 0 $, we write
\begin{align}
	\label{eq:sgloc:}
	\sg(t+1;x,\tilx) = \sum_{y\in\T} \sg(\delta;x,y) \sg(t';y,\tilx) \geq \sg(\delta;x,x) \sg(t';x,\tilx).
\end{align}
Given that $ \delta \leq 1 $, we indeed have $ \hk(\delta;x,x) \geq \Pr_x[ X(s)=x, \forall s\in[0,1] ] \geq \frac{1}{c} $.
With $ \sg(\delta)=\hk(\delta)+\hkr(\delta)+\til{\sgr}(\delta) $,
combining the preceding lower bound on $ \hk(\delta;x,x) $ with Proposition~\ref{prop:hk}\ref{cor:hk:hkrsup} and \ref{sgr:sup},
we now have, with probability $ \to_{\Lambda,N}1 $, $ \sg(\delta;x,x) \geq \frac1c - N^{-v}\Lambda \to \frac1c >0 $.
Inserting this back into~\eqref{eq:sgloc:} yields~\ref{prop:sg:loc}.
\end{itemize}
\end{proof}

We conclude this section by establishing the convergence of the microscopic semigroup $ \sg(t) $
to its macroscopic counterpart $ \Sg(t) $.
Recall from Assumption~\ref{assu:rt}\ref{assu:rt:limit} that $ \Rtlim $ and $ \Rt $ are \emph{coupled}.
The semigroups $ \Sg(t) $ and $ \sg(t) $ being constructed from $ \Rtlim $ and $ \Rt $,
the coupling in Assumption~\ref{assu:rt}\ref{assu:rt:limit} induces a coupling of $ \Sg(t) $ and $ \sg(t) $.

\begin{proposition}
\label{prop:sgtoSg}
Set $ \sg_N(t;x,\tilx) := N\sg(tN^2;Nx,N\tilx) $, and linearly interpolate in $ x $ and $ \tilx $
so that $ \sg_N(t;x,\tilx) $ defines a kernel on $ \limT $.
Given any $ T<\infty $, $ u>0 $, and $ f\in C(\limT) $, we have that
\begin{align*}
	\sup_{x\in\limT} \sup_{t\in[0,T]} \Big| \big( \Sg(t)f-\sg_N(t)f \big)(x) \Big| \longrightarrow_\text{P} 0.
\end{align*}
\end{proposition}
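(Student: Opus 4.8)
The plan is to compare the two semigroups term by term through their Feynman--Kac expansions \eqref{eq:sgker} and \eqref{eq:Sgker}, making the tails uniformly negligible with the bounds already established and passing to the limit in each individual term via a rescaled local central limit theorem. First I would reduce to a lattice statement: by Proposition~\ref{prop:sg}\ref{prop:sg:sum}--\ref{prop:sg:gsum} (and Proposition~\ref{prop:Sg} on the continuum side, noting that $\norm{\Rtlim}_{C^{\urt}[0,1]}$ is bounded with probability $\to_{\Lambda,N}1$ by Assumption~\ref{assu:rt}\ref{assu:rt:holder}--\ref{assu:rt:limit}), with probability $\to_{\Lambda,N}1$ the kernels $\sg(tN^2;\Cdot,\Cdot)$ and $\Sg(t;\Cdot,\Cdot)$ have total mass at most $\Lambda$ and a spatial modulus of continuity of order $(\dist/N)^{u}$, so the linear interpolation defining $\sg_N$ and the replacement of $x\in\limT$ by the nearest point of $\tfrac1N\T$ only cost an error of order $\Lambda$ times the modulus of continuity of $f$; hence it suffices to show
\[
	\sup_{x\in\frac1N\T}\ \sup_{t\in[0,T]}\Big|\big(\Sg(t)f\big)(x)-\sum_{\tilx\in\T}\sg(tN^2;Nx,\tilx)\,f(\tilx/N)\Big|\longrightarrow_\text{P}0.
\]
Inserting \eqref{eq:sgker} and \eqref{eq:Sgker}, I would fix a cutoff $n_0$ and bound the tails $\sum_{n>n_0}$ uniformly in $t\in[0,T]$ by Lemma~\ref{lem:sgrIbd}\ref{lem:sgrI:sum} on the microscopic side (the bound there is $\le t^{(1+v)/2}\Lambda^{n}/\Gamma(\tfrac{(1+v)n+2}{2})$, summable in $n$, with probability $\to_{\Lambda,N}1$) and by Lemma~\ref{lem:SgrIbd}\ref{lem:SgrI:int} on the macroscopic side; this reduces the problem to the $n=0$ term and each fixed $n\ge1$ term.

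For the $n=0$ term I would split $\hka=\hk+\hkr$. By Proposition~\ref{prop:hk}\ref{cor:hk:hkrsum} the $\hkr$-contribution is at most $\norm{f}_{C(\limT)}N^{-v}\Lambda\to0$ with probability $\to_{\Lambda,N}1$, while $\sum_{\tilx}\hk(tN^2;Nx,\tilx)f(\tilx/N)=\Ex_{Nx}[f(X(tN^2)/N)]$ converges to $(\HK(t)f)(x)$ uniformly over $x\in\tfrac1N\T$ and $t\in[\delta,T]$ by the classical local/invariance principle on the torus (via the folding identity \eqref{eq:hk:hkZ} and Fourier analysis), the remaining window $t\in[0,\delta]$ being dispatched because, for $\delta$ small, both quantities differ from $f(x)$ by at most the modulus of continuity of $f$ at scale $\delta$.

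For a fixed $n\ge1$, I would insert the summation-by-parts representation \eqref{eq:ips} of $\sgrI_n$ and \eqref{eq:Ips} of $\SgrI_n$, rescale the times $s_i=N^2\sigma_i$ and the space variables to $\tfrac1N\T$, and verify that the rescaled building blocks converge: $N\hka(N^2\sigma;Nw,Nw')\to\HK(\sigma;w,w')$ and $N^2\nabla\hka(N^2\sigma;Nw,Nw')\to\partial\HK(\sigma;w,w')$ (again $\hka=\hk+\hkr$, with $\hkr$ and its gradient negligible in $L^1$ by Proposition~\ref{prop:hk}\ref{cor:hk:hkrsup},\ref{cor:hk:hkrsum} and the $\hk$-part coming from the rescaled local CLT and its gradient version), $N\nu\,\Rt(Nw,Nw')\to\Rtlim(w,w')$ by \eqref{eq:nuN} together with Assumption~\ref{assu:rt}\ref{assu:rt:limit}, and the sums over $\T$ converging to integrals over $\limT$. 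Tracking the powers of $N$, these assemble to $N^{2n+1}\sgrI_n(N^2\vec\sigma;Nx,N\tilx)\to\SgrI_n(\vec\sigma;x,\tilx)$ for $\vec\sigma$ in the interior of $\Sigma_n(t)$, with an $N$-uniform domination by the integrable functions underlying Lemmas~\ref{lem:sgrIbd} and \ref{lem:SgrIbd}; dominated convergence then gives, after integrating $d^n\vec s=N^{2n}d^n\vec\sigma$ over $\Sigma_n(t)$ and summing against $f(\tilx/N)$, that the $n$-th term of the microscopic sum converges to $\int_{\limT}\Sgr_n(t;x,\tilx)f(\tilx)\,d\tilx$, i.e.\ the $n$-th term of $(\Sg(t)f)(x)$, uniformly in $x$ and $t\le T$. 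Summing over $n\le n_0$ and combining with the two previous steps finishes the proof.

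The hard part will be the rescaled local central limit theorem for the inhomogeneous walk $X^\rt$ and, especially, for its discrete gradient, in a form uniform enough to survive the $n$-fold composition inside \eqref{eq:ips}; once that is available, the $\hkr$-corrections, the interpolation errors and the $n$-tail are all absorbed by the bounds already proved in Propositions~\ref{prop:hk}--\ref{prop:sg} and Lemmas~\ref{lem:sgrIbd}--\ref{lem:SgrIbd}, but interchanging the $N\to\infty$ limit with the nested space integrals and sums, together with securing uniformity as $t\downarrow0$, will still demand some care.
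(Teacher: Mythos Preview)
Your proposal is correct and follows essentially the same route as the paper: split via the expansions \eqref{eq:Sgker} and \eqref{eq:sgker}, control the tails $\sum_{n>n_0}$ uniformly by Lemmas~\ref{lem:SgrIbd}\ref{lem:SgrI:int} and \ref{lem:sgrIbd}\ref{lem:sgrI:sum}, handle the $n=0$ term by $\hka=\hk+\hkr$ with Proposition~\ref{prop:hk}\ref{cor:hk:hkrsum}, and check each fixed $n\ge1$ term via the representations \eqref{eq:Ips}, \eqref{eq:ips}. The paper in fact declares the last step ``straightforwardly checked (though tedious)'' and omits the details you sketch; note also that the ``hard part'' you flag is not actually hard, since your own splitting $\hka=\hk+\hkr$ reduces the local CLT (and its gradient version) to the \emph{homogeneous} walk $\hk$, which is standard Fourier analysis on $\Z$ folded via \eqref{eq:hk:hkZ}.
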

\begin{proof}
Set $ \hka_N(t;x,\tilx):= N\hka(N^2t;Nx,N\tilx) $,
$ \hk_N(t;x,\tilx):= N\hk(N^2t;Nx,N\tilx) $,
$ \hkr_N(t;x,\tilx):= N\hkr(N^2t;Nx,N\tilx) $,
and $ \sgr_{n,N}(t;x,\tilx) := N\sgr_n(tN^2;Nx,N\tilx) $,
and linearly interpolate these kernels in $ x $ and $ \tilx $.
Recall from~\eqref{eq:Sgker} and \eqref{eq:sgker} that $ \Sg(t) $ and $ \sg(t) $ are given in terms of $ \Sgr_n(t) $ and $ \HK(t) $, and $ \sgr_n(t) $ and $ \hka $, respectively.
Finally, recall that $ \hka(t)=\hk(t)+\hkr(t) $.
We may write
\begin{align*}
	\Big| \big( \Sg(t)f-\sg_N(t)f \big)(x) \Big|
	\leq
	&\Big| \int_{\limT} \big( \HK(t;x,\tilx) - \hk_N(t,x,\tilx) \big) f(\tilx) d\tilx \Big|
	+
	\norm{f}_{L^\infty(\limT)} \int_{\limT} |\hkr_N(t;x,\tilx)| d\tilx
\\
	&+
	\norm{f}_{L^\infty(\limT)}
	\sum_{n=1}^\infty \sup_{x\in\limT} \sup_{t\in[0,T]} \int_{\limT}|\Sgr_n(t;x,\tilx)-\sgr_{n,N}(t;x,\tilx)|d\tilx.
\end{align*}
Given that $ f\in C(\limT) $,
with the aid of Lemma~\ref{lem:hk}, it is standard to check that:
\begin{align*}
	\sup_{x\in\limT} \int_{\limT} \big( \HK(t;x,\tilx) - \hk_N(t;x,\tilx) \big) f(\tilx) d\tilx \longrightarrow 0,
	\qquad
	\textrm{as } N\to\infty.
\end{align*}
By Proposition~\ref{prop:hk}\ref{cor:hk:hkrsum}, we have
\begin{align*}
	\sup_{x\in\limT} \sup_{t\in[0,T]} \int_{\T} |\hkr_N(t;x,\tilx)| d\tilx \longrightarrow_\text{P} 0,
	\qquad
	\textrm{as } N\to\infty.
\end{align*}
Further, by Lemmas~\ref{lem:SgrIbd}\ref{lem:SgrI:int} and \ref{lem:sgrIbd}\ref{lem:sgrI:sum}, we have,
with probability $ \to_{\Lambda,N} 1 $,
\begin{align*}
	\sum_{n \geq 1}\sup_{x\in\limT} \sup_{t\in[0,T]} \int_{\limT}|\Sgr_n(t;x,\tilx)|d\tilx <\Lambda,
	\qquad
	\textrm{and}
	\qquad
	\sum_{n \geq 1}\sup_{x\in\T} \sup_{t\in[0,N^2T]} \sum_{\tilx\in\T}|\sgr_{n,N}(t;x,\tilx)| <\Lambda.	
\end{align*}
Given this, it suffices to check, for each fixed $ n \geq 1 $,
\begin{align*}
	\sup_{x\in\limT} \sup_{t\in[0,T]} \int_{\limT}\big| \Sgr_n(t;x,\tilx)-\sgr_{n,N}(t;x,\tilx) \big|d\tilx \longrightarrow_\text{P} 0,
	\qquad
	\textrm{as } N\to\infty.	
\end{align*}
Such a statement is straightforwardly checked (though tedious)
from the given expressions \eqref{eq:Sgr}--\eqref{eq:SgrI}, \eqref{eq:Ips}
and \eqref{eq:sgr}--\eqref{eq:sgrI}, \eqref{eq:ips} of $ \Sgr_n $ and $ \sgr_{n} $,
together with the aid of Lemmas~\ref{lem:hk}, \ref{lem:SgrIbd}, and \ref{lem:sgrIbd}.
We omit the details here.
\end{proof}

\section{Moment bounds and tightness}
\label{sect:mom}
Recall that $ \scZ_N(t,x)=Z(tN^{2},xN) $ denotes the scaled process in~\eqref{eq:Z}.
The goal of this section is to show the tightness of $ \{\scZ_N\}_N $.
For the case of homogeneous \ac{ASEP},
tightness is shown by establishing moment bounds on $ Z_N $
through iterating the microscopic equation (analogous to~\eqref{eq:Lang:int}); see \cite[Section~4]{bertini97} and also \cite[Section~3]{corwin18}.
Here we proceed under the same general strategy.
A major difference here is that the kernel $ \sg(t;x,x') $ (that governs the microscopic equation~\eqref{eq:Lang:int}) is itself random.
We hence proceed by conditioning.
For given $ u\in(0,1] $, $ v\in(0,\urt) $, $ \Lambda,T<\infty $, let
\begin{align}
	\label{eq:Omega}
	\Omega(u,v,\Lambda,T,N):=\{
		\text{properties in Proposition~\ref{prop:sg} hold and } \hold{\Rt}_{\urt,N} \leq \Lambda
	\}.
\end{align}
%
%
Recall $ \mg(s,x) $ from~\eqref{eq:mg}.
\begin{lemma}
\label{lem:BDG}
Fix $ k>1 $.
Write $ \Exrt[\,\Cdot \,]:=\Ex[ \, \Cdot \,  | \rt(x),x\in\T] $
for the conditional expectation quenching the inhomogeneity,
and write $ \normrt{\,\Cdot\,}{k} := (\Exrt[\,(\Cdot)^k\,])^{1/k} $ for the corresponding norm.
Given any deterministic $ f:\T\to\R $,
\begin{align*}
	\Normrt{ \int_{i}^{i'} \sum_{x\in\T} f(s,x)d\mg(s,x) }{k}^2
	\leq
	\frac{c(k)}{N} \sum_{i\leq j<i'} \sum_{x\in\T}\Big(\sup_{s\in[j,j+1]} f^2(s,x)\Big) \normrt{ Z(j,x) }{k}^2,
\end{align*}
for all $ i<i'\in\Z_{\geq 0} $.
\end{lemma}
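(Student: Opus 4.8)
The plan is to run a Burkholder--Davis--Gundy argument conditionally on the environment, bound the quadratic variation pathwise, and then carry out a short localization step that replaces $Z(s,x)$, for $s\in[j,j+1]$, by $Z(j,x)$. First I would note that, since the environment $\rt$ is $\filZ(0)$-measurable (see~\eqref{eq:filZ}), conditioning on $\rt$ preserves the martingale property, so $M(t):=\int_i^t\sum_{x\in\T}f(s,x)\,d\mg(s,x)$, $t\ge i$, is still an $(\filZ(t))$-martingale under $\Pr[\,\Cdot\mid\rt]$. By the (conditional) Burkholder--Davis--Gundy inequality,
\[
	\normrt{M(i')}{k}^2
	\le
	\normrt{\,\sup_{i\le t\le i'}|M(t)|\,}{k}^2
	\le
	c(k)\,\Normrt{\langle M\rangle_{i'}}{k/2},
\]
where $\langle M\rangle$ denotes the quadratic variation (the jumps of $M$ being of size $O(N^{-1/2})Z(s,x)$, the optional and predictable brackets are comparable, so this step is harmless).

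Next I would bound $\langle M\rangle_{i'}$ pathwise. By~\eqref{eq:qv} the martingales $\{\mg(\Cdot,x)\}_{x\in\T}$ are mutually orthogonal, and $d\langle\mg(\Cdot,x)\rangle_s$ equals $Z^2(s,x)(r-\ell)^2\rtt(x)$ times the factor $\tfrac1\ell\eta(s,x)+\tfrac1r\eta(s,x+1)-(\tfrac1r+\tfrac1\ell)\eta(s,x)\eta(s,x+1)$; checking the four values of $(\eta(s,x),\eta(s,x+1))\in\{0,1\}^2$, this factor is one of $0,\tfrac1\ell,\tfrac1r,0$, hence $\le 1/r\le c$ under~\eqref{eq:was}. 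Together with $(r-\ell)^2=1/N$ and $\rtt(x)\le c$ (Assumption~\ref{assu:rt}\ref{assu:rt:bdd}), this yields $d\langle\mg(\Cdot,x)\rangle_s\le\tfrac cN Z^2(s,x)\,ds$, so that
\[
	\langle M\rangle_{i'}=\int_i^{i'}\sum_{x\in\T}f^2(s,x)\,d\langle\mg(\Cdot,x)\rangle_s
	\le
	\frac cN\sum_{i\le j<i'}\sum_{x\in\T}\Big(\sup_{s\in[j,j+1]}f^2(s,x)\Big)\int_j^{j+1}Z^2(s,x)\,ds .
\]

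The heart of the argument --- and the step I expect to be the main obstacle --- is the localization estimate
\[
	\Normrt{\,\int_j^{j+1}Z^2(s,x)\,ds\,}{k/2}\le c(k)\,\normrt{Z(j,x)}{k}^2 ,
\]
uniformly in $N$. Here I would use the explicit form $Z(s,x)=\tau^{h(s,x)/2}e^{\nu s}$ from~\eqref{eq:Z}: for $s\in[j,j+1]$ the ratio $Z(s,x)/Z(j,x)$ is a product of factors $\tau^{\pm1}$, one per jump across the bond $(x,x+1)$ in $(j,s]$, times $e^{\nu(s-j)}$; since $\tau<1$ this gives $\sup_{s\in[j,j+1]}Z(s,x)\le Z(j,x)\,\tau^{-P_{j,x}}\,e^{\nu}$, where $P_{j,x}$ is the number of increments of $\PoiR(\Cdot,x)+\PoiL(\Cdot,x)$ on $(j,j+1]$, a $\mathrm{Poisson}(\rtt(x))$ variable (as $r+\ell=1$) that is \emph{independent} of $\filZ(j)$. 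Under~\eqref{eq:was} and~\eqref{eq:nuN}, $\tau^{-1}=\ell/r=1+O(N^{-1/2})$ and $\nu=O(N^{-1})$, so $\Exrt[\tau^{-kP_{j,x}}e^{k\nu}\mid\filZ(j)]=e^{k\nu}\exp(\rtt(x)(\tau^{-k}-1))\le c(k)$ uniformly in $N$ (for $N$ large). Conditioning on $\filZ(j)$ and then taking $\Exrt$ gives $\Exrt[\sup_{s\in[j,j+1]}Z(s,x)^k]\le c(k)\Exrt[Z(j,x)^k]$; since $\int_j^{j+1}Z^2(s,x)\,ds\le\sup_{s\in[j,j+1]}Z^2(s,x)$, raising to the power $2/k$ yields the claimed bound.

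Finally, for $k\ge2$ I would apply Minkowski's inequality in $\Normrt{\Cdot}{k/2}$ to the pathwise bound on $\langle M\rangle_{i'}$ and then the localization estimate term-by-term, obtaining $\Normrt{\langle M\rangle_{i'}}{k/2}\le\tfrac{c(k)}{N}\sum_{i\le j<i'}\sum_{x\in\T}(\sup_{s\in[j,j+1]}f^2(s,x))\,\normrt{Z(j,x)}{k}^2$; combined with the first display this is the assertion. For $1<k<2$ the same ingredients are used at the level of the integer-time increments, via the martingale-difference inequality $\normrt{M(i')}{k}^k\le c(k)\sum_{i\le j<i'}\normrt{M(j+1)-M(j)}{k}^k$ applied together with the quadratic-variation and localization bounds for each increment.
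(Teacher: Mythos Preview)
Your proposal is correct and takes essentially the same approach as the paper: a Burkholder/BDG inequality, the pathwise bound $d\langle\mg(\Cdot,x)\rangle_s\le\tfrac cN Z^2(s,x)\,ds$ coming from~\eqref{eq:qv}, and the Poisson-clock localization $Z(s,x)\le e^{X(j,x)/\sqrt N}Z(j,x)$ on $[j,j+1]$ with $X(j,x)$ independent of $\filZ(j)$ and stochastically dominated by a Poisson variable. The only cosmetic difference is that the paper first passes to the integer-time martingale with increments $J(j)=\int_j^{j+1}\sum_x f(s,x)\,d\mg(s,x)$ and performs the localization inside the conditional variance $\Exrt[J(j)^2\mid\mathscr F(j)]$---yielding a pathwise bound in terms of $Z(j,x)^2$ directly---rather than at the level of norms as you do.
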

\begin{proof}
The conditional expectation $ \Exrt[\,\Cdot \,]:=\Ex[ \, \Cdot \,  | \rt(x),x\in\T] $
amounts to fixing a realization of $ \{\rt(x)\}_{x\in\T} $ that satisfies Assumption~\ref{assu:rt}.
In fact, only Assumption~\ref{assu:rt}\ref{assu:rt:bdd} will be relevant toward the proof.
With this in mind, throughout this proof we view $ \rt(x) $ as \emph{deterministic} functions satisfying~Assumption~\ref{assu:rt}\ref{assu:rt:bdd}.

For fixed $ i\in\Z_{\geq 0} $, consider the discrete-time martingale $ \til{\mg}(i') := \sum_{j=i}^{i'-1} J(j) $, $ i'=i+1,i+2,\ldots $,
with increment $ J(j):= \int_{j}^{j+1} \sum_{x\in\T} f(s,x)d\mg(s,x) $.
Write $ \mathscr{F}(i'):=\sigma(J(i),\ldots,J(i')) $ for the canonical filtration.
Burkholder's inequality applied to $ \til{\mg} $ gives
\begin{align}
	\label{eq:burkholder}
	\normrt{ \til{\mg}(i') }{k}^2
	\leq
	c(k)
	\Normrt{
		\sum_{i\leq j<i'}  \Exrt\big[ J(j)^2 \big| \mathscr{F}(j) \big]
	}{k}.
\end{align}
We may compute
$ \Exrt[ J(j)^2 | \mathscr{F}(j) ] = \Exrt[ \int_{j}^{j+1} \sum_{x,x'} f(s,x)f(s,x') d\langle \mg(s,x),\mg(s,x')\rangle | \mathscr{F}(j) ] $.
The quadratic variation $ \langle \mg(s,y),\mg(s,y')\rangle $ is calculated in~\eqref{eq:qv}.
Under Assumption~\ref{assu:rt}\ref{assu:rt:bdd}, $ \rtt(x) $ is uniformly bounded,
and weak asymmetry scaling~\eqref{eq:was} gives $ (\tau-1)^{2}, (\tau^{-1}-1)^2 \leq \frac{1}{N} $.
Using these properties in~\eqref{eq:qv} gives
\begin{align}
	\label{eq:qv:bd}
	|\tfrac{d~}{dt}\langle \mg(t,x), \mg(t,x') \rangle |
	\leq
	\tfrac{c}{N}\ind_\set{x=x'}Z^2(t,x),
\end{align}
whereby
\begin{align}
	\label{eq:bdg:J}
	\Exrt[ J(j)^2 | \mathscr{F}(j) ] \leq \frac{c}{N} \sum_{x\in\T}\Exrt\Big[ \int_{j}^{j+1} f(s,x)^2 Z(s,x)^2 ds \Big| \mathscr{F}(j)\Big].
\end{align}
Fix $ x\in\T $.
Assumption~\ref{assu:rt}\ref{assu:rt:bdd} asserts that the Poisson clocks $ \PoiL(t,x) $ and $ \PoiR(t,x) $ that dictate jumps between $ x $ and $ x+1 $ have bounded rates.
Each jump changes $ Z(t,x) $ by a factor of $ \exp(\pm\frac{c}{\sqrt{N}}) $ (see~\eqref{eq:Z} and~\eqref{eq:was}).
This being the case, we have
\begin{align}
	\label{eq:locZbd>}
	Z(s,x) &\leq e^{\frac{X(j,x)}{\sqrt{N}}} Z(j,x),
	\qquad
	s\in[j,j+1),
\\
	\label{eq:locZbd<}
	Z(s,x) &\geq e^{-\frac{\til{X}(j,x)}{\sqrt{N}}} Z(j,x),
	\qquad
	s\in[j,j+1),
\end{align}
for some $ X(j,x),\til{X}(j,x) $ that are stochastically dominated by Poisson($ c $),
and are \emph{independent} of the sigma algebra $ \filZ(t) $ defined in~\eqref{eq:filZ}.
Now, use \eqref{eq:locZbd>} in \eqref{eq:bdg:J} to get
\begin{align*}
	\Exrt[ J(j)^2 | \mathscr{F}(j) ] \leq \frac{c}{N} \sum_{x\in\T}\Big(\sup_{s\in[j,j+1]} f(s,x)^2 \Big) Z(j,x)^2.
\end{align*}
Inserting this back into~\eqref{eq:burkholder} concludes the desired result.
\end{proof}

Recall from~\eqref{eq:nearst} that $ \uic>0 $ is the H\"{o}lder exponent of $ Z_\ic(\Cdot) $.
\begin{proposition}\label{prop:mom}
Fixing $ u\in(0,1) $,
$ v\in(0,\urt) $, $ k>1 $, and $ \Lambda,T<\infty $.
Let $ \Exrt[\,\Cdot \,] $ be as in Lemma~\ref{lem:BDG},
and further, write $ \ExO[\,\Cdot \,]:=\Exrt[ (\, \Cdot \, )\ind_{\Omega(u,v,\Lambda,T,N)}]=\Exrt[ \, \Cdot \, ]\ind_{\Omega(1,v,\Lambda,T)} $,
and let $ \normO{\,\Cdot\,}{k}:= \ExO[(\,\Cdot\,)^k]^{1/k} $ denote the corresponding norm.
There exists $ c=c(u,v,k,\Lambda,T) $ such that, for all $ x,x'\in\T $ and $ t,t'\in[0, N^2T] $,
\begin{subequations}
\begin{align}
	\label{eq:Zmom}
	\normO{ Z(t,x) }{k} &\leq c,
\\
	\label{eq:gZmom}	
	\normO{ Z(t,x)-Z(t,x') }{k} &\leq c \Big(\frac{\dist_\T(x,x')}{N}\Big)^{\frac{u}{2}\wedge\uic\wedge v},
\\
	\label{eq:tZmom}	
	\normO{ Z(t',x)-Z(t,x) }{k} &\leq c \Big(\frac{|t'-t|\vee 1}{N^2}\Big)^{\frac{u}{4}\wedge\frac{\uic}{2}\wedge\frac{v}{2}},
\end{align}
\end{subequations}
\end{proposition}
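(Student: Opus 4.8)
The plan is to adapt the moment-bound argument of Bertini and Giacomin \cite{bertini97} (see also \cite[Section~3]{corwin18}), carried out throughout \emph{conditionally} on the inhomogeneity and restricted to the event $ \Omega=\Omega(u,v,\Lambda,T,N) $ of \eqref{eq:Omega}. On $ \Omega $ every kernel estimate in Proposition~\ref{prop:sg} is in force, and, since the kernel $ \sg(t;x,\tilx) $ is $ \sigma(\rt) $-measurable, it is \emph{deterministic} for the quenched expectation $ \Exrt[\,\Cdot\,] $; hence Lemma~\ref{lem:BDG} applies with $ f $ equal to (differences of) $ \sg(t-\Cdot\,;x,\Cdot\,) $. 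Because $ \ind_\Omega $ is also $ \sigma(\rt) $-measurable, $ \normO{\,\Cdot\,}{k}=\ind_\Omega\normrt{\,\Cdot\,}{k} $, so the estimates valid on $ \Omega $ may be inserted freely once a factor $ \ind_\Omega $ is present; and, the torus being finite, $ \normrt{Z(t,x)}{k}<\infty $ a priori for all $ t\in[0,N^2T] $, which is all that is needed to close the Gronwall-type iterations below. I will prove \eqref{eq:Zmom}, \eqref{eq:gZmom}, \eqref{eq:tZmom} in that order, starting in each case from the integrated equation \eqref{eq:Lang:int} (for the first two) or from \eqref{eq:Lang:int:} with $ t_*=t $ (for the last). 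The analytic heart is \eqref{eq:gZmom}: unlike in the homogeneous case, $ \sg(t) $ is neither translation invariant nor doubly stochastic, so one cannot simply subtract $ Z_\ic(x) $ to create a telescoping that decays in $ \dist_\T(x,x') $, and the device that repairs this (peeling off the translation-invariant part $ \hk $ and absorbing the $ N^{-v} $ gain in the $ \sgr $-estimates) is explained below.

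For \eqref{eq:Zmom} I bound the deterministic term of \eqref{eq:Lang:int} by $ \norm{Z_\ic}_{L^\infty}\sum_{\tilx}\sg(t;x,\tilx)\le c\Lambda $ using near stationarity \eqref{eq:nearst} and Proposition~\ref{prop:sg}\ref{prop:sg:sum}. For the stochastic term I apply Lemma~\ref{lem:BDG} with $ f(s,\tilx)=\sg(t-s;x,\tilx) $, use Proposition~\ref{prop:sg}\ref{prop:sg:loc} to replace $ \sup_{s\in[j,j+1]}\sg(t-s;x,\tilx)^2 $ by $ \Lambda^2\sg(t-j;x,\tilx)^2 $ (the partial interval near $ s=t $ bounded crudely and absorbed), and then Proposition~\ref{prop:sg}\ref{prop:sg:sup}--\ref{prop:sg:sum} to get $ \sum_{\tilx}\sg(t-j;x,\tilx)^2\le\Lambda^2(t-j+1)^{-1/2} $. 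With $ A(t):=\sup_{x}\normO{Z(t,x)}{k}^2 $ this gives a discrete fractional Gronwall inequality $ A(t)\le c_1+\tfrac{c_2}{N}\sum_{0\le j\le t}(t-j+1)^{-1/2}A(j) $, whose kernel has total mass $ \tfrac1N\sum_{m\le t+1}m^{-1/2}\le 2\sqrt{t+1}/N\le c(T) $ for $ t\le N^2T $. Iterating and summing the iterated kernels via the Dirichlet formula \eqref{eq:dirichlet} (the a priori finiteness of $ \sup_{t\le N^2T}A(t) $ controlling the remainder) yields $ \sup_{t\le N^2T}A(t)\le c(u,v,k,\Lambda,T) $, i.e.\ \eqref{eq:Zmom}.

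For \eqref{eq:gZmom} I split $ \sg=\hk+\sgr $ in the spatial difference of \eqref{eq:Lang:int}. For the initial-condition contribution, translation invariance of $ \hk $ on $ \T $ gives $ \sum_{\tilx}(\hk(t;x,\tilx)-\hk(t;x',\tilx))Z_\ic(\tilx)=\sum_{\tilx}\hk(t;x,\tilx)\big(Z_\ic(\tilx)-Z_\ic(\tilx-(x-x'))\big) $, which by $ \sum_{\tilx}\hk=1 $ and \eqref{eq:nearst} is at most $ c(\dist_\T(x,x')/N)^{\uic} $; the $ \sgr $ part is $ \le\norm{Z_\ic}_{L^\infty}\sum_{\tilx}|\sgr(t;x,\tilx)-\sgr(t;x',\tilx)| $, which by Proposition~\ref{prop:sg}\ref{prop:sgr:gsum} with its free H\"{o}lder parameter taken equal to $ v $ (and $ \dist_\T\le N $, $ t\le N^2T $) is $ \le c(\dist_\T(x,x')/N)^{v} $. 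For the martingale contribution I apply Lemma~\ref{lem:BDG} with $ f(s,\tilx)=\sg(t-s;x,\tilx)-\sg(t-s;x',\tilx) $, insert \eqref{eq:Zmom}, bound $ \sum_{\tilx}|\sg(\tau;x,\tilx)-\sg(\tau;x',\tilx)|^2\le(\sup_{\tilx}|\Cdot|)(\sum_{\tilx}|\Cdot|) $ by the minimum of Proposition~\ref{prop:sg}\ref{prop:sg:sup},\ref{prop:sg:sum} with Proposition~\ref{prop:sg}\ref{prop:sg:gsup},\ref{prop:sg:gsum}, and split the resulting time-sum at $ \tau\asymp\dist_\T(x,x')^2 $; this gives $ \normO{\,\Cdot\,}{k}^2\le c(\dist_\T(x,x')/N)^{u} $, hence a contribution $ c(\dist_\T(x,x')/N)^{u/2} $. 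Adding the three pieces produces the exponent $ \tfrac u2\wedge\uic\wedge v $. Note that no case analysis on whether $ \dist_\T(x,x') $ is comparable to $ N $ is needed: the translation-invariance trick handles the $ \hk $ part cleanly, and the $ N^{-v} $ factor in Proposition~\ref{prop:sg}\ref{prop:sgr:gsum} tames the $ \sgr $ part since $ \dist_\T/N\le1 $.

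For \eqref{eq:tZmom}, taking $ t<t' $ and $ \delta=t'-t\in[0,N^2T] $, I use \eqref{eq:Lang:int:} to write $ Z(t',x)-Z(t,x)=\sum_{\tilx}\sg(\delta;x,\tilx)\big(Z(t,\tilx)-Z(t,x)\big)+\big(\sum_{\tilx}\sg(\delta;x,\tilx)-1\big)Z(t,x)+\int_t^{t'}\sum_{\tilx}\sg(t'-s;x,\tilx)\,d\mg(s,\tilx) $. With $ \beta=\tfrac u2\wedge\uic\wedge v $, the first term is handled by the now-available \eqref{eq:gZmom} together with $ \sum_{\tilx}\hk(\delta;x,\tilx)\dist_\T(x,\tilx)^{\beta}\le c(\delta+1)^{\beta/2} $ from \eqref{eq:hk:hold} and the crude bound $ \dist_\T\le N $ for the $ \sgr $ part (Proposition~\ref{prop:sg}\ref{prop:sgr:sum}), yielding $ c((\delta\vee1)/N^2)^{\beta/2} $; the second term uses $ \sum_{\tilx}\hk(\delta;x,\tilx)=1 $, Proposition~\ref{prop:hk}\ref{cor:hk:hkrsum}, Proposition~\ref{prop:sg}\ref{prop:sgr:sum} and \eqref{eq:Zmom}, yielding $ c((\delta\vee1)/N^2)^{v/2} $; and the martingale term, via Lemma~\ref{lem:BDG}, Proposition~\ref{prop:sg}\ref{prop:sg:loc},\ref{prop:sg:sup},\ref{prop:sg:sum} and \eqref{eq:Zmom}, is $ \le c((\delta\vee1)/N^2)^{1/4} $. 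Since $ \beta/2\le v/2\wedge\tfrac14 $, these combine to \eqref{eq:tZmom}. Apart from \eqref{eq:gZmom}, the only recurring technical nuisances are the bookkeeping of the partial time-intervals $ [\lfloor t\rfloor,t] $ in Lemma~\ref{lem:BDG} and the a priori $ L^k $-finiteness closing the Gronwall iterations, both routine given that $ \T $ is finite.
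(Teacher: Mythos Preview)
Your proposal is correct and follows essentially the same route as the paper: work conditionally on $\rt$ restricted to $\Omega$, feed the mild equation \eqref{eq:Lang:int}--\eqref{eq:Lang:int:} into Lemma~\ref{lem:BDG}, invoke the kernel estimates of Proposition~\ref{prop:sg}, close \eqref{eq:Zmom} by a Dirichlet/Gronwall iteration, and for \eqref{eq:gZmom}--\eqref{eq:tZmom} split $\sg=\hk+\sgr$ so that translation invariance of $\hk$ handles the initial-data increment while Proposition~\ref{prop:sg}\ref{prop:sgr:gsum},\ref{prop:sgr:sum} absorbs the $\sgr$ part. The only cosmetic differences are that the paper bounds the martingale term in \eqref{eq:gZmom} via $(\sup_{\tilx}|\sg-\sg|)(\sg+\sg)$ and Proposition~\ref{prop:sg}\ref{prop:sg:sum},\ref{prop:sg:gsup} directly (no time-splitting needed), and it makes the passage from integer to continuous times explicit via the local growth bounds \eqref{eq:locZbd>}--\eqref{eq:locZbd<}.
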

\begin{proof}
Fixing 
$ v\in(0,\urt) $, $ k>1 $, and $ \Lambda,T<\infty $,
throughout this proof we write $ c=c(v,k,T,\Lambda) $ to simplify notation.
As declared previously, the value of the constant may change from line to line.
Following the same convention as in the proof of Lemma~\ref{lem:BDG},
throughout this proof we view $ \rt(x) $ and $ \sg(t;x,\tilx) $ as \emph{deterministic} functions and,
(with $ \Omega(1,v,\Lambda,T) $ as in~\eqref{eq:Omega} being conditioned) assume
the properties in Proposition~\ref{prop:sg}\ref{prop:sg:sum}--\ref{prop:sg:loc} hold.

Let us begin by considering discrete time $ i\in\Z\cap[0,N^{2}T] $.
The starting point of the proof is the microscopic, mild equation~\eqref{eq:Lang:int}.
Recall that $ Z_\ic(x) $ is deterministic by assumption.
In~\eqref{eq:Lang:int}, set $ t=i $, take $ \normO{\,\Cdot\,}{k} $ on both sides, and square the result.
We have
\begin{align}
	\label{eq:Lang:iter}
	\normO{ Z(i,y) }{k}^2
	\leq
	2 \Big( \sum_{\tilx\in\T}\sg(i;x,\tilx)Z_\ic(\tilx) \Big)^2
	+
	2
	\NormO{ \int_0^i \sum_{\tilx\in\T}\sg(i-s;x,\tilx)d\mg(s,\tilx) }{k}^2.
\end{align}
To bound the last term in~\eqref{eq:Lang:iter},
apply Lemma~\ref{lem:BDG} with $ (i,i')\mapsto (0,i) $ and $ f(s,\tilx)=\sg(i-s;x,\tilx) $
(recall that $ \sg $ is  deterministic here),
and then use Proposition~\ref{prop:sg}\ref{prop:sg:loc} to bound $ \sup_{s\in[j,j+1]} \sg(i-s;x,\tilx)^2 $ by $ c\,\sg(i-j;x,\tilx)^2 $.
This gives
\begin{align}
	\label{eq:Lang:iter:}
	\normO{ Z(i,x) }{k}^2
	\leq
	2 \Big( \sum_{\tilx\in\T}\sg(i;x,\tilx)Z_\ic(\tilx) \Big)^2
	+
	\frac{c}{N} \sum_{j=0}^{i-1} \sum_{x\in\T} \sg(i-j;x,\tilx)^2 \normO{ Z(i,\tilx) }{k}^2.
\end{align}
Using the assumption $ Z_\ic(x) \leq c $ from~\eqref{eq:nearst} and the bound from Proposition \ref{prop:sg}\ref{prop:sg:sum},
we have $ \sum_{\tilx\in\T}\sg(i;x,\tilx)Z_\ic(\tilx) \leq c $,
and using the bound from Proposition \ref{prop:sg}\ref{prop:sg:sup},
we write $ \sg(i-j;x,\tilx)^2 \leq \sg(i-j;x,\tilx) c\,(i-j)^{-1/2} $.
Inserting these bounds into~\eqref{eq:Lang:iter:}, we arrive at
\begin{align}
	\label{eq:Lang:iter:1}
	\normO{ Z(i,x) }{k}^2
	\leq
	c
	+
	c \sum_{j=0}^{i-1}  \frac{N^{-2}}{\sqrt{N^{-2}(i-j)}} \sum_{x\in\T}\sg(i-j;x,\tilx) \normO{ Z(i,\tilx) }{k}^2,
\end{align}
Iterating~\eqref{eq:Lang:iter:1} gives
\begin{align}
	\label{eq:Lang:iter:2}
	\normO{ Z(i,x) }{k}^2
	\leq
	c
	+
	\sum_{n=1}^\infty c^n \sum_{\vec{\ell}\in \sigma_n(i)}  \prod_{j=0}^n \frac{N^{-2}}{\sqrt{N^{-2}\ell_j}}
	\sum_{x_1,\ldots,x_n\in\T} \prod_{k=1}^n \sg(\ell_j;x_{j-1},x_{j}).
\end{align}
Here, we adopt the convention that $ x_0:=x $,
and  $ \sigma_n(i) :=\{ (\ell_0,\ldots,\ell_n) \in \Z_{>0}^{n+1} : \ell_0+\ldots+\ell_n=i \} $.
In~\eqref{eq:Lang:iter:2},
we sum over $ x_{n},\ldots,x_1 $ in order, and use the bound from Proposition~\ref{prop:sg}\ref{prop:sg:sum} at each step to bound the result by $ c $.
Then, approximating the sum over $ \vec{\ell}\in \sigma_n(t) $ by an integral, we have
\begin{align}
	\label{eq:Lang:iter:3}
	\normO{ Z(i,x) }{k}^2
	\leq
	c
	+
	\sum_{n=1}^\infty c^n \int_{\Sigma_n(iN^{-2})} \prod_{j=0}^n s_i^{-\frac12} \cdot d^n\vec{s}.
\end{align}
We now apply the Dirichlet integral formula~\eqref{eq:dirichlet} with $ v_0=\ldots=v_n=\frac12 $.
Given that $ i \leq TN^{2} $, upon summing the result over $ n=1,2,\ldots $, we obtain
\begin{align}
	\tag{\ref*{eq:Zmom}'}
	\label{eq:Zmom:}
	\normO{ Z(i,x) }{k}  &\leq c.
\end{align}
This is exactly the first desired bound~\eqref{eq:Zmom} for $ t=i\in\Z_{>0} $,
and hence the label~\eqref{eq:Zmom:}.
We will have similar labels for \eqref{eq:gZmom}--\eqref{eq:tZmom}.

We now turn to the gradient moment estimates~\eqref{eq:gZmom}.
Set
\begin{align}
	\label{eq:mom:I}
	I(x)&:=\sum_{\tilx\in\T}\sg(i;x,\tilx)Z_\ic(\tilx),
\\
	\label{eq:mom:J}
	J(x,x')&:= \frac{1}{N} \sum_{j=0}^{i-1} \sum_{\tilx\in\T} |\sg(i-j;x,\tilx)-\sg(i-j;x',\tilx)|^2 \normO{ Z(i,\tilx) }{k}^2.
\end{align}
Following the same procedure leading to~\eqref{eq:Lang:iter:},
but starting with $ Z(i,x)-Z(i,x') $ instead of $ Z(i,x) $, here we have
\begin{align}
	\label{eq:Lang:iter::}
	\normO{ Z(i,x) -  Z(i,x') }{k}^2
	\leq
	2 \big( I(x)-I(x') \big)^2
	+
	c \, J(x,x').
\end{align}
To bound the term $ J(x,x') $, in~\eqref{eq:mom:J}, use
\begin{align*}
	|\sg(i-j;x,\tilx)-\sg(i-j;x',\tilx)|^2
	\leq
	\Big( \sup_{\tilx}|\sg(i-j;x,\tilx)-\sg(i-j;x',\tilx)| \Big) \Big( \sg(i-j;x,\tilx)+\sg(i-j;x',\tilx) \Big).
\end{align*}
Then, sum over $ \tilx\in\T $,
using the bound \eqref{eq:Zmom:} on $ \normO{ Z(i,\tilx) }{k}^2 $
and the bounds from Proposition~\ref{prop:sg}\ref{prop:sg:sum} and \ref{prop:sg:gsup} on $ \sg $.
With $ i\leq N^2T $, we have
\begin{align}
	\label{eq:mom:J:}
	J(x,x') \leq \frac{c}{N} \sum_{j=0}^{i-1} \frac{\dist_\T(x,x')^u}{(i-j+1)^{(u+1)/2}} \leq c\Big(\frac{\dist_\T(x,x')}{N}\Big)^u.
\end{align}
We now proceed to bound $ I(x)-I(x') $.
Recall that $ \sg(t)=\hk(t)+\sgr(t) $.
Decompose $ I(x)=I_\hk(x)+I_\sgr(x) $ into the corresponding contributions of $ \hk(t) $ and $ \sg(t) $:
$ I_\hk(x):=\sum_{\tilx\in\T}\hk(i;x,\tilx)Z_\ic(\tilx) $ and $ I_\sgr(x):=\sum_{\tilx\in\T}\sgr(i;x,\tilx)Z_\ic(\tilx) $.
For $ I_\hk $, using translation invariance of $ \hk $ (i.e., $ \hk(t;x,\tilx)=\hk(t;x+i,\tilx+i) $),
we have $ I_\hk(x)-I_\hk(x')=\sum_{\tilx\in\T} \hk(t;x,\tilx)(Z_\ic(\tilx)-Z_\ic(\tilx+(x'-x))) $.
Given this expression, together with the H\"{o}lder continuity of $ Z_\ic(\Cdot) $ from our assumption \eqref{eq:nearst},
we have
\begin{align}
	\label{eq:mom:Ihk}
	\big| I_\hk(x)-I_\hk(x') \big| \leq \big(\tfrac{\dist_\T(x,x')}{N}\big)^{\uic} c.
\end{align}
As for $ I_\sgr $, using the bound from Proposition~\ref{prop:sg}\ref{prop:sgr:gsum} for $ u=v $ and the boundedness of $ Z_\ic(x) $ gives
\begin{align}
	\label{eq:mom:Isgr}
	\big| I_\sgr(x)-I_\sgr(x') \big| \leq \big(\tfrac{\dist_\T(x,x')}{N}\big)^v c.
\end{align}
Combining~\eqref{eq:mom:J:}--\eqref{eq:mom:Isgr} with~\eqref{eq:Lang:iter::} yields
\begin{align}
	\tag{\ref*{eq:gZmom}'}
	\label{eq:gZmom:}
	\normO{Z(i,x)-Z(i,x') }{k}
	\leq
	\Big(
			\big(\tfrac{\dist_\T(x,x')}{N}\big)^u
			+
			(\tfrac{\dist_\T(x,x')}{N})^{2\uic}
			+
			(\tfrac{\dist_\T(x,x')}{N})^{2v}
	\Big)^{1/2}c
	\leq
	\big(\tfrac{\dist_\T(x,x')}{N}\big)^{\frac{u}{2}\wedge \uic\wedge v} c.
\end{align}

Finally we turn to the gradient moment estimate~\eqref{eq:tZmom}.
Fix $ i<i'\in\Z\cap[0,N^2T] $, $ x\in\T $, and set
\begin{align*}
	\til{I}(i,i',x):=\sum_{\tilx\in\T}\sg(i'-i;x,\tilx)Z(i,\tilx) - Z(i,x),
	\qquad
	\til{J}(i,i',x) := \frac1N\sum_{j=i}^{i'-1} \sum_{\tilx\in\T} \sg(i'-j;x,\tilx)^2 \normO{ Z(i,\tilx) }{k}.
\end{align*}
To alleviate heavy notation, hereafter we omit dependence on $ (i,i',x) $
and write $ \til{I} $ and $ \til{J} $ in places of $ \til{I}(i,i',x) $ and $ \til{J}(i,i',x) $.
Following the same procedure leading to~\eqref{eq:Lang:iter:},
starting from $ t=i $ instead of $ t=0 $, here we have
\begin{align}
	\label{eq:Lang:iter:::}
	\normO{ Z(i',x)-Z(i,x) }{k}^2
	\leq
	2 \normO{ \til{I} }{k}^2
	+
	c \til{J}.
\end{align}
Using the bound~\eqref{eq:Zmom:} on $ \normO{ Z(i,\tilx) }{k} $ and the bounds from
Proposition~\ref{prop:sg}\ref{prop:sg:sum} and \ref{prop:sg:sup} for $ u=1 $ on $ \sg $,
we have
\begin{align*}
	\til{J} \leq \frac{c}{N} \sum_{j=i}^{i'} \frac{1}{\sqrt{i'-j+1}} \leq \Big(\frac{i'-i}{N^2}\Big)^{\frac12} c.
\end{align*}
As for $ \til{I} $, decompose it into $ \til{I}=\til{I}_\hk+\til{I}_\sgr $,
where
\begin{align*}
	\til{I}_\hk
	&:=\sum_{\tilx\in\T}\hk(i'-i;x,\tilx)Z(i,\tilx) - Z(i,x)
	=
	\sum_{\tilx\in\T}\hk(i'-i;x,\tilx)\big(Z(i,\tilx) - Z(i,x)\big),
\\
	\til{I}_\sgr
	&:=\sum_{\tilx\in\T}\sgr(i'-i;x,\tilx)Z(i,\tilx).
\end{align*}
Taking $ \normO{\,\Cdot\,}{k} $ of $ \til{I}_\hk $, with the aid of~\eqref{eq:gZmom:},
we have $ \normO{\til{I}_\hk}{k} \leq c \sum_{\tilx\in\T}\hk(i'-i;x,\tilx)(\dist_\T(x,\tilx)/N)^{\frac12\wedge\uic} $.
For $ \hk $ it is straightforward to show that
$ \sum_{\tilx\in\T}\hk(i'-i;x,\tilx)\dist_\T(x,\tilx)^u \leq c(u) (i'-i)^{u/2} $, so
$
	\normO{\til{I}_\hk}{k} \leq (\frac{i'-i}{N^2})^{\frac12\wedge\uic}c.
$
As for $ \til{I}_\sgr $, taking $ \normO{\,\Cdot\,}{k} $ using~\eqref{eq:Zmom:} and
the bound from Proposition~\ref{prop:sg}\ref{prop:sgr:sum} gives
$
	\normO{\til{I}_\hk}{k} \leq (\tfrac{i'-i}{N^2})^vc. 	
$
Inserting the preceding bounds on $ \til{J} $, $ \til{I}_\hk $, and $ \til{I}_\sgr $ into~\eqref{eq:Lang:iter:::},
we obtain
\begin{align}
	\tag{\ref{eq:tZmom}'}
	\label{eq:tZmom:}
	\normO{ Z(i,y)-Z(i',y) }{k}
	\leq
	\Big( \big(\tfrac{i'-i}{N^2}\big)^{\frac12} + \big(\tfrac{i'-i}{N^2}\big)^{\frac12\wedge\uic} c+\big(\tfrac{i'-i}{N^2}\big)^{v} \Big)^{1/2}c
	\leq
	\big(\tfrac{i'-i}{N}\big)^{\frac{u}{4}\wedge\frac{\uic}{2}\wedge\frac{v}{2}} c.
\end{align}

So far we have obtained the relevant bounds~\eqref{eq:Zmom:}--\eqref{eq:tZmom:} for integer time.
To go from integer to continuum, we consider generic $ \lfloor t\rfloor \leq t\in[0,N^{2}T] $,
and estimate $ \normO{ Z(t,x)-Z(\lfloor t\rfloor, x)}{k} $.
To this end, recall we have the local (in time) bounds~\eqref{eq:locZbd>}--\eqref{eq:locZbd<} on the growth of $ Z(s,y) $,
where $ X(j,x),\til{X}(j,x) $ that are stochastically dominated by Poisson($ c $), and are \emph{independent} of $ \filZ(t) $ (defined in~\eqref{eq:filZ}).
In~\eqref{eq:locZbd>}--\eqref{eq:locZbd<},
subtract $ Z(j,x) $ from both sides,
and take $ \normO{ \,\Cdot\,}{k} $ on both sides to get
\begin{align}
	\notag
	&\NormO{ \sup_{t\in[j,j+1]}|Z(t,x)-Z(j, x)|}{k}
	\leq
	\NormO{ (e^{\frac{X(j,x)}{\sqrt{N}}}-1)Z(j,x) }{k}
	+
	\NormO{ (1-e^{-1\frac{\til{X}(j,x)}{\sqrt{N}}})Z(j, x) }{k}
\\
	\label{eq:locZbd}
	&\quad\quad\quad
	=
	\NormO{ (e^{\frac{X(j,x)}{\sqrt{N}}}-1) }{k} \, \normO{ Z(j, x) }{k}
	+
	\NormO{ (1-e^{-1\frac{\til{X}(j,x)}{\sqrt{N}}})}{k} \, \normO{ Z(j, x) }{k}
	\leq
	\tfrac{1}{\sqrt{N}}c.
\end{align}
Since $ (\frac{\dist_\T(x,x')}{N})^{\frac{u}{2}\wedge\uic\wedge v}, (\frac{|t-t'|\vee 1}{N^2})^{\frac{u}{4}\wedge\frac{\uic}{2}\wedge\frac{v}{2}} \geq \frac{1}{\sqrt{N}} $ for all $ x\neq x' $ and $ t,t'\geq 0 $,
we may use~\eqref{eq:locZbd} to approximate $ Z(\lfloor t\rfloor,x) $ with $ Z(t,x) $,
and hence infer \eqref{eq:Zmom}--\eqref{eq:tZmom} from \eqref{eq:Zmom:}--\eqref{eq:tZmom:}.
\end{proof}

Recall that $ D([0,T],C(\limT)) $ denotes the space of right-continuous-with-left-limits functions $ [0,T]\to C(\R) $,
equipped with Skorohod's $ J_1 $-topology.

\begin{corollary}
\label{cor:tight}
For any given $ T<\infty $, $ \{\scZ_N\}_{N} $ is tight in the space of $ D([0,T],C(\limT)) $,
and its limits concentrate in $ C([0,T],C(\limT)) $.
\end{corollary}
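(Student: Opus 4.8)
The plan is to deduce tightness from the moment estimates of Proposition~\ref{prop:mom} by a two-parameter Kolmogorov--Chentsov argument, carrying the quenched event through the bookkeeping. First I would note that the event $ \Omega=\Omega(u,v,\Lambda,T,N) $ from~\eqref{eq:Omega} is $ \sigma(\rt(x):x\in\T) $-measurable and, by Proposition~\ref{prop:sg} together with Assumption~\ref{assu:rt}\ref{assu:rt:holder}, satisfies $ \lim_{\Lambda\to\infty}\liminf_{N\to\infty}\Pr[\Omega]=1 $. Since $ \ExO[\,\Cdot\,]=\ind_\Omega\Exrt[\,\Cdot\,] $, taking an outer expectation in~\eqref{eq:Zmom}--\eqref{eq:tZmom}, rescaling via $ \scZ_N(t,x)=Z(tN^2,xN) $ (so that $ \dist_\T(xN,x'N)/N=\dist_\limT(x,x') $ for $ x,x'\in\tfrac1N\T $), and using the built-in linear interpolation of $ \scZ_N $ in $ x $ to pass from $ \tfrac1N\T $ to $ \limT $ (at the cost of larger constants only), gives, for every $ k>1 $ and with $ \alpha:=\tfrac u4\wedge\tfrac{\uic}2\wedge\tfrac v2 $, $ \beta:=2\alpha $,
\begin{align*}
	\Ex\big[|\scZ_N(t,x)|^k\ind_\Omega\big]\le c,
	\qquad
	&\Ex\big[|\scZ_N(t,x)-\scZ_N(t,x')|^k\ind_\Omega\big]\le c\,\dist_\limT(x,x')^{k\beta},
\\
	&\Ex\big[|\scZ_N(t',x)-\scZ_N(t,x)|^k\ind_\Omega\big]\le c\,\big(|t'-t|\vee N^{-2}\big)^{k\alpha},
\end{align*}
uniformly in $ x,x'\in\limT $ and $ t,t'\in[0,T] $.

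Next I would fix $ k $ large enough that $ k\alpha>2 $ (and so that the union bound below converges), and combine the last two estimates into $ \Ex[|\scZ_N(t',x')-\scZ_N(t,x)|^k\ind_\Omega]\le c\,(\dist_\limT(x,x')+|t-t'|)^{k\alpha} $, valid whenever $ |t-t'|\ge N^{-2} $ (using $ \dist_\limT\le1 $ and $ \beta\ge\alpha $). Running the standard dyadic chaining (Kolmogorov--Chentsov / Garsia--Rodemich--Rumsey) on the two-dimensional index set $ [0,T]\times\limT $, restricted to scales $ \ge N^{-2} $ and working on $ \Omega $, then yields: for each $ \e>0 $ there is $ g(\delta)\downarrow0 $, independent of $ N $, with
\[
	\Pr\Big[\big\{\sup\{|\scZ_N(t,x)-\scZ_N(s,y)|:\ \dist_\limT(x,y)+|t-s|\le\delta,\ |t-s|\ge N^{-2}\}>\e\big\}\cap\Omega\Big]\le g(\delta).
\]
For the residual sub-$ N^{-2} $ oscillations --- the genuine jumps of $ \scZ_N $, of multiplicative size $ e^{\pm O(N^{-1/2})} $ --- I would use the local-in-time estimate~\eqref{eq:locZbd}: a union bound over the $ O(N^3) $ pairs (site, unit microscopic interval), using $ \normO{\sup_{s\in[j,j+1]}|Z(s,x)-Z(j,x)|}{k}\le cN^{-1/2} $ with $ k $ large, gives $ \sup_{x\in\limT}\sup_{0\le s\le t\le s+N^{-2}}|\scZ_N(t,x)-\scZ_N(s,x)|\to_\text{P}0 $ on $ \Omega $. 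Combining the two facts, then sending $ \delta\to0 $ and afterwards $ \Lambda\to\infty $ (using $ \Pr[\Omega^c]\to_{\Lambda,N}0 $), establishes that for every $ \e>0 $
\[
	\lim_{\delta\to0}\limsup_{N\to\infty}\Pr\Big[\sup_{0\le s\le t\le(s+\delta)\wedge T}\norm{\scZ_N(t,\Cdot)-\scZ_N(s,\Cdot)}_{C(\limT)}>\e\Big]=0.
\]

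Finally I would assemble the pieces. The spatial estimate, via the same conditioning-and-$ \Lambda $ argument and Kolmogorov--Chentsov on $ \limT $, shows $ \{\scZ_N(t,\Cdot)\}_N $ is tight in $ C(\limT) $ for each fixed $ t $ --- in particular at $ t=0 $, where $ \scZ^\ic_N\to\limZ^\ic $ by hypothesis --- and the modulus-of-continuity bound just displayed is exactly the hypothesis of the classical criterion (see, e.g., Ethier and Kurtz, or Billingsley) under which a sequence of $ D([0,T],C(\limT)) $-valued processes is tight with every subsequential limit supported on $ C([0,T],C(\limT)) $; the vanishing of the jump sizes is precisely what promotes the limit points from $ D $ to $ C $. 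I expect the main obstacle to be the bookkeeping around the quenched event $ \Omega $: because the kernel $ \sg $ is itself random, the estimates of Proposition~\ref{prop:mom} are available only on $ \Omega $, so the iterated limit (first $ \delta\to0 $, then $ \Lambda\to\infty $) must be carried consistently through each application of Kolmogorov--Chentsov; a secondary technical point is the treatment of the $ \vee N^{-2} $, i.e.\ of the true jumps of $ \scZ_N $, via~\eqref{eq:locZbd}, which is why one establishes the one-sided $ \sup_{s\le t\le s+\delta} $ modulus rather than the stronger two-sided modulus of continuity in $ C $.
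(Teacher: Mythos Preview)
Your proposal is correct and follows essentially the same strategy as the paper: use the conditional moment bounds of Proposition~\ref{prop:mom} on the event $\Omega$, apply a Kolmogorov--Chentsov argument, handle the sub-$N^{-2}$ time oscillations via the local bound~\eqref{eq:locZbd} with a union bound, and remove $\Omega$ by the iterated limit $\Lambda\to\infty$. The only cosmetic difference is that the paper introduces an auxiliary process $\til{\scZ}_N$ obtained by linearly interpolating $\scZ_N$ in $t$ between the grid points $\tfrac{1}{N^2}\Z_{\geq 0}$, proves tightness of $\til{\scZ}_N$ in $C([0,T],C(\limT))$, and then shows $\norm{\til{\scZ}_N-\scZ_N}_{L^\infty}\to_{\text P}0$; you instead split the modulus of continuity of $\scZ_N$ directly into scales $\geq N^{-2}$ (chaining) and $<N^{-2}$ (jumps). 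Both packagings are standard and equivalent; your version is arguably a bit cleaner in that it avoids the auxiliary process, and your count of $O(N^3)$ (sites $\times$ microscopic time intervals) in the union bound is the correct one --- just be sure to take $k>6$ so that $N^3\cdot N^{-k/2}\to 0$.
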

\begin{proof}
First, to avoid the jumps (in $ t $) of $ \scZ_N(t,x) $,
consider the process $ \til{\scZ}_N(t,x) := \scZ(t,x) $, for $ t\in \frac{1}{N^2}\Z_{\geq 0} $,
and linearly interpolate in $ t\in[0,\infty) $.
For fixed $ v,\Lambda,T $ as in Proposition~\ref{prop:mom},
the moment bounds obtained in Proposition~\ref{prop:mom}, together with the Kolmogorov continuity theorem,
implies that $ \{\til{\scZ}_N \ind_{\Omega(1,v,\Lambda,T)}\}_{N} $ is tight in $ C([0,T]\times\limT)=C([0,T],C(\limT)) $.
Further, Proposition~\ref{prop:sg} asserts that $ \Pr[\Omega(1,v,\Lambda,T)] \to 1 $ under the iterative limit $ (\lim_{\Lambda\to\infty} \lim_{N\to\infty}\Cdot) $,
so $ \{\til{\scZ}_N \}_{N} $ is tight in $ C([0,T],C(\limT)) $.

To relate $ \scZ_N $ to $ \til{\scZ}_N $,
we proceed to bound the difference $ \til{\scZ}_N - \scZ_N $.
Fix $ u\in(0,1) $, $ v\in(0,\urt) $ and set $ I_j:=[\frac{j}{N^2},\frac{j+1}{N^2}] $.
From~\eqref{eq:locZbd}, we have that
\begin{align}
	\notag
	\ExO&\big[ \ \norm{ \til{\scZ}_N - \scZ_N }_{L^\infty(I_j\times\limT)}^k \ \big]
\\
	\label{eq:markov:}	
	&:=\Ex\Big[ \ \norm{ \til{\scZ}_N - \scZ_N }_{L^\infty(I_j\times\limT)}^k \ind_{\Omega(u,v,\Lambda,T,N)} \ \Big| \rt(x),x\in\T \Big]
	\leq
	c(u,v,\Lambda,k,T) N^{-k/2},
	\qquad
	j=0,1,\ldots, \lceil TN^2 \rceil.
\end{align}
The r.h.s.\ of~\eqref{eq:markov:} is deterministic (i.e., not depending on $ \rt $).
This being the case, take $ \Ex[\,\Cdot\,] $ in~\eqref{eq:markov:}, and apply Markov inequality
$ \Pr[|X|^k>\e] \leq \e^{-k}\Ex[|X|^k] $ with $ X=\norm{ \til{\scZ}_N - \scZ_N }_{L^\infty(I_j\times\limT)} \ind_{\Omega(u,v,\Lambda,T,N)} $.
We obtain
\begin{align*}
	\Pr\Big[ \ \norm{ \til{\scZ}_N - \scZ_N }_{L^\infty(I_j\times\limT)} \ind_{\Omega(u,v,\Lambda,T,N)} > \varepsilon \Big]
	\leq
	c(u,v,\Lambda,k,T) \e^{-k} N^{-k/2},
	\qquad
	j=0,1,\ldots, \lceil TN^2 \rceil.
\end{align*}
Setting $ k=5 $ and take union bounds over $ j=0,1,\ldots, \lceil TN^2 \rceil $ yields
\begin{align}
	\label{eqcor:tight}
	\Pr\big[ \norm{ \til{\scZ}_N-\scZ_N }_{L^\infty([0,T]\times\limT)}\ind_{\Omega(u,v,\Lambda,T,N)} > \varepsilon \big]
	\leq
	c(u,v,\Lambda,T,\e) N^{2-5/2}.
\end{align}
Further, Proposition~\ref{prop:sg} asserts that $ \Pr[\Omega(u,v,\Lambda,T,N)] \to 1 $ under the iterative limit
$ (\lim_{\Lambda\to\infty} \lim_{N\to\infty}\Cdot) $.
Hence, passing~~\eqref{eqcor:tight} to $ N\to\infty $ along a suitable sequence $ \Lambda=\Lambda_n\to\infty $ gives
\begin{align*}
	\lim_{N\to\infty} \Pr\big[ \norm{ \til{\scZ}_N-\scZ_N }_{L^\infty([0,T]\times\limT)} > \varepsilon \big] = 0.
\end{align*}
From this, we conclude that $ \scZ_N $ and $ \til{\scZ}_N $ must have the same limit points in $ D([0,T],C(\limT)) $.
Knowing that $ \{\til{\scZ}_N \}_{N} $ is tight in $ C([0,T],C(\limT)) $,
we thus conclude the desired result.
\end{proof}

\section{Proof of Theorem~\ref{thm:main}}
\label{sect:pfmain}
Given Corollary~\ref{cor:tight}, to prove Theorem~\ref{thm:main},
it suffices to identify limit points of $ \{\scZ_N\}_N $.
We achieve this via a martingale problem.

\subsection{Martingale problem}
\label{sect:mgpb}
Recall that, even though $ \Ham $ and its semigroup $ \Sg(t):= e^{t\Ham} $ are possibly random, they are independent of the driving noise $ \xi $.
This being the case, conditioning on a generic realization of $ \Rtlim $,
throughout this subsection, we assume $ \Sg(t) $ and $ \Ham $ are \emph{deterministic},
(constructed from a deterministic $ \Rtlim\in C^{\urt}[0,1] $).

It is shown in~\cite[Section~2]{fukushima77} that, for bounded $ \Rtlim $,
the self-adjoint operator $ \Ham = \frac12\partial_{xx}+\Rtlim'(x) $ has discrete spectrum.
More explicitly, $ \Ham \eigf_n = \eigv_n \eigf_n $, $ n=1,2,\ldots $,
with $ \eigf_n \in D(\Ham) \subset H^1(\limT) $ and $ \eigv_1\geq \eigv_2 \geq \cdots \to-\infty $,
and with $ \{\eigf_n\}_{n=1}^\infty $ forming a Hilbert basis (i.e., dense orthonormal set) of $ L^2(\limT) $.
Let $ \eigsp := \{ \sum_{i=1}^m \alpha_i \eigf_i: m\in\Z_{>0},\alpha_1,\ldots,\alpha_m\in\R \} $
denote the linear span of eigenfunctions.
Recall that $ \langle f, g\rangle := \int_{\limT} f(x) g(x) dx $ denotes the inner product on $ L^2(\limT) $.

We say that a $ C([0,\infty)\times\limT) $-valued process $ \limZ $ solves the \textbf{martingale problem} corresponding to~\eqref{eq:spde} if,
for any $ f\in\eigsp $,
\begin{align}
	\label{eq:linmg}
	\Mg(t;f)
	&:=
	\langle f, \limZ(s) \rangle \Big|_{s=0}^{s=t}
	-
	\int_{0}^t \langle \Ham f, \limZ(s) \rangle ds ,
\\	
	\label{eq:qdmg}
	\Mgg(t;f)
	&:=
	(\Mg_f(t))^2
	-
	\int_{0}^t \langle f^2, \limZ^2(s) \rangle ds
\end{align}
are local martingales in $ t $.

\begin{proposition}
\label{prop:mgpb}
A $ C([0,\infty)\times\limT) $-valued process $ \limZ $ that solves
the aforementioned martingale problem is a mild solution~\eqref{eq:spde} of the \ac{SPDE}~\eqref{eq:spde}.
\end{proposition}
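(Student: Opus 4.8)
The plan is to follow the classical Bertini--Giacomin argument, namely to show that the martingale problem~\eqref{eq:linmg}--\eqref{eq:qdmg} pins down $\limZ$ uniquely as the mild solution, by producing from $\Mg(\Cdot;f)$ a spacetime stochastic integral against a Gaussian white noise and then running the It\^o/Duhamel computation backwards. First I would fix $g\in\eigsp$, say $g=\eigf_n$, so that $\Ham g=\eigv_n g$, and use~\eqref{eq:linmg} to write
\begin{align*}
	\langle \eigf_n,\limZ(t)\rangle = \langle \eigf_n,\limZ(0)\rangle + \eigv_n\int_0^t \langle\eigf_n,\limZ(s)\rangle ds + \Mg(t;\eigf_n).
\end{align*}
This is a (one-dimensional) linear SDE driven by the continuous local martingale $\Mg(\Cdot;\eigf_n)$, whose bracket, by~\eqref{eq:qdmg} and polarization, is $\langle \Mg(\Cdot;\eigf_n),\Mg(\Cdot;\eigf_m)\rangle_t = \int_0^t\langle \eigf_n\eigf_m,\limZ^2(s)\rangle ds$. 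Since $\limZ\in C([0,\infty)\times\limT)$ the integrands here are locally bounded, so these are genuine continuous local martingales with absolutely continuous brackets.

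The key step is to realize the family $\{\Mg(\Cdot;f)\}_{f\in\eigsp}$ as $\Mg(t;f)=\int_0^t\int_{\limT} f(x)\,\limZ(s,x)\,\xi(ds\,dx)$ for a single spacetime white noise $\xi$. I would do this by the standard martingale representation / noise-extension device: on a possibly enlarged probability space, set (formally) $\xi(ds\,dx) := \limZ(s,x)^{-1}\,\Mg(ds\,dx)$ on the (open, full-measure in an appropriate sense) set where $\limZ>0$, supplemented by an independent white noise where $\limZ$ vanishes, so that the resulting $\xi$ has bracket $ds\,dx$ and hence is a white noise, while $\int_0^t\int f\limZ\,\xi = \Mg(t;f)$ for all $f\in\eigsp$. (Positivity of $\limZ$ is automatic here because $\scZ_N>0$ and the limit is obtained in Corollary~\ref{cor:tight}; alternatively one argues on the set $\{\limZ>\e\}$ and lets $\e\downarrow0$, the contribution of $\{\limZ\le\e\}$ to every bracket being $O(\e^2)$.) Because $\eigsp$ is dense in $L^2(\limT)$ and the brackets match, the identity $\Mg(t;f)=\int_0^t\int f\limZ\,\xi$ extends to all $f\in L^2(\limT)$, in particular to $f=\eigf_n$ and to $f=\Sg(t-s;x,\Cdot)$ for fixed $t,x$.

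Finally I would recover the mild formulation. Apply It\^o's formula to $s\mapsto \langle \Sg(t-s;x,\Cdot),\limZ(s)\rangle$: the drift from $\partial_s\Sg(t-s;x,\Cdot) = -\Sg(t-s;x,\Cdot)\Ham$ (valid because $\Sg$ is the semigroup of the self-adjoint $\Ham$, and $\eigf_n$-by-$\eigf_n$ this is just $\partial_s e^{(t-s)\eigv_n}=-\eigv_n e^{(t-s)\eigv_n}$) exactly cancels the drift $\langle \Ham f,\limZ(s)\rangle ds$ coming from~\eqref{eq:linmg} with $f=\Sg(t-s;x,\Cdot)$, leaving
\begin{align*}
	\limZ(t,x) = \langle\Sg(t;x,\Cdot),\limZ(0)\rangle + \int_0^t\langle \Sg(t-s;x,\Cdot),\limZ(s)\rangle\,\Mg(ds\,\Cdot),
\end{align*}
which by the noise identity is precisely~\eqref{eq:spde:mild}. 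I expect the main obstacle to be the bookkeeping in this last step: justifying the stochastic Fubini / It\^o formula for the time-dependent, merely continuous (not smooth) test function $\Sg(t-s;x,\Cdot)$ against the local martingale measure, and controlling the $s\uparrow t$ singularity of $\Sg$ and its space-gradient. The clean way around it is to expand $f=\Sg(t-s;x,\Cdot)=\sum_n e^{(t-s)\eigv_n}\eigf_n(x)\eigf_n$ in the eigenbasis, carry out the elementary one-dimensional It\^o computation for each $\eigf_n$ using~\eqref{eq:linmg}--\eqref{eq:qdmg}, and then sum, using the bounds on $\Sg(t;\Cdot,\Cdot)$ from Proposition~\ref{prop:Sg} (together with the $L^\infty$ bound on $\limZ$ on compact time intervals) to justify the interchange of summation with the time-integral and the stochastic integral; a localization in time via the stopping times $\tau_\Lambda=\inf\{t:\sup_x\limZ(t,x)^2>\Lambda\}$ handles the ``local'' in ``local martingale.''
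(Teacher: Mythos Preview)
Your proposal is correct and follows essentially the same route as the paper: construct a white noise $\xi$ from the martingale family $\{\Mg(\Cdot;f)\}_{f\in\eigsp}$ via the Bertini--Giacomin representation (which the paper cites as \cite[Proposition~4.11]{bertini97} rather than reproving), then run It\^o's formula eigenfunction by eigenfunction to recover the mild equation. The paper's execution of the second step is slightly cleaner than your ``sum over $n$'' endgame: instead of summing the eigenfunction expansion of $\Sg(t-s;x,\Cdot)$ and justifying the interchange, it applies It\^o to $e^{-\eigv_n t}\langle\eigf_n,\limZ(t)\rangle$ for each fixed $n$, rewrites the result as $\langle\eigf_n,\text{(mild equation)}\rangle=0$, and then invokes density of $\eigsp$ in $L^2(\limT)$ (hence $C(\limT)$) to conclude pointwise --- this sidesteps any appeal to the kernel bounds in Proposition~\ref{prop:Sg} or to convergence of the spectral series.
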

%
%
\begin{proof}
Fix $ \limZ\in C([0,\infty)\times\limT) $ that solves the martingale problem.
The first step is to show that $ \limZ $ is a weak solution.
That is, extending the probability space if necessary,
there exists a white noise measure $ \xi(t,x)dtdx $ such that,
for any given $ f\in\eigsp $,
\begin{align}
	\label{eq:weak}
	\Mg(t;f)
	=
	\langle f, \limZ(s) \rangle \big|_{s=0}^{s=t}
	-
	\int_{0}^t \langle \Ham f, \limZ(s) \rangle ds
	=
	\int_0^t\int_{\limT} f(x) \limZ(s,x) \xi(s,x)dx.
\end{align}
With $ \eigsp $ being dense in $ L^2(\limT) $,
the statement is proven by the same argument of \cite[Proposition~4.11]{bertini97}.
We do not repeat it here.

Next, for given $ n \geq 1 $,
consider the process $ F(t):=e^{-\eigv_n t} \langle \eigf_n, \limZ(t) \rangle  $.
Using It\^{o} calculus, with the aid of~\eqref{eq:weak} (for $ f=\eigf_n $),
we have
\begin{align*}
	F(t)-F(0)
	=
	\int_0^t \big( -\eigv_n F(s) + e^{-\eigv_n s} \langle \Ham\eigf_n, \limZ(s) \rangle \big) ds
	+
	\int_0^t \int_{\limT} e^{-\eigv_n s} \eigf_n(x) \limZ(s,x) \xi(s,x)dxds.
\end{align*}
With $ \Ham\eigf_n=\eigv_n\eigf_n $, the first term on the r.h.s.\ is zero.
This being the case, multiplying both sides by $ e^{\eigv_n t} $ gives
\begin{align*}
	\langle \eigf_n, \limZ(t) \rangle
	-
	\langle e^{t\eigv_n}\eigf_n, \limZ(0) \rangle	
	=
	\int_0^t \int_{\limT} e^{\eigv_n (t-s)} \eigf_n(\tilx) \limZ(s,\tilx) \xi(s,\tilx)d\tilx ds.
\end{align*}
Further, write $ e^{t\eigv_n}\eigf_n= \Sg(t)\eigf_n $
and $ e^{\eigv_n (t-s)} \eigf_n(\tilx) = \int_{\limT} \Sg(t-s;x,\tilx) \eigf_n(x) dx $,
and use the fact that $ \Sg(t-s;x,\tilx)=\Sg(t-s;\tilx,x) $,
we now have
\begin{align}
	\label{eq:mild1}
	\langle f, \limZ(t) \rangle
	-
	\langle \Sg(t)f, \limZ(0) \rangle	
	=
	\Big\langle f, \int_0^t \int_{\limT} \Sg(t-s;\Cdot,\tilx) \limZ(s,\tilx) \xi(s,\tilx) d\tilx ds \Big\rangle,
	\qquad
	f=\eigf_1,\eigf_2,\ldots.
\end{align}
Equation~\eqref{eq:mild1} being linear in $ f $ readily generalizes to all $ f\in\eigsp $.
With $ \eigsp $ being dense in $ L^2(\limT) $ and hence in $ C(\limT) $,
we conclude that $ \limZ $ satisfies~\eqref{eq:spde:mild}.
\end{proof}

For convenience of subsequent analysis,
let us rewrite the martingale problem \eqref{eq:linmg}--\eqref{eq:qdmg} in a slightly different but equivalent form:
for all $ n,n' \geq 1 $,
\begin{align}
	\tag{\ref*{eq:linmg}'}
	\label{eq:linmg:}
	\Mg_n(t)
	&:=
	\Mg(t;\eigf_n)
	=
	\langle \eigf_n, \limZ(s) \rangle \big|_{s=0}^{s=t}
	-
	\eigv_n\int_{0}^t \langle \eigf_n, \limZ(s) \rangle ds,
\\	
	\tag{\ref*{eq:qdmg}'}
	\label{eq:quadmg:}
	\Mgg_{n,n'}(t)
	&:=
	\Mg(t;\eigf_n)\Mg(t;\eigf_{n'})
	-
	\int_{0}^t \langle \eigf_n\eigf_{n'}, \limZ^2(s) \rangle ds
\end{align}
are local-martingales in $ t $.

As stated previously, to prove Theorem~\ref{thm:main},
it now suffices to identify limit points of $ \{\scZ_N\}_N $.
This being the case, after passing to a subsequence,
hereafter we assume $ \scZ_N \Rightarrow \limZ $,
for some $ C([0,\infty),C(\T)) $-valued process $ \limZ $.
By Skorokhod's representation theorem, extending the probability space if necessary,
we further assume $ \limZ $ and $ \scZ_N $ inhabit the same probability space, with
\begin{align}
	\label{eq:Zcnvg}
	\norm{\scZ_N - \limZ }_{L^\infty([0,T]\times\limT)} \longrightarrow_\text{P} 0,
\end{align}
for each given $ T<\infty $.
Our goal is to show that $ \limZ $ solves the martingale problem~\eqref{eq:linmg:}--\eqref{eq:quadmg:}.
We further refer to \eqref{eq:linmg:} and \eqref{eq:quadmg:} as the linear and quadratic martingale problems, respectively.

\subsection{Linear martingale problem}
\label{sect:linmg}
Here we show that $ \limZ $ solves the linear martingale problem~\eqref{eq:linmg:}.
Let
\begin{align*}
	\big\langle f, g \big\rangle_N := \frac{1}{N} \sum_{x\in\T} f(\tfrac{x}{N}) g(\tfrac{x}{N})
\end{align*}
denote the discrete analog of $ \langle f,g\rangle $,
$ \Delta_N f (x) := N^2 (f(x+\frac{1}N)+f(x-\frac1N)-f(2x)) $ denote the scaled discrete Laplacian,
and $ \ham_N  := \frac12 \Delta_N + N^2 \nu \rt(Nx) $  denote the scaled operator.
Multiply both sides of~\eqref{eq:Lang} by $ \eigf_n(Nx) $,
integrate over $ t\in[0,N^2\bar{t}] $ and sum over $ x\in\T $.
We have that
\begin{align}
	\label{eq:mgn}
	\mg_n(N^2t)
	:=
	\int_0^{N^2t} \frac1N \sum_{x\in\T} \eigf_n(Nx) d\mg(t,x)
	=
	\langle \eigf_n, \scZ_N(s) \rangle_N \Big|_{s=0}^{s=t}
	-
	\int_0^t \langle \ham_N\eigf_n,  \scZ_N(s) \rangle_N ds
\end{align}
is a martingale.

Indeed, the r.h.s.\ of~\eqref{eq:mgn} resemble the r.h.s.\ of~\eqref{eq:linmg:},
and one would hope to show convergence of the former to the latter in order to establish $ \Mg_n(t) $ being a local martingale.
For the case of homogeneous \ac{ASEP}, we have $ \frac12\Delta_N $ in place of $ \ham_N $,
and the eigenfunctions $ \eigf_n $ are $ C^2 $.
In this case, using Taylor expansion it is straightforward to show that
$ \int_0^t \langle \frac12\Delta_N\eigf_n,  \scZ_N(s) \rangle_N ds $ converges to
its continuum counterpart $ \int_0^t\int_{\limT} \langle \frac12 \eigf_n'', \limZ(s) \rangle ds $.
Here, on the other hand, we only have $ \eigf_n \in H^1(\limT) $,
and $ \rt(x) $ and $ \scZ_N(t,x) $ lack differentiability in $ x $.
Given the situation,
a direct proof of $ \int_0^t \langle \ham_N\eigf_n,  \scZ_N(s) \rangle_N ds $ converging to its continuum counterpart seems challenging.

To circumvent the aforementioned issue, we route through the integrated (i.e., mild) equation~\eqref{eq:Lang:int:}.
For a given $ t \geq 0 $ and $ k\in\Z_{>0} $, put $ t_i:= \frac{i}{k}t $, set $ (t_*,t)=(N^2t_{i-1},N^2t_i) $ in~\eqref{eq:Lang:int:},
and subtract $ Z(N^2t_{i-1},x) $ from both sides.
This gives
\begin{align*}
	Z(s,x)\big|_{s=N^2t_{i-1}}^{s=N^2t_i}
	=
	\Big(\big(\sg(N^2\tfrac{t}{k}) -\Id\big) Z(N^2t_{i-1}) \Big)(x) + \sum_{\tilx\in\T}\int_{N^2t_{i-1}}^{N^2t_i} \sg(N^2t_i-s;x,\tilx) d\mg(s,\tilx),
\end{align*}
where `$ \Id $' denotes the identity operator.
Multiply both sides by $ \eigf_n(Nx) $, and sum over $ x\in\T $ and $ i=1,\ldots,k $.
After appropriate scaling, we obtain
\begin{align}
	\label{eq:linmg:pf}
	\big\langle \eigf_n, \scZ_N(s)\big\rangle_N \big|_{s=0}^{s=t}
	-
	\linmgD_{k,N}(t)
	-
	\sum_{i=1}^k\sum_{x,\tilx\in\T}\int_{N^2t_{i-1}}^{N^2t_i} \frac1N \sum_{x\in\T} \eigf_n(Nx)\sg(N^2t_i-s;x,\tilx) d\mg(s,\tilx)
	=
	0,
\end{align}
where, with $ (\sg_N(t) f)(x) := \frac{1}{N}\sum_{\tilx\in\frac1N\T} N\sg(N^2t;Nx,N\tilx) f(x) $ denoting the scaled semigroup,
\begin{align}
	\label{eq:linmgD}
	\linmgD_{k,N}(t) := \sum_{i=1}^k \Big\langle \eigf_n, (\sg_N(\tfrac{t}{k})-\Id) \scZ_N(t_{i-1}) \Big\rangle_N.
\end{align}
Further adding and subtracting $ \mg_n(t) $ on both sides of \eqref{eq:linmg:pf} gives
\begin{align}
	\label{eq:linmg:pf:}
	&\langle \eigf_n, \scZ_N(s)\rangle_N \big|_{s=0}^{s=t}
	-
	\linmgD_{k,N}(t)
	-
	\linmgR_{k,N}(t)
	=
	\mg_n(t),
\\
	\label{eq:lingmgR}
	&\linmgR_{k,N}(t)
	:=
	\sum_{i=1}^k
	\int_{N^2t_{i-1}}^{N^2t_i} \frac1N\sum_{\tilx\in\T}\Big(  \sum_{x\in\T} \eigf_n(Nx)\sg(N^2t_i-s;x,\tilx) -\eigf_n(N\tilx) \Big) d\mg(s,\tilx).
\end{align}

In the following we will invoke convergence to zero in probability under \emph{iterated} limits.
For random variables $ X_{k,N} $ indexed by $ k,N $,
we write
\begin{align*}
	\lim_{k\to\infty} \lim_{N\to\infty} X_{k,N} \stackrel{\text{P}}{=} 0
\end{align*}
if $ \limsup\limits_{k\to\infty} \limsup\limits_{N\to\infty}\Pr[ |X_{k,N}| > \e ] =0 $, for each $ \e >0 $.
Given~\eqref{eq:linmg:pf:}, we proceed to show
\begin{lemma}
\label{lem:linmg}
For any given $ T<\infty $,
\begin{enumerate}[label=(\alph*),leftmargin=7ex]
\item \label{lem:linmg1} \
	$
		\displaystyle
		\lim_{N\to\infty} \Big( \sup_{t\in[0,T]} \big| \langle \eigf_n, \scZ_N(t)\rangle_N - \langle \eigf_n, \limZ(t)\rangle \big| \Big) \stackrel{\text{P}}{=} 0,
	$
\item \label{lem:linmg2} \
	$
		\displaystyle
		\lim_{k\to\infty}\lim_{N\to\infty}
		\sup_{t\in[0,T]} \big| \linmgD_{k,N}(t) - \eigv_n \int_0^t \langle \eigf_n, \limZ(s)\rangle ds \big| \stackrel{\text{P}}{=} 0,
	$
\item \label{lem:linmg3} \
	$
		\displaystyle
		\lim_{k\to\infty}\lim_{N\to\infty}
		\sup_{t\in[0,T]} \big| \linmgR_{k,N}(t) \big| \stackrel{\text{P}}{=} 0.
	$
\end{enumerate}
\end{lemma}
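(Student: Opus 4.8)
The plan is to prove the three parts of Lemma~\ref{lem:linmg} in order, using the moment bounds from Proposition~\ref{prop:mom}, the semigroup bounds from Proposition~\ref{prop:sg}, the convergence $ \sg_N(t)\to\Sg(t) $ from Proposition~\ref{prop:sgtoSg}, and the coupling \eqref{eq:Zcnvg}. Throughout, because all the estimates of Propositions~\ref{prop:mom} and~\ref{prop:sg} hold only on the event $ \Omega(u,v,\Lambda,T,N) $ whose probability tends to $ 1 $ under the iterated limit $ \lim_{\Lambda\to\infty}\lim_{N\to\infty} $, I would first fix $ u\in(0,1) $, $ v\in(0,\urt) $, and argue all convergences on $ \Omega(u,v,\Lambda,T,N) $, then send $ \Lambda\to\infty $ at the end; this is the same device used in the proof of Corollary~\ref{cor:tight}, so I will cite that pattern rather than repeat it.

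For part~\ref{lem:linmg1}: write $ \langle \eigf_n,\scZ_N(t)\rangle_N - \langle\eigf_n,\limZ(t)\rangle = \big(\langle\eigf_n,\scZ_N(t)\rangle_N - \langle\eigf_n,\scZ_N(t)\rangle\big) + \langle\eigf_n,\scZ_N(t)-\limZ(t)\rangle $. The second term is bounded by $ \norm{\eigf_n}_{L^1(\limT)}\norm{\scZ_N(t)-\limZ(t)}_{L^\infty(\limT)} $ uniformly in $ t\in[0,T] $, which $ \to_\text{P} 0 $ by \eqref{eq:Zcnvg}. For the first term, since $ \scZ_N $ is the linear interpolation of its values at lattice points and $ \eigf_n\in H^1(\limT)\subset C(\limT) $, the difference between the Riemann sum $ \langle\eigf_n,\scZ_N(t)\rangle_N $ and the integral $ \langle\eigf_n,\scZ_N(t)\rangle $ is controlled by a modulus of continuity of $ \eigf_n $ times $ \sup_{x}\scZ_N(t,x) $; the latter is tight by Proposition~\ref{prop:mom} (in the form used in Corollary~\ref{cor:tight}), so this also vanishes. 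The supremum over $ t $ causes no trouble since the bounds are uniform in $ t $.

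For part~\ref{lem:linmg2}: the first step is to replace $ \sg_N $ by $ \Sg $ and $ \scZ_N $ by $ \limZ $ inside \eqref{eq:linmgD}. Using Proposition~\ref{prop:sgtoSg}, $ \sup_{t\in[0,t/k]}\norm{(\sg_N(t')-\Sg(t'))\eigf_n}_{L^\infty(\limT)}\to_\text{P}0 $, and combined with part~\ref{lem:linmg1}-type estimates and the uniform bound $ \sum_{\tilx}\sg(\cdot;x,\tilx)\le\Lambda $ (Proposition~\ref{prop:sg}\ref{prop:sg:sum}), one reduces to showing $ \sum_{i=1}^k\langle\eigf_n,(\Sg(\frac tk)-\Id)\limZ(t_{i-1})\rangle \to \eigv_n\int_0^t\langle\eigf_n,\limZ(s)\rangle ds $ as $ N\to\infty $ then $ k\to\infty $. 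Here I would use $ (\Sg(\frac tk)-\Id)\eigf_n = (e^{\eigv_n t/k}-1)\eigf_n $ (since $ \eigf_n $ is an eigenfunction and $ \Sg(t)=e^{t\Ham} $, established in Proposition~\ref{prop:Sg}), so, using self-adjointness of $ \Sg(\frac tk) $, $ \langle\eigf_n,(\Sg(\frac tk)-\Id)\limZ(t_{i-1})\rangle = (e^{\eigv_n t/k}-1)\langle\eigf_n,\limZ(t_{i-1})\rangle $. Then $ e^{\eigv_n t/k}-1 = \frac{\eigv_n t}{k} + O(k^{-2}) $, so $ \sum_{i=1}^k(e^{\eigv_n t/k}-1)\langle\eigf_n,\limZ(t_{i-1})\rangle $ is a Riemann sum for $ \eigv_n\int_0^t\langle\eigf_n,\limZ(s)\rangle ds $ (with error $ O(1/k) $ from the $ O(k^{-2}) $ terms and from the continuity of $ s\mapsto\langle\eigf_n,\limZ(s)\rangle $, which holds since $ \limZ\in C([0,T]\times\limT) $), uniformly in $ t\in[0,T] $. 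The replacement error terms must be shown to vanish uniformly in $ t\in[0,T] $; for this I would bound the number of summands by $ k $ and each replacement by $ \frac1k $ times a quantity tending to $ 0 $, using that $ \|(\sg_N(\frac tk)-\Sg(\frac tk))\eigf_n\|_{L^\infty}$ is $ o(1) $ uniformly over the relevant time arguments — this is the technically delicate point and the main obstacle, since Proposition~\ref{prop:sgtoSg} gives convergence of $ (\Sg(t)-\sg_N(t))f $ but one needs it summed over $ k $ increments with the $ \frac1k $ gain, which requires being slightly careful about the order of limits (first $ N\to\infty $ for fixed $ k $, then $ k\to\infty $).

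For part~\ref{lem:linmg3}: this is a martingale whose quadratic variation can be computed via Lemma~\ref{lem:BDG}. Applying that lemma with $ f(s,\tilx) = \sum_{x\in\T}\eigf_n(Nx)\sg(N^2t_i-s;x,\tilx) - \eigf_n(N\tilx) = ((\sg_N(\cdot)\eigf_n)(\tilx) - \eigf_n(\tilx)) $ on each block $ [N^2t_{i-1},N^2t_i] $, one gets
\begin{align*}
	\Exrt\big[\linmgR_{k,N}(t)^2\big]
	\le
	\frac{c}{N}\sum_{i=1}^k\sum_{j}\sum_{\tilx\in\T}\Big(\sup_{s}\big|(\sg_N(t_i - s/N^2)\eigf_n)(\tilx)-\eigf_n(\tilx)\big|^2\Big)\normrt{Z(j,\tilx)}{k}^2,
\end{align*}
and using $ \normO{Z(j,\tilx)}{k}\le c $ from \eqref{eq:Zmom}, together with the time-regularity of $ \sg_N $ (Proposition~\ref{prop:sg}\ref{prop:sg:loc} and the local-in-time bound, giving $ \sup_{0\le t'\le t/k}\|(\sg_N(t')-\Id)\eigf_n\|_{L^\infty}^2 = o_k(1) $ as $ k\to\infty $ since $ \eigf_n\in D(\Ham) $ and $ \Sg(t')\eigf_n = e^{\eigv_n t'}\eigf_n\to\eigf_n $), the right side is $ O(\text{time span})\times\sup_{0\le t'\le t/k}\|(\sg_N(t')-\Id)\eigf_n\|_{L^\infty}^2 $, which vanishes as $ N\to\infty $ then $ k\to\infty $. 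One then upgrades $ L^2 $-smallness at fixed $ t $ to smallness of $ \sup_{t\in[0,T]}|\linmgR_{k,N}(t)| $ via Doob's maximal inequality applied to the martingale $ t\mapsto\linmgR_{k,N}(t) $, and removes the conditioning / the event $ \Omega $ as in Corollary~\ref{cor:tight} by taking $ \Ex[\,\cdot\,] $ of the (deterministic) bound and applying Markov's inequality. The main obstacle across all three parts is the bookkeeping of the iterated limit $ \lim_{k\to\infty}\lim_{N\to\infty} $ together with the event $ \Omega(u,v,\Lambda,T,N) $ and the extra $ \Lambda\to\infty $ limit — making sure that the replacement errors in part~\ref{lem:linmg2} genuinely come with a $ 1/k $ gain so that summing $ k $ of them is harmless.
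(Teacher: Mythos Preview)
Your treatment of parts~\ref{lem:linmg1} and~\ref{lem:linmg2} matches the paper's approach essentially line for line; the worry you flag in~\ref{lem:linmg2} about needing a $1/k$ gain is unnecessary, since for \emph{fixed} $k$ there are only $k$ terms and each replacement error goes to zero as $N\to\infty$ by Proposition~\ref{prop:sgtoSg} and \eqref{eq:Zcnvg} (uniformly in $t$), so the order of limits is already doing the work.

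The gap is in part~\ref{lem:linmg3}: $t\mapsto \linmgR_{k,N}(t)$ is \emph{not} a martingale, so Doob's maximal inequality does not apply. Recall from~\eqref{eq:lingmgR} that the partition points are $t_i = it/k$, which move with $t$; hence as $t$ varies, the integrand $\sg(N^2t_i - s;x,\tilx)$ changes inside each block, not just the upper limit of integration. Your $L^2$ computation for a \emph{fixed} $t$ is correct and agrees with the paper's (after passing $N\to\infty$ the integrand becomes $(e^{(t_i-s)\eigv_n}-1)\eigf_n(\tilx)$ and one gets $k\int_0^{t/k}(e^{s\eigv_n}-1)^2\,ds\le c\,k^{-2}$), but this only yields $\sup_{t}\Ex[\linmgR_{k,N}(t)^2]\to 0$, not control on $\Ex[\sup_t|\linmgR_{k,N}(t)|]$. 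The paper closes this gap differently: it first observes that the moment bounds of Proposition~\ref{prop:mom} give tightness of $\{\linmgR_{k,N}(\cdot)\}_{k,N}$ in $D[0,T]$, reducing the problem to one-point convergence $\linmgR_{k,N}(t)\to_{\text{P}}0$ for each fixed $t$, which is exactly what your second-moment bound delivers.
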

\begin{proof}
\ref{lem:linmg1} Given~\eqref{eq:Zcnvg} and $ \eigf_n \in H^1(\limT) \subset C(\limT) $, this follows straightforwardly.

\ref{lem:linmg2} Given~\eqref{eq:Zcnvg} and Proposition~\ref{prop:sgtoSg}, we have,
for each $ s,\delta\in[0,\infty) $,
\begin{align}
	\label{eqlem:lingmg2}
	\lim_{N\to\infty} \norm{ \big(\sg_N(\delta)-\Id) \scZ_N(s)\big)- \big(\Sg(\delta)-\Id\big) \limZ(s) }_{L^\infty(\limT)}
	\stackrel{\text{P}}{=} 0.
\end{align}
Using~\eqref{eqlem:lingmg2} for $ s=t_{j-1} $ and $ \delta=\frac{t}{k} $, and plugging it into~\eqref{eq:linmgD},
together with $ \eigf_n \in H^1(\limT) \subset L^1(\limT) $, we have
\begin{align*}
	&\lim_{N\to\infty} \sup_{t\in[0,T]} |\linmgD_{k,N}(t) - \LinmgD_k(t,k)| \stackrel{\text{P}}{=} 0,
\\
	&\LinmgD_k(t) := \sum_{i=1}^k \Big\langle \eigf_n, (\Sg(\tfrac{t}{k})-\Id) \limZ(t_{i-1}) \Big\rangle
	=
	\sum_{i=1}^k (e^{\frac{t}{k}\eigv_n}-1) \big\langle \eigf_n, \limZ(t_{i-1}) \big\rangle.
\end{align*}
Further taking the $ k\to\infty $ limit using the continuity of $ \limZ(t) $ gives
\begin{align*}
	\lim_{k\to\infty} \sup_{t\in[0,T]} \Big|\LinmgD_k(t) - \eigv_n \int_0^t \big\langle \eigf_n, \limZ(s) \big\rangle ds\Big| \stackrel{\text{P}}{=} 0.
\end{align*}
This concludes the proof for~\ref{lem:linmg2}.

\ref{lem:linmg3} Given the moment bounds from Proposition~\ref{prop:mom},
it is not hard to check that $ \{\linmgR_{k,N}(\Cdot)\}_{k,N} $ is tight in $ D[0,T] $.
This being the case, it suffices to establish one point convergence:
\begin{align}
	\label{eq:linmggoal}
	\lim_{k\to\infty}\limsup_{N\to\infty}
	\big| \linmgR_{k,N}(t) \big| \stackrel{\text{P}}{=} 0.
\end{align}

To this end, fix $ u\in(0,1) $, $ v\in(0,\urt) $, $ \Lambda,T<\infty $,
recall the definition of $ \Omega=\Omega(u,v,\Lambda,T,N) $ from~\eqref{eq:Omega},
and recall the notation $ \Exrt[\,\Cdot\,]:= \Ex[\,\Cdot\,|\rt(x),x\in\T] $.
Multiply both sides of~\eqref{eq:lingmgR} by $ \ind_{\Omega}\ind_\set{\eigv_n<\Lambda} $,
and calculate the second moment (with respect to $ \Exrt[\,\Cdot\,] $) of $ \linmgR_{k,N}(t) $.
With the aid of~\eqref{eq:qv:bd} and the moment bounds from Proposition~\ref{prop:mom}, we have
\begin{align}
	\label{eq:linmg2}
	\Exrt&\big[\linmgR_{k,N}(t)^2\big] \ind_{\Omega} \ind_\set{\lambda_n<\Lambda}
\\
	\notag
	&\leq
	c(u,v,T,\Lambda)
	\sum_{i=1}^k
	\int_{N^2t_{i-1}}^{N^2t_i} \frac{1}{N^3}\sum_{\tilx\in\T}
	\Bigg( \Big(  \sum_{x\in\T} \eigf_n(Nx)\sg(N^2t_i-s;x,\tilx) -\eigf_n(N\tilx) \Big)^2
	\Exrt\big[Z(s,\tilx)^2\big]\ind_{\Omega} \ind_\set{\lambda_n<\Lambda} \Bigg)
	ds
\\
	\label{eq:linmg3}
	&\leq
	c(u,v,T,\Lambda)
	\sum_{i=1}^k
	\frac{1}{N^2}\int_{N^2t_{i-1}}^{N^2t_i} \frac{1}{N}\sum_{\tilx\in\T}\Big(  \sum_{x\in\T} \eigf_n(Nx)\sg(N^2t_i-s;x,\tilx) -\eigf_n(N\tilx) \Big)^2 ds
	\ind_\set{|\lambda_n|<\Lambda}.
\end{align}
Let $ N\to\infty $ in~\eqref{eq:linmg3}.
Given that $ \eigf_n \in H^1(\limT) \subset L^1(\limT) $,
with the aid of Proposition~\ref{prop:sgtoSg},
we have
\begin{align*}
	\sum_{x\in\T} \eigf_n(Nx)\sg(N^2(t_i-s);x,N\tilx) \to \int_{\limT} \eigf_n(x)\Sg(t_i-s;x,\tilx) dx,
	\qquad
	\text{uniformly in } \tilx\in\limT.
\end{align*}
Hence
\begin{align}
	\notag
	\limsup_{N\to\infty} \eqref{eq:linmg3}
	&\leq
	c(u,v,T,\Lambda)
	\sum_{i=1}^k
	\int_{t_{i-1}}^{t_i} \int_{\limT}\Big( \int_{\limT}\eigf_n(x)\Sg(t_i-s;x,\tilx) d\tilx - \eigf_n(\tilx) \Big)^2 d\tilx \ind_\set{\lambda_n<\Lambda}
\\
	\notag
	&=
	c(u,v,T,\Lambda)
	\sum_{i=1}^k
	\int_{t_{i-1}}^{t_i} \int_{\limT}\Big( (e^{(t_i-s)}\eigv_n-1) \eigf_n(\tilx) \Big)^2 d\tilx \ind_\set{\lambda_n<\Lambda}
\\
	\label{eq:linmg5}
	&=
	c(u,v,T,\Lambda) k \int_{0}^{\frac{t}{k}} (e^{s\eigv_n}-1)^2 ds
	\leq
	k^{-2} (u,v,T,\Lambda).
\end{align}
Now, combine~\eqref{eq:linmg2}--\eqref{eq:linmg5},
take $ \Ex[\,\Cdot\,] $ of the result, and let $ k\to\infty $.
We arrive at
\begin{align}
	\label{eq:linmg6}
	\lim_{k\to\infty} \limsup_{N\to\infty}\Ex\big[\linmgR_{k,N}(t)^2\ind_{\Omega}\ind_\set{\eigv_n<\Lambda} \big] = 0.
\end{align}
Indeed, $ \Pr[\set{\eigv_n<\Lambda}] \to 1 $ as $ \Lambda \to \infty $,
and Proposition~\ref{prop:sg} asserts that $ \Pr[\Omega]=\Pr[\Omega(u,v,\Lambda,T,N)] \to 1 $ under the iterative limit $ (\lim_{\Lambda\to\infty} \lim_{N\to\infty}\Cdot) $.
Combining these properties with \eqref{eq:linmg6} yields the desired result~\eqref{eq:linmggoal}.
\end{proof}

Lemma~\eqref{lem:linmg} together with~\eqref{eq:linmg:pf:} gives
\begin{align}
	\label{eq:mgcnvg}
	\sup_{t\in[0,T]} |\Mg_n(t)-\mg_n(t)|
	\longrightarrow_\text{P}
	0.
\end{align}
Knowing that~$ \mg_n(t) $ is an $ \filZ $-martingale, we conclude that $ \Mg_n(t) $ is a local martingale.

\subsection{Quadratic martingale problem}
\label{sect:quadmg}

Our goal here is to show that $ \limZ $ solves the quadratic martingale problem~\eqref{eq:qdmg}.
With $ \mg_n(t) $ given in~\eqref{eq:mgn},
the first step is to calculate the cross variation of $ \mg_n(t)\mg_{n'}(t) $:
\begin{align}
	\label{eq:qdmg:qv}
	\langle \mg_n, \mg_{n'} \rangle(N^2t)
	=
	\int_0^{N^2t} \frac{1}{N^2} \sum_{x,x'\in\T} \eigf_n(Nx)\eigf_{n'}(Nx') d\langle \mg(s,x), \mg(s,x') \rangle.
\end{align}
Given~\eqref{eq:qv}, the r.h.s.\ of~\eqref{eq:qdmg:qv} permits an explicit expression in terms of $ \eta(s,x) $ and $ Z(s,x) $.
Relevant to our purpose here is an expansion of the expression that exposes the $ N\to\infty $ asymptotics.
To this end,
with $ Z(t,x) $ defined in~\eqref{eq:Z},
note that
\begin{align}
	\label{eq:taylor}
	\eta(t,x)Z(t,x) &= \tfrac{\tau^{1/2}-1}{\tau^{1/2}-\tau^{1/2}} Z(t,x) + \tfrac{1}{\tau^{1/2}-\tau^{1/2}} \nabla Z(t,x-1),
\\
	\label{eq:taylor:}
	\eta(t,x+1)Z(t,x) &= \tfrac{1-\tau^{-1/2}}{\tau^{1/2}-\tau^{1/2}} Z(t,x) + \tfrac{1}{\tau^{1/2}-\tau^{1/2}} \nabla Z(t,x).
\end{align}
Recall the filtration~$ \filZ(t) $ from \eqref{eq:filZ}.
In the following we use $ \bdd(t,x)=\bdd^{(N)}(t,x) $ to denote a \emph{generic} $ \filZ $-adopted process
that may change from line to line (or even within a line), but is bounded uniformly in $ t,x,N $. 
Set
\begin{align}
	\label{eq:W}
	W(t,x) := N(\nabla Z(t,x))(\nabla Z(t,x-1)).
\end{align}
Using the identities~\eqref{eq:taylor}--\eqref{eq:taylor:} in~\eqref{eq:qv},
together with $ r=\frac{1-1/\sqrt{N}}{2} $, $ \ell=\frac{1+1/\sqrt{N}}{2} $, $ \tau:=r/\ell $
and $ |\rtt(x)| \leq c $ (from \eqref{eq:Z}, \eqref{eq:was}, and Assumption~\ref{assu:rt}\ref{assu:rt:bdd}),
we have
\begin{align}
	\notag
	\tfrac{d~}{ds}&\langle \mg(s,x), \mg(s,x') \rangle
	=(r-\ell)^2 \rtt(x) \Big( \tfrac{1}{\ell}\eta(s,x) + \tfrac{1}{r}\eta(s,x+1) - \big(\tfrac1r+\tfrac1\ell\big)\eta(t,x)\eta(s,x+1)) \Big)Z(s,x)^2
\\
	\label{eq:qdmg:qv::}
	&= \tfrac{\rtt(x)}{N} \Big( \big( Z^2(s,x) + W(s,x) ) \big) + N^{-\frac12}\bdd(s,x) Z^2(s,x) \Big),
\end{align}
From~\eqref{eq:taylor}--\eqref{eq:taylor:}, it is readily checked that
\begin{align}
	\label{eq:Wapbd}
	|W(t,x)| \leq c Z^2(t,x).
\end{align}
In~\eqref{eq:qdmg:qv::}, write $ \rtt(x)=1+\rt(x) $,
and use~\eqref{eq:Wapbd} to get $ \frac{\rt(x)}{N}(Z^2(s,x)+W(s,x)) = \frac{\rt(x)}{N} \bdd(s,x)Z^2(s,x) $.
Also, since $ \rtt(x) $ is bounded (from Assumption~\ref{assu:rt}\ref{assu:rt:bdd}),
we have $ \frac{\rtt(x)}{N}N^{-\frac12}\bdd(s,x) Z^2(s,x) = \frac{1}{N} N^{-\frac12} \bdd(s,x) Z^2(s,x) $.
From these discussions we obtain
\begin{align}
	\tfrac{d~}{ds}\langle \mg(s,x), \mg(s,x') \rangle
	\label{eq:qdmg:qv:}	
	&= \tfrac{1}{N} \Big( \big( Z(s,x)^2 + W(s,x) ) + (\rt(x) + N^{-\frac12}) \bdd(s,x) Z^2(s,x) \Big).
\end{align}
Inserting~\eqref{eq:qdmg:qv:} into~\eqref{eq:qdmg:qv} gives
\begin{subequations}
\label{eq:qvv}
\begin{align}
	\label{eq:qdmg:qvv}
	\langle \mg_n, \mg_{n'} \rangle(N^2t)
	&=
	\frac{1}{N^2}\int_0^{N^2t} \frac{1}{N} \sum_{x\in\T} \eigf_n(Nx)\eigf_{n'}(Nx') Z^2(s,x) ds
	+
	\mgg_1(t) + \mgg_2(t),
\\
	\label{eq:qdmg:qvv1}
	\mgg_1(t)
	&:=	\frac{1}{N^2}\int_0^{N^2t} \frac{1}{N} \sum_{x\in\T} \eigf_n(Nx)\eigf_{n'}(Nx') (\rt(x) + N^{-\frac12}) \bdd(s,x) Z^2(s,x) ds,
\\	
	\label{eq:qdmg:qvv2}
	\mgg_2(t)
	&:=
	\frac{1}{N^2}\int_0^{N^2t} \frac{1}{N} \sum_{x\in\T} \eigf_n(Nx)\eigf_{n'}(Nx) W(s,x) ds.
\end{align}
\end{subequations}

Indeed, the r.h.s.\ of~\eqref{eq:qdmg:qvv} is the discrete analog of
$ \int_0^{t} \langle \eigf_n\eigf_{n'}, \limZ^2(s) \rangle ds  $ that appears in~\eqref{eq:quadmg:}.
By~\eqref{eq:rtbd}, $ \norm{\rt}_{L^\infty(\T)} \leq N^{-\urt} $ with probability $ \to_{\Lambda,N} 1 $.
With the aid of moment bounds from Proposition~\ref{prop:mom},
it is conceivable $ \mgg_1(t) $ converges in $ C[0,T] $ to zero in probability.
On the other hand, $ W(s,x) $ does \emph{not} converge to zero for fixed $ (s,x) $.
In order to show that $ \mgg_2(t) $ converges to zero,
we  capitalize on the spacetime averaging in~\eqref{eq:qdmg:qvv2}.
The main step toward showing such an averaging is a decorrelation estimate on $ W(s,x) $,
stated in Proposition~\ref{prop:selfav}.

To prove the decorrelation estimate,
we follow the general strategy of \cite{bertini97}.
The idea here is to develop an integral equation for $ \ExO[ W(t+s,x) | \filZ(s) ] $ and try to `close' the equation.
Closing the equation means bounding terms on the r.h.s.\ of the integral equation,
so as to end up with an integral inequality for $ \ExO[ W(t+s,x) | \filZ(s) ] $.
Crucial to success under this strategy are certain nontrivial inequalities involving the kernel $ \sg(t;x,\tilx) $,
which we now establish. These are considerably more difficult to demonstrate in the inhomogeneous case (versus the homogeneous case).

\begin{remark}
\label{rmk:selfav}
Self-averaging properties like Proposition~\ref{prop:selfav} are often encountered in the context of convergence of particle systems to \acp{SPDE}.
In particular, in addition to the approach of \cite{bertini97} that we are following,
alternative approaches have been developed in different contexts.
This includes hydrodynamic replacement \cite{quastel11} and the Markov duality method \cite{corwin18a}.
The last two approaches do not seem to apply in the current context.
For hydrodynamic replacement, one needs two-block estimates to relate the fluctuation of $ h(t,x) $ to the quantity $ W(t,x) $.
Inhomogeneous \ac{ASEP} under Assumption~\ref{assu:rt} sits beyond the scope of existing proofs of two-block estimates.
As for the duality method, it is known \cite{borodin14} that inhomogeneous \ac{ASEP} enjoys a duality
via the function $ \til{Q}(t,\vec{x}):=\prod_{i=1}^n \eta(t,x_i)\tau^{h(t,x_i)} $.
(Even though \cite{borodin14} treats \ac{ASEP} in the full-line $ \Z $,
duality being a local property, readily generalizes to $ \T $.)
For the method in~\cite{corwin18a} to apply, however,
one also needs a duality via the function $ Q(t,\vec{x}):=\prod_{i=1}^n \tau^{h(t,x_i)} $,
which is lacking for the inhomogeneous \ac{ASEP}.
\end{remark}

In what follow, for $ f,g\in[0,\infty)\times\T^2\to\R $, we write
\begin{align}
	\label{eq:sgmg:sgmgg}
	\sgmg_{f,g}(t;x,\tilx) := (\nabla_{x}f(t;x,\tilx))(\nabla_x g(t;x-1,\tilx)),
	\qquad
	\sgmgg_{f,g}(t;x) := \sum_{\tilx\in\T}|\sgmg_{f,g}(t;x,\tilx)|.
\end{align}
Recall also that $ \sgr(t):=\sg(t)-\hk(t) $ denotes the difference of $ \sg(t)=e^{t\ham} $ to $ \hk(t) $, the kernel of the homogeneous walk.

\begin{lemma}
\label{lem:sgkey}
Fix $ u\in(\frac12,1) $, $ v\in(0,\urt) $, $ \Lambda,T<\infty $.
We have, for all $ t\in[0,N^2T]  $ and $ x\in\T $,
\begin{enumerate}[label=(\alph*),leftmargin=7ex]
\item \label{lem:sgrkey=0} \
	$
		\displaystyle
		\int_0^{N^2T} \sgmgg_{f,g}(s;x) ds \, \ind_{\Omega(u,v,\Lambda,T,N)}
		\leq
		c(u,v,T,\Lambda)N^{-(u\wedge v)}\log(N+1) \qquad \textrm{when  }(f,g)=(\hk,\sgr), (\sgr,\hk), (\sgr,\sgr),
	$
\item \label{lem:sgrkeytail} \
	$
		\displaystyle
		\sgmgg_{\sg,\sg}(t;x)\ind_{\Omega(u,v,\Lambda,T,N)}
		\leq
		c(u,v,T,\Lambda)(1+t)^{-(u+\frac12)},
	$
\item \label{lem:sgkey=0} \
	$
		\displaystyle
		\Big| \int_0^{t} \sum_{\tilx\in\T}\sgmg_{\hk,\hk}(s;x,\tilx)ds \Big| \leq  \frac{c}{\sqrt{t+1}},
	$
\item \label{lem:sgkey<1} \
	There exists a universal $ \beta<1 $ and $ N_0=N_0(u,v,T,\Lambda) $ such that\\
	$
		\displaystyle
		\int_0^{N^2T} \sgmgg_{\sg,\sg}(s;x) ds \, \ind_{\Omega(u,v,\Lambda,T,N)} \leq \beta,
	$
	for all $ N\geq N_0 $.
\end{enumerate}
\end{lemma}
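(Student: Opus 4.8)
\textbf{Plan for the proof of Lemma~\ref{lem:sgkey}\ref{lem:sgkey<1}.}
The strategy is to decompose $\sg=\hk+\sgr$ in each of the two slots of $\sgmg_{\sg,\sg}$ and treat the four resulting pieces separately. Writing
\begin{align*}
	\sgmg_{\sg,\sg}(s;x,\tilx)
	=
	\sgmg_{\hk,\hk}(s;x,\tilx)
	+
	\sgmg_{\hk,\sgr}(s;x,\tilx)
	+
	\sgmg_{\sgr,\hk}(s;x,\tilx)
	+
	\sgmg_{\sgr,\sgr}(s;x,\tilx),
\end{align*}
the three mixed/remainder terms are handled by part~\ref{lem:sgrkey=0}: their time-integrated $\ell^1$-norms are bounded on $\Omega(u,v,\Lambda,T,N)$ by $c(u,v,T,\Lambda)N^{-(u\wedge v)}\log(N+1)$, which tends to $0$ as $N\to\infty$. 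So for $N$ large (depending on $u,v,T,\Lambda$) their total contribution is at most, say, $\frac{1}{100}$. The entire difficulty is therefore concentrated in the pure homogeneous term $\int_0^{N^2T}\sum_{\tilx}|\sgmg_{\hk,\hk}(s;x,\tilx)|\,ds$, for which a naive bound is \emph{not} small --- indeed $\sum_{\tilx}|\sgmg_{\hk,\hk}(s;x,\tilx)|\asymp (1+s)^{-3/2}$ by \eqref{eq:hk:hold:}--\eqref{eq:hk:hold::}, and $\int_0^\infty (1+s)^{-3/2}ds$ is a finite constant that is \emph{not} obviously $<1$.

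The key point to exploit is part~\ref{lem:sgkey=0}, which says that the \emph{signed} integral $\big|\int_0^t\sum_{\tilx}\sgmg_{\hk,\hk}(s;x,\tilx)\,ds\big|\le c(t+1)^{-1/2}$ is small for large $t$ --- there is massive cancellation in $\tilx$. So the plan is to split the time integral at a large threshold $\Theta$ (a constant independent of $N$, to be chosen): on $[\Theta,N^2T]$ one cannot afford to pass to absolute values inside the $\tilx$-sum, but one \emph{can} use an integration-by-parts / summation argument converting $\sum_{\tilx}\sgmg_{\hk,\hk}(s;x,\tilx)=\sum_{\tilx}(\nabla_x\hk(s;x,\tilx))(\nabla_x\hk(s;x-1,\tilx))$ into something controlled by the signed quantity in~\ref{lem:sgkey=0} plus error terms of order $(1+s)^{-2}$ (one gradient can be moved off using $\sum_\tilx \nabla_x\hk\,\nabla_{\tilx}(\cdots)$ type identities and the Chapman--Kolmogorov relation $\nabla_x\hk(s;x,\tilx)=\partial$-type bounds from \eqref{eq:hk}). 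The tail $\int_\Theta^\infty (1+s)^{-2}ds = (1+\Theta)^{-1}$ is then made arbitrarily small by taking $\Theta$ large. For the bulk $[0,\Theta]$, one uses that $\sum_{\tilx}|\sgmg_{\hk,\hk}(s;x,\tilx)|\le c(1+s)^{-3/2}$ is \emph{integrable near $0$}, and more importantly that its integral over $[0,\Theta]$ is a universal constant $\beta_0:=\int_0^\infty c_*(1+s)^{-3/2}ds$ which, crucially, is \emph{strictly less than $1$} --- here one must invoke the precise constant in \eqref{eq:hk:hold:}/\eqref{eq:hk:hold::} (or rather a direct Fourier/local-CLT computation on $\Z$ via \eqref{eq:hk:hkZ}) to verify $\beta_0<1$. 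Adding the three bounds gives $\int_0^{N^2T}\sgmgg_{\sg,\sg}(s;x)\,ds\,\ind_{\Omega}\le \beta_0 + (1+\Theta)^{-1}\cdot c + \frac{1}{100} =: \beta < 1$ for $N\ge N_0$.

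\textbf{Main obstacle.} The genuinely delicate point is verifying that the universal homogeneous constant $\beta_0=\int_0^\infty \big(\limsup_N \sum_{\tilx}|\sgmg_{\hk,\hk}(s;x,\tilx)|\big)ds$ is strictly below $1$ --- this is a hard analytic inequality about the continuous-time simple random walk and is \emph{not} implied by the soft estimates \eqref{eq:hk} (which only give an unspecified constant $c(T)$). The resolution is to compute the relevant quantity explicitly in the $N=\infty$ (full-line) limit: by \eqref{eq:hk:hkZ} the kernel $\hk$ converges to $\hk^\Z$, and $\sum_{\tilx\in\Z}(\nabla_x\hk^\Z(s;x-\tilx))(\nabla_x\hk^\Z(s;x-1-\tilx))=(\hk^\Z\star\hk^\Z)$-type convolutions reduce via Fourier (or via the generator identity $\partial_s\hk^\Z=\tfrac12\Delta\hk^\Z$) to explicit Bessel-function expressions, whose time integral can be bounded by an elementary argument. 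One then must also argue uniformity in $N$ to upgrade from the $N=\infty$ value to the statement for $N\ge N_0$, using the $N\to\infty$ convergence of $\hk$ to $\hk^\Z$ quantitatively on the region $\dist_\T(x,\tilx)\le N/2$ and handling the wrap-around terms in \eqref{eq:hk:hkZ} (which are exponentially small for $s\le N^2T$ when restricted to, say, $s\le \Theta$, and handled by the tail argument for $s>\Theta$). I expect this computation, together with the care needed to keep the error terms in the summation-by-parts for the bulk estimate genuinely of order $(1+s)^{-2}$ rather than $(1+s)^{-3/2}$, to be the crux; everything else follows from Proposition~\ref{prop:hk}, Lemma~\ref{lem:hk}, and parts~\ref{lem:sgrkey=0}--\ref{lem:sgkey=0} already in hand.
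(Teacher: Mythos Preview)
Your decomposition $\sg=\hk+\sgr$ and use of part~\ref{lem:sgrkey=0} for the three mixed/remainder pieces is correct and matches the paper. But the time-splitting at $\Theta$ and the attempt to invoke part~\ref{lem:sgkey=0} for the tail is a wrong turn. Part~\ref{lem:sgkey=0} bounds the \emph{signed} quantity $\big|\int_0^t\sum_{\tilx}\sgmg_{\hk,\hk}\big|$, whereas $\sgmgg_{\hk,\hk}(s;x)=\sum_{\tilx}|\sgmg_{\hk,\hk}(s;x,\tilx)|$ has absolute values inside the sum; your ``integration-by-parts / summation argument'' converting one to the other is never spelled out and is not what is needed. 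Moreover your definition $\beta_0:=\int_0^\infty c_*(1+s)^{-3/2}\,ds$ is already an integral over \emph{all} of $[0,\infty)$; if that is $<1$ you are done without any splitting, so the plan is internally inconsistent.

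The paper's route is more direct and avoids part~\ref{lem:sgkey=0} entirely. Unfold $\hk$ via~\eqref{eq:hk:hkZ} as $\hk(s;x,\tilx)=\sum_{j\in\Z}\hk^\Z(s;x-\tilx+jN)$, so that $\sum_{\tilx\in\T}|\nabla\hk(s;x,\tilx)\nabla\hk(s;x-1,\tilx)|$ splits into a \emph{diagonal} part ($j=j'$) and an \emph{off-diagonal} part ($j\neq j'$). The diagonal part sums to exactly the full-line quantity $V(s)=\sum_{y\in\Z}|\nabla\hk^\Z(s;y)\,\nabla\hk^\Z(s;y-1)|$, and the paper simply cites \cite[Lemma~A.3]{bertini97} for the hard analytic fact $\int_0^\infty V(s)\,ds=:\beta''<1$ --- this is precisely the ``universal constant'' you correctly flag in your Main Obstacle paragraph, but it already covers the whole time axis, not just $[0,\Theta]$. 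The off-diagonal wrap-around terms are then controlled uniformly on $[0,N^2T]$ by the pointwise bound $|\nabla\hk^\Z(s;y+jN)|\le(s+1)^{-1}e^{-|y+jN|/c\sqrt{s+1}}$, giving a contribution that tends to $0$ as $N\to\infty$. No time threshold $\Theta$ is needed.
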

\begin{proof}
Throughout this proof we assume that $ s,t\leq TN^{2} $,
and, to simplify notation, write $ c=c(u,v,\Lambda,T) $
and $ \Omega=\Omega(u,v,\Lambda,T,N) $. At times below we will apply earlier lemmas or propositions wherein variables were labeled $x$ or $u$. We will not, however, always apply them with the values of $x$ and $u$ specified in our proof (for instance, we may want to apply a result with $u=1$). In order to avoid confusion, when we specify the value $\Cdot$ of $x$ or $u$ (or other variables) used in that application of an earlier result we will write $x\mapsto \Cdot$ or $u\mapsto \Cdot$.

\ref{lem:sgrkey=0}
Our first aim is to bound the expression
$
	\sum_{\tilx} |\nabla f(s;x,\tilx)| |\nabla g(s;x',\tilx)|
$
when $ (f,g)=(\hk,\sgr), (\sgr,\hk), (\sgr,\sgr) $.
To this end, bound $ |\nabla f(s;x,\tilx)| $ by it supremum over $ \tilx\in\Z $,
and use \eqref{eq:hkgd} or Proposition~\ref{prop:sg}\ref{prop:sgr:gsup} (with $ x'\mapsto x-1 $ and $u\mapsto u$),
and for the remaining sum use \eqref{eq:hkgd:sum} or Proposition~\ref{prop:sg}\ref{prop:sgr:gsum}.
This gives
\begin{align}
	\label{eqlem:sgrkey=0:}
	\sum_{\tilx\in\T} |\nabla f(s;x,\tilx)\,\nabla g(s;x',\tilx)| \ind_{\Omega}
	&\leq
	c\,\Bigg(
		\frac{1}{s+1} \bigg( \frac{N^{-v}}{(s+1)^{u/2}}+N^{-u} \bigg)
		+
		\bigg( \frac{N^{-v}}{(s+1)^{(u+1)/2}}+\frac{N^{-u}}{\sqrt{s+1}} \bigg) \frac{1}{\sqrt{s+1}}
\\
	\label{eqlem:sgrkey=0::}
	&\hphantom{\leq c\Big(\frac{1}{s+1} }	
		+
		\bigg( \frac{N^{-v}}{(s+1)^{(u+1)/2}}+\frac{N^{-u}}{\sqrt{s+1}} \bigg)
		\bigg( \frac{N^{-v}}{(s+1)^{u/2}}+N^{-u} \bigg)
	\Bigg),
\end{align}
when $ (f,g)=(\hk,\sgr), (\sgr,\hk), (\sgr,\sgr) $.
Expand the terms on the r.h.s.\ of~\eqref{eqlem:sgrkey=0:},
and (using $ u<1 $), bound $ N^{-v}/(s+1)^{1+\frac{u}2} \leq N^{-v}/(s+1)^{u+\frac12} $.
In~\eqref{eqlem:sgrkey=0::}, use $ u>\frac12 $ and $ s\leq TN^2 $ to bound
$ N^{-u}/\sqrt{s+1} \leq c\,(s+1)^{-1} $.
We then have, when $ (f,g) = (\hk,\sgr), (\sgr,\hk), (\sgr,\sgr) $,
\begin{align}
	\label{eqlem:sgrkey=0}
	\sum_{\tilx\in\T} |\nabla f(s;x,\tilx)\,\nabla g(s;x',\tilx)| \ind_{\Omega}
	\leq
	\frac{cN^{-v}}{(s+1)^{u+\frac12}} + \frac{cN^{-u}}{s+1}.
\end{align}
Integrate~\eqref{eqlem:sgrkey=0} over $ s\in[0,t] $.
Given that $ u>\frac12 $, we have $ \int_0^t N^{-v}/(s+1)^{u+\frac12} ds \leq c N^{-v} $;
Given that $ t\leq N^2T $,
we have $ \int_0^t N^{-u}/(s+1) ds \leq c N^{-u}\log(N+1) $.
From these considerations we conclude the desired bound.

\ref{lem:sgrkeytail}
Using~\eqref{eq:hkgd}--\eqref{eq:hkgd:sum} gives $ \sum_{\tilx\in\T} |\nabla \hk(t;x,\tilx)\,\nabla \hk(t;x',\tilx)| \leq c\,(t+1)^{-3/2} $.
Combining this with~\eqref{eqlem:sgrkey=0},
and using $ N^{-u}/(t+1) \leq c\,(t+1)^{-(1+\frac{u}{2})} $, we conclude the desired result.

\ref{lem:sgkey=0}
Recall that $ \hk $ solves the lattice heat equation~\eqref{eq:lhe}.
Multiply both sides of~\eqref{eq:lhe} by $ \hk(s;x',\tilx) $, sum over $ x\in\T $, and integrate over $ s\in[0,\infty) $. We have
\begin{align*}
	\sum_{\tilx\in\T} \int_0^\infty \tfrac12\partial_s\big( \hk(s;x',\tilx) \hk(s;x,\tilx) \big) ds
	&=
	\int_0^\infty \sum_{\tilx\in\T} \tfrac14 \big( \hk(s,x',\tilx)\Delta_x \hk(s;x,\tilx) + (\Delta_{x'}\hk(s,x',\tilx))\,\hk(s;x,\tilx) \big) ds
\\
	&=
	-\int_0^\infty \sum_{\tilx\in\T} \tfrac12 \nabla_{x'}\hk(s;x',\tilx) \nabla_x \hk(s;x,\tilx)ds.
\end{align*}
With $ \hk(0;x,\tilx)=\ind_\set{x=\tilx} $ and $ \hk(\infty;x,\tilx)=\frac{1}{N} $,
the l.h.s.\ is equal to $ \frac12(\frac{1}{N}-\ind_\set{x=x'}) $.
This gives
\begin{align*}
	\int_0^\infty \sum_{\tilx\in\T} \nabla_{x'}\hk(s,x',\tilx) \nabla_x \hk(s;x,\tilx)ds
	=
	\ind_\set{x=x'} - \frac1N.
\end{align*}
Set $ x'\mapsto x-1 $ gives $ \int_0^t \sum_{\tilx\in\T} \sgmg_{\hk,\hk}(s;x,\tilx)ds = \frac1N-\int_{t}^\infty \sum_{\tilx\in\T} \sgmg_{\hk,\hk}(s;x,\tilx)ds $.
To bound the last term, use~\eqref{eq:hkgd}--\eqref{eq:hkgd:sum} (with $ u\mapsto 1 $) to get
\begin{align*}
	\Big| \int_0^t \sum_{\tilx\in\T} \sgmg_{\hk,\hk}(s;x,\tilx)ds \Big|
	\leq
	\frac1N+
	\Big| \int_{t}^\infty \sum_{\tilx\in\T} \sgmg_{\hk,\hk}(s;x,\tilx)ds \Big|
	\leq
	\frac1N+
	\int_{t}^\infty \frac{c}{(s+1)^{3/2}} ds
	\leq
	\frac1N
	+
	\frac{c}{\sqrt{t+1}}.
\end{align*}
This together with $ \frac1N \leq \frac{c}{\sqrt{t+1}} $ completes the proof.

\ref{lem:sgkey<1}
Since $ \sg=\hk+\sgr $, we have $ \sgmgg_{\sg,\sg}(s,x)\leq (\sgmgg_{\hk,\hk}+\sgmgg_{\sgr,\sg}+\sgmgg_{\sg,\sgr}+\sgmgg_{\sgr,\sgr})(s,x) $.
The bounds established in part \ref{lem:sgrkey=0} of this lemma gives
\begin{align*}
	\sup_{x\in\T}
	\int_0^{N^2T}
	\big( \sgmgg_{\sgr,\sg}+\sgmgg_{\sg,\sgr}+\sgmgg_{\sgr,\sgr} \big)(s,x) ds \ind_{\Omega}
	\longrightarrow_\text{P}
	0.
\end{align*}
Granted this, it now suffices to show that there exists $\beta'<1$ and $N_0(u,v,T,\Lambda)$ such that
\begin{align}
	\label{eqlem:sgkey<1}
	\int_0^{N^2T} \sgmgg_{\hk,\hk}(s;x) ds
	:=
	\int_0^{N^2T} \sum_{\tilx\in\T} |\nabla\hk(s;x,\tilx) \, \nabla\hk(s;x-1,\tilx)| ds
	\leq \beta',
	\qquad
	\textrm{for }N\geq N_0(u,v,T,\Lambda).
\end{align}
for some universal constant $ \beta'<1 $.
Recall that $ \hk^\Z(t;x) $ denotes the kernel $ \hk $ of the homogeneous walk on $ \Z $,
and that $ \hk $ is expressed in terms of $ \hk^\Z $ by~\eqref{eq:hk:hkZ}.
Let $ I := (-\frac{N}{2},\frac{N}{2}]\cap\Z \subset\Z $ denote an interval in $ \Z $ of length $ N $ centered at $ 0 $.
Under this setup we have
\begin{align}
	\notag
	\sum_{\tilx\in\T} |\nabla\hk(s;x,\tilx) \, \nabla\hk(s;x-1,\tilx)|
	&=
	\sum_{y\in I} \Big|\sum_{j\in\Z}\nabla\hk^\Z(s;y+jN) \, \sum_{j'\in\Z}\nabla\hk^\Z(s;y-1+j'N)\Big|	
\\
	\label{eq:nabnab}
	&\leq
	\sum_{y\in I}\sum_{j,j'\in\Z} \Big|\nabla\hk^\Z(s;y+jN) \, \nabla\hk^\Z(s;y-1+j'N)\Big|.
\end{align}
Within \eqref{eq:nabnab}, the diagonal terms $ j=j'$, after being summed over $ y\in I $,
jointly contribute to
\begin{align*}
	V(s) :=  \sum_{y\in \Z}\big|\nabla\hk^\Z(s;y) \, \nabla\hk^\Z(s;y-1)\big|.
\end{align*}
We set the contribution of off-diagonal terms to be
\begin{align}
	\label{eq:nabnabS}
	S(s)
	:=
	\sum_{y\in I}\sum_{j\neq j'\in\Z} \big|\nabla\hk^\Z(s;y+jN) \, \nabla\hk^\Z(s;y-1+j'N)\big|,
\end{align}
Integrating \eqref{eq:nabnab} over $ s\in[0,N^2T] $ then gives
\begin{align}
	\label{eq:nabnab:}
	\int_0^{N^2T}\sum_{\tilx\in\T} |\nabla\hk(s;x,\tilx) \, \nabla\hk(s;x-1,\tilx)| ds
	\leq
	\int_0^{N^2T} V(s) ds
	+
	\int_0^{N^2T} S(s) ds.
\end{align}
For the first term on the r.h.s.\ of~\eqref{eq:nabnab:}, it is known~\cite[Lemma~A.3]{bertini97} that
\begin{align}
	\label{eq:bgkey}
	\int_0^{N^2T} V(s) ds = \int_0^\infty \sum_{y\in\Z} |\nabla\hk^\Z(s;y) \, \nabla\hk^\Z(s;y-1)| ds
	=: \beta''
	<1.
\end{align}
To bound the last term in~\eqref{eq:nabnab:},
we use the bound from
\cite[Eq~(A.13)]{dembo16}, which in our notation reads
$ |\nabla\hk^\Z(s;y+iN)| \leq \frac{1}{s+1} e^{-\frac{|y+iN|}{\sqrt{s+1}}} $.
Further, for all $ y\in I $ we have $ |y|\leq \frac{N}{2} $, which gives $ |y+iN| \geq \frac{1}{c} (|y|+ |i|N) $, and hence
$
	|\nabla\hk^\Z(s;y+iN)| \leq \tfrac{1}{s+1} e^{-\frac{|y|+|i|N}{c\sqrt{s+1}}},
$
for all $ y\in I $.
Using this bound on the r.h.s.\ of~\eqref{eq:nabnabS} gives
\begin{align*}
	S(s)
	\leq
	c\,\Big( \sum_{j\neq j'\in\Z} e^{-\frac{(|j|+|j'|)N}{\sqrt{s+1}}} \Big) \Big( \sum_{y\in \Z} \frac{ e^{-\frac{|y|}{c\sqrt{s+1}}} }{(s+1)^2} \Big)
	\leq
	c \, e^{-\frac{N}{c\sqrt{s+1}}} \, (s+1)^{-3/2}.
\end{align*}
Integrating this inequality over $ s\in[0,N^2T] $,
and combining the result with~\eqref{eq:nabnab:}--\eqref{eq:bgkey} yields
\begin{align*}
	\int_0^{N^2T}\sum_{\tilx\in\T} |\nabla\hk(s;x,\tilx) \, \nabla\hk(s;x-1,\tilx)| ds
	\leq
	\beta'' + c \int_0^{N^2 T} e^{-\frac{N}{c\sqrt{s+1}}}(s+1)^{-3/2} ds.
\end{align*}
Fix $ \alpha \in (0,1) $ and divide the last integral into integrals over $ s\in[0,N^{-\alpha}] $ and $ s\in[N^{-\alpha},N^2T] $.
We see that $ \int_0^{N^2 T} \exp(-\frac{N}{c\sqrt{s+1}})(s+1)^{-3/2} ds \leq (\exp(-\frac{1}c N^{-1+\alpha})+ N^{-\alpha/2}) c \to 0 $.
Hence we conclude~\eqref{eqlem:sgkey<1} for $ \beta'=\frac{\beta''+1}{2} <1 $.
\end{proof}

Given Lemma~\ref{lem:sgkey}, we now proceed to establish an integral inequality of the conditional expectation of $ W(t+s,x) $.

\begin{lemma}
\label{lem:selfav:}
Fix $ u\in(\frac12,1) $, $ v\in(0,\urt) $, $ \Lambda,T<\infty $.
Let $ \Omega':=\Omega(u,v,T,\Lambda,N)\cap \Omega(1,v,T,\Lambda,N) $
and $ \ExO[\,\Cdot\,] := \Ex[\,\Cdot\,|\rt(x),x\in\T] \ind_{\Omega'} $.
We have, for all $ 	s,t\in[0,N^2T] $ and $ x\in\T $,
\begin{align}
\label{eq:decorIter}
\begin{split}
	\ExO&\Big[\,\big| \ExO\big[ W(t+s,x) \big| \filZ(s) \big] \big| \, \Big]
	\leq
	c(u,v,\Lambda,T) \big( N^{-(\frac{u}2\wedge\uic\wedge v)}\log(N+1) + \tfrac{1}{\sqrt{t+1}} + \tfrac{N}{t+1} \big)
\\
	&+
	\int_0^t
	\sum_{\tilx\in\T} \sgmg_{\sg,\sg}(t';x,\tilx) \ExO\Big[\,\big| \ExO\big[ W(t'+s,x) \big| \filZ(s) \big] \big| \,\Big] dt'.
\end{split}
\end{align}
\end{lemma}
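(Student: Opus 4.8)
The plan is to derive the integral equation for $W(t+s,x) = N(\nabla Z(t+s,x))(\nabla Z(t+s,x-1))$ by starting from the mild equation~\eqref{eq:Lang:int:} initiated at time $N^2s$. Applying $N\nabla_x$ on the spatial variable to both sides of~\eqref{eq:Lang:int:} at the two shifted sites and taking the product, one writes $W(t+s,x)$ as a sum of three types of terms: a ``homogeneous-in-the-initial-data'' term built from $\sg(t;x,\tilx)$ acting on $Z(N^2s,\cdot)$ through double gradients $\sgmg_{\sg,\sg}$, cross terms pairing one deterministic gradient against one stochastic (martingale) gradient integral, and a pure stochastic term which is a product of two stochastic integrals. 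Conditioning on $\filZ(N^2s)$ and taking $\ExO[\,\cdot\,\mid\filZ(s)]$, the cross terms vanish in conditional expectation because the martingale increments $d\mg$ are centered given the past, leaving the ``initial-data'' term and the conditional expectation of the product of the two stochastic integrals.

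First I would handle the ``initial-data'' term: it equals $N\sum_{\tilx,\tilx'} (\nabla_x\sg(t;x,\tilx))(\nabla_x\sg(t;x-1,\tilx'))Z(N^2s,\tilx)Z(N^2s,\tilx')$. To extract from it exactly the desired integral term $\int_0^t\sum_{\tilx}\sgmg_{\sg,\sg}(t';x,\tilx)\ExO[\,|\ExO[W(t'+s,x)\mid\filZ(s)]|\,]\,dt'$ one iterates once more, i.e.\ re-inserts the mild equation for $W(t'+s,x)$ into this term --- this is the step that produces the Duhamel-type self-referential integral. The leftover contributions (the diagonal $\tilx=\tilx'$ piece, which by the bound $|Z(N^2s,\tilx)|\le c$ from~\eqref{eq:Zmom} and $|W|\le cZ^2$ is controlled, plus the off-diagonal ``error'' from replacing $Z(N^2s,\tilx)Z(N^2s,\tilx')$ by $Z(N^2s,x)^2$ using the gradient moment bound~\eqref{eq:gZmom} which costs a factor $(\dist_\T/N)^{\frac{u}{2}\wedge\uic\wedge v}$) get absorbed into the first line of~\eqref{eq:decorIter}. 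The Hölder exponent $\frac{u}{2}\wedge\uic\wedge v$ appearing there is precisely what comes out of~\eqref{eq:gZmom}, and the $\log(N+1)$ comes from the $\int_0^{N^2T}$ time integral of $\sgmgg$-type kernels in Lemma~\ref{lem:sgkey}\ref{lem:sgrkey=0}.

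Next I would bound the conditional second-moment/product term. After conditioning on $\filZ(s)$, the product of the two stochastic integrals has conditional expectation controlled --- via the quadratic-variation formula~\eqref{eq:qv:bd}, i.e.\ $|\frac{d}{ds'}\langle\mg(s',x),\mg(s',x')\rangle|\le\frac{c}{N}\ind_{x=x'}Z^2$, together with $\ExO[Z^2]\le c$ from~\eqref{eq:Zmom} --- by $c\,N\int_0^t \sum_{\tilx}|\nabla_x\sg(t-t';x,\tilx)\,\nabla_x\sg(t-t';x-1,\tilx)|\frac{1}{N}\,dt'$, which is $c\int_0^t\sgmgg_{\sg,\sg}(t-t';x)\,dt'$. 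Here I would split $\sg=\hk+\sgr$ and estimate the four resulting products. The $\hk,\hk$ piece is handled by Lemma~\ref{lem:sgkey}\ref{lem:sgkey=0}, giving the $\frac{1}{\sqrt{t+1}}$ decay; the pieces involving $\sgr$ are handled by Lemma~\ref{lem:sgkey}\ref{lem:sgrkey=0}, giving $N^{-(u\wedge v)}\log(N+1)$, which is absorbed into the $N^{-(\frac u2\wedge\uic\wedge v)}\log(N+1)$ term (after noting $\frac u2\wedge v\le u\wedge v$); the $\frac{N}{t+1}$ term is the contribution of the first-increment ($t'\in[0,1]$, say) or boundary part of the iteration where the time integral cannot yet produce decay and one only has the crude prefactor $N$ from the definition of $W$.

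\textbf{Main obstacle.} The hard part will be the single re-iteration step that converts the ``initial-data'' term into the self-referential integral $\int_0^t\sum_{\tilx}\sgmg_{\sg,\sg}(t';x,\tilx)\ExO[\,|\ExO[W(t'+s,x)\mid\filZ(s)]|\,]\,dt'$ while keeping careful track of the absolute values (needed because we only estimate $|\ExO[W\mid\filZ(s)]|$, not $\ExO[W\mid\filZ(s)]$ itself) and of the order of conditional expectations; one must use the tower property $\ExO[\,\cdot\mid\filZ(s)\,]=\ExO[\ExO[\,\cdot\mid\filZ(s')]\mid\filZ(s)\,]$ for $s\le s'\le s+t'$ and bound $\ExO[|\ExO[W(t'+s,x)\mid\filZ(s')]|\mid\filZ(s)]$ by pulling the outer $\ExO$ through. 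Getting the sign-definite kernel $\sgmg_{\sg,\sg}$ to appear with the correct argument $t'$ (rather than $t-t'$), and ensuring the error terms genuinely sit below $N^{-(\frac u2\wedge\uic\wedge v)}\log(N+1)+\frac{1}{\sqrt{t+1}}+\frac{N}{t+1}$ uniformly over $s,t\in[0,N^2T]$ and $x\in\T$, is where the bookkeeping is most delicate; everything else is a direct application of Proposition~\ref{prop:sg}, Proposition~\ref{prop:mom}, and Lemma~\ref{lem:sgkey}.
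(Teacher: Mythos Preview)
Your proposal has the right starting point---write $\nabla_x Z(t+s,x)=D(x)+F(x)$ from the mild equation and observe that cross terms $D\cdot F$ vanish under $\ExO[\,\cdot\mid\filZ(s)]$---but you have misidentified where the self-referential integral comes from, and this is a genuine gap.

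In the paper's argument the term $N D(x)D(x-1)$ is \emph{not} iterated; it is bounded directly. Since $D(x)=\sum_{\tilx}\nabla_x\sg(t;x,\tilx)Z(s,\tilx)$ is $\filZ(s)$-measurable and involves only $Z$ at the fixed initial time $s$, there is no later-time $W(t'+s,\cdot)$ hiding in it to ``re-insert the mild equation for.'' One simply bounds $\normO{D(x)}{2}\le c(t+1)^{-1/2}$ using Proposition~\ref{prop:sg}\ref{prop:sg:gsum}--\ref{prop:sgr:gsum}, and Cauchy--Schwarz gives $N\ExO[|D(x)D(x-1)|]\le cN/(t+1)$. That is the sole origin of the $\tfrac{N}{t+1}$ term in~\eqref{eq:decorIter}.

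The self-referential integral instead comes from the stochastic product $N\ExO[F(x)F(x-1)\mid\filZ(s)]$, and the mechanism is the quadratic-variation expansion~\eqref{eq:qdmg:qv:}, which reads
\[
\tfrac{d}{d\tau}\langle\mg(\tau,\tilx),\mg(\tau,\tilx)\rangle
=\tfrac{1}{N}\Big(Z^2(\tau,\tilx)+W(\tau,\tilx)+(\rt(\tilx)+N^{-1/2})\bdd(\tau,\tilx)Z^2(\tau,\tilx)\Big).
\]
The $W(\tau,\tilx)$ term here is what produces $F_2(x)=\int_0^t\sum_{\tilx}\sgmg_{\sg,\sg}(t-t';x,\tilx)\,\ExO[W(s+t',\tilx)\mid\filZ(s)]\,dt'$, i.e.\ exactly the integral on the right of~\eqref{eq:decorIter}. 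Your proposal uses only the $Z^2$ piece of the quadratic variation (bounding by $\ExO[Z^2]\le c$) and drops the $W$ piece entirely; but then you are left with $\int_0^t\sgmgg_{\sg,\sg}(t-t';x)\,dt'$, which by Lemma~\ref{lem:sgkey}\ref{lem:sgkey<1} is only $\le\beta<1$, a constant that does \emph{not} decay and cannot be absorbed into the error terms of~\eqref{eq:decorIter}. The paper handles the $Z^2$ piece ($F_1$) by further splitting off $\ExO[Z^2(s+t,x)\mid\filZ(s)]\cdot\int_0^t\sum_{\tilx}\sgmg_{\hk,\hk}$ (controlled by Lemma~\ref{lem:sgkey}\ref{lem:sgkey=0}, giving $\tfrac{1}{\sqrt{t+1}}$) from a remainder involving $Z^2(s+t',\tilx)-Z^2(s+t,x)$ (controlled by the H\"older moment bounds~\eqref{eq:gZmom}--\eqref{eq:tZmom}, giving the $N^{-(\frac{u}{2}\wedge\uic\wedge v)}$ contribution); the $\sgr$ cross-pieces are handled by Lemma~\ref{lem:sgkey}\ref{lem:sgrkey=0}.
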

\begin{proof}
Throughout this proof we assume $ s,t\leq TN^{2} $,
and, to simplify notation, we write $ c=c(u,v,\Lambda,T) $.
Recall from~\eqref{eq:W} that $ W(t+s,x) $ involves $ x $-gradients of $ Z $.
The idea is to derive equations for $ \nabla_x Z(t,x) $.
To this end, set $ (t_*,t)\mapsto(s,s+t) $ in~\eqref{eq:Lang:int:} and take $ \nabla_x $ on both sides to get
\begin{align}
	\label{eq:gZeq}
	&\nabla_x Z(t+s,x) = D(x) + F(x),
\\
	\label{eq:DF}
	&
	D(x) := \sum_{\tilx\in\T}\nabla_x\sg(s;x,\tilx)Z(s,\tilx),
	\qquad
	F(x) := \int_{s}^{t+s} \sum_{\tilx\in\T}\nabla_x\sg(t-\tau;x,\tilx)d\mg(\tau,\tilx).
\end{align}
Note that we have omit dependence on $ s,t $ in the notation $ D(x), F(x) $.
Similar convention is practiced in the sequel.
Use~\eqref{eq:gZeq} twice with $ x\mapsto x $ and $ x\mapsto x-1 $  to express $ W $ in terms of $ D $ and $ F $.
Since $ F(x) $ is a martingale integral and since $ D(x) $ is $ \filZ(s) $-measurable,
upon taking $ \ExO[\,\Cdot\,|\filZ(s)] $, we have
\begin{align}
	\label{eq:quadmg:ExW}
	&\ExO[W(t+s,x)|\filZ(s)]
	=
	N D(x)D(x-1) + N \ExO[F(x)F(x+1)|\filZ(s)].
\end{align}
To evaluate the last term in~\eqref{eq:quadmg:ExW},
recall that $ \bdd(t,x) $ denotes a generic $ \filZ $-adopted uniformly bounded process,
and note that, from~\eqref{eq:rtbd}, we have $ |\rt(x)| \leq \Lambda N^{-\urt} $ under $ \Omega $.
Recall the notation $ \sgmg_{f,g}, \sgmgg_{f,g} $ from~\eqref{eq:sgmg:sgmgg}.
Using~\eqref{eq:qdmg:qv:} we write
\begin{align*}
	N \ExO[F(x)F(x+1)|\filZ(s)] = F_1(x)+F_2(x)+F_3(x),
\end{align*}
where
\begin{align}
	\label{eq:quadmg:F1}
	F_1(x)
	&:=
	\int_{s}^{s+t} \sum_{\tilx\in\T}\sgmg_{\sg,\sg}(t-t';x,\tilx) \ExO[ Z^2(s+t',\tilx)|\filZ(s)] dt',
\\
	\notag
	F_2(x)
	&:=
	\int_{0}^{t} \sum_{\tilx\in\T}\sgmg_{\sg,\sg}(t-t';x,\tilx) \ExO[ W(s+t',\tilx)|\filZ(s)] dt',
\\
	\label{eq:quadmg:F3}
	F_3(x)
	&:=
	N^{-(\frac12\wedge\urt)}
	\int_{s}^{s+t} \sum_{\tilx\in\T}\sgmg_{\sg,\sg}(t-t';x,\tilx) \ExO[ \bdd(t',\tilx)Z^2(s+t',\tilx)|\filZ(s)] dt'.
\end{align}
Note that $ F_2(x) $ is expressed in terms of $ \ExO[W(t+s,x)|\filZ(s)] $, $ t\geq 0 $.
Insert \eqref{eq:quadmg:F1}--\eqref{eq:quadmg:F3} into the last term in~\eqref{eq:quadmg:ExW},
and take $ \ExO[\,|\Cdot|\,] $ on both sides.
We now obtain
\begin{align}
\label{eq:quadmg:iter}
\begin{split}
	\ExO&\big[\, \big| \ExO[W(t+s,x)|\filZ(s)] \big| \, \big]
	\leq
	N \ExO\big[ \, |D(x)D(x-1)| \, \big] + \ExO\big[\,|F_1(x)| \, \big]
\\
	&\quad
	+ \int_{0}^{t} \sum_{\tilx\in\T}\sgmg_{\sg,\sg}(t-t';x,\tilx) \ExO\big[\, |\ExO[ W(s+t',\tilx)|\filZ(s)]| \, \big] dt'
	+ \ExO\big[\,|F_3(x)|\,\big].
	\end{split}
\end{align}
To proceed, we bound the residual terms in~\eqref{eq:quadmg:iter} that involves $ D $, $ F_1 $, and $ F_3 $.

We begin with the term $ N\ExO[\,|D(x)D(x-1)|\,] $.
Using the expression~\eqref{eq:DF} for $ D(x) $,  we take $ \normO{\,\Cdot\,}{2} $ on both sides,
and write $ \sg(t)=\hk(t)+\sgr(t) $.
With the aid of the moment bound on $ \normO{Z(s,\tilx)}{2} $ from Proposition~\ref{prop:mom}, we obtain
\begin{align}
	\normO{D(x)}{2}
	\leq
	\sum_{\tilx\in\T} |\nabla_x\sg(t;x,\tilx)| \, \normO{Z(s,\tilx)}{2}
	\leq
	c\sum_{\tilx\in\T} |\nabla_x\hk(t;x,\tilx)| + c\sum_{\tilx\in\T} |\nabla_x\sgr(t;x,\tilx)|.
\end{align}
Further using~\eqref{eq:hkgd:sum} (with $ x'\mapsto x-1 $ and $ u\mapsto 1 $) and using
the bound from Proposition~\ref{prop:sg}\ref{prop:sgr:gsum} (with $ u\mapsto 1 $
and $v\mapsto v$) gives
$
	\normO{D(x)}{2}
	\leq
	( \tfrac{1}{\sqrt{t+1}} + N^{-1} )c
	\leq
	\tfrac{c}{\sqrt{t+1}},
$
where, in the last inequality, we used $ t \leq TN^{2} $. We were able to take $u\mapsto 1$ in our application of \eqref{eq:hkgd:sum} because we are on the event $\Omega':=\Omega(u,v,T,\Lambda,N)\cap\Omega(1,v,T,\Lambda,N)$.
Given this bound, applying Cauchy--Schwarz inequality we have
\begin{align}
	\label{eq:quadmg:Dbd:}
	N \ExO\big[\,|D(x)D(x-1)|\,\big]
	\leq
	N\normO{D(x)}{2} \normO{D(x-1)}{2}
	\leq
	\tfrac{cN}{t+1}.
\end{align}

Next we turn to bounding $ \ExO[\,|F_1(x)|\,] $.
First, given the decomposition $ \sgmg =\sgmg_{\hk,\hk}+\sgmg_{\hk,\sgr}+\sgmg_{\sgr,\hk}+\sgmg_{\sgr,\sgr} $,
we write $ F_1(x) = F_{11}(x)+F_{12}(x) $, where
\begin{align*}
	F_{11}(x)
	&=
	\int_{0}^{t} \sum_{\tilx\in\T}\big(\sgmg_{\hk,\sgr}+\sgmg_{\sgr,\hk}+\sgmg_{\sgr,\sgr}\big)(t-t';x,\tilx) \ExO[ Z^2(s+t',\tilx)|\filZ(s)] dt',
\\
	F_{12}(x)
	&=
	\int_{0}^{t} \sum_{\tilx\in\T}\sgmg_{\hk,\hk}(t-t';x,\tilx) \ExO[ Z^2(s+t',\tilx)|\filZ(s)] dt'.
\end{align*}
For $ F_{11}(x) $, we use bounds from Lemma~\ref{lem:sgkey}\ref{lem:sgrkey=0}
and moment bounds from Proposition~\ref{prop:mom} to get $ \ExO[\,|F_{11}(x)|\,] \leq cN^{-(u\wedge v)}\log(N+1) $.
As for $ F_{12}(x) $, we further decompose $ F_{12}(x)=F_{121}(x)+F_{122}(x) $,
where
\begin{align*}
	F_{121}(x)
	&=
	\ExO[ Z^2(s+t,x)|\filZ(s)] \int_{0}^{t} \sum_{\tilx\in\T}\sgmg_{\hk,\hk}(t-t';x,\tilx) dt',
\\
	F_{122}(x)
	&=
	\int_{0}^{t} \sum_{\tilx\in\T}\sgmg_{\hk,\hk}(t-t';x,\tilx) \ExO[ Z^2(s+t',\tilx)-Z^2(s+t,x)|\filZ(s)] dt'.
\end{align*}
For $ F_{121}(x) $, taking $ \ExO[\,|\,\Cdot\,|\,] $ and using the moment bound on $ \normO{Z(s+t,\tilx)}{2} $ from Proposition~\ref{prop:mom},
followed by using Lemma~\ref{lem:sgkey}\ref{lem:sgkey=0}, we have $ \ExO[\,|F_{121}(x)|\,] \leq \frac{c}{\sqrt{t+1}} $.
As for $ F_{122}(x) $, write
\begin{align*}
	|Z^2(s+t',\tilx)-Z^2(s+t,x)|
	\leq
	\big(Z(s+t',\tilx)+Z(s+t,x)\big) \big( |Z(s+t',\tilx)-Z(s+t,\tilx)| + |Z(s+t,\tilx)-Z(s+t,x)|	\big).
\end{align*}
Set $ \alpha=\frac{u}{2}\wedge\uic\wedge v $ to simplify notation.
Using the moment bounds from Proposition~\ref{prop:mom}, here we have
\begin{align*}
	\ExO\big[\,|F_{122}(x)|\,\big]
	\leq
	c \int_{0}^{t} \sum_{\tilx\in\T}|\sgmg_{\hk,\hk}(t-t';x,\tilx)|
	\Big(
		\Big(\frac{\dist_\T(x,\tilx)}{N}\Big)^{\alpha}
		+
		\Big(\frac{|t-t'|\vee 1}{N^2}\Big)^{\frac{\alpha}{2}}
	\Big)
	dt'.
\end{align*}
Further using the bounds~\eqref{eq:hkgd}--\eqref{eq:hkgd:sum} (with $ x'\mapsto x-1 $ and $ u\mapsto 1 $)
and \eqref{eq:hk:hold:} (with $ u\mapsto \alpha $) gives
\begin{align*}
	\ExO\big[\,|F_{122}(x)|\,\big]
	\leq
	c \int_{0}^{t}
	\frac{1}{(t-t'+1)}
	\frac{N^{-\alpha}}{(t-t'+1)^{(1-\alpha)/2}}
	dt'
	\leq
	c N^{-\alpha}
	=c N^{-(\frac{u}2\wedge\uic\wedge v)}.
\end{align*}
Collecting the preceding bounds on the $ F $'s, we conclude
\begin{align}
	\label{eq:quadmg:F1bd}
	\ExO\big[\,|F_1(x)|\,\big]
	\leq
	\frac{c}{\sqrt{t+1}} + c N^{-(\frac{u}2\wedge\uic\wedge v)} \log(N+1).
\end{align}

As for $ F_3(x) $.
Recall that $ \bdd(t,x) $ denotes a (generic) uniformly bounded process.
Taking $ \ExO[\,|\,\Cdot\,|\,] $ in~\eqref{eq:quadmg:F3} and using the moment bounds from Proposition~\ref{prop:mom}
and using Lemma~\ref{lem:sgkey}\ref{lem:sgkey<1},
we have
\begin{align}
	\label{eq:quadmg:F3bd}
	\ExO\big[\,|F_3(x)|\,\big]
	\leq
	cN^{-(\frac12\wedge\urt)}.
\end{align}

Inserting~\eqref{eq:quadmg:Dbd:}--\eqref{eq:quadmg:F3bd} into~\eqref{eq:quadmg:iter} completes the proof.
\end{proof}

We now establish the required decorelation estimate on $ W $.

\begin{proposition}
\label{prop:selfav}
Let $ u,v,\Lambda,T, \Omega' $, $ \ExO[\,\Cdot\,] $ be as in Lemma~\ref{lem:selfav:}.
There exists $ c=c(u,v,\Lambda,T) $ such that, for all $ 	s,t\in[0,N^2T]$ and $ x\in\T $,
\begin{align}
	\label{eq:selfav:goal}
	\ExO\Big[\,\big|\ExO\big[ W(t+s,x) \big| \filZ(s) \big]\big| \, \Big]
	\leq
	c\, \Big( N^{-(\frac{u}{2}\wedge\uic\wedge v)}\log(N+1) + \tfrac{1}{\sqrt{t+1}} + \tfrac{N}{t+1} \Big).
\end{align}
\end{proposition}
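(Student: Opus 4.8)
The plan is to close the integral inequality established in Lemma~\ref{lem:selfav:}. Write $ g(t):=\sup_{x\in\T}\ExO[\,|\ExO[ W(t+s,x) \,|\, \filZ(s) ]|\,] $, which is finite uniformly in $ t\in[0,N^2T] $ thanks to the a priori bound $ |W(t,x)|\le cZ^2(t,x) $ from~\eqref{eq:Wapbd} together with the moment bounds of Proposition~\ref{prop:mom} (so $ g(t)\le c_0 $ for some deterministic $ c_0=c_0(u,v,\Lambda,T) $). Lemma~\ref{lem:selfav:} then reads
\begin{align}
	\label{eqpf:selfav:iter}
	g(t) \le A(t) + \int_0^t \sgmgg_{\sg,\sg}(t-t';x)\, g(t')\, dt',
	\qquad
	A(t) := c\big( N^{-(\frac{u}2\wedge\uic\wedge v)}\log(N+1) + \tfrac{1}{\sqrt{t+1}} + \tfrac{N}{t+1} \big),
\end{align}
where I have used that $ \sum_{\tilx} |\sgmg_{\sg,\sg}(t-t';x,\tilx)| = \sgmgg_{\sg,\sg}(t-t';x) $ and bounded $ \ExO[\,|\ExO[W(t'+s,\tilx)|\filZ(s)]|\,]\le g(t') $ uniformly in $ \tilx $. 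The aim is to upgrade~\eqref{eqpf:selfav:iter} into the bound $ g(t)\le cA(t) $ (up to enlarging $ c $ and absorbing logarithms), which is exactly~\eqref{eq:selfav:goal}.

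The mechanism is a contraction-style iteration powered by Lemma~\ref{lem:sgkey}\ref{lem:sgkey<1}: there is a universal $ \beta<1 $ and $ N_0 $ such that $ \int_0^{N^2T}\sgmgg_{\sg,\sg}(s;x)\,ds\le\beta $ for all $ N\ge N_0 $ on the event $ \Omega' $. First I would iterate~\eqref{eqpf:selfav:iter} once: substituting the inequality into itself and using Fubini gives
\begin{align}
	\label{eqpf:selfav:iter2}
	g(t) \le A(t) + \int_0^t \sgmgg_{\sg,\sg}(t-t';x)A(t')\,dt' + \int_0^t \Big( \int_{t''}^{t}\sgmgg_{\sg,\sg}(t-t';x)\sgmgg_{\sg,\sg}(t'-t'';x)\,dt'\Big) g(t'')\,dt''.
\end{align}
Iterating $ m $ times produces a finite sum of convolution terms against $ A $, plus a remainder term whose kernel is the $ m $-fold convolution power of $ \sgmgg_{\sg,\sg} $ integrated against $ g $; since $ g\le c_0 $ and the total mass of each convolution factor is at most $ \beta $, the remainder is bounded by $ c_0\beta^{m}\to 0 $ as $ m\to\infty $ (for fixed $ N\ge N_0 $). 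Hence $ g(t)\le \sum_{n\ge 0} (\sgmgg_{\sg,\sg})^{*n} * A\,(t) $, a convergent geometric-type series. It remains to show each convolution of $ A $ against $ \sgmgg_{\sg,\sg} $ is again $ \lesssim A $ up to constants. For the $ \frac{N}{t+1} $ and $ \frac{1}{\sqrt{t+1}} $ pieces of $ A $, I would use the tail decay $ \sgmgg_{\sg,\sg}(s;x)\le c(1+s)^{-(u+\frac12)} $ from Lemma~\ref{lem:sgkey}\ref{lem:sgrkeytail} (with $ u>\frac12 $, so the exponent exceeds $ 1 $), splitting each convolution integral $ \int_0^t\sgmgg_{\sg,\sg}(t-t';x)\,(1+t')^{-a}\,dt' $ at $ t/2 $: on $ [0,t/2] $ the factor $ (1+t')^{-a} $ dominates and the $ \sgmgg $-integral contributes $ O(1) $ mass, while on $ [t/2,t] $ the $ \sgmgg $-decay gives $ (1+t)^{-(u-\frac12)} $ times the integrable mass of $ (1+t')^{-a} $ — in both regimes the output is $ \le c\,(1+t)^{-a'} $ for $ a'\ge a\wedge(u-\tfrac12) $; one checks $ a=1 $ and $ a=\tfrac12 $ are stable (possibly with an extra logarithm, harmlessly absorbed). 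For the constant piece $ N^{-\alpha}\log(N+1) $ (with $ \alpha=\frac{u}2\wedge\uic\wedge v $), convolving against $ \sgmgg_{\sg,\sg} $ just multiplies by the total mass $ \le\beta<1 $, so summing the geometric series keeps it $ \le c N^{-\alpha}\log(N+1) $.

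The main obstacle is organizing the iteration so that the geometric factor $ \beta^m $ genuinely kills the remainder uniformly: one must be careful that the a priori bound $ g\le c_0 $ is uniform over $ t\in[0,N^2T] $ and that the constant $ c_0 $ does not itself blow up with $ N $ — this is guaranteed by Proposition~\ref{prop:mom} on the event $ \Omega' $, and by the fact that $ \Omega'\subset\Omega(u,v,\Lambda,T,N)\cap\Omega(1,v,\Lambda,T,N) $ so that all of Lemma~\ref{lem:sgkey} applies simultaneously. A secondary technical point is the bookkeeping of the convolution estimates for the inhomogeneous-in-$ x $ kernel $ \sgmgg_{\sg,\sg}(\cdot;x) $: the decay bound in Lemma~\ref{lem:sgkey}\ref{lem:sgrkeytail} and the total-mass bound in \ref{lem:sgkey<1} both hold uniformly in $ x\in\T $ on $ \Omega' $, so the final bound on $ g(t) $ is uniform in $ x $ as required, and~\eqref{eq:selfav:goal} follows. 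One should also note that for $ N<N_0 $ the claimed bound is vacuous after enlarging $ c $ (since $ g\le c_0 $ and $ N^{-\alpha}\log(N+1)\ge c' $ for bounded $ N $), so restricting to $ N\ge N_0 $ costs nothing.
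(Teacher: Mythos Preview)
Your overall strategy matches the paper's: iterate the integral inequality of Lemma~\ref{lem:selfav:}, use the a~priori bound $|W|\le cZ^2$ together with Proposition~\ref{prop:mom} to kill the remainder via $\beta^m\to 0$ (Lemma~\ref{lem:sgkey}\ref{lem:sgkey<1}), and then control the series $\sum_{n\ge 0}(\sgmgg_{\sg,\sg})^{*n}*A$. The handling of the constant piece $N^{-\alpha}\log(N+1)$ is fine, since convolving against $\sgmgg_{\sg,\sg}$ multiplies by at most $\beta<1$.

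There is, however, a genuine gap in your treatment of the decaying pieces $\tfrac{1}{\sqrt{t+1}}$ and $\tfrac{N}{t+1}$. Your split-at-$t/2$ argument yields a \emph{single-step} bound $\sgmgg_{\sg,\sg}*(1+t)^{-a}\le C(1+t)^{-a}$, but the constant $C$ you obtain is not $\beta$: the piece over $[t/2,t]$ already gives $2^a\beta(1+t)^{-a}$, and the piece over $[0,t/2]$ contributes an additional $c(1+t)^{-a}$ with $c=c(u,v,\Lambda,T)$ coming from Lemma~\ref{lem:sgkey}\ref{lem:sgrkeytail}. Iterating this $n$ times gives $(\sgmgg_{\sg,\sg})^{*n}*(1+t)^{-a}\le C^n(1+t)^{-a}$, and since nothing forces $C<1$, the series $\sum_n C^n$ may diverge. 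The phrase ``summing the geometric-type series'' hides exactly this issue.

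The paper closes this gap by analyzing the $n$-fold convolution directly as a simplex integral and applying a pigeonhole argument (attributed to \cite{labbe17}): writing $(\sgmgg_{\sg,\sg})^{*n}*(1+\cdot)^{-a}(t)=\int_{\Sigma_n(t)}\prod_{i=1}^n\sgmgg_{\sg,\sg}(s_i)\,(1+s_0)^{-a}\,d^n\vec{s}$, one splits according to whether $s_0>\tfrac{t}{n+1}$ (in which case $(1+s_0)^{-a}\le((n+1)/(t+1))^{a}$ and the remaining integral gives $\beta^n$) or $s_0\le\tfrac{t}{n+1}$ (in which case some $s_i>\tfrac{t}{n+1}$, and Lemma~\ref{lem:sgkey}\ref{lem:sgrkeytail} gives $\sgmgg_{\sg,\sg}(s_i)\le c((n+1)/(t+1))^{u+1/2}$, yielding a bound of the form $c\,n(n+1)^{u+1/2}\beta^{n-1}(1+t)^{-u}$ after integrating). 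Both pieces are summable in $n$ (polynomial growth times $\beta^n$), and since $u>\tfrac12$ the second piece is dominated by the target decay. This $n$-dependent splitting, not a one-step stability estimate, is the missing ingredient.
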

\begin{proof}
Through the proof, we write $ c=c(u,v,T,\Lambda) $ to simplify notation, and assume $ t\in[0,N^2T] $.
For fixed $ s \in[0,N^2T] $, set $ w(t) := \sup_{x\in\T} \ExO[\,|\ExO[ W(t+s,x) \big| \filZ(s) ]|\,] $,
which is the quantity we aim to bound,
and consider also $ w(t,x) := \ExO[\,|\ExO[ W(t+s,x) \big| \filZ(s) ]|\,] $.
Taking supremum over $ x\in\T $ in~\eqref{eq:decorIter} gives
\begin{align*}
	w(t,x)
	\leq
	c \Big( N^{-(\frac{u}2\wedge\uic\wedge v)}\log(N+1) + \tfrac{1}{\sqrt{t+1}} + \tfrac{N}{t+1} \Big)
	+
	\int_0^t
	\sgmgg_{\sg,\sg}(t-t';x) w(t') dt'.
\end{align*}
Iterating this inequality gives
\begin{align}
	\label{eq:decorIter:}
	w(t,x)
	\leq
	c \Big( N^{-(\frac{u}2\wedge\uic\wedge v)} + \tfrac{1}{\sqrt{t+1}} + \tfrac{N}{t+1} \Big)
	+
	\sum_{n=1}^\infty \big(w_{1,n}(t,x) + w_{2,n}(t,x) + w_{3,n}(t,x)\big),
\end{align}
where, with the notation $ \Sigma_n(t) $ from~\eqref{eq:Sigman} and $ d^n\vec{s} $ from~\eqref{eq:ds}, we have
\begin{align*}
	w_{i,n}(t,x)
	:=
	\int_{\Sigma_n(t)}
	\Big(\prod_{i=1}^n\sgmgg_{\sg,\sg}(s_i;x) \Big)
	\cdot
	\left\{\begin{array}{l@{,}l}
		N^{-(\frac{u}2\wedge\uic\wedge v)}\log(N+1)	& \text{ for } i=1
		\\
		\frac{1}{\sqrt{s_0+1}}	& \text{ for } i=2
		\\
		\frac{N}{s_0+1} 			& \text{ for } i=3
	\end{array}
	\right\}
	\cdot
	d^n\vec{s}.
\end{align*}
Let $ \beta:= \sup_{x\in\T} \int_0^{N^2T} \sgmgg_{\sg,\sg}(t,x) dt $,
which, by Lemma~\ref{lem:sgkey}\ref{lem:sgkey<1}, is strictly less than $ 1 $ (uniformly in $ N $).
For $ w_{1,n}(t,x) $, noting that the integral does not involve the variable $ s_0 $, we bound
\begin{align}
\label{eq:w1bd}
\begin{split}	
	\sum_{n=1}^\infty w_{1,n}(t,x)
	&\leq
	N^{-(\frac{u}2\wedge\uic\wedge v)} \log(N+1)
	\sum_{n=1}^\infty
	\int_{[0,N^2T]^n}
	\prod_{i=1}^n\sgmgg_{\sg,\sg}(s_i;x) ds_i
\\
	&=
	N^{-(\frac{u}2\wedge\uic\wedge v)} \log(N+1)
	\tfrac{\beta}{1-\beta}
	=
	c N^{-(\frac{u}2\wedge\uic\wedge v)}\log(N+1).
\end{split}
\end{align}

To bound $ w_{2,n} $ and $ w_{3,n} $, we invoke the argument from \cite[Proof of Proposition~3.8]{labbe17}.
We begin with $ w_{2,n} $.
Split $ w_{2,n}(t,x) $ into integrals over $ \Sigma_{n}(t)\cap\set{s_0>\frac{t}{n+1}} $
and over $ \Sigma_{n}(t)\cap\set{s_0\leq\frac{t}{n+1}} $, i.e., $ w_{2,n}(t,x)=w'_{2,n}(t,x)+w''_{2,n}(t,x) $, where
\begin{align*}
	w'_{2,n}(t,x)
	:=
	\hspace{-10pt}
	\int\limits_{\Sigma_n(t)\cap\set{s_0>\frac{t}{n+1}}}
	\hspace{-10pt}
	\Big(\prod_{i=1}^n\sgmgg_{\sg,\sg}(s_i;x) \Big)
	\cdot
	\frac{1}{\sqrt{s_0+1}}
	d^n\vec{s},&
	&
	w''_{i,n}(t,x)
	:=
	\hspace{-10pt}
	\int\limits_{\Sigma_n(t)\cap\set{s_0\leq\frac{t}{n+1}}}
	\hspace{-10pt}
	\Big(\prod_{i=1}^n\sgmgg_{\sg,\sg}(s_i;x) \Big)
	\cdot
	\frac{1}{\sqrt{s_0+1}}
	\cdot
	d^n\vec{s}.
\end{align*}
For $ w'_{i,n} $, we bound $ \frac{1}{\sqrt{s_0+1}} $ by $ c(\frac{n+1}{t+1})^{1/2} $.
Doing so releases the $ s_0 $ variable from the integration, yielding
\begin{align}
	\label{eq:w'}
	w'_{i,n}(t,x)
	\leq
	c
	\Big(\frac{n+1}{t+1}\Big)^{1/2}	
	\int_{[0,N^2T]^n}
	\Big(\prod_{i=1}^n\sgmgg_{\sg,\sg}(s_1;x) \Big)
	d^n\vec{s}
	=
	c n \beta^n \Big(\frac{n+1}{t+1}\Big)^{1/2}.	
\end{align}
As for $ w''_{i,n} $,
we note that the integration domain is necessarily a subset of $ \Sigma_n(t)\cup_{i=1}^n\set{s_i>\frac{t}{n+1}} $.
At each encounter of $ s_i>\frac{t}{n+1} $, we invoke the bound from Lemma~\ref{lem:sgkey}\ref{lem:sgrkeytail}.
This gives
\begin{align}
	\label{eq:w''}
	w''_{i,n}(t,x)
	\leq
	c\sum_{i=1}^n
	\Big( \frac{n+1}{t+1} \Big)^{u+\frac12}
	\int_{\Sigma_n(t)}
	\Big(\prod_{i'\in\set{1,\ldots,n}\setminus\set{i}}\sgmgg_{\sg,\sg}(s_{i'};x) \Big)
	\frac{1}{\sqrt{s_0+1}}
	d^n\vec{s}.
\end{align}
For each $ i=1,\ldots,n $, the integral in~\eqref{eq:w''} does not involve the variable $ s_i $.
We then bound
\begin{align}
	\label{eq:w'':}
	w''_{i,n}(t,x)
	\leq
	c
	\sum_{i=1}^n
	\Big( \frac{n+1}{t+1} \Big)^{u+\frac12}
	\int_{[0,t]^n}
	\Big(\prod_{i'\in\set{1,\ldots,n}\setminus\set{i}}\sgmgg_{\sg,\sg}(s_{i'};x) ds_i \Big)
	\frac{1}{\sqrt{s_0+1}} ds_0
	\leq
	\frac{c(n+1)^{u+\frac12}\beta^{n-1}}{(t+1)^{u}}.
\end{align}
Combine~\eqref{eq:w'} and~\eqref{eq:w'':}, and sum the result over $ n\geq 1 $.
With $ \beta<1 $ and $ u>\frac12 $, we conclude
\begin{align}
	\label{eq:w2bd}
	\sum_{n=1}^\infty w_{2,n}(t,x)
	\leq
	\frac{c}{\sqrt{t+1}}.
\end{align}
As for $ w_{3,n}(t) $, the same calculations as in the preceding gives
\begin{align*}
	w_{3,n}(t,x)
	\leq
	cN\cdot
	\Big(
		n \beta^n \frac{n+1}{t+1}
		+
		\frac{c(n+1)^{u+\frac12}\beta^{n-1}}{(t+1)^{u+\frac12}} \log(t+2)
	\Big)
	\leq
	cN\cdot\frac{(n+1)^{u+\frac12} \beta^{n-1}}{t+1},
\end{align*}
where the factor $ \log(t+2) $ arises from integrating $ \frac{1}{s_0+1} $,
and the second inequality follows since $ u>\frac12 $.
Summing over $ n\geq 1 $, with $ \beta<1 $, we have
\begin{align}
	\label{eq:w3bd}
	\sum_{n=1}^\infty w_{3,n}(t,x)
	\leq
	\frac{cN}{t+1}.
\end{align}

Inserting~\eqref{eq:w1bd}, \eqref{eq:w2bd}--\eqref{eq:w3bd} into~\eqref{eq:decorIter:} completes the proof.
\end{proof}

Having established the decorrelation estimate in Proposition~\ref{prop:selfav},
we continue to prove that $ \limZ $ solves the quadratic martingale problem~\eqref{eq:quadmg:}.
Recall the definition of $ \mg_n(t) $ from~\eqref{eq:mgn}.
Consider the discrete analog $ \mgg_{n,n'}(t) $ of $ \Mgg_{n,n'}(t) $ (defined in~\eqref{eq:quadmg:}):
\begin{align*}
	\mgg_{n,n'}(t)
	:=
	\mg_n(t)\mg_{n'}(t)
	-
	\int_0^t \langle \eigf_{n} \eigf_{n'}, \scZ^2_N(s) \rangle_N ds.
\end{align*}
Recall from \eqref{eq:mgcnvg} that $ \mg_n $ converges in $ C[0,T] $ to $ \Mg_n $ in probability.
Also, from~\eqref{eq:Zcnvg},
$
	\int_0^t \langle \eigf_{n} \eigf_{n'}, \scZ^2_N(s) \rangle_N ds
$
converges in $ C[0,T] $ to its continuum counterpart
$
	\int_0^t \langle \eigf_{n} \eigf_{n'}, \limZ^2(s) \rangle ds
$
in probability.
Consequently, $ \mgg_{n,n'} $ converges in $ C[0,T] $ to $ \Mgg_{n,n'} $ in probability.

On the other hand, we know that
$
	\mgg'_{n,n'}(t)
	:=
	\mg_n(t)\mg_{n'}(t) - \langle\mg_n,\mg_{n'} \rangle (t)
$
is a martingale, and, given the expansion~\eqref{eq:qvv},
we have
$	
	\mgg_{n,n'}(t) - \mgg'_{n,n'}(t) = \mgg_1(t) + \mgg_2(t).
$
In the following we will show that $ \mgg_1 $ and $ \mgg_2 $ converges in $ C[0,T] $ to zero in probability.
Given this, \eqref{eq:mgcnvg}, and the fact that $ \mgg'_{n,n'}(t) $ is a martingale,
it then follows that \eqref{eq:quadmg:} is a local martingale.

It now remains only to show that $ \mgg_1 $ and $ \mgg_2 $ converges in $ C[0,T] $ to zero in probability.
Given the moment bounds Proposition~\ref{prop:mom},
it is not hard to check that $ \mgg_1, \mgg_2 $ is tight in $ C[0,T] $.
This being the case, it suffices to establish one point convergence:
\begin{lemma}
For a fixed $ t\in\R_{\geq 0} $, we have that $ \mgg_1(t),\mgg_2(t) \to_\text{P} 0 $.
\end{lemma}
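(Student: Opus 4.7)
The pointwise convergence splits cleanly by the nature of the two terms: $\mgg_1$ is small because its integrand carries the explicit small factor $\rt(x)+N^{-\frac12}$, while $\mgg_2$ requires the decorrelation estimate of Proposition~\ref{prop:selfav}. The plan is to handle each term at the level of first and second moments respectively, restricted to the good event $\Omega':=\Omega(u,v,T,\Lambda,N)\cap\Omega(1,v,T,\Lambda,N)$ from Lemma~\ref{lem:selfav:}, and then use $\Pr[\Omega']\to 1$ to transfer back to unconditional convergence in probability.

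For $\mgg_1(t)$, the idea is to bound $\ExO[|\mgg_1(t)|]$ directly. Under $\Omega'$, the bound $|\rt(x)|\leq \Lambda N^{-\urt}$ from~\eqref{eq:rtbd} gives $|\rt(x)+N^{-\frac12}|\leq c(N^{-\urt}+N^{-\frac12})$. Since $\eigf_n,\eigf_{n'}\in H^1(\limT)\subset L^\infty(\limT)$ (by~\eqref{fn:LinfinH1}), $\bdd$ is uniformly bounded, and $\ExO[Z^2(s,x)]\leq c$ by Proposition~\ref{prop:mom}, one obtains
\begin{align*}
	\ExO[|\mgg_1(t)|]\leq \frac{1}{N^3}\cdot N\cdot N^2 t\cdot c\cdot (N^{-\urt}+N^{-\frac12})\longrightarrow 0.
\end{align*}
Markov plus $\Pr[\Omega']\to 1$ then delivers $\mgg_1(t)\to_\text{P} 0$.

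For $\mgg_2(t)$, the plan is to control the second moment $\ExO[\mgg_2(t)^2]$. Expanding the square produces a double sum $\sum_{x_1,x_2}$ and a double integral $\iint_{[0,N^2t]^2}$; by symmetry it suffices to treat $s_1\leq s_2$. Write $\ExO[W(s_1,x_1)W(s_2,x_2)]=\ExO[W(s_1,x_1)\ExO[W(s_2,x_2)|\filZ(s_1)]]$ and apply Cauchy--Schwarz. The factor $\|W(s_1,x_1)\|_2$ is $\leq c$ via $|W|\leq cZ^2$ and Proposition~\ref{prop:mom}. For $\|\ExO[W(s_2,x_2)|\filZ(s_1)]\|_2$, all higher-$L^p$ norms are $\leq c$ (Jensen and moments), while Proposition~\ref{prop:selfav} gives the $L^1$ bound $B(s_2-s_1):=c(N^{-\alpha}\log(N+1)+(s_2-s_1+1)^{-\frac12}+N(s_2-s_1+1)^{-1})$, with $\alpha=\tfrac{u}{2}\wedge\uic\wedge v$; the log-convexity interpolation $\|X\|_2\leq \|X\|_1^{1/3}\|X\|_4^{2/3}$ then yields $\|\ExO[W(s_2,x_2)|\filZ(s_1)]\|_2\leq c\,B(s_2-s_1)^{1/3}$.

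Inserting into $\ExO[\mgg_2(t)^2]$, the overall prefactor is $N^{-6}\cdot N^2=N^{-4}$ after summing the two $\eigf_n\eigf_{n'}$'s in $x_1,x_2$ (both bounded by~\eqref{fn:LinfinH1}). Substituting $\tau=s_2-s_1$ and performing the time integrals, each of the three contributions from $B(\tau)^{1/3}$ must be checked to beat $N^4$: the $N^{-\alpha/3}\log^{1/3}$ term contributes $t^2 N^{-\alpha/3}\log^{1/3}(N+1)\to 0$; the $\tau^{-1/6}$ term contributes at most $(N^2t)^{11/6}N^{-4}=t^{11/6}N^{-1/3}\to 0$; and the $N^{1/3}\tau^{-1/3}$ term contributes at most $N^{1/3}(N^2t)^{5/3}N^{-4}=t^{5/3}N^{-1/3}\to 0$. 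Hence $\ExO[\mgg_2(t)^2]\to 0$, and another round of Markov plus $\Pr[\Omega']\to 1$ gives $\mgg_2(t)\to_\text{P} 0$.

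The only nontrivial step is the interpolation argument, since Proposition~\ref{prop:selfav} is an $L^1$-level decorrelation while the $L^2$ computation of $\ExO[\mgg_2(t)^2]$ naturally asks for $L^2$-control. That the available $\alpha$-gain in the $L^1$ bound is more than enough to absorb the $1/3$-power loss, once combined with uniform higher moments of $W$, is what makes the argument go through.
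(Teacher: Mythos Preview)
Your argument is correct. For $\mgg_1(t)$ you follow essentially the paper's approach: bound $|\rt(x)+N^{-1/2}|$ by a negative power of $N$, use the moment bounds on $Z$ from Proposition~\ref{prop:mom}, and conclude via Markov's inequality together with $\Pr[(\Omega')^c]\to 0$. The paper additionally localizes to the event $\Gamma=\{\norm{\eigf_n}_{H^1}\norm{\eigf_{n'}}_{H^1}\leq\Lambda\}$ to keep track of the dependence on the eigenfunctions, but under the convention of Remark~\ref{rmk:Rtfixed} that $\Rtlim$ is deterministic this is not strictly necessary, and your absorption of $\norm{\eigf_n}_{L^\infty}$ into the constant is legitimate.

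For $\mgg_2(t)$ you take a genuinely different route. The paper bounds $\Exrt[W(s_1,x_1)W(s_2,x_2)\ind_{\Omega'}]$ by a \emph{truncation} argument: split at a threshold $|W(s_1,x_1)|\lessgtr\kappa$, use Proposition~\ref{prop:selfav} on the bounded piece to gain the factor $\kappa\cdot B(s_2-s_1)$, use the fourth-moment bound on $Z$ for the tail piece to gain $\kappa^{-1}$, and then optimize in $\kappa$. You instead use the log-convexity interpolation $\normO{X}{2}\leq\normO{X}{1}^{1/3}\normO{X}{4}^{2/3}$ applied to $X=\ExO[W(s_2,x_2)\mid\filZ(s_1)]$, combining the $L^1$ decorrelation from Proposition~\ref{prop:selfav} with the uniform $L^4$ bound coming from $|W|\leq cZ^2$ and Proposition~\ref{prop:mom}. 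The two devices are morally equivalent (truncation is a poor man's interpolation), but your version avoids the optimization step and the arithmetic with $B(\tau)^{1/3}$ is clean; the paper's version is more elementary in that it never invokes $L^p$ interpolation. Either way, the key input is the same: Proposition~\ref{prop:selfav} furnishes enough decay in $s_2-s_1$ and enough gain in $N$ to beat the $N^4$ growth of the double integral.
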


\begin{proof}
Fixing $ u\in(0,1) $, $ v\in(0,\uic) $, $ t\in[0,T] $, $ \Lambda<\infty $,
throughout this proof we write
$ \Omega=\Omega(u,v,\Lambda,T,N) $,
$ \Omega'=\Omega(u,v,\Lambda,T,N)\cap\Omega(1,v,\Lambda,T,N) $,
$ c=c(u,v,T,\Lambda) $, and $ \Exrt[\,\Cdot\,]:= \Ex[\,\Cdot\,|\rt(x),x\in\T] $.

We begin with $ \mgg_1 $.
Recall that $ \bdd $ denotes a generic uniformly bounded process.
By~\eqref{eq:rtbd}, $ \norm{\rt}_{L^\infty(\T)} \leq N^{-\urt} $.
Given this, taking $ \Exrt[|\,\Cdot\ind_{\Omega}\,|] $ in~\eqref{eq:qdmg:qvv1}
using the moment bound on $ Z(t,x) $ from Proposition~\ref{prop:mom}
and using $ \norm{\eigf_n}_{L^\infty(\limT)} \leq c\norm{\eigf_n}_{H^1(\limT)} $ (from~\ref{fn:LinfinH1}),
we have
\begin{align*}
	\Exrt[| \mgg_1(t)\ind_{\Omega}|]
	:=
	\Ex\big[\,| \mgg_1(t) |\ind_{\Omega}\,\big| \rt(x) \big]	
	\leq
	c
	N^{-(\frac12\wedge \urt)}\,\norm{\eigf_n}_{H^1(\limT)}\norm{\eigf_{n'}}_{H^1(\limT)}.
\end{align*}
Set $ \Gamma = \Gamma(n,n',\Lambda):= \set{\,\norm{\eigf_n}_{H^1(\limT)}\norm{\eigf_{n'}}_{H^1(\limT)} \leq \Lambda} $.
Multiply both sides by $ \ind_\Gamma $, and take $ \Ex[\,\Cdot\,] $ on both sides to get
$
	\Ex[\,| \mgg_1(t) |\ind_{\Omega}\ind_{\Gamma}\,]	
	\leq
	c
	N^{-(\frac12\wedge \urt)}.
$
This gives $ | \mgg_1(t) |\ind_{\Omega}\ind_{\Gamma} \to_\text{P} 0 $ as $ N\to\infty $.
More explicitly, writing $ \mgg_1(t)=\mgg_1(t;N) $, $ \Omega=\Omega(u,v,T,\Lambda,N) $, and $ \Gamma=\Gamma(\Lambda) $,
we have, for each fixed $ \e>0 $,
\begin{align}
	\label{eq:mgg1to0}
	\lim_{N\to\infty} \Pr\big[ \set{|\mgg_1(t;N)|>\e}\cap\Omega(u,v,T,\Lambda,N)\cap\Gamma(\Lambda) \big] =0.
\end{align}
Indeed, with $ n,n' $ being fixed, we have
\begin{align}
	\label{eq:Gammato1}
	\lim_{\Lambda\to\infty}
	\Pr[\,\Gamma(\Lambda)^\text{c}\,] = \Pr[\,\norm{\eigf_n}_{H^1(\limT)}\norm{\eigf_{n'}}_{H^1(\limT)} > \Lambda\,] = 0.
\end{align}
Also, Proposition~\ref{prop:sg} asserts that
\begin{align}
	\label{eq:iterlimit}
	\limsup_{\Lambda\to\infty}
	\limsup_{N\to\infty}\Pr[\Omega(u,v,T,\Lambda,N)^\text{c}]=0.
\end{align}
Use the union bound to write
\begin{align*}
	\Pr\big[ |\mgg_1(t;N)|>\e \big]
	\leq
	\Pr\big[ \set{|\mgg_1(t;N)|>\e}\cap\Omega(u,v,T,\Lambda,N)\cap\Gamma(\Lambda) \big]
	+
	\Pr[\Omega(u,v,T,\Lambda,N)^\text{c}]
	+
	\Pr[\,\Gamma(\Lambda)^\text{c}\,],
\end{align*}
and send $ N\to\infty $ and $ \Lambda\to\infty $ \emph{in order} on both sides.
With the aid of \eqref{eq:mgg1to0}--\eqref{eq:iterlimit}, we conclude that\\
$ \lim_{N\to\infty} \Pr[ |\mgg_1(t;N)|>\e ] = 0 $, for each $ \e >0 $.
That is, $ \mgg_1(t;N) \to_\text{P} 0 $, as $ N\to\infty $.

Turning to $ \mgg_2 $, in~\eqref{eq:qdmg:qvv2},
we take $ \ExO[(\,\Cdot\,)^2\ind_{\Omega'}] $ on both sides to get
\begin{align*}
	\Exrt[(\mgg_2(t))^2\ind_{\Omega'}]
	\leq
	\big( \norm{\eigf_n}_{H^1(\limT)}\norm{\eigf_{n'}}_{H^1(\limT)} \big)^2
	\frac{2}{N^4}\int_{s_1<s_2\in[0,N^2t]^2} \frac{1}{N^2} \sum_{x_1,x_2\in\T} |\Exrt[ W(s_2,x_2) W(s_1,x_1)\ind_{\Omega'} ]| ds_1ds_2.
\end{align*}
Multiplying both sides by $ \ind_{\Gamma} $,
we replace $ (\norm{\eigf_n}_{H^1(\limT)}\norm{\eigf_{n'}}_{H^1(\limT)})^2 $ with $ \Lambda^2=c $ on the r.h.s.\ to get
\begin{align}
	\label{eq:mgg2:bd}
	\Exrt[(\mgg_2(t))^2\ind_{\Omega'}\ind_\Gamma]
	\leq
	\frac{c}{N^4}\int_{s_1<s_2\in[0,N^2t]} \frac{1}{N^2} \sum_{x_1,x_2\in\T} |\Exrt[ W(s_2,x_2) W(s_1,x_1)\ind_{\Omega'} ]| ds_1ds_2.
\end{align}

To bound the expectation on the r.h.s.\ of~\eqref{eq:mgg2:bd},
we fix a threshold $ \kappa>0 $, and split the expectation into
$ \Exrt[ W(s_2,x_2) W(s_1,x_1)\ind_{\Omega'} ] = f_1+ f_2 $, where
\begin{align*}
	f_1:=
	\Exrt[ W(s_2,x_2) W(s_1,x_1)\ind_{\Omega'}\ind_{|W(s_1,x_1)|\leq\kappa} ],
	\qquad
	f_2:=
	\Exrt[ W(s_2,x_2) W(s_1,x_1)\ind_{\Omega'}\ind_{W(s_1,x_1)>\kappa} ].
\end{align*}
For $ f_1 $, insert the conditional expectation $ \Ex[\,\Cdot\,|\filZ(s_1)] $, and then use Proposition~\ref{prop:selfav} to show
\begin{align*}
	|f_1|
	\leq
	\kappa \Exrt\big|\Exrt\big[ W(s_2,x_2)\ind_{\Omega'} \big| \filZ(s_1) \big] \big|
	\leq
	c\kappa \,
	\Big( N^{-(\frac{u}{2}\wedge\uic\wedge v)}\log(N+1)+\tfrac{1}{\sqrt{s_2-s_1+1}} + \tfrac{N}{s_2-s_1+1} \Big).
\end{align*}
As for $ f_2 $, apply Markov's inequality followed by using~\eqref{eq:Wapbd} to get
\begin{align*}
	|f_2| 
	\leq
	c \kappa^{-1} \sup_{s\in[0,N^2T]} \sup_{x\in\T} \Exrt[ Z^4(s,x)\ind_{\Omega'} ]
	\leq
	c\kappa^{-1},
\end{align*}
where the last inequality follows from the moment bound on $ Z(s,x) $ from Proposition~\ref{prop:mom}.
Inserting  the bounds on $ |f_1| $ and $ |f_2| $ into~\eqref{eq:mgg2:bd} now gives
\begin{align}
\label{eq:mgg2:bd:}
\begin{split}
	\Exrt[(\mgg_2(t))^2\ind_{\Omega'}\ind_\Gamma]
	&\leq
	\frac{c}{N^4}\int\limits_{s_1<s_2\in[0,N^2t]}
	\hspace{-15pt}
	\Big( \kappa\,\Big( N^{-(\frac{u}{2}\wedge\uic\wedge v)}\log(N+1)+\frac{1}{\sqrt{s_2-s_1+1}} + \frac{N}{s_2-s_1+1} \Big) + \kappa^{-1} \Big) ds_1ds_2
\\
	& \leq c \kappa N^{-(\frac{u}{2}\wedge\uic\wedge v)}\log(N+1) + c\kappa^{-1}.
\end{split}
\end{align}
Now, choose $ \kappa= N^{-(\frac{u}{4}\wedge\frac{\uic}{2}\wedge\frac{v}{2})} $,
and take $ \Ex[\,\Cdot\,] $ on both sides of~\eqref{eq:mgg2:bd:}.
This gives
\begin{align*}
	\Ex[(\mgg_2(t))^2\ind_{\Omega'}\ind_\Gamma]
	\leq c
	N^{-(\frac{u}{4}\wedge\frac{\uic}{2}\wedge\frac{v}{2})}\,\log(N+1) \to 0.
\end{align*}
Given this, similarly to the preceding, after passing $ \Lambda $ to a suitable subsequent $ \Lambda_N\to\infty $ in~\eqref{eq:mgg1to0},
we conclude that $ \mgg_2(t) \to_\text{P} 0 $.
\end{proof}

\bibliographystyle{alphaabbr}
\bibliography{inhmgASEP}
\end{document}